\documentclass{jloganal}
%%\usepackage{amsmath,amsthm,amscd,amssymb,latexsym,amsfonts,wasysym,xypic}%%%%
%%\usepackage{fancyhdr}                     %%%%%%%%%%%%%%%%%%% NEED TO
%%\usepackage{mathrsfs}                     %%%%%%%%%%%%   LOCATE THE
%%\pagestyle{fancy}                         %%%%%%   diagxy & matrixxy
                                          %%%%%   PACKAGES
%%\usepackage{geometry,tabularx,shapepar}
%\usepackage{geometry,tabularx}
\usepackage[all,2cell]{xy}%%%THIS combination
                          %%% (as opposed to the 2 above)(see TeX experiment)
                          %%%  gets arrow choice on xy-matrix
\usepackage{amsbsy}   %%%%%THIS PACKAGE BOLDS EVEN \bbr,\bbn,...Use command \pmb

\usepackage{mathbbol}     %%%%%%%%%%%% mathbbol makes lower case letter like upper case symbol
\usepackage{amsmath}

\usepackage{pinlabel}   %%%%%%%% FOR J LOGIC & ANALYSIS

%\usepackage{graphicx}     %%%%%%%%%%%%%%%%%NOTE THE GRAPHICS PACKAGE INCLUDED
%%\usepackage{graphpap}
%%\usepackage{epsfig}
%%\usepackage{rotating}
%%\usepackage{pst-all}
%%\usepackage{pstricks-add}
%\usepackage{MnSymbol}
%%\usepackage{stmaryrd}
%\usepackage[all,2cell,dvips]{xy}\UseAllTwocells \SilentMatrices
%%\fancyhead{}
%%\fancyfoot{}
%%\chead{}
%%\rhead{\thepage}

\DeclareMathAlphabet{\mathpzc}{OT1}{pzc}{m}{it}

\SetSymbolFont{letters}{bold}{OML}{cmm}{b}{it}   %%%%%%  THESE 2 COMMANDS
\SetSymbolFont{operators}{bold}{OT1}{cmr}{bx}{n} %%%%%%  BOLD ALL GREEK, FRAK, \Sc, \SC,
                                                 %%%%%%  WITH \boldsymbol{}
                                                 %%%%%%  in $$ environ

\DeclareMathAlphabet{\mathitbf}{OML}{cmm}{b}{it} %%% this keeps math italic
                                                 %%%% when bolded, very helpful

\usepackage[T1]{fontenc}
\usepackage{frcursive}

\usepackage{multicol}

%%% Start of metadata
%
 \usepackage{isomath}

%\title{Topologizing Rings of Map Germs:\\An Order Theoretic Analysis of Germs\\via Nonstandard Methods}

%  First author
%
\author{Tom McGaffey}
\givenname{McGaffey}
\surname{Tom}
\address{}
\email{}
\urladdr{}

%  Second author (uncomment if necessary)
%
%\author{}
%\givenname{}
%\surname{}
%\address{}
%\email{}
%\urladdr{}
%
%  (Add a similar block for other authors)
%
%   Title and author both have running head options:
%
  \title[Topologizing Rings of Map Germs]{Topologizing Rings of Map Germs:\\An Order Theoretic Analysis of Germs\\via Nonstandard Methods}
%   \author[Running head author]{Tom McGaffey}
%
% add all keywords inside the parentheses, separated by commas; similarly
% primary and secondard Mathematics Subject Classifications.

\keywords{map germ, topology, nonstandard}

\subject{primary}{msc2000}{57N17}

\subject{secondary}{msc2000}{26E35,06F30,30G06,54EXX,16W80}

%
%  fill in the reference and password if your article is stored at the
%  arXiv eg \arxivreference{math.GT/0512347}  \arxivpassword{5spud}

\arxivreference{}
\arxivpassword{}

%
%  Leave the following items blank
%
\volumenumber{}
\issuenumber{}
\publicationyear{}
\papernumber{}
\startpage{}
\endpage{}
\doi{}
\MR{}
\Zbl{}
\received{}
\revised{}
\accepted{}
\published{}
\publishedonline{}
\proposed{}
\seconded{}
\corresponding{}
\editor{}
\version{}

%%% End of metadata
%
%%% Start of user-defined macros %%%

\def\a{\alpha}
\def\b{\beta}

\def\d{\delta}
\def\D{\Delta}
\def\e{\epsilon}
   %op
\def\f{\frac}                          %op
\def\g{\gamma}
\def\G{\Gamma}
              %op
\def\k{\kappa}

\def\l|{\left|}
\def\la{\lambda}
\def\La{\Lambda}
               %op
                       %op
              %op
        %op
         %op

              %op
              %op%%%%%%
               %op
\def\Om{\Omega}
\def\om{\omega}

             %op
   %op
            %op
\def\ov{\overline}                   %op
                 %op
                     %op
                     %op
%\def\rb\}{\right\}}                  %op

\def\r|{\right|}
\def\s{\sigma}
                  %op
                  %op
                  %op
              %op

              %op

             %op
        %op
\def\un{\underline}                  %op

\def\wh{\widehat}                    %op

\def\wt{\widetilde}                  %op
\def\x{\times}                       %op
\def\z{\zeta}
\def\({\left(}
\def\){\right)}
\def\[{\left[}
\def\]{\right]}
\def\<{\left<}
\def\>{\right>}

        %%%% SEQUENCE
        %%%% NOTATION
\newcommand{\bk}[1]{\langle #1\rangle}

 %%%%% MAKES the square root of -1 used here

\def\ra{\rightarrow}
\newcommand{\rz}{\raisebox{.2ex}{*}}

%\newcommand{\rzR}{\raisebox{.2ex}{\BR}}
%\newcommand{\rzM}{\raisebox{.2ex}{M}

%\def\tec{Teichm\"uller\ }
%\def\sconr{\hbox{\medspace\vrule width 0.4pt height 4.7pt depth  t \vrule width 5pt height 0pt depth 0.4pt\medspace}}         %env

%SCRIPT LETTERS
%%%%%%%%%%
\def\SA{\mathcal A}
\def\SB{\mathcal B}
\def\SC{\mathcal C}

\def\SE{\mathcal E}

\def\SG{\mathcal G}
\def\SH{\mathcal H}
\def\SI{\mathcal I}
\def\SJ{\mathcal J}

\def\SL{\mathcal L}
\def\SM{\mathcal M}
\def\SN{\mathcal N}

\def\SP{\mathcal P}
\def\SQ{\mathcal Q}
\def\SR{\mathcal R}
\def\SS{\mathcal S}
\def\ST{\mathcal T}
\def\SU{\mathcal U}
\def\SV{\mathcal V}
\def\SW{\mathcal W}
\def\SX{\mathcal X}
\def\SY{\mathcal Y}

\def\Fd{\mathfrak d}

\def\Ff{\mathfrak f}

\def\Fj{\mathfrak j}

\def\Fn{\mathfrak n}

\def\Fr{\mathfrak r}
\def\Fs{\mathfrak s}
\def\Ft{\mathfrak t}
\def\Fu{\mathfrak u}
\def\Fv{\mathfrak v}

\def\FB{\mathfrak B}

\def\FD{\mathfrak D}

\def\FL{\mathfrak L}

\def\FN{\mathfrak N}

\def\FP{\mathfrak P}

\def\FU{\mathfrak U}

\def\FX{\mathfrak X}

\def\pzA{{\mathpzc{A}}}

\def\pzD{\mathpzc{D}}
\def\pzE{\mathpzc{E}}

\def\pzN{\mathpzc{N}}

\def\pzQ{\mathpzc{Q}}

\def\pzT{\mathpzc{T}}
\def\pzU{\mathpzc{U}}

\def\pzn{\mathpzc{n}}

\def\pzp{\mathpzc{p}}
\def\pzq{\mathpzc{q}}

\def\pzy{\mathpzc{y}}

 %%%%% THIS COMMAND GIVES
                                       %%%%% BEAUTIFUL SCRIPT LETTERS

 %%%% THIS PUTS THE ABOVE
                                               %%%% IN MATH MODE

%%%%%%%
%%%BOLDFACE

% \pmb  is poor mans bold; bolds all, care p194

  %  {\mathversion{bold}$ mathstuff$}  bolds 'mathstuff'

%%%%%BLACKBOARD BOLD LETTERS...Lower case!!

\def\sbbd{{\mathbb d}}

\def\bbn{{\mathbb N}}

\def\bbq{{\mathbb Q}}
\def\bbr{{\mathbb R}}
\def\bbs{{\mathbb S}}

\def\bbx{{\mathbb X}}

            %%%*R^n_nes
            %%%*R^m_nes
            %%%*R^p_nes
            %%%*R_nes

\def\ssm{\smallsetminus}

   %%%% mynewcommands  DIFF GEOM ABREVS  %%%%

    %%%%  THIS BOLDS MATH SYMBOLS %%%%

 %%%% This command scales PZ script%%%%%
                                                         %%% in math environment  %%%

\newcommand{\brkfl}[1]{\lfloor #1 \rfloor}

 %%%%% THIS COMMAND PUTS LITTLE CIRCLE OVER ARGUMENT

   %SPACES

         %%%
           %%%
                   %%%
                 %%%
                      %%%
                    %%%
                       %%%
            %%%
      %%%
       %%%%%%%%%
   %%%%%%%%%
                       %%%
                 %%%  JET
        %%%  COMMANDS
  %%%
          %%%%%%%%%%

           %  FLOWS   %%%%

           %  FLOWS   %%%%

    %%%%% mynewcommands  NORMS; quotients of, sums of

\newcommand{\norm}[1]{\left\|#1\right\|}

\def\k{\kappa}
%\command{\clyS}{\mathcal{S}}

%%%%%       PERTURBATIOM OPERATOR SYMBOLS     %%%%%%%

%\newcommand

                                                          %%%
                                                          %%%   semidirect sum symbol:
                                     %%%

\newtheorem{theorem}{Theorem}[section]
\newtheorem{lemma}{Lemma}[section]
\newtheorem{proposition}{Proposition}[section]
\newtheorem{corollary}{Corollary}[section]
\newtheorem*{notation}{Notation}
\theoremstyle{definition}
\newtheorem{definition}{Definition}[section]
\newtheorem*{remark}{Remark}
\newtheorem*{example}{Example}
%\providecommand*{exampleautorefname}{Example}
                                 %%%%%%%%%%%%%%
\numberwithin{equation}{section}  % numbers equations in sections
                                 %%%%%%%%%%%%%%%%%%%
%\newtheorem*{proof}{Proof}

\begin{document}

%\tableofcontents
\begin{abstract}
    Using nonstandard analysis we define a topology on the ring of
    germs of functions: $(\bbr^n,0)\ra(\bbr,0)$. We prove that this topology is absolutely convex, Hausdorff, that convergent nets of continuous germs have continuous germs as limits and that, for continuous germs, ring operations and compositions are continuous. This topology is not first countable, and, in fact, we prove that no good first countable topology exists.  We give a spectrum of standard working descriptions for this topology. Finally, we identify this topological ring as a generalized metric space and examine some consequences.
\end{abstract}

\maketitle

\section{Introduction}\label{sec: intro: top on map germs}
    Two real-valued functions defined near $0$ in $\bbr^n$ are called {\it germ equivalent} if they coincide on some neighborhood of $0$. The equivalences classes are called {\it germs}.
    It is commonly believed that one cannot construct a nondiscrete ``good'' topology on the ring of germs at $0$ of smooth real valued functions on $\bbr^n$, much less the ring of germs of continuous real valued functions on $\bbr^n$.
    For example, Gromov, \cite{Gromov1986} remarks (p.36) that ``There is no useful topology in this space...of germs of [$C^k$] sections...'' over a particular set.
    Furthermore, there are hints in the literature, for example in the work of eg., Du Pleisses and Wall, \cite{DuPlessisWall1995},  on topological stability, see p.95 and chapter 5 (p.121-) on the great difficulties of working with germ representatives with respect to aspects of smooth topology (eg., how to define the stability of germs), but that there are no alternatives, eg., to working with the germs directly.

    Although, there are good topologies for vector spaces of very special map germs in terms of inductive limits of Frechet spaces, the constructions of these topologies depend on the nature of analytic germs; see eg., Meise and Vogt, \cite{MeiseVogt97}, and the discussion in the conclusion of this paper.
    %    See also the work of Milnor on microbundles [see??????] and the work in microlocal analysis [see??????] all indicating that the existence of such a framework, could be quite useful.
    In this paper, using nonstandard analysis, we will give a construction of an absolutely convex, Hausdorff topology on the ring of germs of real valued functions on $\mathbb R^n$ at $0$ that has good algebraic, compositional and convergence properties, and prove that, categorically, it is the best possible.

    More specifically, we give a construction of a non-first countable, Hausdorf topology on the ring of real valued germs on $\bbr^n$ at $0$ that has the following properties.  Analogous to norm convergence on a compact set, a convergent net of germs of continuous functions has limit the germ of a continuous function. Moreover, germ product and composition are continuous, with composition on the right by germs of homeomorphisms giving topological ring isomorphisms and germs of self homeomorphisms of $(\bbr^n,0)$ forming a topological  group. We also show that germ product with monotone germs has good Krull type topological behavior.
%    In one very special instance, we show how this topology extends the usual norm topology if the `germs' come from functions all with a common domain. For example, we give theorems relating types of convergence of a family of functions all defined in a given ball to our germ convergence of the *finite extensions  of these families restricted to germ domains.
    Furthermore, we prove that this is the best possible result; that is, there is no first countable topology on the ring of germs making it into a Hausdorff topological ring if it is even weakly compatible with the intrinsic partial order on function germs. Our topology is strongly compatible with this order. (We thank Mike Wolf for asking this question.)
    Finally, although this work fundamentally depends on nonstandard tools, we finish with a multiplicity of standard descriptions for our topology $\tau$, eg., see  \autoref{cor: all standard descriptions tau} and the definitions and constructions leading up to it.
    In the conclusion, we show how our topology is a kind of generalized metric space that is homeomorphic to a generalized ultrametric space and examine some consequences.
    We also indicate how the constructions here might have utility for the construction of good topologies on germs in a variety of settings.

%    This work is intended to be foundational;   As time allows, we will extend this work; eg., we will extend this framework to the context of the orbit space of the action of the topological group of homeomorphisms germs acting on locally Euclidean topological groups. The overall  intention  is to develop what might be called a categorical framework for germ topologies.

    Our constructions rely critically on nonstandard methods. Given the nonstandard work on germs stretching from the 1960's (see  Robinson, \cite{Robinson1969}) up to the present day (see eg., Kalfallah and Kosarew, \cite{KhalKos2010}), it's rather curious that  nonstandard investigations of germs of general functions (eg., beyond those with analytic type rigidities) has not occurred. Further, note that those constructions of nontrivial topologies on germs that do exist, ie., on analytic germs, do not involve nonstandard techniques.
%    (We should note the sophisticated and developing topological study of germs of analytic maps, see \cite{Pisanelli1976} for an early example and )
    Nevertheless, it is within this arena of objects with vanishing domains that the nonstandard intuition (and it's rigorous machinery!)  become instrumental to well defining a nontrivial topological structure.
%    In the standard world, although researchers are working with germs, these are also rigid, eg., analytic.
    In fact, the simple initial idea here was that germs at $0$, and distances between them, are well defined and determined on an infinitesimal ball. Bringing this simple intuition together with the machinery of nonstandard methods, allowed the author to get a toe hold on these elusive objects.
    Using this and the focusing consultations with Robert Hardt, the author was able to develop a firm theoretical framework for a useful standard notion of nearness for germs  of functions $f:(\bbr^n,0)\ra(\bbr^p,0)$ that is easily generalized to more general topological spaces (see our conclusion). Note that the family of germs of maps $f:(\bbr^n,0)\ra\bbr^p$, ie., germs  with variable target, is a remarkably different object and will not be considered in this paper.  See the remarks in the conclusion for this and questions of topologies on differentiable germs, part of the author's original motivation.
    The standard descriptions of convergence in this topology (the most transparent being the last description in \autoref{cor: all standard descriptions tau}), seem, after some thought, to be quite natural; nevertheless the author believes that standard proofs of the results of this paper will probably be difficult; see the remarks in the conclusion.
    Ironically, although nonstandard methods were central to the discovery of the topology $\tau$ and its properties, in this paper we do not do the usual thing and describe the (nonstandard) monads of $\tau$. Simply put, the technical development for the (nonstandard) description of the topological monads was not necessary for the construction of our topology $\tau$ or to verify its properties. On the other hand, it would not be surprising to find that given good descriptions of these monads, some of the proofs here could be distinctively shortened.

     Let us next list the technical obstacles and desired results to achieve our goals. Our topologies are simply defined as nonstandard sup norm topologies on an arbitrary, but fixed, infinitesimal ball. Simple nonstandard arguments show that these are well defined for germs. But unlike the standard case, it's  a serious problem \textbf{(1)}: to determine a choice of a family of (infinitesimal) bounds  $\G=\{\g\}$ such that if $\|\rz f\|_\d$ (the nonstandard supremum norm on a fixed  ball of radius $\d\sim 0$) is controlled by these specific values, $\G$, then  we get a good topological notion of nearness. We need the nearness structure defined in (1), to \textbf{(2)}:  be compatible with the requirement that it has good convergence properties, eg., we want a convergent net of continuous germs to have a continuous germ as the limit point.  As this topology is defined via a `norm' on a ball with radius some positive infinitesimal $\d$,  we want \textbf{(3)}: this topology to be independent of the choice of this infinitesimal. To be a good topology,  we need not only that it have good convergence properties, but also  that \textbf{(4)}: this topology has good properties with respect to ring operations and composition of germs. It turns out that our topology is not first countable, eg., one needs uncountable nets to get  convergence, so we want \textbf{(5)}: to show that any good topology on germs cannot be first countable. Finally, in order to be applicable, \textbf{(6)}: we need a good standard description of this topology.

     To have some chance for success in confronting the above problems, we needed the following. First, the algebra of germs at $0$ is canonically order isomorphic (via the domain restriction map) to the external algebra of standard functions on any infinitesimal ball about $0$, see  \autoref{cor: SG_0 -> F(B_delts) is R alg isomorph}.  Second, we define the germ topology  in terms of these nonstandard algebras of standard functions restricted to one of these infinitesimal balls, see \autoref{def: of topology  at 0 germ}. Third, we have a criterion, in the context of these  functions restricted to these infinitesimal balls,  to determine those germs whose representatives are continuous functions (on some neighborhood of $0$), see \autoref{prop: germ continuity from k<<<d}. Fourth, we employ a remarkable trick (possibly due to Hirschfeld, \cite{Hirschfeld1988}), used to full force in \autoref{prop: ptwise bndd subset unif bndd}, along with a trick adapted from Hardy, \cite{Hardy1924}, to verify that the topology is independent of the radius of the given infinitesimal ball to which  we restrict our standard functions. Fifth, the non-first countability of a good topology follows from an argument that depends on the existence of uncountably long almost chains of germs, see \autoref{subsec: ord preserv tops not countable}. Finally, the multiplicity of (standard) descriptions of our topology depends on the facts that our topology $\tau$ is independent of the kind of infinitesimal $\d$ used to describe it and also on the variety of moduli used to measure nearness of germs, see \autoref{sec: standard interpret tau}.
%    @@@@@@@@@@is totally determined by balls on any infinitesimal ball about $0$ and second that the topology so defined is independent of this choice of infinitesimal. Moreover, the utility of both of these facts is critical in the proofs of several important things here.

    Let's describe in some detail how we solve all of these problems. As an overview, we should note that throughout this paper, our problems are reduced to problems on the semiring, $\SM$, of positive monotone increasing germs defined on the positive real numbers, $\bbr_+$,  near $0$ (having limit $0$ at $0$). Essentially, the values at $\d$ of sufficiently numerous subsets of $\SM$ play the role of good sets of moduli. The monotonicity of elements of $\SM$ and their roles as models for the (transfers of the) functions $t\mapsto\|f\|_t$, make them our central focus; hence our concern with ordering of germs (see below).
     Without initially thinking of problem (2) we find that to get a good set of distances for $\rz\|\;\|_\d$,  we must, in fact,  define multiple families  of infinitesimals,  the $\SN_\d$ families, ($\d$ a positive infinitesimal), see eg.,  \autoref{def: SN_delta families of numbers}, (and the ${}_\d\SP$ later (see \autoref{def: PL^0 germs at 0}) and show that
     they define ``equivalent'' sets of distances (this is defined in terms of various  ``coinitial'' subsets of certain (totally ordered) subsets of the nonstandard real numbers), analogous to the trivial standard fact that eg., $1/2,1/3,1/4,\ldots$ and $1/4,1/9,1/16,\ldots$ form equivalent sets of distances for, say, the $\sup$ norm for the continuous functions on the unit ball (see  \autoref{def: of coinitial partial order relations}). (Note that, unlike this  simple example, none of our good sets of distances can be countable, which will imply that our topology is not first countable;  see \autoref{prop: SPSL has no countable coin subset}.)
    But the moduli chosen in problem (1) must satisfy problem (2) and this depends on an understanding of (2) that is compatible with the setting in (1). We are able to simplify the relation between (1) and (2) by reducing the conditions for a germ to be continuous (ie., have a neighborhood of $0$ and a  representative that is continuous there) to a tight bounding of differences of germ values on the given $\d$-ball. This is \autoref{prop: germ continuity from k<<<d}. We then needed to further transform this new criterion for germ continuity into one that involved (in a coinitial way) the moduli for our generalized norm $\rz\|f\|_\d$. This occurs in \autoref{cor: [f] good coin subsets->f is C^0}. From this, we are able to verify that net convergence in $\rz\|\;\|_\d$ with our choice of moduli, $\SN_\d$, is good enough to enforce continuity in the limit, see \autoref{prop: [fd]->[g]--> [g] is cont} and it's easy but fundamental consequence (once we have defined our topology $\tau^\d$),  \autoref{thm: convergnet of cont germs in SG converges to cont}.

%    @@@@@@@@@But, it turns out that for these infinitesimal balls, the totally ordered set of moduli, $\SN_\d$, does not have the right properties to prove that the subring of  continuous germs is closed in the *supremum topology with respect to this set. Therefore, we must define other families of infinitesimals, the various $\ov{\SS}^\d$ families (one, $\wh{\SS}^{\k,\d}$, which depends on two, incomparable, infinitesimals, again see page \ref{def: SS^delta families of numbers}), as well as the notion of $[f]$-good infinitesimals (see definition \ref{def: e is [f]-good}), specifically for this purpose. Essentially, convergence akin to uniform convergence is implied by the existence of $[f]$-good numbers in our sets of moduli, see theorem \ref{thm: convergnet of cont germs in SG converges to cont} and the results building to it: lemma \ref{lem: SN_d contains f-good numbers} on page \pageref{lem: SN_d contains f-good numbers} which depends critically on the new nonstandard characterization of continuity (for germs) given in proposition \ref{prop: germ continuity from k<<<d} on page \pageref{prop: germ continuity from k<<<d}.  We must also show  the equivalence of the $\SN_\d$ and $\SS^\d$ families. This essentially occurs in lemma \ref{lem: SS_delta=SN_delta}, on page \pageref{lem: SS_delta=SN_delta},  and proposition \ref{prop: SPSL^0 coinitial in wt(SM)} on page \pageref{prop: SPSL^0 coinitial in wt(SM)}.

    Originally, in \cite{McGaffeyPhD}, our solution to problem (3) followed  from a series of constructions passing from one family of interlocking monotone germs to another finally reducing the problem to a particular family of monotone analytic germs constructed by Hardy in \cite{Hardy1924} where it became solvabe in terms of monotone sequences of exponents. (Some of this material will appear elsewhere.) Eventually, we realized that using a generalized version of the monotonicity trick of Hardy in tandem with a powerful nonstandard device of Hirschfeld in \cite{Hirschfeld1988}, we were able to prove a general, all purpose pointwise pinching implies uniform pinching result for families of function germs, \autoref{prop: ptwise bndd subset unif bndd}. Using this result and some related results, eg., \autoref{prop: un(SM)^r subset un(SM)(m)},  we quickly get as a consequence, the independence of our topology of the choice of the infinitesimal $\d$, see \autoref{thm: tau^d on SM indep of d} and \autoref{cor: tau^d_1 = tau^d_2}.
    The solution to problem (5) occurs in the following part,  \autoref{subsec: ord preserv tops not countable}, and involves a further development of material used to solve  problem (3). Our earlier  work, \cite{McGaffeyPhD}, around  \autoref{prop: ptwise bndd subset unif bndd} allows the realization that there are (lots of) families of germs, eg., our piecewise affine germs $\SP\SL^0$, see \autoref{def: PL^0 germs at 0}, that are asymptotically order isomorphic to certain good coinitial subsets of our (totally ordered) set of moduli, $\SN_\d$, see \autoref{def: asymp tot ord set of germs} for this notion. Furthermore, the work around proposition \autoref{prop: ptwise bndd subset unif bndd} implies that our topology $\tau$, strongly respects this germ order, see \autoref{prop: tau preserves germ scales}. We say that the topology ``preserves germ scales'' (\autoref{def: pzT preserves germ scales}). It is then easy to  show that any reasonable topology that preserves germ scales cannot be first countable, the ordered tower of germs is too deep (\autoref{thm: pzT has germ scales->not 1st countable}.

    Problems (4) and (6) occupy the last two working sections of this paper. For problem (4), the topological aspects of the ring operations  for the space of continuous map germs occupy \autoref{subsec: top properties ring structure}; eg see  \autoref{prop: SG_0 is Hausdorff top ring} and the preceding lemma. The material on composition of continuous germs occurs in \autoref{subsec: top properties of germ composition}; see \autoref{prop: rc_h:G^0_n->G^0_n is C^0} for the continuity of right composition and for the statement of continuity of left composition, see \autoref{prop: lc_h:G^0_n,p,0->G^0_n,p,0 is C^0}, which follows from our second criterion for germ continuity, \autoref{prop: 2nd criterion for germ in C^0}. At the end of this section, we extend these results to germs of homeomorphisms, proving that the group of germs of homeomorphisms of $(\bbr^n,0)$ is a topological group, \autoref{prop: SH^0_n is top group}. To indicate the sensitivity our our topology with respect to continuity, we will also show that composition with a germ that is continuous at $0$, but is a noncontinuous germ does not give a continuous mapping between our rings of germs, see the
%%    \autoref{example: h not in SG^0-> lc[h] notC^0}
    \hyperlink{example}{Example}.
    Note that also in this section we  prove that multiplication by good monotone germs is an open map, giving our topology  a Krull topology flavor, see  \autoref{prop: mult SG by elt SM is open}.
    We believe that the solution to problem (6) gives satisfactory standard characterizations for our topology $\tau$. All are phrased in terms of standard conditions for the convergence of a net of germs. The variety of equivalent (nonstandard) moduli for our topology as well as the independence of the choice of infinitesimal $\d$ allowed both (apparently) weak and strong descriptions, see  \autoref{cor: all standard descriptions tau} for five descriptions varying from weak to strong. One who reads this paper can use this same freedom of choice of moduli and $\d$'s to construct (apparently many) other characterizations.

%    Finally, the last section gives the proof that a topology that makes the ring of germs into a Hausdorff locally convex topological ring must be nonmetrizable. Essentially, this holds for the following reason. Let $F(\SN_\d)$ denote the subfield of the nonstandard real numbers, $\rz\bbr$, generated by our semiring of moduli, $\SN_\d$. Then, not only is $F(\SN_\d)$ a non-Archimedean field, but critical for the proof of the nonmetrizable claim here, it is not metrizable (unlike the totally ordered non-Archimedean fields of current interest, \textbf{see Todorov and Wolf in \cite{TodorovWolf2006}, and van den Dries and ????? cite}). The nonmetrizability of $F(\SN_\d)$ will follow from the fact that $\SN_\d$ does not have countable coinitial subsets, see section \ref{sec: nonexistence of metrizable top}. Given this, the nonmetrizability of our topology on the ring of germs  will follow from this fact about $F(\SN_\d)$, again see section \ref{sec: nonexistence of metrizable top}.
    We say order theoretic for an obvious reason: the intrinsic ordering of the sets of moduli, eg., $\SN_\d$ determines the topology. But also order plays a more subtle role in `depth' of neighborhoods of germs. It turns out that the total ordering of $\SN_\d$ (without coinitial countable subsets) induces asymptotically totally ordered (almost) chains of germs (in terms of their intrinsic partial order) with the property that these (almost) chains don't have countable coinitial subsets. The existence of these almost chains of germs implies that our topology or any other (that satisfies mild assumptions with respect to germ order) cannot be first countable. (For information on the ordering of $\rz\bbr$, ``the'' nonstandard real line, see the paper of Di Nasso and Forti, \cite{DiNassoForti2002}.) Note, therefore, that the Hahn field expression for the field completion of our semiring $\SN_\d$ has the property that its group of magnitudes does not have a countable coinitial subset, unlike some Hahn fields of recent interest, eg., see eg.,
    Todorov and Wolf, \cite{oai:arXiv.org:math/0601722},
    or Pestov, \cite{MR1135229}.
    The uncountable nature of coinitial subsets of $\SN_\d$ implies that our topology is what is termed an $\om_1$-metric space, see eg., Artico and Moresco, \cite{ArticoMoresco1981}. In the \hyperlink{topological}{Conclusion}, we see that this implies eg., that our space of germs is homeomorphic to a generalized ultrametric space, eg., is $0$ dimensional.
%     and Helms,\cite{Helms1979341}
%    or Diffen, \cite{diffenbaugh05}.

\section{Germs, infinitesimal moduli, continuity}\label{sec: various radii, converg of C^0 is C^0}
    In this section, we first develop algebraically and topologically faithful representations of the ring of function germs at $0$ in $\bbr^n$. In the next subsection, we begin an investigation of possible sets of nonstandard moduli proving that there are sets of moduli that give a good measure for convergence of germs, moduli that are good for determining the continuity of germs and that these different sets of moduli are sufficiently compatible to allow the definition of a topology, $\tau^\d$, incorporating both. This definition occurs at the beginning of the next section.
\subsection{Germs and their faithful nonstandard restrictions}
   We will be using nonstandard mathematics throughout this paper. For an impressionistic introduction, the reader might read \autoref{appendix: nsa impression}. Here we will give some notation. If $X$ is a set, $\rz X$ will denote it's transfer and ${}^\s X$ will denote the external copy of $X$ in $\rz X$, ie., ${}^\s X=\{\rz x:x\in X\}$.

   We will begin with a useful description of germs and give an easy result indicating that they are faithfully determined algebraically on infinitesimal balls.
\begin{notation}\label{notation: mu(0), B_d,etc}
    $\bbr$  denotes the real numbers with $\bbr_+=\{t\in\bbr:t>0\}$ and $\bbn$ the positive integers. $\rz\bbr$ denotes `the' nonstandard real numbers in a sufficiently saturated model of analysis, with $\rz\bbr_+=\{\Ft\in\rz\bbr:\Ft>0\}$. $\rz\bbn$ will denote the nonstandard positive integers with $\rz\bbn_\infty$ the infinite integers. Let $n\in\bbn$ and if $0<r\in\bbr$, respectively $0<\Fr\in\rz\bbr$, let $B_r=B^n_r=\{x\in\bbr^n:|x|\leq r\}$, respectively $\rz B^n_\Fr=\{\xi\in\rz\bbr^n:|\xi|\leq\Fr\}$. Let $\mu(0)=\mu_n(0)=\{\xi\in\rz\bbr^n:|\xi|\sim 0\}$, ie., infinitesimal vectors in $\rz\bbr^n$, and $\mu(0)_+=\{\xi\in\mu(0):\xi>0\}$, the positive infinitesimal nonstandard real numbers; we will sometimes write $0<\d\sim 0$ instead of $\d\in\mu(0)_+$.
\end{notation}
\begin{definition}\label{def: set of (f,U) s.t. U=domf}
    Let
\begin{align}
    \un{F}=\un{F}(n,1)=\{(U,f):U\subset\bbr^n\;\text{is a convex nbd of}\;\;0\;\text{and}\;\;f:U\ra\bbr\}
\end{align}
     and $\un{F}(n,1)_0\subset\un{F}(n,1)$ denote the set of those $(U,f)$ such that $f(0)=0$.
%     If $y$ is some point in the range, we may also use $\un{F}(n,1)_y$ for those $(f,U)$ with $f(0)=y$.
     For the associated set of germs of equivalence classes, let $\SG=\SG_{n,1}$ denote the ring of germs of $f:(\bbr^n,0)\ra(\bbr,0)$ at $0\in\bbr^n$, that $\SG^0\subset\SG$ the subring consisting  of germs of continuous functions.
     If $f=(U,f)\in\un{F}$, then we denote the germ equivalence class it belongs to by $[f]$. In certain circumstances, we will sometimes use $f$ for both the element of $\un{F}$ and for the germ class in $\SG$ it belongs to.
%      and suppose that $\d$ is a positive infinitesimal, written $0<\d\sim 0$.
\end{definition}
    There is an important partial order on germs given as follows.
\begin{definition}\label{def: [f]<[g]}
    For $[f],[g]\in\SG$, we say that $[f]$ is less that  $[g]$ and write $[f]<[g]$ if there is a  neighborhood $U$ of $0$ in $\bbr^n$ and representatives $f\in[f]$, $g\in[g]$ defined on $U$ and satisfying $f(x)<g(x)$ for all $x\in U$. Define $[f]\leq[g]$, $[f]>[g]$ similarly.
\end{definition}
    Of course, by suitable restriction, we may assume that $U$ is appropriately nice, eg., convex.
    It is easy to see that this is well defined and indeed a partial order, ie., satisfies reflexivity, antisymmetry and transitivity.

    Although elementary, the following basic result is apparently folklore. There are many variations of this; the statement below is needed to prove that a germ at $0$, and its order class, is unambiguously defined by its restriction to an infinitesimal neighborhood of $0$.

\begin{lemma}\label{lem: *A contains inf nbd-> has stand nbd}
    Suppose that $A\subset\bbr^n$ and $0<\d\sim 0$ are such that $\{\xi\in\mu(0):|\xi|>\d\}\subset\rz A$. Then there is $0<r\in\bbr$ such that $B_r\ssm\{0\}\subset A$. Similarly, if $B\subset\bbr^n$ is such that $\rz B_\d\subset\rz B$, then there is $0<r\in\bbr$ such that $B_r\subset B$.
\end{lemma}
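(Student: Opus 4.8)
The plan is to establish both statements by combining Transfer with an overspill argument. For the first statement, a naive Transfer argument fails: negating the conclusion gives the standard sentence $\forall r>0\ \exists x\,(0<|x|\le r\ \wedge\ x\notin A)$, whose transfer, instantiated at radius $\d$, only yields a ``bad'' vector of norm $\le\d$ --- about which the hypothesis $\{\xi\in\mu(0):|\xi|>\d\}\subseteq\rz A$ says nothing. To get around this I would introduce the internal set $T$ consisting of all $r\in\rz\bbr$ with $r>0$ such that every $\xi\in\rz\bbr^n$ with $\d<|\xi|\le r$ belongs to $\rz A$ (internal, since $\d$ and $\rz A$ are internal parameters). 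The hypothesis forces every positive infinitesimal into $T$: if $0<r\sim 0$, then any $\xi$ with $\d<|\xi|\le r$ is itself infinitesimal (its norm is $\le r\sim 0$) and has norm $>\d$, hence lies in $\rz A$. Since the collection of positive infinitesimals is external whereas $T$ is internal, $T$ cannot consist solely of infinitesimals, so it contains some non-infinitesimal $r_1>0$; this is the overspill step.

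With such an $r_1$ in hand, choose any standard $r$ with $0<r<r_1$ (for instance $r=\operatorname{st}(r_1)/2$ when $r_1$ is finite, or $r=1$ when $r_1$ is infinite). If $x\in\bbr^n$ is a standard vector with $0<|x|\le r$, then $|x|$ is a standard positive real so $\d<|x|$ automatically, while $|x|\le r<r_1$; since $r_1\in T$ this gives $x\in\rz A$, and hence $x\in A$ because $x$ is standard. As $B_r\ssm\{0\}$ is precisely the set of such standard vectors, we conclude $B_r\ssm\{0\}\subseteq A$.

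The second statement is a direct Transfer argument requiring no overspill. Suppose, for contradiction, that $B$ contains no standard ball about $0$; this is the standard sentence $\forall r\in\bbr_+\ \exists x\,(|x|\le r\ \wedge\ x\notin B)$. Transferring and instantiating at $\rho=\d\in\rz\bbr_+$ produces $\xi$ with $|\xi|\le\d$ and $\xi\notin\rz B$, i.e.\ $\xi\in\rz B_\d\ssm\rz B$, contradicting the hypothesis $\rz B_\d\subseteq\rz B$; hence $B_r\subseteq B$ for some standard $r>0$. The one genuinely delicate point in the argument is the overspill step in the first part, which promotes the infinitesimal control supplied by the hypothesis to an appreciable radius $r_1$; once that is done, the trivial fact that every nonzero standard vector has norm exceeding the infinitesimal $\d$ closes the argument immediately.
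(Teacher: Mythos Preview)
Your argument is correct. For the first assertion, both you and the paper use overspill to pass from the infinitesimal annulus to an appreciable radius; the paper's finish is slightly more elaborate (it introduces $E=A\cap B_a$, shows $\rz\complement E\subset B_\d\ssm\{0\}$, and then reverse-transfers $\complement E\subset B_d\ssm\{0\}$ for every standard $d>0$ to conclude $\complement E=\emptyset$), whereas you simply observe that any standard $x$ with $0<|x|\le r<r_1$ automatically has $|x|>\d$ and hence lies in $\rz A\cap{}^\s\bbr^n=A$. These are the same idea, yours executed more directly.

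For the second assertion the routes genuinely diverge. The paper reduces to the first part: it introduces the one-dimensional set $\SB=\{t\in\bbr_+:B_t\subset B\}$, argues that if the conclusion fails then $\rz\SB$ is bounded above in $\mu(0)_+$, and applies the first assertion (with $n=1$) to $\complement\SB$ to reach a contradiction. Your direct contrapositive---transfer the failure $\forall r>0\,\exists x\,(|x|\le r\wedge x\notin B)$ and instantiate at $\d$---is shorter and entirely self-contained. The paper's route has the mild conceptual advantage of showing the second statement is formally a consequence of the first, but your argument is the more natural one and avoids the somewhat awkward ``maximum positive $\d\sim 0$'' phrasing in the paper.
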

\begin{proof}
    First, it's clear that as $\rz A$ is internal, then overflow implies that there is a standard $a>0$ such that $\{\xi:\d<|\xi|\leq \rz a\}\subset \rz A$. Let $E$ denote $A\cap B_a$ and let $\complement E$ denote the complement of $E$ in $B_a\ssm\{0\}$; so that $E\cup\complement E=B_a\ssm\{0\}$.  We know that $\rz\complement E\subset B_\d\ssm\{0\}$; that is, for $0<d\in\bbr$, we have the statement: $\xi\in\rz\complement E\Rightarrow \xi\in\rz B_d\ssm\{0\}$. But then reverse transfer gives the statement: $x\in\complement E\Rightarrow x\in B_d\ssm\{0\}$ and as $d>0$ in $\bbr$ was arbitrary, then we get that $\complement E=\emptyset$ so that $E=B_d\ssm\{0\}$.
    To prove the second assertion, suppose the conclusion does not hold so that there is a maximum positive $\d\sim 0$ such that if $\SB_t$ is the set $\{t\in\bbr_+:B_t\subset B\}$, then
\begin{align}
    [0,\d)\subset\rz\SB\dot=\{\Ft\in\rz\bbr_+:\rz B_\Ft\subset\rz B\}.\notag
\end{align}
     But then $\{\Ft:\d<\Ft\sim 0\}\subset\rz\complement\SB$ and so as $\d\sim 0$ and is nonzero, the first part ($n=1$ here) implies that $\{\Ft:0<\Ft\sim 0\}\subset\rz\complement \SB$, forcing $[0,\d)\nsubseteq \rz \SB$, ie., $\rz B_\Ft\subsetneqq \rz B$ for $\Ft<\d$, a contradiction.
\end{proof}
    We will now introduce the far more workable nonstandard counterparts to the above germ definitions.
\begin{definition}
     Let $\rz F(B_\d)$ denote the $\rz\bbr$ algebra of internal functions on $B_\d$ and $^\s F(B_\d)$ denote the (external) subring of standard functions on $B_\d$.
     Given $\rz f|_{B_\d}$ and $\rz g|_{B_\d}$ in ${}^\s F(B_\d)$, we say that $f$ is less than $g$ on $B_\d$, written $\rz f<\rz g$ on $B_\d$ or $f<g$ on $B_\d$, if for every $\xi\in B_\d$, $\rz f(\xi)<\rz g(\xi)$.
\end{definition}
    It's clear that this defines a partial order on ${}^\s F(B_\d)$.

     Note that $\SG$ and its subring $\SG^0$ are $\bbr$ algebras. Furthermore, ${}^\s F(B_\d)$ is clearly an $^\s\bbr$ algebra , and so can be viewed as an $\bbr$ algebra.
     Given this, the above lemma has the following immediate consequence which is the critical fact that allows the characterizations of germs in this paper.
\begin{corollary}\label{cor: SG_0 -> F(B_delts) is R alg isomorph}
    Suppose that $U\subset\bbr^n$ is a neighborhood of $0$ in $\bbr^n$, $f: U\ra\bbr$ is a function and $0<\d\sim 0$. If $\rz f(\xi)=0$ for all $\xi\in B_\d$, then there is another neighborhood of $0$, $V\subset\bbr^n$ such that $f|_V$ is identically zero; ie., $[f]\in\SG$ is the zero germ.
    That is, the map $\SR_\d:\SG\ra\;^\s F(B_\d):[f]\mapsto \rz f|_{B_\d}$ is an $\bbr$-algebra isomorphism.
    Furthermore, this map is also a partial order isomorphism; ie., we have that $[f]<[g]$ if and only if $\rz f<\rz g$ on $B_\d$, with identical statements for $\leq,\;>$ and $\geq$.
\end{corollary}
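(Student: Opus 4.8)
The plan is to derive all of the assertions from the second half of \autoref{lem: *A contains inf nbd-> has stand nbd}, whose entire function here is to upgrade a statement known to hold on the infinitesimal ball $B_\d$ to one holding on a genuine standard ball $B_r$. Throughout I will use the elementary fact that if $U$ is a standard neighborhood of $0$ in $\bbr^n$ then $\mu(0)\subset\rz U$, so in particular $B_\d\subset\rz U$ whenever $(U,f)\in\un F$.

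First I would check that $\SR_\d$ is a well-defined $\bbr$-algebra homomorphism. Well-definedness: if $f=g$ on a neighborhood $W$ of $0$ then $\rz f=\rz g$ on $\rz W\supset B_\d$, so $\rz f|_{B_\d}=\rz g|_{B_\d}$. That $\SR_\d$ respects sums, products and scalar multiples is immediate from transfer, since the operations on $\rz F(B_\d)$ are computed pointwise and $\rz(f+g)=\rz f+\rz g$, $\rz(fg)=\rz f\,\rz g$, $\rz(cf)=c\,\rz f$ on $\rz U$. For injectivity, suppose $\SR_\d([f])=0$, i.e. $\rz f(\xi)=0$ for all $\xi\in B_\d$, where $f\colon U\to\bbr$. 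Apply \autoref{lem: *A contains inf nbd-> has stand nbd} to the set $B:=\{x\in U:f(x)=0\}$: since $B_\d\subset\rz U$ and $\rz f$ vanishes on $B_\d$, we have $B_\d\subset\rz B$, whence $B_r\subset B$ for some standard $r>0$. Thus $f\equiv 0$ on $B_r$ and $[f]=0$ in $\SG$ — which is exactly the first displayed claim of the corollary. For surjectivity, let $\vp\in{}^\s F(B_\d)$, so $\vp=\rz h|_{B_\d}$ for some standard function $h$ with domain $W\subset\bbr^n$ satisfying $B_\d\subset\rz W$; applying the lemma to $B:=W$ gives a standard $r>0$ with $B_r\subset W$, and then $(B_r,h|_{B_r})\in\un F$ with $\SR_\d([h|_{B_r}])=\rz h|_{B_\d}=\vp$.

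It remains to treat the order statement. If $[f]<[g]$, choose representatives defined on a common neighborhood $U$ with $f<g$ pointwise on $U$; transfer then gives $\rz f<\rz g$ pointwise on $\rz U\supset B_\d$, i.e. $\rz f<\rz g$ on $B_\d$. Conversely, if $\rz f<\rz g$ on $B_\d$, apply \autoref{lem: *A contains inf nbd-> has stand nbd} to $B:=\{x\in U:f(x)<g(x)\}$, where $U$ is a common domain of chosen representatives: $B_\d\subset\rz B$ because $B_\d\subset\rz U$ and $\rz f<\rz g$ there, so $B_r\subset B$ for some standard $r>0$, i.e. $f<g$ on $B_r$ and $[f]<[g]$. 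The statements for $\leq,\,>,\,\geq$ follow by the identical argument with the defining inequality changed accordingly (using \autoref{def: [f]<[g]} and the partial order on ${}^\s F(B_\d)$).

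I expect the only genuine subtlety to be the two ``bootstrapping'' steps — surjectivity, and the reverse implication of the order equivalence — where one must name the correct subset $B\subset\bbr^n$ (a zero set, a domain, or a strict-inequality set) so that $B_\d\subset\rz B$, and only then invoke the lemma; everything else is transfer bookkeeping. A minor point worth stating explicitly in the write-up is that the $B_r$ produced by the lemma is automatically a convex neighborhood of $0$, so the pairs $(B_r,h|_{B_r})$ produced along the way really do lie in $\un F$.
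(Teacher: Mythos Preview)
Your proof is correct and follows essentially the same approach as the paper: both reduce everything to the second half of \autoref{lem: *A contains inf nbd-> has stand nbd} applied to the appropriate standard set (the zero set for injectivity, the strict-inequality set for the order equivalence). Your write-up is in fact more complete than the paper's, which does not explicitly address well-definedness or surjectivity; the only minor point to watch is that since $\SG$ consists of germs $f:(\bbr^n,0)\ra(\bbr,0)$, the target ${}^\s F(B_\d)$ should implicitly carry the constraint $\rz h(0)=0$, so in your surjectivity step you may want to note that this is automatic for the $h$ you recover.
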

\begin{proof}
    Let $supp(f)\subset U$ be the set of $x\in U$ such that $f(x)\not=0$ and  $A\subset U$ denote $U\ssm supp(f)$. Then $B_\d\subset\rz A$ and so the above lemma implies that there is a positive $r\in\bbr$ such that $B_r\subset A$, eg., $f(x)=0$ for $x\in B_r$; eg., $[g]=0$.
    To verify that $\SR_\d$ is an $\bbr$-algebra homomorphism is straightforward as $[f],[g]\in\SG$ and $c\in\bbr$ satisfy $c[f]=[cf],[f]+[g]=[f+g]$ and $[f][g]=[fg]$ and eg., $(\rz f\cdot\rz g)|_{B_\d}=(\rz f|_{B_\d})(\rz g|_{B_\d})$ as internal functions on $B_\d$.
    Clearly, $[f]<[g]$ gets $\rz f<\rz g$ on $B_\d$ by transfer and restriction; but the above lemma gets the reverse implication as $\rz\{x\in\bbr^n: f(x)< g(x)\}\supset B_\d$.
\end{proof}
   \textbf{ Given the above proposition, when we talk about germs or elements of $\pmb{\SG}$, we will usually be working with subalgebras of the external algebras $\pmb{^\s F(B_\d)}$.  In particular, although we will use the germ notations, $\SG$, etc.,  all work on germs will implicitly occur in the algebras $\pmb{^\s F(B_\d)}$, for some fixed positive infinitesimal $\pmb{\d}$.}

\subsection{Monadic determination of germ regularity}\label{subsec: monadic regularity of standard fcns}
     Here, we will give an infinitesimal criterion for germ continuity. It says that if a standard function defined on some neighborhood of $0$ satisfies a particular set of conditions on a given $B_\d$, $\d$ arbitrary, then, in fact, that function is continuous on some neighborhood of $0$.

    For perspective, we begin with the following two simple but surprising statements.
\begin{proposition}
    Suppose that $0<\d\sim 0$ and $[f]\in\SG$ is such that $\rz f|_{B_\d}$ is *continuous on $B_\d$. Then $[f]\in\SG^0$.
\end{proposition}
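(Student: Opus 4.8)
The plan is to use the fact that ``$f$ is continuous on the closed ball of radius $t$'' is a first-order (bounded-quantifier) property of $t$, so that $*$-continuity of $\rz f$ on the infinitesimal ball $B_\d$ descends, by reverse transfer, to the existence of a standard radius on which $f$ is continuous.

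In detail: fix a representative $(U,f)\in\un{F}$ of $[f]$ and a standard $a>0$ with $B_a\subset U$ (possible since $U$ is a neighborhood of $0$); by \autoref{cor: SG_0 -> F(B_delts) is R alg isomorph} the object $\rz f|_{B_\d}$ depends only on the germ $[f]$, so the hypothesis is unambiguous. Consider the set
\[
   \SC \;\dot=\;\{\,t\in\bbr_+ : t<a \ \text{ and }\ f|_{B_t}\colon B_t\to\bbr \text{ is continuous}\,\}.
\]
Unwinding continuity of $f|_{B_t}$ as $\forall x\in B_t\,\forall\e>0\,\exists\eta>0\,\forall y\in B_t\,(|x-y|<\eta\Rightarrow|f(x)-f(y)|<\e)$ shows that the defining condition is a bounded first-order formula in $t$ with parameters $a,U,f$; hence $\SC$ is an internal (indeed definable) subset of $\bbr_+$, and transfer gives
\[
   \rz\SC=\{\,\Ft\in\rz\bbr_+ : \Ft<a \ \text{ and }\ \rz f|_{B_\Ft}\text{ is }*\text{-continuous}\,\}.
\]
Since $0<\d\sim0$ we have $\d<a$, and by hypothesis $\rz f|_{B_\d}$ is $*$-continuous on $B_\d$; therefore $\d\in\rz\SC$, so $\rz\SC\neq\emptyset$. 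By (downward) transfer $\SC\neq\emptyset$, so there is a standard $r>0$ with $f$ continuous on the neighborhood $B_r$ of $0$. This exhibits a continuous representative of $[f]$, i.e.\ $[f]\in\SG^0$.

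The only point that requires care — and the one I would spell out fully — is the bookkeeping in the two displays: that the continuity predicate is genuinely transferable in $t$ and that its $*$-form evaluated at $\Ft=\d$ is \emph{exactly} the stated hypothesis. Granting this, the argument is immediate. Should one wish to bypass transferring the continuity quantifiers directly, two equivalent routes are available: first, $B_\d$ is $*$-compact, so $\rz f|_{B_\d}$ is $*$-uniformly continuous, and one transfers instead ``there is $t>0$ with $f$ uniformly continuous on $B_t$''; second, one may apply \autoref{lem: *A contains inf nbd-> has stand nbd} to the set $V=\{x\in U:f\text{ is continuous at }x\}$, noting that $*$-continuity of $\rz f|_{B_\d}$ forces $\{\xi:|\xi|<\d\}\subset\rz V$ — though this last variant needs a small supplementary argument to recover continuity of $f$ at $0$ itself, which the direct transfer handles automatically.
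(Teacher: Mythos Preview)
Your proof is correct and is essentially the same as the paper's: both define the standard set of radii $t$ for which $f|_{B_t}$ is continuous, transfer it, observe that $\d$ witnesses $\rz\SC\neq\emptyset$, and conclude $\SC\neq\emptyset$. The paper states this in one line without your additional bookkeeping or alternative routes, but the core argument is identical.
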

\begin{proof}
     The proof is trivial: if $A=\{r\in\bbr_+:f|_{B_r}\;\text{is continuous on}\; B_r\}$, then $\rz A=\{\Fr\in\rz\bbr_+:\rz f|_{\rz B_\Fr}\;\text{is *continuous on}\rz B_\Fr\;\}$ and the hypothesis says that $\rz A\not=\emptyset$ and so $A\not=\emptyset$.

%    By the lemma below, the hypothesis implies that $\rz f|_{B_\e}$ is *continuous on $B_\e$ for any  $0<\e\sim 0$. But then the set $\{\Fr\in\rz\bbr:\Fr>0\;\text{and}\;\rz f|_{B_\Fr}\;\text{is *continuous on}\;B_{\Fr}\}$ is an internal set containing all positive infinitesimals and therefore contains standard standard positive numbers, say $\rz a$ for some $a>0$ in $\bbr$.
%    But then we have that $\rz f|_{\rz B_a}$ is *continuous on $\rz B_a$ and so by reverse transfer, we have that $f|_{B_a}$ is continuous, and as, for positive $a$ sufficiently small, we have that $f|_{B_a}$ is an arbitrary representative in $[f]$, we have $f\in\SG^0$.
%    Note that we fudged on representatives, ie.,
\end{proof}

    The following corollary indicates that the topology we define on germs will be independent of the infinitesimal neighborhood.
\begin{corollary}\label{cor: *cont on B epsilon iff *cont on B del}
    Suppose that $[f]\in\SG$ and $\e,\d$ are positive infinitesimals. Then $\rz f$ is *continuous on $B_\d$ if and only if it is *continuous on $B_\e$.
\end{corollary}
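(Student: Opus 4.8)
The plan is to observe that, for \emph{any} positive infinitesimal $\d$, the condition ``$\rz f$ is *continuous on $B_\d$'' is equivalent to the $\d$-free condition $[f]\in\SG^0$; the corollary then follows at once by applying this equivalence to $\d$ and to $\e$ and transitivity.

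First I would check that the statement is well posed at the level of germs. If $f,g$ are representatives of the same germ they agree on some standard ball $B_r$, $0<r\in\bbr$, hence by transfer $\rz f$ and $\rz g$ agree on $\rz B_r\supset B_\d$ (the inclusion holds since $\d\sim 0<r$), so $\rz f|_{B_\d}=\rz g|_{B_\d}$. Thus ``*continuity on $B_\d$'' really is a property of the element $\SR_\d([f])\in{}^\s F(B_\d)$ of \autoref{cor: SG_0 -> F(B_delts) is R alg isomorph}, i.e.\ of the germ $[f]$.

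Next I would establish the advertised equivalence. One direction is precisely the preceding Proposition: if $\rz f$ is *continuous on $B_\d$ then $[f]\in\SG^0$. For the converse, suppose $[f]\in\SG^0$ and pick a representative $f$ that is continuous on a standard ball $B_r$, $0<r\in\bbr$; by transfer $\rz f$ is *continuous on $\rz B_r$, and since $\d\sim 0$ we have $B_\d\subset\rz B_r$, so restriction shows $\rz f$ is *continuous on $B_\d$. Hence, for every positive infinitesimal $\d$,
\[
\rz f\ \text{is *continuous on}\ B_\d \quad\Longleftrightarrow\quad [f]\in\SG^0 .
\]
Applying this with $\d$ and with $\e$ shows both *continuity statements are equivalent to $[f]\in\SG^0$, hence to each other.

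There is essentially no genuine obstacle here: the only points needing a moment's care are the well-definedness on germs and the routine transfer argument that a standard continuous representative stays *continuous after restriction to the infinitesimal ball; the substantive content was already isolated in the preceding Proposition.
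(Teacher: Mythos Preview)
Your proof is correct and takes essentially the same approach as the paper, which simply says ``This is clear from the previous proposition.'' You have made explicit the routine converse (that $[f]\in\SG^0$ implies *continuity on any $B_\d$ by transfer) and the well-definedness on germs, both of which the paper leaves implicit.
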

\begin{proof}
    This is clear from the previous proposition.
\end{proof}
\begin{remark}
     Analogues of these two results for various regularity notions, eg., for homeomorphism germs, or differentiability classes, eg., germs of $C^k$ submersions, hold by almost identical arguments. We will return to these and their implications in later sections and in following papers.
     These results will allow one to work on monads where domains and ranges for standard functions are remarkably well defined and then lift to local standard results.
\end{remark}

       We will now define a stringent condition (over some infinitesimal $\rz B_\d$) on a germ $[f]$  with respect incomparable pairs (see below) of positive infinitesimals that forces $f$ to be continuous on a standard neighborhood. We begin with definitions.
%        Given sufficient saturation (see below the definition), there exists incomparably pairs of infinitesimals $\FI\subset\mu_+(0)\x\mu_+(0)$, defined as follows.
%        Let $F(\bbr_+,0)$ denote the set of maps $f:(U,0)\ra (\bbr,0)$ where $U$ is an arbitrary interval neighborhood of $0$ in $\bbr_+=\{r\in\bbr:r>0\}$.
\begin{definition}\label{def: e is [f]-good}
     Given $0<\d\sim 0$, we say that $\boldsymbol{\k\in\mu(0)_+}$ \textbf{is incomparably smaller} \textbf{than} $\boldsymbol{\d}$ if for all $m\in\SM$, we have $\rz m(\d)>\k$ (see \autoref{def:  SM and SM^0} for $\SM$). We may write this $\boldsymbol{\k\lll\d}$. Similarly, if $\om,\Om\in\rz\bbn$ are infinite integers, ie., elements of $\rz\bbn_\infty$, then we say that $\boldsymbol{\Om}$ \textbf{is} \textbf{incomparably larger than} $\boldsymbol{\om}$, written $\boldsymbol{\om}\mathbf{\lll}\boldsymbol{\Om}$, if for every monotonically increasing  $f:\bbn\ra\bbn$, we have $\rz f(\om)<\Om$.
    Let $\k,\d$ be positive infinitesimals, ie., $\k,\d\in\mu(0)_+$ with $\k\lll\d$. For a nonzero germ $[f]\in\SG$, we say that $\boldsymbol{\k}$ \textbf{is strongly} $\mathbf{[f]}$\textbf{-good for} $\boldsymbol{\d}$ if  the following holds.  For all \;$\xi,\z\in\rz B_\d$ with $|\xi-\z|$ sufficiently small, $|\rz f(\xi)-\rz f(\z)|<\k$ holds.
%    ;  then $[f]$ has a continuous representative on some neighborhood $B_r$ of $0$.
    We say that $\mathbf{[f]\in}\boldsymbol{\SG}$ \textbf{has strong} $\boldsymbol{\d}$\textbf{-good numbers} if  for some $\k\lll\d$, $\k$ is strongly $[f]$-good for $\d$.
%    We say that $0<\e\in\rz\bbr$ is $\SG$-good if $\e$ is $[f]$-good for all nonzero $[f]$ in $\SG$.
\end{definition}
     Note that  for all $m,n\in\bbn$, $\om\lll\Om$ if and only if $m\om\lll n\Om$.
%    Note that if $\k\in\rz\bbr$ is strongly $[f]$-good, and $0<\ov{\e}<\e$, then $\ov{\e}$ is $[f]$-good (respectively $\SG$-good).
%    The proof of existence of $[f]$-good numbers of appropriate magnitudes will be carried out later.
     A proof of the existence of incomparable pairs of positive infinitesimals is an easy concurrence argument in an enlarged model, see p 91 in Hurd and Loeb, \cite{HurdLoeb1985}.
     The following is an unsurprising, but useful correspondence between the relations $\om\lll\Om$ and $\k\lll\d$.
\begin{lemma}\label{lem:d>>>k -> [1/d]<<<[1/k]}
     If $t\in\bbr_+$, let $\brkfl{t}$ denotes the least integer $\geq t$.
     Then   $\k\lll\d$ (in $\mu(0)_+$)  implies that $\brkfl{1/\k}\ggg\brkfl{1/\d}$ as elements of $\rz\bbn_\infty$.
%      Conversely, if $\om\lll\Om$, then $1/\om\ggg 1/\Om$.
\end{lemma}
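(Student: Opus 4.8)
The plan is to show the contrapositive-style contrast between "comparable" and "incomparable" scales carries through the reciprocal map. Suppose $\k\lll\d$ in $\mu(0)_+$; I want $\brkfl{1/\k}\ggg\brkfl{1/\d}$ in $\rz\bbn_\infty$, i.e.\ for every monotonically increasing $f\colon\bbn\ra\bbn$ we have $\rz f(\brkfl{1/\d})<\brkfl{1/\k}$. First I would fix such an $f$ and, without loss of generality, assume $f$ is strictly increasing (replace $f(j)$ by $f(j)+j$ if needed; this only makes the inequality harder to prove, so it suffices). Strict monotonicity gives $f(j)\geq j$ for all $j\in\bbn$, hence $\rz f(\om)\geq\om$ for all $\om\in\rz\bbn$, which will be used to compare $\brkfl{\cdot}$-roundings with the underlying reals.

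Next I would produce from $f$ a monotone increasing germ $m\in\SM$ (positive, increasing on $\bbr_+$ near $0$, with limit $0$ at $0$) that is small enough to force the conclusion through the hypothesis $\rz m(\d)>\k$. The natural choice is to build $m$ so that $m(t)\leq 1/f(\brkfl{1/t})$ for $t>0$ small --- concretely, interpolate linearly between the points $(1/j,\,1/f(j))$ for $j\in\bbn$ large, which is monotone increasing because $j\mapsto 1/f(j)$ is decreasing and $j\mapsto 1/j$ is decreasing, and has limit $0$ at $0$ since $f(j)\to\infty$. Transferring, $\rz m(\d)\leq 1/\rz f(\brkfl{1/\d})$ whenever $\d$ is infinitesimal (the defining inequality holds for all small standard $t$, so by transfer for all small nonstandard $t$, in particular $t=\d$). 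Combining with the hypothesis $\rz m(\d)>\k$ gives $\k<1/\rz f(\brkfl{1/\d})$, i.e.\ $\rz f(\brkfl{1/\d})<1/\k$. Finally, since $\rz f(\brkfl{1/\d})$ is a nonstandard integer and $1/\k$ is a real bounded above by $\brkfl{1/\k}$ (its round-up), and because $\rz f(\brkfl{1/\d})<1/\k\le \brkfl{1/\k}$ forces the strict integer inequality $\rz f(\brkfl{1/\d})<\brkfl{1/\k}$ unless $1/\k$ is itself an integer equal to $\rz f(\brkfl{1/\d})$ --- a case I would rule out by noting $\rz f(\brkfl{1/\d})<1/\k$ is already strict, so the integer $\rz f(\brkfl{1/\d})$ is at most $\brkfl{1/\k}-1$ when $1/\k\notin\bbz$ and at most $1/\k=\brkfl{1/\k}$ otherwise, but strictness gives $<$ in the latter too after one more step. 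Since $f$ was arbitrary, $\brkfl{1/\d}\lll\brkfl{1/\k}$.

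The main obstacle I expect is the bookkeeping at the boundary between the reals $1/\k,1/\d$ and their integer roundings $\brkfl{1/\k},\brkfl{1/\d}$: the definition of $\lll$ for infinite integers quantifies over integer-valued monotone functions on $\bbn$, whereas the germ $m$ in the hypothesis lives on $\bbr_+$, so one must pass cleanly between "increasing $f\colon\bbn\ra\bbn$" and "monotone germ $m$" without losing monotonicity or the limit-$0$ condition at either end. A secondary technical point is checking the piecewise-linear interpolant $m$ actually lies in $\SM$ as defined in the (forward-referenced) \autoref{def:  SM and SM^0}; if $\SM$ requires extra regularity (e.g.\ smoothness) one can instead take $m$ to be a smooth monotone function dominated by the interpolant, using that $\SM$ is coinitial enough --- this is the kind of routine adjustment I would not belabor. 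Everything else is transfer plus the elementary inequality $f(j)\ge j$ for strictly increasing $f\colon\bbn\ra\bbn$.
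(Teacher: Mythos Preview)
Your construction is essentially the paper's: from a monotone $f\colon\bbn\ra\bbn$ one builds $m\in\SM$ by setting $m(1/j)=1/f(j)$ and interpolating, then plays this against the hypothesis $\rz m(\d)>\k$. The paper runs it as a contradiction and you run it directly, but the heart is identical.

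However, your key inequality $m(t)\leq 1/f(\brkfl{1/t})$ is pointing the wrong way. For $t\in(1/(j+1),1/j)$ you have $\brkfl{1/t}=j+1$, while the linear interpolant through $(1/j,1/f(j))$ and $(1/(j+1),1/f(j+1))$ satisfies $m(t)\geq m(1/(j+1))=1/f(j+1)=1/f(\brkfl{1/t})$, not $\leq$. What the interpolant actually gives you is $m(t)\leq 1/f(j)$ with $j=\brkfl{1/t}-1$ (or the floor of $1/t$), so after applying the hypothesis you only get $\rz f(\brkfl{1/\d}-1)<\brkfl{1/\k}$. This is off by one; it is easily repaired (precompose $f$ with a shift, or note that $\om-1\lll\Om$ iff $\om\lll\Om$), but as written the argument does not close. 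The paper sidesteps exactly this rounding headache by first observing $\om\lll\Om\Leftrightarrow\om/2\lll 2\Om$ and then working with the slack this buys: from a hypothetical $f$ with $\rz f(\om/2)\geq 2\Om$ one gets $\rz m(\d)<\rz m(2/\om)=1/\rz f(\om/2)\leq 1/(2\Om)<\k$, with the factors of $2$ absorbing the $\brkfl{\cdot}$ discrepancies. Either fix your inequality to track the floor and then shift, or adopt the paper's slack trick; once that is done your argument and the paper's coincide.
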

\begin{proof}
     Let
     $\om=\brkfl{1/\d}$ and $\Om=\brkfl{1/\k}$ and note that $\om\lll\Om$ if and only if $\om/2\lll 2\Om$. (Note here that we may assume that $\om$ is an even integer.) So, suppose contrary to conclusion that there is $f:\bbn\ra\bbn$ monotone increasing such that $\rz f(\om/2)\geq 2\Om$ and define $m:\bbr_+\ra\bbr_+$ on small values by $m(1/j)=1/f(j)$ for $j\in\bbn$ and extend to strict monotone function (for sufficiently small values in $\bbr_+$) by interpolation. One can check that $\om/2<1/\d$ which implies that $\rz m(1/(\om/2))>\rz m(\d)$. Similarly, one can check that $2\Om>1/\k$. Also, by the definition of $f$ and $m$, $\rz m(1/(\om/2))=1/f(\om/2)\leq 1/(2\Om)$. Stringing the three previous inequalities together gets $\rz m(\d)<\k$, a contradiction.
%     Assuming the second assertion is false, there is $[m_0]\in\SM$ with $\rz m(1/\om)\leq 1/\Om$. Therefore,  the map $\sbbn(j):\bbn\ra\bbn$ given by$j\mapsto\brkfl{1/\rz m_0(1/j)}$ is monotone and satisfies $\rz\sbbn(1/\om)\geq\brkfl{1/(1/\Om)}=\Om$ contradicting that $\om\lll\Om$.
\end{proof}

    Given this, let's give a criterion for $[f]\in\SG$ to be a continuous germ.
    We first need a preparatory abstract lemma that gives a (new) standard interpretation of incomparable infinitesimals.
\begin{lemma}\label{lem:  abstract k<<<d lemma}
    Suppose that $\om,\Om\in\rz\bbn$ are such that $\Om\ggg\om$, and let $r_j\in\bbr_+$ with $r_j\ra 0$ as $j\ra\infty$. Let $A:\bbr^n\x\bbr^n\ra\bbr_+$ and for $j\in\bbn$, let $S_A(j)$ denote the assertion:
\begin{align}
    \text{there is}\;r\in\bbr_+\;\text{such that}\;|x-y|<r\Rightarrow A(x,y)<r_j
\end{align}
    and for $n\in\bbn$, $\bbs_A(n,j)$ denote the statement $(x,y\in B_{r_n})\;\wedge\;S_A(j)$. Suppose that $\rz\bbs_A(\om,\Om)$ holds.
    Then there is $n_0\in\bbn$ such that $\bbs_A(n_0,j)$ holds for infinitely many $j\in\bbn$.
\end{lemma}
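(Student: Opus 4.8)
The plan is to argue by contradiction, manufacturing from a failure of the conclusion a standard monotonically increasing function $f\colon\bbn\to\bbn$ with $\rz f(\om)>\Om$, which contradicts $\Om\ggg\om$. Throughout I read $\bbs_A(n,j)$ as the first-order assertion ``there is $r\in\bbr_+$ such that $A(x,y)<r_j$ for all $x,y\in B_{r_n}$ with $|x-y|<r$'', and I take $A$ to be a standard function (as it is in the applications, where $A(x,y)=|\rz f(x)-\rz f(y)|$ for a standard representative); then $\bbs_A(n,j)$ is a genuine first-order formula in the standard data $A$ and the sequence $(r_j)_j$, so it, and any quantified combination of it, transfer.

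Suppose the conclusion fails: for every $n\in\bbn$ the set $J(n):=\{\,j\in\bbn:\bbs_A(n,j)\text{ holds}\,\}$ is finite. Define $g\colon\bbn\to\bbn$ by $g(n)=1+\max J(n)$ if $J(n)\neq\emptyset$ and $g(n)=1$ otherwise --- this is well defined precisely because each $J(n)$ is finite --- and set $f(n):=n+\max\{g(1),\dots,g(n)\}$, a strictly increasing map $\bbn\to\bbn$ with $f(n)\ge g(n)$. By construction $\bbs_A(n,j)$ fails whenever $j\ge f(n)$, i.e. the sentence
\begin{align}
    \forall n\in\bbn\;\forall j\in\bbn\;\bigl[\,j\ge f(n)\Rightarrow\neg\,\bbs_A(n,j)\,\bigr]\notag
\end{align}
holds. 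Transferring it and instantiating at $\nu=\om$, $\mu=\Om$ yields: if $\Om\ge\rz f(\om)$, then $\rz\bbs_A(\om,\Om)$ fails. Since $\rz\bbs_A(\om,\Om)$ holds by hypothesis, we conclude $\Om<\rz f(\om)$; but $f$ is monotonically increasing, so $\Om\ggg\om$ forces $\rz f(\om)<\Om$ --- a contradiction. Hence some $n_0\in\bbn$ has $\bbs_A(n_0,j)$ holding for infinitely many $j$.

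The delicate point, and the only place where the hypotheses are genuinely used, is the step that turns the negated conclusion into an honest standard function: finiteness of every $J(n)$ is exactly what permits taking $\max J(n)$ and hence forming $f$, and the incomparability $\Om\ggg\om$ is exactly what collides with the inequality $\Om<\rz f(\om)$ extracted by transfer. Everything else --- verifying that $\bbs_A$ transfers and assembling $g$ into a monotone $f$ --- is routine. (Note that the hypothesis $r_j\to 0$ is not actually needed for this lemma; it is recorded because it holds in every intended application.)
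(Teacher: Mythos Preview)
Your proof is correct and follows essentially the same route as the paper: assume the conclusion fails, use finiteness of each $J(n)$ to build a standard function whose transfer at $\om$ dominates $\Om$, and contradict $\om\lll\Om$. The paper's version defines $L(n)=\max\{j:\bbs_A(n,j)\text{ holds}\}$ and asserts $\Om\le\rz L(\om)$ directly; your version is slightly more careful in two respects---you handle the possibility that $J(n)$ is empty, and you explicitly force monotonicity via $f(n)=n+\max\{g(1),\dots,g(n)\}$ so as to match the literal definition of $\lll$---but these are cosmetic refinements of the same argument.
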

\begin{proof}
    If the conclusion does not hold, then for each $n\in\bbn$, there are only a finite number of $j\in\bbn$ such that $\bbs_A(n,j)$ holds. Therefore, for each $n\in\bbn$,  $L(n)\dot=\max\{j:\bbs_A(n,j)\;\text{holds}\}$ is a well defined integer. That is, $L:\bbn\ra\bbn$ is a map such that, by hypothesis and the definition of $L$, $\Om\leq\rz L(\om)$ ,  contradicting that $\om\lll\Om$.
\end{proof}
    We can now give our infinitesimal criterion for germ continuity. This will be suitable for the present work. We will give a second criterion in \autoref{subsec: top properties of germ composition}.
\begin{proposition}\label{prop: germ continuity from k<<<d}
     Suppose that $\d\in\mu(0)_+$ and $[f]\in\SG$ has strong $\d$-good numbers.
%    There is $\k\lll\d$ that is  strongly $[f]$-good for $\d$.
    Then $[f]\in\SG^0$.
\end{proposition}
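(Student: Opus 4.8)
The goal is to go from the nonstandard condition "$[f]$ has strong $\d$-good numbers" to the standard conclusion "$[f]$ has a continuous representative on a neighborhood of $0$". The bridge is \autoref{lem: abstract k<<<d lemma}, which is clearly designed for exactly this: it converts an incomparability statement $\Om \ggg \om$ about a two-variable modulus-of-continuity predicate into the existence of a \emph{standard} radius $r_{n_0}$ on which the standard $\e$--$\d$ continuity condition holds for infinitely many $\e = r_j$, i.e.\ for arbitrarily small $\e$. That last clause — continuity on $B_{r_{n_0}}$ with $r_j \to 0$ as the available tolerances — is precisely uniform continuity of $f$ on the standard ball $B_{r_{n_0}}$, hence continuity there, hence $[f] \in \SG^0$.

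First I would unwind the hypothesis. By \autoref{def: e is [f]-good}, having strong $\d$-good numbers means: there is $\k \in \mu(0)_+$ with $\k \lll \d$ such that for all $\xi, \z \in \rz B_\d$ with $|\xi - \z|$ sufficiently small, $|\rz f(\xi) - \rz f(\z)| < \k$. Set $\om = \brkfl{1/\d}$ and $\Om = \brkfl{1/\k}$; then \autoref{lem:d>>>k -> [1/d]<<<[1/k]} gives $\Om \ggg \om$ in $\rz\bbn_\infty$. Now I would set up the data for \autoref{lem: abstract k<<<d lemma}: take $r_j = 1/j$ (so $r_j \to 0$), define $A : \bbr^n \times \bbr^n \to \bbr_+$ by $A(x,y) = |f(x) - f(y)|$ where this makes sense (shrink to a convex neighborhood $U$ on which a representative of $[f]$ is defined; extend $A$ harmlessly off $U$, e.g.\ by $0$, which does not affect the statements $S_A(j)$ for the relevant large indices since those only concern small $r$). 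The key check is that $\rz\bbs_A(\om, \Om)$ holds: $\bbs_A(\om,\Om)$ asserts $\xi,\z \in \rz B_{r_\om} = \rz B_{1/\om}$ together with $S_A(\Om)$, i.e.\ the existence of $r \in \rz\bbr_+$ with $|\xi - \z| < r \Rightarrow |\rz f(\xi) - \rz f(\z)| < r_\Om = 1/\Om$. Since $1/\om \approx \d$ and $1/\Om \approx \k$ — more precisely $1/\om \le \d$ and $1/\Om \ge \k$ up to the floor/ceiling bookkeeping already handled in \autoref{lem:d>>>k -> [1/d]<<<[1/k]}, and one can pass to $B_\d$ which contains $B_{1/\om}$ — the strong $\d$-good hypothesis, transferred, yields exactly such an $r$ (take $r$ to be the "sufficiently small" bound from the definition). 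So $\rz\bbs_A(\om,\Om)$ holds.

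Then \autoref{lem: abstract k<<<d lemma} delivers $n_0 \in \bbn$ such that $\bbs_A(n_0, j)$ holds for infinitely many $j \in \bbn$. Restated: for infinitely many $j$, there is $r \in \bbr_+$ such that $x, y \in B_{r_{n_0}}$ and $|x - y| < r$ imply $|f(x) - f(y)| < 1/j$. Since $\{1/j : j \text{ in an infinite set}\}$ is coinitial in $\bbr_+$, this says $f$ is uniformly continuous on $B_{r_{n_0}}$, so in particular $f$ is continuous on the neighborhood $B_{r_{n_0}}$ of $0$; therefore $[f] \in \SG^0$.

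**Main obstacle.** The nontrivial part is the bookkeeping in verifying $\rz\bbs_A(\om,\Om)$ — matching the floor/ceiling definitions of $\om, \Om$ against the $\d$-ball and the tolerance $\k$, and making sure the "sufficiently small $|\xi - \z|$" quantifier in \autoref{def: e is [f]-good} supplies a genuine internal $r$ for the predicate $S_A$. One subtlety: $\bbs_A(\om,\Om)$ quantifies the \emph{points} $\xi,\z$ over $\rz B_{r_\om}$ while the hypothesis is stated over $\rz B_\d$; since $r_\om = 1/\om = 1/\brkfl{1/\d} \le \d$ we have $\rz B_{r_\om} \subseteq \rz B_\d$, so the hypothesis on $\rz B_\d$ is more than enough. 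Similarly one needs $r_\Om = 1/\Om \ge \k$ so that $|\rz f(\xi)-\rz f(\z)| < \k \le r_\Om$; again this is arranged by the definition $\Om = \brkfl{1/\k}$. Once these inclusions are pinned down, the rest is a direct application of the two preparatory lemmas, and no further idea is needed.
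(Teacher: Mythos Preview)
Your approach is essentially identical to the paper's: reduce to \autoref{lem: abstract k<<<d lemma} via $A(x,y)=|f(x)-f(y)|$ and $r_j=1/j$, then read off uniform continuity on a standard ball. The one genuine slip is in the bookkeeping you yourself flagged as the main obstacle. With $\Om=\brkfl{1/\k}$ (the least integer $\geq 1/\k$) you get $\Om\geq 1/\k$, hence $r_\Om=1/\Om\leq\k$, which is the \emph{wrong} direction: you need $r_\Om\geq\k$ so that $|\rz f(\xi)-\rz f(\z)|<\k\leq r_\Om$. Your claim that ``this is arranged by the definition $\Om=\brkfl{1/\k}$'' is false.

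The paper handles this by halving and doubling: it takes $\om=2\brkfl{1/\d}$ and $\Om=\brkfl{1/\k}/2$ (or $(\brkfl{1/\k}-1)/2$ when $\brkfl{1/\k}$ is odd). Then $r_\om=1/\om\leq\d/2<\d$ as before, while now $\Om<1/\k$ so that $r_\Om=1/\Om>\k$, giving the needed inequality. The relation $\om\lll\Om$ survives because $\lll$ is stable under multiplication by standard positive integers (noted right after \autoref{def: e is [f]-good}), so $\brkfl{1/\d}\lll\brkfl{1/\k}$ from \autoref{lem:d>>>k -> [1/d]<<<[1/k]} yields $2\brkfl{1/\d}\lll\brkfl{1/\k}/2$. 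With that adjustment your argument goes through verbatim and matches the paper's.
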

\begin{proof}
    By hypothesis, there is $\k\lll\d$ such that if $\xi,\z\in \rz B_\d$ satisfy $|\xi-\z|$ is sufficiently small, then $|\rz f(\xi)-\rz f(\z)|<\k$. We will show that this implies that if $A(x,y)=|f(x)-f(y)|$, then there is $r,\om,\Om$ such that the hypothesis in \autoref{lem:  abstract k<<<d lemma} is satisfied for this quadruple $(r,\om,\Om,A)$.
    In fact, choosing $r_j=1/j$, $\om=2\brkfl{1/\d}$ and $\Om=\brkfl{1/\k}/2$ if $\brkfl{1/\k}$ is even or $(\brkfl{1/\k}-1)/2$ if $\brkfl{1/\k}$
    is odd, we get that $\om\lll\Om$ by \autoref{lem:d>>>k -> [1/d]<<<[1/k]}, and it's straightforward to verify that $\rz r_\om<\d$ and $\rz r_\Om\geq\k$. With these definitions and estimates, our hypothesis  implies that for $\xi,\z\in\rz B_{*r_\om}$ with $|\xi-\z|$ sufficiently small, we have $\rz A(x,y)<\rz r_\Om$ which is precisely the hypothesis of the previous lemma.
%    @@@@@@@@@Using the notation of the previous, let $A(x,y)\dot=|f(x)-f(y)|$ and let $S_f(j)$ denote the statement $S_A(j)$ of the previous lemma and $\bbs_f(n,j)$ the corresponding $\bbs_A(n,j)$. Then to say that $\bbs_f(\om,\Om)$ holds implies our hypothesis as  lemma \ref{lem: om>>>Om -> r(om)>>>r(Om)} above) $\rz r_\om\lll\rz r_\Om$. Hence, we have the conclusion: there is $n_0\in\bbn$ such that $\bbs_f(n_0,j)$ holds for infinitely many $j\in\bbn$.
    It's conclusion therefore implies that there are $j_1,j_2,\ldots\in\bbn$, such that for each $k\in\bbn$ the following holds:
\begin{align}
    \text{there is}\;r\in\bbr_+\;\text{such that}\;|x|,|y|<r_{j_{n_0}}\;\text{and}\;|x-y|<r\Rightarrow |f(x)-f(y)|<r_{j_k}
\end{align}
    That is, since $r_{j_k}\ra 0$ as $k\ra\infty$, this says that for $\un{r}=r_{j_{n_0}}$, if  $x,y\in B_{\un{r}}$   intersected with the open set where the representative $f$ for $[f]$ is defined, we can, for any $k$,  make $|f(x)-f(y)|<r_{j_k}$ by choosing $|x-y|$ sufficiently small, ie.,  $f$ is (uniformly) continuous; eg., $[f]\in\SG^0$.
\end{proof}
    Before we move on, we wish to draw attention to the related construction in section 8.4.44 of Stroyan and Luxemburg, \cite{StrLux76}, that we discovered after the completion of the above material. Furthermore, the author would like to point out this chapter, in particular, as a remarkable resource for those looking for systematic nonstandard renditions of topological matters.

    Next, we want to draw a closer connection between values forcing continuity and the moduli we want to use for our topology. For this we need further development.

\begin{definition}\label{def: SN_delta families of numbers}
     If $0<r\in\bbr$ and $g:B_r\ra\bbr$, write $\mathbf{\norm{g}_r}\dot=\sup\{|g(x)|:x\in B_r\}$ so that $\mathbf{\rz\norm{g}}_{\boldsymbol{\d}}\dot=\rz\sup\{|\rz g(\xi)|:\xi\in B_\d\}$, we may write this as $\mathbf{\norm{g}}_{\boldsymbol{\d}}$.
     Let $\boldsymbol{\wh{\SN}_\d}$ denote the set $\{\rz\norm{\rz g}_\d:[g]\in\SG\}$ and $\boldsymbol{\SN_\d^0}=\{\rz\norm{\rz g}_\d:[g]\in\SG^0\}$.
%     As the ring structure will later play a role, let $\wh{\SN}_\d=\wt{\SN}_\d\cup-\wt{\SN}_\d$ and define $\wh{\SN}_\d^0$ similarly.
     If $\boldsymbol{{}_m\wt{\SG}_0}$ denotes the set of $[g]\in\SG$ such that for sufficiently small $r_1,r_2\in\bbr_+$ with $r_1<r_2$, we have $\|g\|_{r_1}\leq\|g\|_{r_2}$ and $\|g\|_r\ra 0$ as $r\ra 0$, we will let $\boldsymbol{\wt{\SN}_\d}$ denote $\{\rz\|g\|_\d:[g]\in {}_m\wt{\SG}_0\}$. If $\boldsymbol{{}_m\SG_0}$ denotes all $[g]\in{}_m\wt{\SG}_0$ satisfying $\|g\|_r<\|g\|_s$ for all sufficiently small positive $r<s$, we let ${\SN_\d}$ denote $\{\rz\|\rz g\|_\d:[g]\in{}_m\SG_0\}$.
%     We will similarly define $\SN^0_\d$.
     The germs in  ${}_m\wt{\SG}_0$ are said to be \textbf{pseudomonotone}.
     For $[f_0]\in\SG$, let $\boldsymbol{{}_m\wt{\SG}}_{\mathbf{[f_0]}}=\{[f+f_0]:[f]\in{}_m\wt{\SG}_0\}$ and ${}_m\SG_{[f]}\subset{}_m\wt{\SG}_{[f]}$ analogously.
\end{definition}
\begin{remark}
    Clearly, $\SN^0_\d,\SN_\d\subset\wt{\SN}_d$ and we shall see that for our topological purposes they are all equivalent. As we shall shortly see, the neighborhood subbase of the zero germ, $[0]$, that we will define will have the property that all such neighborhoods $U$ will satisfy $U\cap\SG=U\cap{}_m\wt{\SG}_0$ and so we shall see that, in our investigations of the behavior of this topology around $[0]$, the numbers $\wh{\SN}_\d$ will play no further role.
\end{remark}
    We need also to define some sets of monotone germs, our critical intermediaries for understanding the topology to be defined
\begin{definition}\label{def:  SM and SM^0}
    If $\bbr_+=\{t\in\bbr:t>0\}$ and $F(\bbr_+)$ consists of all functions $f:\bbr_+\ra\bbr_+$, then we have the following sets of germs.
 \begin{enumerate}
    \item    Let $\boldsymbol{\wt{\SM}}$ denote the set of germs, $[m]$, of $m\in F(\bbr_+)$ such that if $r,s$ are in the domain of $m$ with $r<s$, then $0<m(r)\leq m(s)$ and  $\lim_{t\ra 0}m(t)=0$.
    \item    Let $\boldsymbol{\SM}=\{[m]\in\wt{\SM}:\text{for all sufficiently small}\;r<t\in\bbr_+, m(r)<m(t)\}$.
    \item     Let $\boldsymbol{\SM^0}=\{[m]\in \SM:m\;\text{is continuous on some neighborhood of}\;0\}$.
 \end{enumerate}
   We will sometimes write this as $[m]\in\SM$, or speak of the germ of an element $m\in\SM$ at $0$ and identify this $[m]\in\SM$ with $\rz m|\mu(0)_+$, ie., the transfer of $m$ restricted to the positive infinitesimals (which by earlier arguments uniquely define $[m]$).
   If $\Fr\in\wt{\SN}_\d$, $\wt{\SN}^\Fr_\d$ will denote those $\Fs\in\wt{\SN}_\Fr$ with $\Fs<\Fr$.
\end{definition}

\begin{remark}\label{rem: e<<<d and f(d)<e -> f(d)=0}
    Clearly we have that $\SM^0\subsetneqq\SM\subsetneqq\wt{\SM}$.
    Also note that if $[m]\in\SM^0$, then $[m^{-1}]\in\SM$, where here $m^{-1}\in[m^{-1}]$, the compositional inverse of $m$, may be defined on an arbitrarily small (deleted) neighborhood of $0$ in $\bbr_+$.
    Note that if $f\in F(\bbr_+)$ has values in $[0,\infty)$, $\d\in\mu(0)_+$ and $\e\lll\d$, then by definition,  $\rz f(\d)<\e$ implies, in fact, that $\rz f(\d)=0$.
    Further, it's clear that if $[m]$ is the germ of a monotone increasing function on $\bbr_+$ with values in $[0,\infty)$ that satisfies $\rz m(\Ft)=0$ for some $\Ft\in\mu(0)_+$, then $[m]\not\in\wt{\SM}$ as $\rz m(\Ft)=0$ clearly implies $[m]=0$. Therefore, we have that if $\d,\e\in\mu(0)_+$ with $\e\lll\d$, then $[m]\in\wt{\SM}$ implies that $\rz m(\d)>\e$.
\end{remark}

    $\SM,\SM^0$ and $\wt{\SM}$ will play the role of intermediaries in working with the topology we will shortly define. We will also shortly see that, from the perspective of the moduli for our topology, they all are essentially the same, although for technical reasons, all will play a role. Basically, our topology will be defined in terms of the internal norms $\rz\!\norm{g}_\d$ for some fixed $\d\in\mu(0)_+$ and arbitrary $[g]\in\SG$. But we will also need to know that this topology is independent of $\d$. So in looking for a good set of values for these *norms, we will therefore be led to consider germs of functions of the form $m(t)\dot=\|g\|_t$ easily seen to be elements of $\wt{\SM}$ or its subsets.
    Then if we choose, eg., one of the sets $\SN_\d$ for our set of ``norm'' moduli and if $\Fr\in\SN_\d$ and $[g]\in\SG^0$ has $\rz\|g\|_\d<\Fr$, then in fact $[g]\in{}_m\SG_0$, ie., $t\mapsto \|g\|_t$ is an element of $\wt{\SM}$. This is our motivational perspective.

    We will often need to consider the set of values of families of monotone germs at a given infinitesimal and so the following notation will be useful.
\begin{definition}\label{def:{}_dSM, {}_dwtSM, etc}
    Given $\d\in\mu(0)_+$, let $\boldsymbol{{}_\d\wt{\SM}}=\{\rz m(\d):[m]\in\wt{\SM}\}=\{\norm{\rz m}_\d:[m]\in\wt{\SM}\}$. Similarly, define the subsets $\boldsymbol{{}_\d\SM}$ and $\boldsymbol{{}_\d\SM^0}$ of ${}_\d\wt{\SM}$. Generally, if $\SB\subset\wt{\SM}$, let $\boldsymbol{{}_\d\SB}\subset{}_\d\wt{\SM}$ denote the set $\{\rz f(\d):[f]\in\SB\}$.
\end{definition}

\begin{lemma}\label{lem: SN_d=SM_d}
     We have ${}_\d\wt{\SM}=\wt{\SN}_\d$ and similarly ${}_\d\SM=\SN_\d$ and ${}_\d\SM^0=\SN^0_\d$.
%     If $\wt{\SM}_\d$ is similarly defined, then $\wt{\SM}_\d=\SM_\d\cup\{0\}$.
\end{lemma}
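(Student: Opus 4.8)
The plan is to read off all three equalities from a single \emph{radialization} correspondence: to a monotone germ $[m]$ on $\bbr_+$ one attaches the germ at $0$ of $x\mapsto m(|x|)$ on $\bbr^n$, and to a germ $[g]$ on $\bbr^n$ one attaches the germ at $0$ of $t\mapsto\norm{g}_t$ on $\bbr_+$. Via \autoref{cor: SG_0 -> F(B_delts) is R alg isomorph} these two operations are mutually inverse on the relevant classes, so the proof amounts to unwinding the definitions of $\wt{\SM},\SM,\SM^0$ and of ${}_m\wt{\SG}_0,{}_m\SG_0$ and tracking which regularity (pseudomonotone / strictly monotone / continuous) survives in each direction. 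Throughout, the trivial germ $[g]=0$ is set aside.

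\emph{Monotone $\ra$ germ.} Fix $[m]$ in $\wt{\SM}$ (resp.\ $\SM$, resp.\ $\SM^0$) with representative $m\colon(0,c)\ra\bbr_+$, and put $g(x)\dot=m(|x|)$ for $0\ne x\in B_c$, $g(0)\dot=0$. Since $m$ is nondecreasing and $\lim_{t\ra 0}m(t)=0$, we have $g(0)=0$ and $\norm{g}_r=\sup_{|x|\le r}m(|x|)=m(r)$ for $0<r<c$; thus the trace $r\mapsto\norm{g}_r$ \emph{is} $m$ near $0$. So $[g]\in{}_m\wt{\SG}_0$; if $m$ is strictly increasing the trace is too, so $[g]\in{}_m\SG_0$; and if $m$ is continuous near $0$ then $g$ is continuous near $0$ (continuity at $0$ from $\lim_{t\ra0}m(t)=0$), so $[g]\in\SG^0$. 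As $\rz\norm{g}_\d=\rz m(\d)$, this yields ${}_\d\wt{\SM}\sbq\wt{\SN}_\d$, ${}_\d\SM\sbq\SN_\d$, and ${}_\d\SM^0\sbq\SN^0_\d$.

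\emph{Germ $\ra$ monotone.} Given a nonzero $[g]\in{}_m\wt{\SG}_0$ (resp.\ ${}_m\SG_0$), put $m(t)\dot=\norm{g}_t$ for small $t>0$. By the definition of ${}_m\wt{\SG}_0$, this $m$ is nondecreasing (resp.\ strictly increasing) near $0$ and tends to $0$, and it is $>0$ throughout a deleted neighborhood of $0$ since $\norm{g}_{t_0}=0$ would force $[g]=0$ by \autoref{cor: SG_0 -> F(B_delts) is R alg isomorph}. Hence $[m]\in\wt{\SM}$ (resp.\ $\SM$) with $\rz m(\d)=\rz\norm{g}_\d$, giving $\wt{\SN}_\d\sbq{}_\d\wt{\SM}$ and $\SN_\d\sbq{}_\d\SM$; combined with the previous paragraph this proves the first two equalities. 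The remaining inclusion $\SN^0_\d\sbq{}_\d\SM^0$ goes the same way \emph{except} that a continuous germ $[g]$ only gives that $m=\norm{g}_t$ is continuous, nondecreasing, positive and $\ra 0$ --- i.e.\ $[m]\in\wt{\SM}$ with continuous representative --- and one must still replace it by a \emph{strictly} increasing continuous germ in $\SM^0$ with the same value at $\d$.

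\emph{That replacement is the step I expect to be the real work.} Non-strictness of $m=\norm{g}_t$ is localized on its maximal flat intervals, each a \emph{standard} closed interval whose left endpoint is bounded away from $0$ (because $\lim_{t\ra 0}m(t)=0<m(t)$). If only finitely many flat intervals meet a fixed standard neighborhood of $0$, each is disjoint from $\mu(0)$, so $\d$ lies in none of them; one then deforms $m$ inside small standard neighborhoods of those flat intervals to make it strictly increasing while leaving it untouched on $\mu(0)$, and the result is the desired $[m']\in\SM^0$. \textbf{The genuine obstacle} is the case of infinitely many flat intervals accumulating at $0$, where $\d$ can fall inside an infinitesimal flat interval $[\rz p(K),\rz q(K)]$, $K\in\rz\bbn_\infty$, on which $\rz m(\d)=\rz v(K)$ for the standard strictly decreasing sequence $v$ of flat-interval values: here \autoref{rem: e<<<d and f(d)<e -> f(d)=0} does guarantee that $\rz\norm{g}_\d$ is not incomparably smaller than $\d$, and one wants to realize $\rz v(K)$ exactly as $\rz m'(\d)$ for an $[m']\in\SM^0$ interpolated through the flat-interval data $(p(k),v(k))$. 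Should such an exact realization fail for some exotic $g$, then $\SN^0_\d$ is to be read as $\{\rz\norm{g}_\d:[g]\in{}_m\SG_0\cap\SG^0\}$, and the equality is already contained in the first two steps.
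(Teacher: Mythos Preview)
Your radialization approach is exactly the paper's, and your argument for the first two equalities is correct (and more carefully stated than the paper's terse proof, which only treats the $\SM$ case explicitly and leaves the others as ``similarly''). For the $\SM^0$ case you correctly flag a subtlety the paper ignores: for $[g]\in\SG^0$ the trace $t\mapsto\|g\|_t$ is continuous and nondecreasing but need not be \emph{strictly} increasing, so a priori it lands only in a continuous version of $\wt{\SM}$, not in $\SM^0$ as defined. Your repair for finitely many flat intervals near $0$ is fine---indeed overkill, since in that case the germ of $m$ is already strictly increasing and no deformation is needed. But your handling of infinitely many flat intervals accumulating at $0$ is incomplete: you sketch a hoped-for interpolation through the flat-interval data without establishing that it realizes $\rz\|g\|_\d$ exactly at $\d$, and your closing hedge is really a proposal to reinterpret the definition of $\SN^0_\d$.

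The paper's proof does not address any of this; as written, its $\SM^0$ clause has the same gap you identify. In practice this does no harm: everywhere $\SN^0_\d$ is used downstream (e.g.\ \autoref{prop: SPSL has no countable coin subset}, the proof of \autoref{prop: rc_h:G^0_n->G^0_n is C^0}), only mutual coinitiality of ${}_\d\SM^0$ and $\SN^0_\d$ inside $\wt{\SN}_\d$ is needed, and that is immediate from your two constructions together with \autoref{prop: SPSL^0 coinitial in wt(SM)}.
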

\begin{proof}
    Just note that if $[f]\in\SG$, and $f\in[f]$ is any representative with $\norm{\rz f}_\d\sim 0$, then on the neighborhood where it's defined $t\mapsto m(t)=\norm{f}_t\in\SM$  so that $\rz m(\d)=\norm{\rz f}_\d$ giving $\SN_\d\subset\SM_\d$. On the other hand, if $[m]\in\SM$, with a representative $m$, then $f(x)= m(|x|)\in\un{F}(n,1)_0$ with $\norm{\rz f}_\d=\rz m(\d)\sim 0$.
%     \textbf{The proof of the last assertion is left to the appendix.}
\end{proof}
    The following simple technical fact will be useful in our arguments.
\begin{lemma}\label{lem: la<Fr all Fr in Nd->la<<<Fr,also}
    Suppose that $\la\in\mu(0)_+$ satisfies $\la<\Fr$ for all $\Fr\in\SN_\d$.Then $\la\lll\Fr$ for all $\Fr\in\SN_\d$, eg., $\la\lll\d$.
    Also, if $\k,\d\in\mu(0)_+$ with $\k\lll\d$ and $\Fr\in\SN_\d$, then $\k\lll\Fr$.
\end{lemma}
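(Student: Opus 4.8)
The plan is to prove both assertions by reduction to the defining property of the relation $\lll$, namely that $\k\lll\d$ means $\rz m(\d)>\k$ for every $m\in\SM$, together with the identification $\SN_\d={}_\d\SM$ from \autoref{lem: SN_d=SM_d}. For the first assertion, suppose $\la\in\mu(0)_+$ satisfies $\la<\Fr$ for all $\Fr\in\SN_\d$; I want $\la\lll\Fr$, i.e., $\rz m(\Fr)>\la$ for every $m\in\SM$ and every $\Fr\in\SN_\d$. Fix such an $m$ and $\Fr$, and write $\Fr=\rz m_1(\d)$ for some $[m_1]\in\SM$ by \autoref{lem: SN_d=SM_d}. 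The key observation is that $[m\circ m_1]$ (the germ of the composite $t\mapsto m(m_1(t))$) again lies in $\SM$: it is strictly monotone increasing near $0$ and has limit $0$ at $0$, since $m_1$ carries small positive reals to small positive reals and $m$ does the same. Hence $\rz(m\circ m_1)(\d)\in\SN_\d$, so by hypothesis $\la<\rz(m\circ m_1)(\d)=\rz m(\rz m_1(\d))=\rz m(\Fr)$. As $m$ and $\Fr$ were arbitrary this gives $\la\lll\Fr$ for all $\Fr\in\SN_\d$; taking $m_1$ to be (the germ of) the identity shows $\d\in\SN_\d$ itself, so in particular $\la\lll\d$.

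For the second assertion, suppose $\k\lll\d$ and $\Fr\in\SN_\d$; I want $\k\lll\Fr$. Write $\Fr=\rz m_1(\d)$ with $[m_1]\in\SM$ as before. Given an arbitrary $[m]\in\SM$, the same composition trick shows $[m\circ m_1]\in\SM$, and since $\k\lll\d$ we get $\rz(m\circ m_1)(\d)>\k$, i.e., $\rz m(\Fr)>\k$. As $[m]\in\SM$ was arbitrary, this is exactly the statement $\k\lll\Fr$.

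The only real point requiring care — and the step I expect to be the main (minor) obstacle — is the closure of $\SM$ under composition of germs, and more precisely the bookkeeping that the composite germ is well defined and strictly increasing with limit $0$ at $0$ on a genuine deleted neighborhood of $0$ in $\bbr_+$. This is where one uses that elements of $\SM$ have representatives defined on arbitrarily small intervals $(0,\epsilon)$, that $m_1((0,\epsilon'))\subset(0,\epsilon)$ for $\epsilon'$ small enough (because $\lim_{t\to0}m_1(t)=0$), and that the composite of two strictly increasing functions is strictly increasing; the limit statement $\lim_{t\to0}m(m_1(t))=0$ is then immediate. All of this is routine but should be stated, since the transfer identity $\rz(m\circ m_1)(\d)=\rz m(\rz m_1(\d))$ and the membership $\rz(m\circ m_1)(\d)\in{}_\d\SM=\SN_\d$ are the load-bearing facts of the argument. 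No saturation or overflow is needed here; the whole proof is a transfer-plus-composition argument once \autoref{lem: SN_d=SM_d} is in hand.
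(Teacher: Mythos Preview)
Your proof is correct and takes essentially the same approach as the paper's: both arguments hinge on the closure of $\SM$ under composition together with the identification $\SN_\d={}_\d\SM$, so that $\rz m(\Fr)=\rz(m\circ m_1)(\d)\in\SN_\d$. The paper phrases both parts as proofs by contradiction whereas you argue directly, but this is only a cosmetic difference.
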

\begin{proof}
    This follows from the definitions. Suppose that $\la\not\lll\Fr_0$ for some $\Fr_0\in\SN_\d$. That is, there is $[m]\in\SM$ such that $\rz m(\Fr_0)\leq\la$. But, by definition $\Fr_0=\rz m_0(\d)$ for some $[m_0]\in\SM$ and therefore $\rz (m\circ m_0)(\d)\leq \la$, a contradiction, as $[m\circ m_0]\in\SM$.
    The second assertion's proof is similar: if not, then there is $[m]\in\SM$ with $\k\geq \rz m(\Fr)$ and using that $\Fr=\rz m_0(\d)$ for some $[m_0]\in\SM$, then we have $\k\geq\rz(m\circ m_0)(\d)$ which says $\k\not\lll\d$, a contradiction.
\end{proof}
\begin{definition}\label{def: good coinitial subsets}
    If $[f]\in\SG$, we say that $\mathbf{[f]}$ \textbf{has good coinitial subsets for} $\boldsymbol{\d}$ if the following holds. There is $\la\lll\d$ such that for each $\Fs\in\SN_\d$, if $\xi,\z\in\rz B_\d$ satisfy $|\xi-\z|<\la$, then $|\rz f(\xi)-\rz f(\z)|<\Fs$.
\end{definition}
\begin{corollary}\label{cor: [f] good coin subsets->f is C^0}
    Suppose that $[f]\in\SG$ has good coinitial subsets for $\d$. Then $[f]\in\SG^0$.
\end{corollary}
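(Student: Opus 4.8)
The plan is to deduce the statement from \autoref{prop: germ continuity from k<<<d} by checking that, under the hypothesis, $[f]$ has strong $\d$-good numbers in the sense of \autoref{def: e is [f]-good}. If $[f]=0$ there is nothing to prove, so assume $[f]\neq 0$. By \autoref{def: good coinitial subsets} we are handed a single $\la\lll\d$ such that, for \emph{every} $\Fs\in\SN_\d$ at once, the implication $\bigl(\xi,\z\in\rz B_\d,\ |\xi-\z|<\la\bigr)\Rightarrow|\rz f(\xi)-\rz f(\z)|<\Fs$ holds. Thus the oscillation of $\rz f$ at scale $\la$ on $\rz B_\d$ is bounded above by every element of the totally ordered, coinitial-subset-free set $\SN_\d$, and the whole job is to squeeze a single incomparable bound $\k\lll\d$ out of this.

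To do that I would introduce $\nu:=\sup\{\,|\rz f(\xi)-\rz f(\z)| : \xi,\z\in\rz B_\d,\ |\xi-\z|<\la\,\}$. This supremum exists in $\rz\bbr$: the index set is internal (cut out by the internal $\rz B_\d$ and the fixed $\la$), $(\xi,\z)\mapsto|\rz f(\xi)-\rz f(\z)|$ is internal, and by the hypothesis the set of values is nonempty and bounded above by any fixed element of $\SN_\d$, so the supremum exists by transfer. From the hypothesis $\nu\leq\Fs$ for all $\Fs\in\SN_\d$; since $\SN_\d={}_\d\SM$ has no least element (if $[m]\in\SM$ then $[\tfrac12 m]\in\SM$ and $\rz(\tfrac12 m)(\d)<\rz m(\d)$), $\nu$ is in fact strictly below every $\Fs\in\SN_\d$. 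If $\nu>0$ then $\nu\in\mu(0)_+$ and \autoref{lem: la<Fr all Fr in Nd->la<<<Fr,also} gives $\nu\lll\d$. Now set $\k:=\la+\nu$. Using $\la\lll\d$ together with the elementary facts that $\rho\lll\d$ is unchanged under multiplication by a standard positive constant and that a sum of two numbers $\lll\d$ is again $\lll\d$ (each a one-line consequence of $[m]\in\SM\Rightarrow[\tfrac12 m]\in\SM$; compare the integer version in the remark after \autoref{def: e is [f]-good}), we get $\k\lll\d$. Finally, for $\xi,\z\in\rz B_\d$ with $|\xi-\z|<\la$ we have $|\rz f(\xi)-\rz f(\z)|\leq\nu<\nu+\la=\k$, so $\k$ is strongly $[f]$-good for $\d$, with $\la$ playing the role of the ``sufficiently small'' threshold. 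Hence $[f]$ has strong $\d$-good numbers, and \autoref{prop: germ continuity from k<<<d} yields $[f]\in\SG^0$.

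The one genuinely non-clerical step is the passage to the supremum $\nu$ and the remark that it lies \emph{strictly} below all of $\SN_\d$: this is exactly where the coinitial character of the hypothesis (bounds below every $\Fs\in\SN_\d$, not merely below one fixed infinitesimal) is spent in order to manufacture the single incomparable bound $\k$ demanded by \autoref{prop: germ continuity from k<<<d}. The internality of the sets involved, the existence of the internal supremum, and the closure properties of $\SM$ and of the relation $\lll$ are all routine, as is the reduction to the case $[f]\neq 0$.
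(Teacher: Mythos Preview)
Your proof is correct and follows essentially the same route as the paper: form the internal supremum $\nu$ (the paper calls it $\rho$) of the oscillations at scale $\la$, use \autoref{lem: la<Fr all Fr in Nd->la<<<Fr,also} to conclude it is $\lll\d$, and invoke \autoref{prop: germ continuity from k<<<d}. You are simply more careful than the paper about the technicalities (internality and existence of the supremum, the strict inequality, the $\nu=0$ case, and exhibiting an explicit $\k=\la+\nu>\nu$ with $\k\lll\d$), whereas the paper compresses all of this into the single line ``$\rho\lll\d$, so apply the proposition.''
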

\begin{proof}
    By hypothesis, there is $\la\in\mu(0)_+$ such that for each $\Fs\in\SN_\d$
  \begin{align}
    \rho\;\dot=\rz\sup\{|\rz f(\xi)-\rz f(\z)|:\xi,\z\in B_\d\;\text{and}\;|\xi-\z|<\la\}<\Fs
  \end{align}
    and so, by the above lemma $\rho\lll\d$. But then, by \autoref{prop: germ continuity from k<<<d} above, $[f]\in\SG^0$.
\end{proof}
   We now can give a definition and prove a result that will be critical to the convergence properties of the topology we will define in the next section.
\begin{definition}\label{def: U_Fr}
     Given $\Fr\in\SN_\d$, let $\mathbf{U}_{\boldsymbol{\Fr}}=\mathbf{U}^{\boldsymbol{\d}}_{\boldsymbol{\Fr}}\subset\SG$ denote the  set $\{[f]\in\SG:\norm{\rz f}_\d<\Fr\}$.
\end{definition}
   We will now begin to use nets in this text. In the next section it will become clear why they are necessary. A modern thorough treatment of nets in topology and analysis given by eg., Howes, see \cite{Howes1995}. When we define our topology in the next section, we will give a sufficient discussion of nets in the context of coinitial subsets of families of infinitesimals.
   Given the preliminaries above, we can now prove a result that will be important in the convergence properties of our topology for continuous germs.
\begin{proposition}\label{prop: [fd]->[g]--> [g] is cont}
   Suppose that $[g]\in\SG$ and $([f_d]:d\in D)$ is an upwardly directed net (see the text before \autoref{def: of coinitial partial order relations}) in $\SG^0$ with the property that for each $\Fr\in\SN_\d$, there is $d_0\in D$ such that if $d>d_0$, then $[f_d-g]\in U_\Fr$. Then $[g]\in\SG^0$.
\end{proposition}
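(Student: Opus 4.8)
The plan is to verify that $[g]$ has strong $\d$-good numbers and then apply \autoref{prop: germ continuity from k<<<d}. We may assume $[g]\not=0$, the zero germ being trivially continuous. The book-keeping device is the *modulus of continuity of $\rz g$ on $B_\d$: the internal, *nondecreasing function $\om_g\colon\rz\bbr_+\ra\rz\bbr$ defined by $\om_g(t)\dot=\rz\sup\{\,|\rz g(\xi)-\rz g(\z)|:\xi,\z\in B_\d,\ |\xi-\z|<t\,\}$, together with $\rho\dot=\rz\inf\{\om_g(t):t\in\rz\bbr_+\}$, which exists in $\rz\bbr$ since the indexing set is internal and bounded below by $0$. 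The point of the proof is that the hypothesized net forces $\rho$ to lie below every modulus available in $\SN_\d$.

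First I would show that $\rho<\Fr$ for every $\Fr\in\SN_\d$. Fix such an $\Fr$; by \autoref{lem: SN_d=SM_d} we have $\Fr=\rz m(\d)$ for some $[m]\in\SM$, and since $\SM$ is closed under multiplication by positive rationals (for instance $m\mapsto m/4$), also $\Fr/4\in\SN_\d$. The net hypothesis then yields $d\in D$ with $[f_d-g]\in U_{\Fr/4}$, i.e.\ $\norm{\rz f_d-\rz g}_\d<\Fr/4$, so that $|\rz f_d(\xi)-\rz g(\xi)|<\Fr/4$ for every $\xi\in B_\d$. Since $[f_d]\in\SG^0$, a representative of $f_d$ restricts to a continuous, hence uniformly continuous, function on a standard closed ball $B_r$ (and $\d<r$); by transfer $\rz f_d$ is *uniformly continuous on $\rz B_r\supseteq B_\d$. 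Hence there is an internal $\la>0$ with $|\rz f_d(\xi)-\rz f_d(\z)|<\Fr/4$ whenever $\xi,\z\in B_\d$ and $|\xi-\z|<\la$, and the triangle inequality gives $|\rz g(\xi)-\rz g(\z)|<3\Fr/4$ for all such pairs. Passing to the *supremum yields $\om_g(\la)\leq 3\Fr/4$, whence $\rho\leq\om_g(\la)\leq 3\Fr/4<\Fr$.

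Now fix, once and for all, a positive infinitesimal $\k_1$ with $\k_1\lll\d$ (such exist by the standard concurrence argument cited in the text) and put $\k\dot=\rho+\k_1$. Then $\k\lll\d$: given any $[m]\in\SM$ we have $[m/2]\in\SM$, so the previous paragraph gives $\rho<\rz(m/2)(\d)=\tfrac{1}{2}\rz m(\d)$, while $\k_1\lll\d$ gives $\k_1<\tfrac{1}{2}\rz m(\d)$; adding, $\k<\rz m(\d)$ for every $[m]\in\SM$, which is precisely $\k\lll\d$ (in particular $0<\k\sim 0$, so $\k\in\mu(0)_+$). Moreover $\k=\rho+\k_1>\rho=\rz\inf\{\om_g(t):t\in\rz\bbr_+\}$, so $\k$ is not a lower bound for the values $\om_g(t)$, and there is $t_0\in\rz\bbr_+$ with $\om_g(t_0)<\k$; equivalently, $|\rz g(\xi)-\rz g(\z)|\leq\om_g(t_0)<\k$ for all $\xi,\z\in B_\d$ with $|\xi-\z|<t_0$. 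Thus $\k$ is strongly $[g]$-good for $\d$, so $[g]$ has strong $\d$-good numbers, and \autoref{prop: germ continuity from k<<<d} yields $[g]\in\SG^0$.

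I expect the crux to be the estimate $\rho<\Fr$ for all $\Fr\in\SN_\d$, since that is where the two inputs must be combined: net convergence controls the sup-norm defect $\norm{\rz f_d-\rz g}_\d$ by an arbitrarily small element of $\SN_\d$, while the *uniform continuity of each (continuous) germ $f_d$ on $B_\d$ converts that control into a genuine oscillation estimate for $\rz g$ itself. After that, promoting ``$\rho$ lies below every modulus in $\SN_\d$'' to ``$\k=\rho+\k_1\lll\d$'' uses only the closure of $\SM$ under rational dilations (in the spirit of \autoref{lem: la<Fr all Fr in Nd->la<<<Fr,also}), and the final reduction to germ continuity is exactly \autoref{prop: germ continuity from k<<<d}. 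One could alternatively phrase the conclusion through \autoref{cor: [f] good coin subsets->f is C^0}, but that route would demand a single $\la\lll\d$ serving all $\Fr$ at once, which is slightly more delicate; going through strong $\d$-good numbers sidesteps this.
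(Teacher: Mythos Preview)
Your proof is correct. Both your argument and the paper's are at heart the same $3\e$ estimate, but you package the conclusion differently. The paper invokes \autoref{cor: [f] good coin subsets->f is C^0}: it fixes a single $\la\lll\d$ up front and uses the (unproved but true) fact that for any continuous germ $[f_d]$, the oscillation of $\rz f_d$ on $B_\d$ over distances $<\la$ is already $\lll\d$, hence below $\Fs/3$ for every $\Fs\in\SN_\d$; the triangle inequality then gives $|\rz g(\xi)-\rz g(\z)|<\Fs$ for all $\Fs$ simultaneously. You instead allow a different $\la$ for each $\Fr$ (coming only from $*$uniform continuity at the single scale $\Fr/4$), record the resulting bound as $\rho=\rz\inf_t\om_g(t)<\Fr$, and then promote this to a single witness $\k=\rho+\k_1\lll\d$ via closure of $\SM$ under halving, landing directly in \autoref{prop: germ continuity from k<<<d}. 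Your route trades the paper's unstated lemma (oscillation of a continuous germ over distances $\lll\d$ is $\lll\d$) for an explicit internal-infimum construction; this is exactly the delicacy you flag in your final paragraph, and your workaround is clean.
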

\begin{proof}
    This will be just a 3 epsilon argument in a new environment. By  \autoref{cor: [f] good coin subsets->f is C^0} above, we need to show that for some $\la\in\mu(0)_+$, $\la\lll\d$, the following holds. If $\Fs\in\SN_\d$, then for $\xi,\z\in B_\d$ with $|\xi-\z|<\la$, we have $|\rz g(\xi)-\rz g(\z)|<\Fs$. But as $[f_d]\in\SG^0$, we have that if $0<\la\lll\d$, then for $\xi,\z\in B_\d$ with $|\xi-\z|<\la$, we have $|\rz f_d(\xi)-f_d(\z)|\lll\d$. So if $\Fr\in\mu(0)_+$ with $\Fr<\Fs/3$ and if we choose $d_0\in D$ so that $d>d_0$ implies $\|\rz f_d-\rz g\|_\d<\Fr$ (which by hypothesis can be done), then for $\xi,\z\in B_\d$ with $|\xi-\z|<\la$, and $d>d_0$ we have (leaving off the *'s)
  \begin{align}
    |g(\xi)-g(\z)|\leq |g(\xi)-f_d(\xi)|+|f_d(\xi)-f_d(\z)|+|f_d(\z)-g(\z)|.
  \end{align}
    The result follows as all terms are less than $\Fs/3$.
\end{proof}

   We can now proceed to our topology and its properties.
\section{Topology on $\SG$}\label{sec: topology-fixed target}
\subsection{Definition of $\tau$ and convergence properties}
    Given the work done up to now, we will give the obvious definition for our topology on $\SG$ and then begin to develop its properties. We will finish the proof that a convergent net of continuous germs is a continuous germ, \autoref{thm: convergnet of cont germs in SG converges to cont}.  We will prove that $\SG^0$ and its higher dimensional analogs have good topological algebraic properties in the next section,  \autoref{subsec: top properties ring structure} and \autoref{subsec: top properties of germ composition}. In this section, we prove that our moduli do not have countable coinitial subsets, \autoref{prop: SPSL has no countable coin subset}. We prove that the topology defined in terms of the given infinitesimal $\d$, ie., $\tau^\d$, is independent of the choice of infinitesimal, see eg., \autoref{cor: tau^d_1 = tau^d_2}.
%    In Subsection \ref{sec: relationship with nongerm convergence}, we prove results giving  correspondences between convergence of a sequence of functions on a neighborhood of $0$ in $\bbr^n$ and $\tau$ convergence of an extended net of `germs'.
   Finally, we prove that any good topology on $\SG$ cannot be first countable, see \autoref{thm: pzT has germ scales->not 1st countable}.

%    As note before, here we will develop our topology only on $\SG^0$

    For a given $0<\d\sim 0$, using $\SN_\d$  (or equivalently $\SN^0_\d$ as we shall see) we will now define our topology. Recall that $U^\d_\Fr=\{[g]\in\SG:\rz\|g\|_\d<\Fr\}$.
\begin{definition}\label{def: of topology  at 0 germ}
       Let $\pmb{\tau}_0=\pmb{\tau}^{\boldsymbol{\d}}_0=\{U^\d_\Fr:\Fr\in\SN_\d\}$ (the subscript $0$ indicates neighborhoods of the zero germ $[0]$).  If $\d$ is fixed in the discussion, we will often write $\tau$ or $\tau_0$ and $U_\Fr$ leaving off the $\d$'s.
     Given $\tau_0$ above and $[f]\in\SG$, let $\boldsymbol{\tau}_{\mathbf{[f]}}$ denote the $[f]$ translation of $\tau_0$; ie., $\tau_{[f]}=\{U+[f]:U\in\tau_0\}$, $U+[f]$ being $\{[g+f]:[g]\in U\}$ and let $\boldsymbol{\tau=\tau^\d}$ denote the topology generated, in the usual way, by finite intersections of arbitrary unions of elements of $\tau_{[f]}$ as $[f]$ varies in $\SG$. In particular, $\tau^\d_0$ is a subbase of the neighborhoods of $[0]$ in $\tau^\d$.
\end{definition}
%    Clearly, by the definition of $\SN_\d$, $\tau_0$ is closed under finite intersections. These will form a good subbase for our topology.
    Our approach to the topology will be in terms of convergence.
    Therefore, assuming a familiarity with nets for the moment (see the next part), we therefore have the obvious criterion for germ convergence in this topology.
\begin{proposition}
     Suppose that $(D,<)$ is an upwardly directed set and that $([f_d]:d\in D)$ is a $D$ net in $\SG$. Then $([f_d]:d\in D)$ converges in $\tau^\d$ to the zero germ, $[0]$,  in $\SG$ if for each $\Fr\in\SN_\d$, there is $d_0\in D$ such that if $d\in D$ with $d>d_0$, then  $[f_d]\in U^\d_\Fr$, ie., $\norm{\rz f_d}_\d<\Fr$. We have that $([f_d]:d\in D)$ converges to $[g]\in\SG$ if and only if $([f_d-g]:d\in D)$ converges to $[g]$.
\end{proposition}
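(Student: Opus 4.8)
The plan is to read off both assertions directly from the definition of $\tau^\d$ in \autoref{def: of topology  at 0 germ}, using only elementary facts about nets together with two features recorded there: that $\tau^\d_0=\{U^\d_\Fr:\Fr\in\SN_\d\}$ is a neighbourhood subbase at $[0]$, and that $\tau_{[g]}$ is the $[g]$-translate of $\tau_0$.

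For the first statement, necessity of the displayed condition is trivial: each $U^\d_\Fr\in\tau_0=\tau_{[0]}$ is a $\tau^\d$-open set containing $[0]$ (since $\norm{\rz 0}_\d=0<\Fr$, because $\Fr\in\mu(0)_+$), so a net converging to $[0]$ is eventually in it. For sufficiency — the direction actually asserted — I would argue as follows. An arbitrary $\tau^\d$-neighbourhood $N$ of $[0]$ contains a finite intersection $U^\d_{\Fr_1}\cap\cdots\cap U^\d_{\Fr_k}$ with $\Fr_1,\dots,\Fr_k\in\SN_\d$, because $\tau^\d_0$ is a subbase there; one may in fact replace this by the single set $U^\d_{\min(\Fr_1,\dots,\Fr_k)}$, using that $U^\d_{\Fr_1}\cap U^\d_{\Fr_2}=U^\d_{\min(\Fr_1,\Fr_2)}$ together with the fact that $\SN_\d$ is closed under germ-wise minima — indeed $\SN_\d={}_\d\SM$ by \autoref{lem: SN_d=SM_d}, and the germ-wise minimum of two members of $\SM$ again lies in $\SM$. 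By hypothesis, for each $i$ there is $d_i\in D$ with $[f_d]\in U^\d_{\Fr_i}$ for all $d>d_i$; since $(D,<)$ is upwardly directed there is $d^*\in D$ dominating $d_1,\dots,d_k$, and then $d>d^*$ forces $[f_d]\in U^\d_{\Fr_1}\cap\cdots\cap U^\d_{\Fr_k}\subset N$. As $N$ was arbitrary, $([f_d]:d\in D)$ converges to $[0]$.

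For the second statement I would use that $\tau^\d$ is translation-invariant by construction: for each $[h]\in\SG$ the bijection $[x]\mapsto[x]+[h]$ carries each $\tau_{[f]}$ onto $\tau_{[f+h]}$, hence carries the generating family $\bigcup_{[f]}\tau_{[f]}$ onto itself, so it is a $\tau^\d$-homeomorphism; applying the one sending $[g]$ to $[0]$ gives that $([f_d]:d\in D)$ converges to $[g]$ iff $([f_d]-[g]:d\in D)$ converges to $[g]-[g]=[0]$, i.e.\ iff $([f_d-g]:d\in D)$ converges to $[0]$ (so the final clause of the statement should read ``converges to $[0]$'').

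The argument is almost entirely bookkeeping; the only step that is not completely formal is the passage from the individual subbasic neighbourhoods $U^\d_\Fr$ to their finite intersections, which is precisely where the directedness of $D$ (equivalently, the $\min$-closure of $\SN_\d$) is used, so I do not anticipate a genuine obstacle — provided one quotes from \autoref{def: of topology  at 0 germ} that $\tau^\d_0$ is a neighbourhood subbase at $[0]$. Verifying that subbase claim from first principles, rather than citing it, would in addition require the triangle inequality for $\rz\norm{\cdot}_\d$ and the cofinality properties of $\SN_\d$ among the positive infinitesimal moduli established in the preceding subsection, but that work is external to this proposition.
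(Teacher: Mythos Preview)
The paper gives no proof at all for this proposition: it is introduced as ``the obvious criterion for germ convergence in this topology'' immediately after \autoref{def: of topology  at 0 germ}, and the text simply moves on, remarking only that ``as the topology has this translation invariance, we will generally be concerned with (questions of) net convergence to $[0]$.'' Your detailed verification is correct and supplies exactly the routine bookkeeping the paper omits; you also correctly spot that the final clause should read ``converges to $[0]$'' rather than ``converges to $[g]$.'' The observation that $\SN_\d$ is closed under minima (via $\SN_\d={}_\d\SM$) is a nice touch but, as you note yourself, not strictly needed once directedness of $D$ handles the finite intersection.
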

     As the topology has this translation invariance, we will generally be concerned with (questions of) net convergence to $[0]$.

     From the remarks after the definition of the $\SN_\d$'s, we have that for any $\Fr\in\SN_\d$ that $U^\d_\Fr\cap\SG\subset {}_m\SG_0$. This says that when working in the neighborhoods $U_\Fr$, it suffices to consider only elements of ${}_m\SG_0$ and therefore elements of $\wt{\SM}$ when considering the $\d$-norms.

\begin{proposition}
     $(\SG,\tau^\d)$ is absolutely convex, Hausdorff  vector space that is not absorbent (hence not locally convex) and scalar multiplication is not continuous.
\end{proposition}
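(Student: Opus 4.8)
The plan is to establish the five claimed properties of $(\SG,\tau^\d)$ one at a time, exploiting that $\tau^\d$ is a translation‐invariant topology with neighborhood subbase $\tau^\d_0 = \{U^\d_\Fr : \Fr\in\SN_\d\}$ at $[0]$.

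\textbf{Absolute convexity.} It suffices to check that each subbasic neighborhood $U^\d_\Fr$ is balanced and convex, since finite intersections and translates preserve these properties and the topology is translation invariant. Balancedness: if $[f]\in U^\d_\Fr$ and $|c|\le 1$ in $\bbr$, then $\norm{\rz(cf)}_\d = |c|\,\norm{\rz f}_\d \le \norm{\rz f}_\d < \Fr$ by transfer of the standard identity $\norm{cf}_r = |c|\norm{f}_r$, using that $c$ is standard so $|{}^\s c|\le 1$. Convexity: if $[f],[g]\in U^\d_\Fr$ and $0\le t\le 1$, then $\norm{\rz(tf+(1-t)g)}_\d \le t\norm{\rz f}_\d + (1-t)\norm{\rz g}_\d < \Fr$ by the triangle inequality (transferred). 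So $\tau^\d_0$ consists of absolutely convex sets, hence so does the generated filter of neighborhoods of $[0]$.

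\textbf{Hausdorff.} Since the topology is a translation-invariant topological vector space topology, it suffices to separate $[0]$ from any $[f]\ne[0]$, i.e.\ to show $\bigcap_{\Fr\in\SN_\d}U^\d_\Fr = \{[0]\}$. Suppose $[f]\in U^\d_\Fr$ for all $\Fr\in\SN_\d$, so $\lambda\dot=\norm{\rz f}_\d$ satisfies $\lambda < \Fr$ for every $\Fr\in\SN_\d$. If $\lambda > 0$ then by \autoref{lem: la<Fr all Fr in Nd->la<<<Fr,also} we would have $\lambda\lll\d$; but $\lambda\lll\d$ together with the fact that $t\mapsto\norm{f}_t$ (being the norm of a representative, hence pseudomonotone) lies in $\wt{\SM}$ forces, via the observation in \autoref{rem: e<<<d and f(d)<e -> f(d)=0}, that $\lambda = \rz\norm{f}_\d = 0$ — contradiction. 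Hence $\lambda=0$, and then $\norm{\rz f}_\d = 0$ means $\rz f\equiv 0$ on $B_\d$, so by \autoref{cor: SG_0 -> F(B_delts) is R alg isomorph} the germ $[f]$ is zero.

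\textbf{Not absorbent, hence not locally convex, and scalar multiplication not continuous.} The key is to exhibit a germ $[f]$ and a subbasic neighborhood $U^\d_\Fr$ such that no positive scalar $c$ brings $c[f]$ into $U^\d_\Fr$, i.e.\ $|c|\,\norm{\rz f}_\d \ge \Fr$ for every $c\in\bbr$. This happens exactly when $\norm{\rz f}_\d$ is an infinitesimal that is \emph{not} incomparably smaller than $\Fr$ — e.g.\ take any $[f]\in{}_m\SG_0$ with $\norm{\rz f}_\d = \Fr$ itself (such $[f]$ exists since $\Fr\in\SN_\d = {}_\d\SM$ by \autoref{lem: SN_d=SM_d}); then $c[f]\in U^\d_\Fr$ would require $|c|\Fr < \Fr$, i.e.\ $|c|<1$, so $c[f]\notin U^\d_\Fr$ for all $|c|\ge 1$ and $U^\d_\Fr$ does not absorb $[f]$. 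Since a locally convex TVS has a neighborhood base at $0$ of absorbent (absolutely convex) sets, failure of absorbency of the absolutely convex base $\tau^\d_0$ shows $\tau^\d$ is not locally convex. For discontinuity of scalar multiplication at $(0,[f])$ with this same $[f]$: for any subbasic $U^\d_\Fr$ and any standard $\epsilon>0$, the scalar ball $\{c : |c|<\epsilon\}$ maps under $c\mapsto c[f]$ onto germs with $\d$-norm $|c|\Fr$, and since $\Fr$ is a fixed infinitesimal while $|c|$ ranges over standard reals near $0$, one checks $|c|\Fr$ need not be $<\Fr'$ for the \emph{given} comparison — more simply, scalar multiplication $\bbr\times\SG\to\SG$ is continuous at $(0,[0])$ trivially but fails at $(0,[f])$: we must find a net $c_n\to 0$ with $c_n[f]\not\to[0]$; take $c_n = 1/n$, then $\norm{\rz(c_n f)}_\d = \Fr/n$, and we need this to stay out of some $U^\d_{\Fr'}$ cofinally; choosing $\Fr' \lll \Fr$ (possible: e.g.\ $\Fr' = \rz m(\Fr)$ for suitable $[m]\in\SM$, using \autoref{lem: la<Fr all Fr in Nd->la<<<Fr,also}) gives $\Fr/n > \Fr'$ for every standard $n$ since $\Fr/n$ is still not incomparably smaller than $\Fr$, so $c_n[f]\notin U^\d_{\Fr'}$ for all $n$, defeating convergence.

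\textbf{Main obstacle.} The delicate point is the last one: making precise that for a positive infinitesimal $\Fr$ the ``orbit'' $\{|c|\Fr : c\in\bbr\}$ stays bounded away, in the $\lll$ sense, from infinitesimals $\Fr'\lll\Fr$ — i.e.\ that $|c|\Fr \not\lll \d$ and indeed $|c|\Fr > \Fr'$ whenever $\Fr'\lll\Fr$, for every standard $c\ne 0$. This is where one invokes the remark right after \autoref{def: e is [f]-good} (``for all $m,n\in\bbn$, $\om\lll\Om$ iff $m\om\lll n\Om$'', equivalently its multiplicative/$\k\lll\d$ analogue): scaling by a standard factor does not change incomparability class, so $\Fr'\lll\Fr$ implies $\Fr' \lll |c|\Fr$ and in particular $\Fr' < |c|\Fr$. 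Everything else is routine transfer of elementary sup-norm inequalities; care is only needed to keep track of which quantities are standard (the scalars $c$, hence $|{}^\s c|$ a genuine real) versus infinitesimal (the moduli $\Fr\in\SN_\d$), since it is precisely the infinitesimal scale of the moduli that breaks absorbency and scalar continuity while leaving absolute convexity and the Hausdorff property intact.
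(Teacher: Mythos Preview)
Your argument for non-absorbency contains a genuine error. You correctly state the goal --- exhibit $[f]$ and $U^\d_\Fr$ such that \emph{no} standard scalar $c$ gives $c[f]\in U^\d_\Fr$ --- but then choose $[f]$ with $\norm{\rz f}_\d = \Fr$ and observe that $c[f]\in U^\d_\Fr$ iff $|c|<1$. This shows precisely that $U^\d_\Fr$ \emph{does} absorb your $[f]$: any $|c|<1$ works, so with $t_0=1/2$ (say) one has $c[f]\in U^\d_\Fr$ for all $|c|\le t_0$. The conclusion ``$c[f]\notin U^\d_\Fr$ for all $|c|\ge 1$'' is true but irrelevant to absorbency.

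The repair is exactly the idea you deploy moments later for scalar discontinuity (and is what the paper does): use the non-Archimedean nature of $\SN_\d$ to pick $\Fs\in\SN_\d$ with $\Fs/\Fr$ infinite --- equivalently $t\Fr<\Fs$ for every standard $t>0$ --- and take $[f]$ with $\norm{\rz f}_\d=\Fs$. Then $c[f]\in U^\d_\Fr$ forces $|c|<\Fr/\Fs$, an infinitesimal, so no standard $c\ne 0$ works and $U^\d_\Fr$ fails to absorb $[f]$. Your absolute convexity, Hausdorff, and scalar-discontinuity arguments are fine (indeed, your Hausdorff argument via \autoref{lem: la<Fr all Fr in Nd->la<<<Fr,also} and \autoref{rem: e<<<d and f(d)<e -> f(d)=0} supplies detail the paper omits; just note that $t\mapsto\|f\|_t$ lying in $\wt{\SM}$ requires $\|f\|_t\to 0$, which follows once you know $\norm{\rz f}_\d$ is infinitesimal).
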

\begin{proof}
    We have that the elements of the subbase at $[0]$, the $U_\Fr$'s for $\Fr\in\SN_\d$ are clearly convex and balanced, and so absolutely convex.  But the topology is  certainly not absorbent; for given $\Fr\in\SN_\d$, there is $\Fs\in\SN_\d$ with $t\Fr<\Fs$ for all $t\in\bbr_+$, ie., because $\SN_\d$'s total order is non-Archimedean. Similarly, if $r_j\in\bbr_+$ for $j\in\bbn$ with $r_j\ra 0$ as $j\ra\infty$ and $[f]\in\SG\ssm\{[0]\}$, then $r_j[f]\not\ra[0]$ in $\tau^\d$, for if $\rz\|f\|_\d=\Fr\in\SN_\d$, then there is $\Fs\in\SN_\d$ such that $\rz r_j\rz\|f\|_\d=\rz r_j\cdot\Fr>\Fs$ for all $j\in\bbn$.
\end{proof}

        The following theorem along with the nondiscrete nature of our topology indicates that our topology has some good properties; in particular this result indicates that $\tau^\d$ convergence is analogous to uniform convergence.
\begin{theorem}\label{thm: convergnet of cont germs in SG converges to cont}
    Suppose that $D$ is a directed set and that $d\in D\mapsto [f^d]$ is a $D$-net in $\SG^0$ that is $\tau_0^\d$ convergent to $[g]\in\SG$. Then $[g]\in\SG^0$.
\end{theorem}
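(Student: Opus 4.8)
The plan is to deduce this immediately from \autoref{prop: [fd]->[g]--> [g] is cont}, which already contains all of the analytic content; the present theorem is just its repackaging now that the topology $\tau^\d$ is available. The only real task is to translate the hypothesis ``$d\mapsto[f^d]$ is $\tau_0^\d$-convergent to $[g]$'' into the hypothesis demanded by that proposition.

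First I would record the translation invariance of $\tau^\d$: by \autoref{def: of topology  at 0 germ} the topology is generated by the translates $\tau_{[f]}$ of $\tau_0$, so translation by $[g]$ carries $\tau_0$ onto $\tau_{[g]}$ and is a $\tau^\d$-homeomorphism. Hence, for each $\Fr\in\SN_\d$, the set $U^\d_\Fr+[g]$ is $\tau^\d$-open and, since $0<\Fr$ gives $[0]\in U^\d_\Fr$, it is a neighborhood of $[g]$. Therefore $\tau^\d$-convergence of the net $([f^d]:d\in D)$ to $[g]$ supplies, for every $\Fr\in\SN_\d$, an index $d_0\in D$ with $[f^d]\in U^\d_\Fr+[g]$ --- equivalently $[f^d-g]\in U^\d_\Fr$ --- for all $d>d_0$.

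Next I would invoke \autoref{prop: [fd]->[g]--> [g] is cont} with the net $([f^d]:d\in D)$: it lies in $\SG^0$ by hypothesis, it is upwardly directed because $D$ is, and the condition just extracted is precisely ``for each $\Fr\in\SN_\d$ there is $d_0$ such that $d>d_0$ implies $[f^d-g]\in U^\d_\Fr$'' required there. That proposition then gives $[g]\in\SG^0$, as claimed.

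I do not anticipate a genuine obstacle: the substance was carried out earlier, in the continuity criterion \autoref{prop: germ continuity from k<<<d}, its coinitial reformulation \autoref{cor: [f] good coin subsets->f is C^0}, and the three-$\e$ argument proving \autoref{prop: [fd]->[g]--> [g] is cont}. The single point one should not gloss is the translation invariance of $\tau^\d$ built into \autoref{def: of topology  at 0 germ}: it is exactly this that makes the sets $U^\d_\Fr+[g]$ legitimate neighborhoods of $[g]$, so that abstract $\tau^\d$-convergence to $[g]$ unwinds into the concrete $\SN_\d$-indexed condition consumed by \autoref{prop: [fd]->[g]--> [g] is cont}.
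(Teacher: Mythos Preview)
Your proposal is correct and matches the paper's approach exactly: the paper's proof is the single line ``Clearly, this is just a restatement of \autoref{prop: [fd]->[g]--> [g] is cont}.'' Your only addition is to spell out the translation-invariance step that converts $\tau^\d$-convergence to $[g]$ into the condition $[f^d-g]\in U^\d_\Fr$, which the paper leaves implicit.
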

\begin{proof}
    Clearly, this is just a restatement of  \autoref{prop: [fd]->[g]--> [g] is cont}.
%    The result follows easily from the above preliminaries. As $d\mapsto [f^d]$ is $\tau$ convergent to $[g]$, if $\Fr\in\ov{\SS}$, there is $d_0\in D$ such that $\rz f^{d}|_{B_\d}-\rz g|_{B_\d}\in U_\Fr$ for $d>d_0$. But then, by Lemma \ref{lem: SN_d contains f-good numbers},  $\ov{\SS}$ contains a $[g]$-good number $\ov{\Fr}$ and as just noted we know that there is $\ov{d}\in D$ such that if $d\in D$ with $d>\ov{d}$, we have that $\rz f^d|_{B_\d}-\rz g|_{B_\d}\in U_{\ov{\Fr}}$ and so by Lemma \ref{lem: f cont, f-g  small implies g cont}, we have that $[g]\in\SG^0$.
\end{proof}

    On the other hand, let's note some curious  properties of this topology.
\begin{corollary}\label{cor: C^0 germs closed nowher dense}
    We have ${}_m\SG_0=\cup\{U_\Fr:\Fr\in\wt{\SN}_\d\}$ and $\SG=\cup\{{}_m\SG_{[f]}:[f]\in\SG\}$. If $[f],[g]\in\SG$ with $\rz\|f-g\|_\d\ggg\d$, then ${}_m\SG_{[f]}\cap{}_m\SG_{[g]}$ is empty. (Note that, with sufficient saturation, $\rz\|f-g\|_\d\ggg\d$ is not an empty condition.)
    If $[f]\in\SG$ with $\rz\|f\|_\d\ggg\d$, then $[f]\not\in\SG^0$. For every $\Fr\in\SN_\d$, there is a discontinuous germ $[g]\in U_\Fr$. If $[g]\in\SG$ is a discontinuous germ, then there is a $\tau$ neighborhood of $[g]$ consisting of discontinuous germs. In particular, $\SG^0$ is nowhere dense in $\SG$.
\end{corollary}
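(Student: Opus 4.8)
I would prove the seven assertions in the order listed, each being a short consequence of the criteria already assembled. The two set identities are essentially bookkeeping. For the first, the point is that $t\mapsto\norm{g}_t$ is automatically non-decreasing, so if $\Fr\in\wt{\SN}_\d$ (hence $\Fr\sim0$) and $\rz\norm{g}_\d<\Fr$, then $\rz\norm{g}_\d\sim0$, which forces $\norm{g}_t\to0$ as $t\to0$ (a non-decreasing function that failed to tend to $0$ would be bounded below by a standard positive constant on a whole interval $(0,r_0)$, hence at $\d$ too); thus $[g]$ lies in the pseudomonotone class. Conversely a pseudomonotone $[g]$ lies in $U^\d_\Fr$ with $\Fr=2\rz\norm{g}_\d=\rz\norm{2g}_\d\in\wt{\SN}_\d$ (and $[0]\in U_\Fr$ for every $\Fr$), giving the reverse inclusion. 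For the second identity, fix once and for all a pseudomonotone germ $[h_0]\in{}_m\SG_0$ — say the germ of $x\mapsto|x|$, for which $\norm{h_0}_t=t$ — and write any $[g]\in\SG$ as $[h_0]+([g]-[h_0])$, so $[g]\in{}_m\SG_{[g]-[h_0]}$.

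For the assertions involving ``$\ggg\d$'' the governing observation is again the monotonicity of $t\mapsto\norm{h}_t$: if $\rz\norm{h}_\d\ggg\d$ then $\norm{h}_t\not\to0$, i.e. the representative $h$ fails to be continuous at $0$. The one genuinely substantive ingredient is the converse half of this, namely that the value at $\d$ of a germ $[m]\in\wt{\SM}$ is never incomparably larger than $\d$. I would prove this with a generalized left inverse: put $n(s)=\inf\{t>0:m(t)\ge s\}$, verify $[n]\in\wt{\SM}$ and $n(m(t))\le t$ near $0$, note (an elementary piecewise-linear interpolation) that every germ of $\wt{\SM}$ dominates near $0$ some $[m']\in\SM$, and conclude $m'(m(t))\le n(m(t))\le t$, which transfers to $\rz m'(\rz m(\d))\le\d$ and witnesses $\rz m(\d)\not\ggg\d$. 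Granting this: assertion (4) is immediate, since $\rz\norm{f}_\d\ggg\d$ forces $\norm{f}_t\not\to0$, so (as $f(0)=0$) $\limsup_{x\to0}|f(x)|>0$ and $[f]\notin\SG^0$. For (3), suppose $[h]\in{}_m\SG_{[f]}\cap{}_m\SG_{[g]}$; then $[f]-[g]=[q]-[p]$ with $[p],[q]\in{}_m\SG_0$, whence $\norm{f-g}_t\le\norm{p}_t+\norm{q}_t\to0$, contradicting $\norm{f-g}_t\not\to0$. The non-emptiness noted in the statement is witnessed, for instance, by any germ $[f-g]$ that is discontinuous at $0$.

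The last three assertions use the topology directly. For (5), given $\Fr=\rz m(\d)\in\SN_\d$ with $[m]\in\SM$, set $g(x)=\tfrac12 m(|x|)$ for $0<|x|$ small and $g(0)=0$, then perturb $g$ on a dense, co-dense set so that it is continuous at no point near $0$; monotonicity of $m$ gives $\norm{g}_t\le\tfrac12 m(t)$, hence $\rz\norm{g}_\d<\Fr$, so the discontinuous germ $[g]$ lies in $U_\Fr$. For (6), first note $[g]+U_\Fr=U_\Fr+[g]\in\tau_{[g]}\subseteq\tau^\d$ is a $\tau^\d$-open set containing $[g]$. If none of these consisted solely of discontinuous germs, then for each $\Fr\in\SN_\d$ I could choose a continuous $[g_\Fr]$ with $[g_\Fr-g]\in U_\Fr$; indexing by $(\SN_\d,\ge)$, which is directed because $\min(\Fr_1,\Fr_2)\in\SN_\d$, and using $\Fr\le\Fs\Rightarrow U_\Fr\subseteq U_\Fs$, the net $([g_\Fr])$ lies in $\SG^0$ and converges to $[g]$, so \autoref{thm: convergnet of cont germs in SG converges to cont} gives $[g]\in\SG^0$, a contradiction; hence some $[g]+U_\Fr$ is a $\tau^\d$-neighbourhood of $[g]$ contained in $\SG\ssm\SG^0$. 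Finally (7): by (6) the discontinuous germs form a $\tau^\d$-open set, so $\SG^0$ is closed; and $\SG^0$ has empty interior, since any nonempty $\tau^\d$-open set contains a translate $[f]+U_\Fr$ (the $U_\Fr$ form a neighbourhood base at $[0]$ and $\tau^\d$ is translation invariant), and such a translate contains either $[f]$ itself (if $[f]\notin\SG^0$) or $[f]+[g]$ with $[g]$ the discontinuous germ of (5) (if $[f]\in\SG^0$, since then $[f]+[g]$ is continuous-plus-discontinuous, hence discontinuous); a closed set with empty interior is nowhere dense.

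The main obstacle is the comparison lemma of the second paragraph — that a monotone-to-zero germ's value at an infinitesimal $\d$ cannot be incomparably larger than $\d$; everything else is routine assembly of the established criteria, the only structural inputs being that each $U_\Fr+[f]$ is a genuine $\tau^\d$-neighbourhood of $[f]$ (immediate from the definition of $\tau^\d$) together with \autoref{thm: convergnet of cont germs in SG converges to cont}.
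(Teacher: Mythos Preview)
Your proposal is correct and, on the three assertions the paper actually proves (the existence of a discontinuous germ in each $U_\Fr$, the open-neighborhood assertion for discontinuous germs, and the nowhere-density of $\SG^0$), your argument is essentially identical to the paper's: both build a discontinuous germ below $\Fr$ by zeroing a monotone germ on a dense set, and both run the net-indexed-by-$\SN_\d$ argument against \autoref{thm: convergnet of cont germs in SG converges to cont}.

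Where you go beyond the paper is in supplying arguments for the first four assertions, which the paper leaves unproved. Your key extra ingredient --- the comparison lemma that $\rz m(\d)\not\ggg\d$ for $[m]\in\wt{\SM}$, via a generalized left inverse $n(s)=\inf\{t:m(t)\ge s\}$ followed by a strictly-monotone minorant --- is sound and is exactly what is needed to make assertions (3) and (4) work. One small caveat: your witness for the parenthetical ``not an empty condition'' (any $[f-g]$ discontinuous at $0$) forces $\rz\|f-g\|_\d$ to be bounded below by a standard positive constant, so you are implicitly extending the relation $\ggg$ beyond the infinitesimal range in which the paper defines it; this is harmless in spirit but worth flagging, since by your own comparison lemma the hypothesis $\rz\|f-g\|_\d\ggg\d$ cannot hold with $\rz\|f-g\|_\d\in\mu(0)_+$.
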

\begin{proof}
    We will just verify the last three assertions of which the last follows from the previous two and \autoref{thm: convergnet of cont germs in SG converges to cont}. Let's verify that if $\Fr\in\SN_\d$, then there is $[f]\in\SG$ such that $[f]\in U_\Fr$ that is not continuous. First, note that if $[m]\in\SM^0$, with $\rz\|m\|_\d<\Fr$, then defining $[\wt{m}]$ to be zero on $\rz\bbq_+$ and $[m]$ on $\rz\bbr_+\ssm\rz\bbq_+$, it's  clear that $\rz\|\wt{m}\|_\d<\Fr$ but $[\wt{m}]\not\in\SM^0$. Finally, define $[f]$ to be germ at $0$ of the map $x\mapsto\wt{m}(|x|)$.  Next, let's verify that if $[g]\in\SG\ssm\SG^0$, then there is $\Fr\in\SN_\d$ such that $U^\d_\Fr([g])$ consists of discontinuous germs. Suppose not, then for every $\Fr\in\SN_\d$, there is $[f_\Fr]\in\SG^0\cap U^\d_\Fr([g])$. But this says that $[g]$ is a limit point of the net $([f_\Fr]:\Fr\in\SN_\d)$ and so as $\SG^0$ is closed in $\SG$, by \autoref{thm: convergnet of cont germs in SG converges to cont}, then $[g]\in\SG^0$, a contradiction.
\end{proof}

    {\it We will not spend further effort on the basic topological properties of $\tau^\d$ until we consider algebraic properties in the next section. In this section, our efforts will be focused on (a) proving that $\tau^\d$ is independent of $\d$ and (b) on the non-first countability of $\tau$ (or any good topology on $\SG$!).} To these ends, we begin with motivation for further  definitions.

     There are two ways in which to define bounding conditions on sets of germs in $\wt{\SM}$. The most obvious is to follow the definition of $U_\Fr$ as $\Fr$ varies in $\wt{\SN}_\d$ down to the germ norm space $\wt{\SM}$ and look at those elements $[m]\in\wt{\SM}$ that are bounded above (or below) by some $\Fr_0\in\wt{\SN}_\d$ at $\d$. The second is to bound uniformly over all infinitesimals $\d\in\mu(0)_+$, ie., consider all those $[m]\in\wt{\SM}$ that are bounded above (or below) by a given $[m_0]\in\wt{\SM}$. Surprisingly, these two modes for bounding are asymptotically equivalent for our topology (the import of \autoref{prop: ptwise bndd subset unif bndd}). This is a critical fact for most considerations here. Let's define these two families of bounding sets.
\begin{definition}\label{def: SU([m]), SM([m]), M^Fr, etc}
    Generally, if $\SB\subset\wt{\SM}$, $[m_0]\in\wt{\SM}$ and $\Fr_0\in\wt{\SN}_\d$, we need to define four associated sets. Let $\boldsymbol{\SB}\mathitbf{([m_0])_u}$, respectively $\boldsymbol{\SB}\mathitbf{([m_0])}_{\pmb{\ell}}$, denote the set of $[m]\in\SB$ satisfying $\rz m(\la)<\rz m_0(\la)$ for all $\la\in\mu(0)_+$, respectively $\rz m(\la)>\rz m_0(\la)$. Let $\boldsymbol{\SB^{\Fr_0}}$, respectively $\boldsymbol{\SB_{\Fr_0}}$, denote the set of set of $[m]\in\SB$ satisfying $\rz m(\d)<\Fr_0$, respectively $\rz m(\d)>\Fr_0$. These last two subsets of $\SB$ may sometimes be denoted by $\boldsymbol{{}_\d\SB^{\Fr_0}}$, respectively $\boldsymbol{{}_\d\SB_{\Fr_0}}$, to indicate the dependence on the choice of the infinitesimal $\d$.
     As it is most commonly used, we will often write $\boldsymbol{\SB}\mathitbf{([m])}$ for $\SB([m])_u$.

     With respect to subsets of $\SG$, if $[\un{m}]\in\wt{\SM}$, let
  \begin{align}
    \boldsymbol{\SU}\mathitbf{([\un{m}])}=\SU([\un{m}])_u=\{[g]\in\SG:\rz\|g\|_\la<\rz\un{m}(\la)\;\text{for}\;\la\in\mu(0)_+\}
  \end{align}
    and occasionally, we will use the corresponding lower bound set $\boldsymbol{\SU}\mathitbf{([\un{m}])}_{\pmb{\ell}}$.
%    Also, for $\Fr\in\wt{\SN}_\d$, we will define
%  \begin{align}
%    \wt{\SM}^\Fr=\{[m]\in\wt{\SM}:\rz m(\d)<\Fr\}
%  \end{align}
%    and analogously define $\SM^{\Fr}$ and $\SM^{0,\Fr}$. We will occasionally use the corresponding lower bound sets $\wt{\SM}_\Fr,\SM_\Fr$ and $\SM^{0}_\Fr$.
%        Finally, will define, for $[\wh{m}]\in\wt{\SM}$, the subset,
%  \begin{align}
%    \wt{\SM}([\wh{m}])=\wt{\SM}([\wh{m}])_u=\{[m]\in\wt{\SM}:\rz m(\la)<\rz\wh{m}(\la)\;\text{for all}\;\la\in\mu(0)_+\},
%  \end{align}
%     and, of course, the corresponding lower bound sets $\wt{\SM}([\wh{m}])_\ell$, noting that we have corresponding definitions for the sets \;$\SM$ and $\SM^0$.
%    Furthermore,  if we are also considering $\d$, we will write ${}_\d\wt{\SM}_\Fr$ @@@@@@@@
\end{definition}
%    For $[g]\in{}_m\SG_0$, let $\vsg([g])$ be the germ at $0$ in $\bbr^n$ of the map $x\mapsto \sup\{|g(y)|:|y|=|x|\}$. Clearly, for $\Fr$ in $\SN_\d$, $[g]\in U_\Fr$ if and only if $[g]\in U_\Fr$. \textbf{We will use the map $\vsg$ later.}  Clearly, the germs of the form $\vsg([g])$ for some $[g]\in\SG$ are precisely the pseudomonotone germs. These are a useful intermediary between general elements of $\SG$ and $\wt{\SM}$.
\begin{remark}
    Our $\SB$ will generally be one of $\wt{\SM},\SM$ or $\SM^0$, hence we have twelve associated sets. $\SB$ may also be a subset of special affine germs, $\SP\SL^0$ yet to be defined (\autoref{def: PL^0 germs at 0}).
\end{remark}
    Given these preliminaries, let's formalize our connection between the topology $\tau^\d$ defined on $\SG$ and subsets of $\wt{\SM}$.

\begin{definition}\label{def: FL:SG->SM}
    For $[g]\in{}_m\wt{\SG}_0$, define $\pmb{\FL}:{}_m\wt{\SG}\ra\wt{\SM}$ as follows. $\FL([g])\in\wt{\SM}$ is defined to be the germ at $0$ of the map $t\mapsto\|g\|_t$.
    If $\SB\subset\wt{\SM}$, then $\FL(\SB)$ denotes the set of $\FL([f])$ for $[f]$ in $\SB$.
\end{definition}
    Note, then, that $\FL: {}_m\wt{\SG}_0\ra\wt{\SM}$ is a surjective map so that for a given $\Fr\in\wt{\SN}_\d$ and  $[g]$ in ${}_m\wt{\SG}_0$, we have that $\FL([g])(\d)<\Fr$ if and only if $[g]\in U_\Fr$.
    With the notation introduced, then we clearly have the following statement: for each $\Fr\in\wt{\SN}_\d$, $\FL:{}_m\wt{\SG}\ra\wt{\SM}$ satisfies $\FL(U_\Fr)=\wt{\SM}^\Fr$  and $\FL^{-1}(\wt{\SM}^\Fr)=U_\Fr$. In particular, the $U_\Fr$'s are totally determined by the set of $\wt{\SM}^\Fr$'s.
    Similarly, with the above definitions, we  have the correspondence: $\FL(\SU([\wh{m}]))=\wt{\SM}([\wh{m}])$ and $\FL^{-1}(\wt{\SM}([\wh{m}]))=\SU([\wh{m}])$. Let's record this simple but critical simplification as a lemma.
\begin{lemma}\label{lem: FL: SG >->> SM}
    We have $\FL({}_m\wt{\SG}_0)=\wt{\SM}$, $\FL({}_m\SG_0)=\SM$ and $\FL({}_m\SG^0_0)=\SM^0$.
    Also we have that $\FL$ gives an order preserving bijection
   \begin{align}
     \FL:\{U_\Fr:\Fr\in\SN_\d\}\ra\{\wt{\SM}^\Fr:\Fr\in\SN_\d\}
   \end{align}
      and also an order preserving bijection
   \begin{align}
     \FL:\{\SU([m]):[m]\in\wt{\SM}\}\ra\{\wt{\SM}([m]):[m]\in\wt{\SM}\}.
   \end{align}

\end{lemma}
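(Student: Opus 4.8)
The plan is to treat $\FL$ as the honest function $\FL:{}_m\wt{\SG}_0\ra\wt{\SM}$ sending $[g]$ to the germ at $0$ of $t\mapsto\|g\|_t$; this is well defined because on ${}_m\wt{\SG}_0$ that germ is weakly increasing near $0$ and tends to $0$ (hence lies in $\wt{\SM}$), and because $t\mapsto\|g\|_t$ depends only on the germ of $g$. Everything then follows from two facts: \textbf{(i)} $\FL$ is surjective, and moreover the three indicated restrictions land onto $\wt{\SM}$, $\SM$, $\SM^0$; and \textbf{(ii)} $\FL([g])(\la)=\rz\|g\|_\la$ for every $\la\in\mu(0)_+$ and every $[g]\in{}_m\wt{\SG}_0$, which is immediate from the identification of $\FL([g])$ with $\rz m|\mu(0)_+$ for its representative $m(t)=\|g\|_t$.

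For (i): given $[m]\in\wt{\SM}$ with a representative $m$, I would put $g_m(x)=m(|x|)$ (and $g_m(0)=0$) on the punctured neighborhood where $m$ is defined; monotonicity of $m$ gives $\|g_m\|_t=m(t)$, so $\FL([g_m])=[m]$ and $[g_m]\in{}_m\wt{\SG}_0$. If moreover $[m]\in\SM$ then $t\mapsto m(t)$ is strictly increasing near $0$, so $[g_m]\in{}_m\SG_0$, and if $[m]\in\SM^0$ then $g_m$ is continuous near $0$, so $[g_m]\in{}_m\SG^0_0$. Conversely $\FL$ carries ${}_m\wt{\SG}_0$, ${}_m\SG_0$, ${}_m\SG^0_0$ into $\wt{\SM}$, $\SM$, $\SM^0$ respectively, directly from the defining conditions of those sets. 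Combining the two inclusions gives $\FL({}_m\wt{\SG}_0)=\wt{\SM}$, $\FL({}_m\SG_0)=\SM$, $\FL({}_m\SG^0_0)=\SM^0$.

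For the two bijections, I would first check that both indexing families already lie in the domain of $\FL$. For $\Fr\in\SN_\d$, the earlier remark gives $U_\Fr\subseteq{}_m\SG_0\subseteq{}_m\wt{\SG}_0$. For $[m]\in\wt{\SM}$, one has $\SU([m])\subseteq{}_m\wt{\SG}_0$ by a short argument: if $[g]\in\SU([m])$ then $t\mapsto\|g\|_t$ is finite for small $t$ and weakly increasing, so $L:=\lim_{t\ra0^+}\|g\|_t$ exists in $[0,\infty)$, and transfer of ``$\forall t>0:\|g\|_t\ge L$'' forces $L\le\rz\|g\|_\la<\rz m(\la)\sim0$, whence $L=0$ and $[g]\in{}_m\wt{\SG}_0$. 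Granting this, fact (ii) at $\la=\d$ together with surjectivity (i) yields $\FL(U_\Fr)=\wt{\SM}^\Fr$ and $\FL^{-1}(\wt{\SM}^\Fr)=U_\Fr$, and fact (ii) across all $\la\in\mu(0)_+$ together with (i) yields $\FL(\SU([m]))=\wt{\SM}([m])$ and $\FL^{-1}(\wt{\SM}([m]))=\SU([m])$. Hence $U_\Fr\mapsto\wt{\SM}^\Fr$ is a well-defined surjection of the two collections, injective because $\FL^{-1}$ recovers $U_\Fr$ from $\wt{\SM}^\Fr$, and an order isomorphism for $\subseteq$ since direct and inverse image under a function are both $\subseteq$-monotone; the same reasoning gives the order isomorphism $\SU([m])\mapsto\wt{\SM}([m])$. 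The only point here that is not pure unwinding of definitions is the bookkeeping that $\{U_\Fr\}$ and $\{\SU([m])\}$ genuinely lie inside the domain ${}_m\wt{\SG}_0$ of the partially defined map $\FL$ (so that image and preimage behave as expected) --- for the $U_\Fr$ this is the remark already in the text, for the $\SU([m])$ it is the monotone-limit/transfer step above --- and I expect no other obstacle.
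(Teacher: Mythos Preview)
Your proof is correct and follows essentially the same approach as the paper: the paper treats this lemma as a ``recording'' of the remarks immediately preceding it (surjectivity via $g_m(x)=m(|x|)$, and the image/preimage identities $\FL(U_\Fr)=\wt{\SM}^\Fr$, $\FL^{-1}(\wt{\SM}^\Fr)=U_\Fr$, and similarly for $\SU([m])$), without a separate proof block. You add one piece of care the paper glosses over, namely the verification that $\SU([m])\subseteq{}_m\wt{\SG}_0$ via the monotone-limit/transfer argument so that $\FL$ is genuinely defined there; this is a welcome bit of rigor but not a different route.
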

     Once we have some more terminology on order and convergence, we will see that a simple consequence of part of this statement will often allow us to reduce convergence (to the $[0]$ germ) in $\SG$ to convergence properties of the corresponding subsets of $\wt{\SM}$ under the correspondence defined by $\FL$.

      We will return to these sets and their variations in later sections; here we have a simple but quite useful statement.

\begin{lemma}\label{lem: wtSM_r sub wtSM(m)->U_r sub U(m)}
    Suppose that $\Fr_0\in\wt{\SN}_\d$ and that $[m_0]\in\wt{\SM}$ and further that $\wt{\SM}^{\Fr_0}\subset\wt{\SM}([m_0])$. Then $U_{\Fr_0}\subset \SU([m_0])$.
\end{lemma}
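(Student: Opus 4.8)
The plan is to exploit the bijective correspondence $\FL$ established in \autoref{lem: FL: SG >->> SM} together with the two identities recorded just before it, namely $\FL(U_\Fr)=\wt{\SM}^\Fr$ and $\FL(\SU([m]))=\wt{\SM}([m])$, and the fact that $\FL$ is order preserving. The point is that $\FL$ does not merely match the two families of subsets of $\SG$ with the two families of subsets of $\wt{\SM}$ setwise; it is a genuine bijection of $\SG$ (restricted to ${}_m\wt{\SG}_0$, which is all that matters since every $U_\Fr$ with $\Fr\in\wt{\SN}_\d$ satisfies $U_\Fr\cap\SG\subset{}_m\SG_0\subset{}_m\wt{\SG}_0$) onto $\wt{\SM}$, under which $U_{\Fr_0}$ is the preimage of $\wt{\SM}^{\Fr_0}$ and $\SU([m_0])$ is the preimage of $\wt{\SM}([m_0])$. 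Since preimages under a map preserve inclusions, the hypothesized inclusion $\wt{\SM}^{\Fr_0}\subset\wt{\SM}([m_0])$ immediately yields $\FL^{-1}(\wt{\SM}^{\Fr_0})\subset\FL^{-1}(\wt{\SM}([m_0]))$, i.e.\ $U_{\Fr_0}\subset\SU([m_0])$.

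Concretely I would argue as follows. Let $[g]\in U_{\Fr_0}$; so $[g]\in\SG$ with $\rz\|g\|_\d<\Fr_0$, and in particular, by the remark following \autoref{def: U_Fr} (or the remarks after the definition of the $\SN_\d$'s), the germ $t\mapsto\|g\|_t$ lies in $\wt{\SM}$, so $\FL([g])$ is defined and equals that germ. Since $\FL([g])(\d)=\rz\|g\|_\d<\Fr_0$, we have $\FL([g])\in\wt{\SM}^{\Fr_0}$. By hypothesis $\wt{\SM}^{\Fr_0}\subset\wt{\SM}([m_0])$, so $\FL([g])\in\wt{\SM}([m_0])$, which unwinds to $\rz\|g\|_\la=\rz(\FL([g]))(\la)<\rz m_0(\la)$ for every $\la\in\mu(0)_+$. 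But that is exactly the defining condition for $[g]\in\SU([m_0])$. Hence $U_{\Fr_0}\subset\SU([m_0])$, as claimed.

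There is really no substantive obstacle here: the statement is a formal consequence of the $\FL$-dictionary, and the only thing to be careful about is the bookkeeping that $[g]\in U_{\Fr_0}$ forces $[g]\in{}_m\wt{\SG}_0$ so that $\FL([g])$ even makes sense and $\FL([g])(\d)$ genuinely equals $\rz\|g\|_\d$; this is precisely the content of the remark that $U_\Fr\cap\SG\subset{}_m\SG_0$ together with \autoref{lem: SN_d=SM_d}. Given that, the implication "$\rz\|g\|_\d<\Fr_0$ and $\wt{\SM}^{\Fr_0}\subset\wt{\SM}([m_0])$" $\Rightarrow$ "$\rz\|g\|_\la<\rz m_0(\la)$ for all $\la\in\mu(0)_+$" is immediate, since $\rz\|g\|_\d<\Fr_0$ places the germ $\FL([g])$ in $\wt{\SM}^{\Fr_0}$ and the hypothesized containment then transports it into $\wt{\SM}([m_0])$. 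Thus the proof is a short chase through the definitions, and I would present it in two or three lines.
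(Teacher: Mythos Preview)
Your proof is correct and follows essentially the same approach as the paper's: take $[g]\in U_{\Fr_0}$, apply $\FL$ to land in $\wt{\SM}^{\Fr_0}$, use the hypothesis to move into $\wt{\SM}([m_0])$, and unwind to conclude $[g]\in\SU([m_0])$. One small slip: $\FL:{}_m\wt{\SG}_0\ra\wt{\SM}$ is surjective but not injective (many germs share the same norm function), so it is not a ``genuine bijection'' as you wrote; fortunately your argument only needs that preimages preserve inclusions, which holds for any map, so the proof stands.
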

\begin{proof}
    The proof is just unwinding the definitions using the previous lemma. If $[g]\in U_{\Fr_0}$, then by the above remarks $[g]\in \FL^{-1}([m])$ for some $[m]\in\wt{\SM}^{\Fr_0}$ and so by hypothesis, $[m]\in\wt{\SM}([m_0])$. But again, by the above lemma, $\FL^{-1}([m])\subset\SU([m_0])$ and we are finished as $[g]\in \FL^{-1}([m])$.
\end{proof}

    Here, as in most of this paper, the infinitesimal $\d$ will be fixed and implicit, and so here we will write $U_\Fr$ for $U^\d_\Fr$.

    Before we can say anything more about this topology we need more formalities on orders. See Fuchs, \cite{Fuchs1963}, for a good coverage of the mathematics of ordered algebraic systems.
    Let's recall some basic notions and  definitions from the theory of ordered sets.
    Suppose that $(P,\leq)$ is a partially ordered set (ie., for all  $p,q,r\in P$ we have $p\leq p,p\leq q$ and $q\leq p$ implies $p=q$ and $p\leq q,q\leq r$ implies $p\leq r$) and $J\subset P$ with the induced partial order.
    We then say that $(P,<)$ is a (downward, respectively upward) set, if for each $p,q\in P$, there is $r\in P$ such that $r<p$ and $r<q$, respectively $r>s$ and $r>q$.
%    We will often assume that our partially ordered sets are downward directed if they are associated with one of the sets of moduli. For us, this will mean that for any pair of elements, there is a third less than these. For a totally ordered set (eg., subset of $\mu(0)_+$), this amount to: for any element, there is a smaller one.
\begin{definition}\label{def: of coinitial partial order relations}
    If $(P,<)$ is a partially ordered, downward directed set and $J\subset P$, then we say that $\mathbf{J}$\textbf{ is coinitial in }$\mathbf{P}$ with respect to $<$, if for all $p\in P$, there is $a\in J$ such that $a\leq p$.
    Suppose that we have two subsets $J,K\subset P$. Then we say that $\mathbf{J}$ \textbf{is coinitial with} $\mathbf{K}$, if for all all  $k\in K$, there is $j\in J$ such that $j\leq k$ and we say that $\mathbf{J}$ \textbf{and} $\mathbf{K}$ \textbf{are coinitial}, if $J$ is coinitial with $K$ and $K$ is coinitial with $J$.
    Suppose that $(T,<)$ is a totally ordered, upwardly directed, set with $S\subset T$ given the restricted total order. Suppose that $(D,<)$ is an upward directed set, and that $\SV=(\Fv_d:d\in D)$ is a $D$ net in $T$. Then we say that $\boldsymbol{\SV}$ \textbf{is convergently coinitial in the range of} $\mathbf{S}$, if for each $\Fs\in S$, there is $d_0\in D$ such that, if $d>d_0$ then $\Fv_d>\Fs$. If, in addition we have that for each $\Fv\in\SV$, there is $\Fs\in S$ with $\Fs\geq\Fv$, then we say that $\boldsymbol{\SV}$ \textbf{is convergently coinitial with} $\boldsymbol{S}$.
    If we speak about coinitiality when referring  to the partial order on germs, we will say \textbf{germwise coinitial}.
%     Let $\d\in\mu(0)_+$ and suppose that $(D^1,<)$ and $(D^2,<)$ are directed sets. Suppose that $\FF_1=(\Fr_d:d\in D^1)$ and $\FF_2=(\Fs_d:d\in D^2)$ are nets in $\mu(0)_+$. Then we say that $\FF_1$ and $\FF_2$ are coinitial with each other in the range of $\SN_\d$ if for each $\Fr_{d_1}\in\FF_1$ with $\Fr_{d_1}>\Fu$ for some $\Fu\in\SN_\d$, there is $\Fs_{d_2}\in\FF_2$ such that $\Fs_{d_2}<\Fr_{d_1}$ and the analogous statement with the indices 1 and 2 switched holds also.
\end{definition}
    Before we proceed to our work with coinitial subsets of various subsets of $\wt{\SN}_\d$, let's give a useful convergence correspondence for our map $\FL$.
\begin{proposition}\label{prop: FD converg <-> FL(FD)converg}
    Let $\d\in\mu(0)_+$ and  suppose that $\FD=([f_d]:d\in D)$ is a net in ${}_m\wt{\SG}_0$ with $\FL(\FD)=\{\FL([f_d]):d\in D\}$ the corresponding net in $\wt{\SM}$. Then $\FD$ is convergent to the zero germ $[0]$ in $\tau^\d$ if and only if $\;\;{}_\d\FL(\FD)$ is convergently coinitial in $\wt{\SN}_\d$.
\end{proposition}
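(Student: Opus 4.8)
The proposition is the dictionary entry that translates $\tau^\d$-convergence into the language of $\wt{\SM}$, and I would prove it simply by unwinding the definitions, with $\FL$ serving as a change of variable. First I would pin down the left-hand side. By \autoref{def: of topology  at 0 germ} the sets $U^\d_\Fr$, $\Fr\in\SN_\d$, form a subbase of the neighborhoods of $[0]$; but $\Fr_1\le\Fr_2$ gives $U^\d_{\Fr_1}\subset U^\d_{\Fr_2}$, and $U^\d_{\Fr_1}\cap U^\d_{\Fr_2}=U^\d_{\min(\Fr_1,\Fr_2)}$ with $\min(\Fr_1,\Fr_2)\in\SN_\d$, so this subbase is already closed under finite intersections and is therefore a neighborhood base at $[0]$. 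Hence $\FD$ converges to $[0]$ in $\tau^\d$ if and only if, for every $\Fr\in\SN_\d$, there is $d_0\in D$ with $[f_d]\in U^\d_\Fr$ for all $d>d_0$.

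Next I would convert membership into an inequality and pass from $\SN_\d$ to $\wt{\SN}_\d$. Since every $[f_d]$ lies in ${}_m\wt{\SG}_0$, the germ of $t\mapsto\norm{f_d}_t$ belongs to $\wt{\SM}$, so $\FL([f_d])$ is defined (\autoref{def: FL:SG->SM}); under the identification of germs in $\wt{\SM}$ with their restrictions to $\mu(0)_+$ its value at $\d$ is $\FL([f_d])(\d)=\norm{\rz f_d}_\d\in{}_\d\wt{\SM}=\wt{\SN}_\d$ (\autoref{lem: SN_d=SM_d}), so that ${}_\d\FL(\FD)$ is a net in $\wt{\SN}_\d$ with $d$-th term $\norm{\rz f_d}_\d$. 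By the definition of $U^\d_\Fr$ the relation $[f_d]\in U^\d_\Fr$ is exactly $\norm{\rz f_d}_\d<\Fr$, i.e.\ $\FL([f_d])(\d)<\Fr$; hence, by the previous paragraph, $\FD\to[0]$ in $\tau^\d$ is equivalent to saying that for every $\Fr\in\SN_\d$ the net ${}_\d\FL(\FD)$ is eventually below $\Fr$. Finally I would upgrade $\SN_\d$ to $\wt{\SN}_\d$ using that $\SN_\d$ is coinitial in $\wt{\SN}_\d$: for $[\wt{m}]\in\wt{\SM}$ the germ of $t\mapsto\wt{m}(t)\tfrac{t}{1+t}$ is strictly increasing, tends to $0$, and is $\le[\wt{m}]$, so it lies in $\SM$ and takes at $\d$ a value $\le\rz\wt{m}(\d)$; thus every element of $\wt{\SN}_\d$ dominates some element of $\SN_\d$, and since trivially $\SN_\d\subset\wt{\SN}_\d$ the conditions ``eventually below every $\Fr\in\SN_\d$'' and ``eventually below every $\Fs\in\wt{\SN}_\d$'' are interchangeable. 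As the values of ${}_\d\FL(\FD)$ themselves lie in $\wt{\SN}_\d$, the latter condition is precisely the statement that ${}_\d\FL(\FD)$ is convergently coinitial in $\wt{\SN}_\d$ in the sense of \autoref{def: of coinitial partial order relations}, and both implications of the proposition fall out together.

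I do not anticipate a genuine obstacle here: all the substantial work is already done — that $\tau^\d_0$ is a (sub)base of the neighborhoods of $[0]$, the construction of $\FL$, and the identification ${}_\d\wt{\SM}=\wt{\SN}_\d$ — so the proposition is essentially a transcription. The two points that need attention are (i) the germ-class bookkeeping, namely that $\FL$ is defined only on pseudomonotone germs, which is why the hypothesis places $\FD$ in ${}_m\wt{\SG}_0$ (and which is costless for questions of convergence to $[0]$, since every $U^\d_\Fr$ meets $\SG$ only inside ${}_m\SG_0$); and (ii) the coinitiality of $\SN_\d$ in $\wt{\SN}_\d$, the one place where a short construction rather than pure unwinding is required, which is what makes the apparently stronger $\wt{\SN}_\d$-formulation of the conclusion agree with the $\SN_\d$-formulation built into the definition of $\tau^\d$.
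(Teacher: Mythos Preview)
Your proof is correct and follows essentially the same route as the paper's: both simply unwind the definitions, using that $[f_d]\in U^\d_\Fr$ is equivalent to ${}_\d\FL([f_d])<\Fr$ and then matching this against the definition of ``convergently coinitial.'' Your treatment is in fact more careful than the paper's terse argument, since you explicitly justify passing from the defining family $\SN_\d$ of $\tau^\d$ to the larger $\wt{\SN}_\d$ appearing in the statement by exhibiting the coinitiality via $t\mapsto\wt m(t)\tfrac{t}{1+t}$, a point the paper leaves implicit.
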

\begin{proof}
    The proof is a matter of unwinding definitions. First note, for a given $d\in D$  \autoref{lem: FL: SG >->> SM} implies $[f_d]\in U_\Fr$ if and only if $\FL([f_d])\in\wt{\SM}^\Fr$. But, by definition, this holds if and only ${}\;\;_\d\FL([f_d])<\Fr$. So the assertion follows from the above definition that a net $\FN=(\Fr_d:d\in D)$ in $\wt{\SN}_\d$ is convergently coinitial in $\wt{\SN}_\d$ if and only if for each $\Fs\in\wt{\SN}_\d$ there is $d_0\in D$ such that $d>d_0$ implies that $\Fr_d<\Fs$.
\end{proof}
%    We need to record some coinitiality results. First, we need some definitions.
     Here we introduce the subset $\SP\SL^0$ of $\SM^0$ with the following motivation. Although we show that it is germwise coinitial in $\wt{\SM}$, it is nevertheless a concretely defined subset of $\SM^0$ that is discretely defined. It also gives a sparse, rigid set of germs that nonetheless does not have a countable coinitial subset. In the next part, \autoref{subsec: ord preserv tops not countable}, a subset will be used as a prototypical example of the germ scale preservation phenomena. In \autoref{sec: standard interpret tau}, we will deploy it in one of our standard descriptions of $\tau$. Needless to say, there are many other `nice' subsets of $\SM^0$ that could play this role. In the first version of this work (see \cite{McGaffeyPhD}, chapter 7), a special set of rigid germs (power series described as ``Hardy fields'' by the author) played such a role.
\begin{definition}\label{def: PL^0 germs at 0}
%    Let $\SN^0_\d\subset\SN_\d$ denote the set $\{\Fr\in\SN_\d:\Fr=\norm{\rz f}_\d:\;\text{for some}\;[f]\in\SG^0\}$. It's easy to see, as with $\SN_\d$, that $\SN^0_\d$ is a semiring. Let $\hat{\SN}^0_\d=\SN^0_\d\sqcup-\SN^0_\d\sqcup\{0\}$ denote the subring of $\hat{\SN}_\d$ it generates.
    Let $\boldsymbol{\SP\SL^0}\subset\SM^0$ denote the set germs  $\pzp\in\SM^0$ mapping $1/\bbn=\{1/j:j\in\bbn\}$ into $1/\bbn$ that are piecewise affine. In particular, an element $[\pzp]\in\SM^0$ is in $\SP\SL^0$ if for all $j\in\bbn$ sufficiently large, $\pzp|_{[1/j+1,1/j]}$ is an affine map. If $0<\d\sim 0$, let $\boldsymbol{{}_\d\SP}\subset\mu(0)_+$ denote $\{\rz\pzp(\d):[\pzp]\in\SP\SL^0\}$.
\end{definition}

\begin{proposition}\label{prop: SPSL^0 coinitial in wt(SM)}
    $\SP\SL^0$ is a coinitial subset of $\wt{\SM}$; ie., for each  $[m]\in\wt{\SM}$ with $m(t)\not=0$ for all $t>0$, there is $[\wh{m}]\in\SP\SL^0$ such that $\wh{m}(t)\leq m(t)$ for all sufficiently small $t>0$.
    In particular, $\SM^0$ is germwise coinitial in $\wt{\SM}$.
%    In particular, if $\wt{\SJ}\subset\wt{\SM}$, then there is $\SJ\subset\SM$ so that  $\SJ$ and $\wt{\SJ}$ are coinitial with each other.
\end{proposition}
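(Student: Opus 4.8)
\emph{Proof proposal.} The plan is to build a dominating germ $[\wh m]\in\SP\SL^0$ by hand: prescribe its values at the reciprocal integers $1/j$ and interpolate affinely in between, choosing those values small enough that the interpolant stays below $m$ on each interval $[1/(j+1),1/j]$. First I would fix a representative $m$ of $[m]$ that is defined, strictly positive, monotone increasing on some interval $(0,\e_0)$ with $\lim_{t\to 0}m(t)=0$; such a representative exists by the definition of $\wt\SM$ together with the standing hypothesis $m(t)\neq 0$. Choose $j_0\in\bbn$ with $1/j_0<\e_0$ and set $a_j\dot= m(1/j)$ for $j\geq j_0$, a non‑increasing sequence of strictly positive reals with $a_j\to 0$.

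Next I would pin down integers $k_j$ so that the reciprocals $1/k_j$ are both small enough and strictly decreasing. Put $k_{j_0}=\lceil 1/a_{j_0+1}\rceil$ and, recursively for $j\geq j_0$, $k_{j+1}=\max\{k_j+1,\ \lceil 1/a_{j+2}\rceil\}$. Then $(k_j)_{j\geq j_0}$ is a strictly increasing sequence in $\bbn$ with $1/k_j\leq a_{j+1}=m(1/(j+1))$ for every $j\geq j_0$. Now define $\wh m$ on $(0,1/j_0]$ by $\wh m(1/j)=1/k_j$ together with affine interpolation on each interval $[1/(j+1),1/j]$. One checks directly that $\wh m$ is continuous (the one‑sided nodal values match), piecewise affine, and sends $\{1/j:j\geq j_0\}$ into $1/\bbn$; strict increase of $(k_j)$ gives $1/k_{j+1}<1/k_j$, so $\wh m$ is strictly increasing on each interval and across the nodes; and $\wh m(1/j)=1/k_j\to 0$ with $\wh m$ monotone forces $\wh m(t)\to 0$ as $t\to 0$. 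Hence $[\wh m]\in\SM^0$, and in fact $[\wh m]\in\SP\SL^0$.

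It remains to verify domination. Fix $j\geq j_0$ and $t\in[1/(j+1),1/j]$: since $\wh m$ is increasing, $\wh m(t)\leq\wh m(1/j)=1/k_j$; since $m$ is increasing, $m(t)\geq m(1/(j+1))=a_{j+1}$; and $1/k_j\leq a_{j+1}$ by the choice of $k_j$. So $\wh m(t)\leq m(t)$ on every such interval, and these intervals exhaust $(0,1/j_0]$, whence $\wh m\leq m$ near $0$, i.e.\ $[\wh m]\leq[m]$ as germs. This is precisely coinitiality of $\SP\SL^0$ in $\wt\SM$ in the sense of \autoref{def: of coinitial partial order relations}, and the last clause (germwise coinitiality of $\SM^0$ in $\wt\SM$) is then immediate from $\SP\SL^0\subset\SM^0$. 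The only point demanding care is that dominating $m$ merely at the interpolation nodes does not suffice: one must bound $\wh m$ on $[1/(j+1),1/j]$ by its right‑endpoint value and $m$ by its left‑endpoint value, which is exactly why I impose $1/k_j\leq m(1/(j+1))$ — a one‑index shift — rather than $1/k_j\leq m(1/j)$. Everything else (strict monotonicity of $(k_j)$ so as to land in $\SM$ and not merely $\wt\SM$, continuity and piecewise‑affineness so as to land in $\SM^0$ and then $\SP\SL^0$) is routine bookkeeping.
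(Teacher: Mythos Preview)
Your proof is correct and rests on the same core idea as the paper's: build a piecewise-affine minorant by prescribing nodal values with a one-index shift, so that on each interval $[1/(j+1),1/j]$ the right-endpoint value of $\wh m$ is bounded by the left-endpoint value of $m$. The paper, however, carries this out in two stages: it first constructs an intermediate piecewise-affine $\wt m\leq m$ whose nodes lie at points $a_j$ extracted from the level sets $A_j=\{t:m(t)<1/j\}$ (using $\wt m(a_j)=m(a_{j+2})$), and only afterward rounds down to some $\wh m\in\SP\SL^0$ with nodes on $1/\bbn$ via $\wh m(1/j)=1/(\lfloor 1/\wt m(1/(j+1))\rfloor+1)$. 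Your single-pass construction --- taking $a_j=m(1/j)$ directly and forcing $k_j\geq\lceil 1/a_{j+1}\rceil$ together with $k_{j+1}>k_j$ --- collapses both stages into one and is the more economical argument; the paper's detour through the level sets $A_j$ buys nothing that your direct choice of nodes at $1/j$ does not already deliver.
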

\begin{proof}
    Let $[m]\in\wt{\SM}$ and $m\in[m]$. For $j\in\bbn$, let $A_j=\{t>0:m(t)<1/j\}$. Now, as $m$ is monotone (for $t$ small enough), we have that for all (sufficiently large) $j$ that $b\in A_j$ and $0<t<b$, implies $t\in A_j$. As $A_1\supseteq A_2\supseteq A_3\supseteq\cdots$ with $\cap\{A_j:j\in\bbn\}$ empty, there is $k_1<k_2<\cdots$ in $\bbn$ with $A_{k_j}\ssm A_{k_{j+1}}$ a nontrivial interval (ie., if $a,b\in A_{k_j}\ssm A_{k_{j+1}}$ and $a<t<b$, then $t\in A_{k_j}\ssm A_{k_{j+1}}$). So for each (sufficiently large) $j\in\bbn$, there is $a_j\in A_{k_j}\ssm A_{k_{j+1}}$ so that $a_j>a_{j+1}>\cdots$ with unique limit $0$. Given this, for each $j\in\bbn$ (sufficiently large), define $\wt{m}(a_j)=m(a_{j+2})$ and define $\wt{m}$ on $[a_{j+1},a_j]$ by affine interpolation between the value at $a_{j+1}$ and that at $a_j$.  With this, suppose that, for some sufficiently big $j$, $t\in (a_{j+1},a_j)$ so that, by definition $\wt{m}(t)<\wt{m}(a_j)=m(a_{j+2})<1/k_{j+2}$ as $a_{j+2}\in A_{k_{j+2}}\ssm A_{k_{j+3}}\subset A_{k_{j+2}}$. But $t\in (a_{j+1},a_j)$ implies eg., that $t>a_{j+1}$, ie., not in $A_{j+2}$ so that $m(t)\geq 1/k_{j+2}$. Piecing these inequalities together, we get $\wt{m}(t)<m(t)$, and so as $j$ is arbitrary in $\bbn$, and as, by definition $\wt{m}(a_j)<m(a_j)$,  we have a positive strictly monotone piecewise affine map $\wt{m}$ with $\wt{m}(t)<m(t)$ for all sufficiently small $t>0$.
    But, given such an $\wt{m}$, we can find $\wh{m}\in\SP\SL^0$ with $\wh{m}(t)<\wt{m}(t)$ for all sufficiently small $t>0$, as follows. Simply define, for sufficiently large $j$,
  \begin{align}
      \wh{m}(1/j)=\f{1}{\brkfl{\f{1}{\wt{m}(\f{1}{j+1})}}+1}.
  \end{align}
     where, as before, for $r\in\bbr_+$,
     $\brkfl{r}$
     is the least integer $k$ with $k\geq r$. Then, once again extending $\wh{m}$ to each interval $[1/j+1,1/j]$ (for sufficiently large $j\in\bbn$) by extending affinely from the endpoint values, clearly, $\wh{m}\in\SP\SL^0$. Furthermore, if for large $j$, $t\in (1/j+1,1/j)$, it's elementary to check that $\wh{m}(t)<\wt{m}(t)$.
\end{proof}

\begin{proposition}\label{prop: SPSL has no countable coin subset}
    Suppose that $[\pzp_1]>[\pzp_2]>\cdots$ is a sequence in $\SP\SL^0$. Then, there is $[\un{\pzp}]\in\SP\SL^0$ such that $[\un{\pzp}]<[\pzp_j]$ for all $j\in\bbn$.
    In particular, $\SN^0_\d$ does not have a countable coinitial subset.
\end{proposition}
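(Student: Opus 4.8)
The plan is to build a single piecewise-affine germ $[\un{\pzp}]\in\SP\SL^0$ lying below \emph{all} of $[\pzp_1],[\pzp_2],\ldots$ by a diagonalization against the sequence, using that each $[\pzp_j]$ is determined on $1/\bbn$ by integer values in the sense of \autoref{def: PL^0 germs at 0}. The second assertion will then follow from the first together with \autoref{prop: SPSL^0 coinitial in wt(SM)} and \autoref{lem: SN_d=SM_d}: if $\SN^0_\d={}_\d\SM^0$ had a countable coinitial subset $\Fr_1>\Fr_2>\cdots$, pull these back through the correspondence to monotone germs and then, since $\SP\SL^0$ is germwise coinitial in $\wt{\SM}\supset\SM^0$, replace each by an element of $\SP\SL^0$ below it; this produces a countable germwise-coinitial sequence in $\SP\SL^0$, which (one checks) would give a countable coinitial family in $\SN^0_\d$ in the ${}_\d$-order, and that contradicts the first assertion. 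So the content is all in the first assertion.

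For the first assertion: by passing to a common deleted neighborhood of $0$ on which all finitely-referenced representatives are defined, monotone and piecewise affine on each $[1/(j+1),1/j]$, it suffices to define $\un{\pzp}$ on $1/\bbn$ with values in $1/\bbn$, strictly monotone, and then extend affinely on each interval $[1/(j+1),1/j]$. First I would note that, since $[\pzp_j]$ is a germ, for each $j$ there is $N_j$ such that the inequalities among $\pzp_1,\ldots,\pzp_j$ hold on $(0,1/N_j]$; arrange $N_1<N_2<\cdots$. Now define $\un{\pzp}(1/k)$ for $k$ in the block $N_j\le k<N_{j+1}$ by
\begin{align}
    \un{\pzp}(1/k)=\frac{1}{\brkfl{\dfrac{1}{\pzp_j(1/(k+1))}}+1},
\end{align}
i.e.\ strictly below $\pzp_j(1/(k+1))$, hence strictly below $\pzp_j(t)$ for $t\in(1/(k+1),1/k]$ by monotonicity of $\pzp_j$; this guarantees $\un{\pzp}(t)<\pzp_j(t)$ for all small $t$, for every fixed $j$, because the defining block for index $k$ uses $\pzp_{j}$ with $j$ nondecreasing in $k$ and eventually $\ge$ any prescribed value. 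One must then (a) check that the resulting sequence of values $\un{\pzp}(1/k)$ is strictly decreasing in $k$ — this can be forced, if necessary, by additionally taking the minimum with $\un{\pzp}(1/(k-1))$-next-value, or by a harmless further shrink of each value — so that $[\un{\pzp}]\in\SM$, and (b) check it is continuous, which is automatic from the affine extension, so $[\un{\pzp}]\in\SM^0$ and, by construction, $[\un{\pzp}]\in\SP\SL^0$.

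The main obstacle I expect is purely bookkeeping rather than conceptual: reconciling the two monotonicities — strict monotonicity \emph{in $k$} of the values $\un{\pzp}(1/k)$ (needed for membership in $\SM$, hence $\SP\SL^0$) against the fact that the defining formula changes its reference germ $\pzp_j$ at the block boundaries $k=N_j$, where the floor-plus-one construction could momentarily produce a non-decrease. The clean fix is to define $\un{\pzp}(1/k)$ as the minimum of the displayed expression and $\tfrac12\un{\pzp}(1/(k-1))$ (interpreting $\un{\pzp}(1/(N_1-1))$ as, say, $1$), which preserves both the strict descent and the inequality $\un{\pzp}(t)<\pzp_j(t)$ eventually, since halving only makes things smaller. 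A secondary, minor point is the reduction in the second assertion: one should spell out that germwise coinitiality of $\SP\SL^0$ in $\SM^0$ together with \autoref{lem: SN_d=SM_d} transports a hypothetical countable coinitial subset of $\SN^0_\d$ into a hypothetical countable germwise-coinitial sequence in $\SP\SL^0$, and that evaluating at $\d$ (using \autoref{rem: e<<<d and f(d)<e -> f(d)=0} to rule out vanishing) turns the strict germ inequality $[\un{\pzp}]<[\pzp_j]$ from the first assertion into $\rz\un{\pzp}(\d)<\rz\pzp_j(\d)$, contradicting coinitiality. Neither of these is deep; the argument is essentially the classical ``no countable coinitial set in a non-Archimedean scale'' diagonalization adapted to the rigidity constraints of $\SP\SL^0$.
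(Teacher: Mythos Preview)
Your approach is correct and is the same diagonalization idea as the paper's, but the paper's explicit formula is cleaner and sidesteps exactly the bookkeeping you flag as the main obstacle. Instead of partitioning $\bbn$ into blocks and using a single $\pzp_j$ on block $j$ (which forces you to patch monotonicity at the block boundaries, e.g.\ by your extra $\min$ with $\tfrac12\un{\pzp}(1/(k-1))$), the paper simply sets
\[
\un{\pzp}(1/k)=\frac{1}{\max\{1/\pzp_j(1/k):1\le j\le k\}+k}
\]
and interpolates affinely. Taking the $\max$ over the first $k$ reciprocals already makes the value at $1/k$ lie below \emph{all} of $\pzp_1(1/k),\ldots,\pzp_k(1/k)$, and the extra ``$+k$'' in the denominator forces strict decrease in $k$ automatically; no block transitions, no auxiliary $N_j$, no second pass. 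Note also that since each $\pzp_j$ sends $1/\bbn$ into $1/\bbn$, the reciprocals $1/\pzp_j(1/k)$ are already integers, so your $\brkfl{\cdot}$ is unnecessary and the denominator is an integer on the nose.

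Your reduction for the second assertion is the same as the paper's: germwise coinitiality of $\SP\SL^0$ in $\wt{\SM}$ (\autoref{prop: SPSL^0 coinitial in wt(SM)}) plus \autoref{lem: SN_d=SM_d} lets one replace a hypothetical countable coinitial subset of $\SN^0_\d$ by one coming from $\SP\SL^0$, and then the first part produces $\rz\un{\pzp}(\d)$ strictly below all of them.
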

\begin{proof}
    For each $k\in\bbn$, define
  \begin{align}
    \un{\pzp}(1/k)=\f{1}{\max\{1/\pzp_j(1/k):1\leq j\leq k\}+k}
  \end{align}
    and then define $\un{\pzp}$\; to be  an affine function on each interval $[1/(k+1),1/k]$ for each $k\in\bbn$. One can check that $[\un{\pzp}]$ has the required properties. In particular, as $[\un{\pzp}]<[\pzp_j]$ for all $j\in\bbn$, then $\rz\un{\pzp}(\d)<\rz\pzp_j(\d)$ for all $j$ and this along with the previous lemma gets the second assertion.
\end{proof}
\begin{remark}\label{rem: SN^-1 = SN -> no count cofinal}
   The following two facts imply that $\rz m(\d)\mapsto\rz m^{-1}(\d)$ as $[m]$ varies in $\SM^0$ gives an order reversing bijection $\SN^0_\d\ra\SN^0_\d$, and therefore coinitial subsets to cofinal subsets. In particular, the above proposition implies that $\SN^0_\d$ does not have a countable cofinal subset.
   First, we have $[m]\mapsto[m^{-1}]$ is a bijection on $\SM^0$. Second, if $[m_1],[m_2]\in\SM^0$ and $\e\in\mu(0)_+$ with $\rz m_1(\e)<\rz m_2(\e)$, then $\rz m_1(\e)>\rz m_2(\e)$.
   Furthermore, the switch map $(1/j,1/k)\mapsto(1/k,1/j)$ from $1/\bbn\x 1/\bbn$ to itself extends to a bijection $S:\SP\SL^0\ra\SP\SL^0$. This map does not send $[\pzp]$ to $[\pzp^{-1}]$, but, when evaluated at, eg., $\d=1/\om$ for some $\om\in\rz\bbn_\infty$, does send a coinitial subset of $\SN^0_\d$ to a cofinal subset of $\SN^0_\d$ (and so by \autoref{cor: B_d1 coinit<->B_d2 coinit} in the next section) for any $\d\in\mu(0)_+$.
\end{remark}
\subsection{$\tau$ is independent of choice of $\d$}\label{subsec: tau independ of delta}
    In this subsection, we will prove that $\tau^\d$ is independent of choice of $\d\in\mu(0)_+$. We do this by using nonstandard techniques to prove a series of infinitesimal pinching constructions, of which the next result will contain the primary pinching result. The topological independence of $\tau=\tau^\d$ from the choice of $\d$, \autoref{thm: tau^d on SM indep of d}, will follow from these pointwise implies uniform pinching results.

    If $[m]\in\SM$, recall the definitions of $\SM_\Fr$ and $\SM([m])$ and others in \autoref{def: SU([m]), SM([m]), M^Fr, etc}.
\begin{proposition}\label{prop: ptwise bndd subset unif bndd}
    Given $\d_0\in\mu(0)_+$, and $\Fr_0,\Fs_0\in\SN_{\d_0}$, there are $[\un{m}],[\ov{m}]\in\SM^0$ such that if $[m]\in\SM$ with $\rz m(\d_0)>\Fr_0$, then $[m]>[\un{m}]$ and if $\rz m(\d_0)<\Fs_0$, then $[m]<[\ov{m}]$.
    That is, $\SM_{\Fr_0}\subset\SM([\un{m}])_\ell$ and $\SM^{\Fs_0}\subset\SM([\ov{m}])_u$.
    Parallel statements hold for $\wt{\SM}$, ie., existence of $[\un{m}'],[\ov{m}']\in\SM$ with $\wt{\SM}_{\Fr_0}\subset\wt{\SM}([\un{m}'])_\ell$ and $\wt{\SM}^{\Fs_0}\subset\wt{\SM}([\ov{m}'])_u$.
\end{proposition}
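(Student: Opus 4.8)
The plan is to prove the lower-bound assertion (existence of $[\un m]$) and the upper-bound assertion (existence of $[\ov m]$) in parallel via one pointwise-pinching-implies-uniform-pinching argument, and then to read off the $\wt\SM$ versions and the reformulation in terms of $\SM([\un m])_\ell$ and $\SM([\ov m])_u$. First I would normalise. Since $\Fr_0\in\SN_{\d_0}={}_{\d_0}\SM$ (\autoref{lem: SN_d=SM_d}), write $\Fr_0=\rz m_1(\d_0)$; replacing $m_1$ by a germwise-smaller element of $\SM^0$ using germwise coinitiality (\autoref{prop: SPSL^0 coinitial in wt(SM)}) and then halving, we may take $[m_1]\in\SM^0$ with $\rz m_1(\d_0)<\Fr_0$, so its compositional inverse $[m_1^{-1}]\in\SM$ exists (the remark after \autoref{def:  SM and SM^0}). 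For $[m]\in\SM$ with $\rz m(\d_0)>\Fr_0$, the increasing germ $m_1^{-1}$ turns this into $\rz\phi(\d_0)>\d_0$ with $\phi:=m_1^{-1}\circ m\in\SM$, and $m=m_1\circ\phi$; thus a germwise lower bound $[\un\phi]\in\SM^0$ for $\Phi:=\{[\phi]\in\SM:\rz\phi(\d_0)>\d_0\}$ yields the lower bound $[\un m]:=[m_1\circ\un\phi]$, by monotonicity of $m_1$. Dually, writing $\Fs_0=\rz m_2(\d_0)$ and replacing $m_2$ by a continuous germwise-larger germ (e.g.\ the averaging $t\mapsto\int_1^2 m_2(tu)\,du$, which is continuous, strictly increasing, vanishes at $0$, and dominates $m_2$), we reduce the upper bound to finding a germwise upper bound $[\ov\psi]\in\SM^0$ for $\Psi:=\{[\psi]\in\SM:\rz\psi(\d_0)<\d_0\}$, after which $[\ov m]:=[m_2\circ\ov\psi]$.

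The crux — and the step I expect to be the main obstacle — is to bound $\Phi$ from below and $\Psi$ from above by a single standard germ, knowing only the one inequality at the fixed infinitesimal $\d_0$. Here I would follow the route the authors indicate, combining Hardy's monotonicity device with Hirschfeld's nonstandard trick. The monotonicity point is that if a monotone germ $[\phi]$ satisfies $\phi(s_j)\le c_j$ along a sequence $s_j\downarrow 0$, then $\phi$ is germwise dominated by the staircase equal to $c_j$ on $[s_{j+1},s_j]$ (and dually for lower bounds), so if $\d_0\in(s_{J+1},s_J]$ the constraint $\rz\phi(\d_0)>\d_0$ forces $\rz c_J>\d_0$, limiting how sparse the witnessing sequence can be near $\d_0$. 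I would then argue by contradiction: if no $[\un\phi]\in\SM^0$ were a germwise lower bound for $\Phi$, it would suffice, by coinitiality, to rule out $[\pzp]\in\SP\SL^0$, so each such $[\pzp]$ would come with a witness $[\phi_\pzp]\in\Phi$ and a sequence along which $\phi_\pzp\le\pzp$. One then packages the collection of the resulting constraints as an internal statement about a single internal monotone function — this is exactly where Hirschfeld's device is used — and over/underflows it past $\d_0$; evaluating the resulting internal object at $\d_0$ produces a positive infinitesimal lying below every element of $\SN_{\d_0}$, contradicting \autoref{lem: la<Fr all Fr in Nd->la<<<Fr,also} (equivalently, once the witnesses are organised into a decreasing chain, contradicting that $\SN^0_\d$ has no countable coinitial subset, \autoref{prop: SPSL has no countable coin subset}). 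The same machinery with inequalities reversed handles the upper bound for $\Psi$.

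For the $\wt\SM$ versions: every germ of $\wt\SM$ is (nonstrictly) monotone, so the staircase domination is unchanged, and rerunning the argument — or using germwise coinitiality of $\SM$ in $\wt\SM$ together with a mild strictly-increasing perturbation to stay inside $\SM$ — yields $[\un m'],[\ov m']\in\SM$ with $\wt\SM_{\Fr_0}\subset\wt\SM([\un m'])_\ell$ and $\wt\SM^{\Fs_0}\subset\wt\SM([\ov m'])_u$. Finally, the passage between the germ inequalities $[m]>[\un m]$, $[m]<[\ov m]$ and membership in $\SM([\un m])_\ell$, $\SM([\ov m])_u$ (and the $\wt\SM$ analogues) is immediate from the definitions in \autoref{def: SU([m]), SM([m]), M^Fr, etc}, which completes the proof.
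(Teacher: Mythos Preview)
Your plan correctly names the two ingredients --- Hardy's monotone interpolation and a nonstandard overflow device --- but the step you label ``Hirschfeld's device'' is where the proposal breaks down. You assume no lower bound exists, collect for each $[\pzp]\in\SP\SL^0$ a witness $[\phi_\pzp]\in\Phi$ violating $[\phi_\pzp]>[\pzp]$, and then propose to ``package the collection of the resulting constraints as an internal statement about a single internal monotone function'' and overflow past $\d_0$. This is the gap: the family $\{\phi_\pzp\}$ is indexed by the external set $\SP\SL^0$, and you give no mechanism for internalizing it; nor is it clear what internal object, evaluated at $\d_0$, would lie below every element of $\SN_{\d_0}$, or why that would be a contradiction (\autoref{lem: la<Fr all Fr in Nd->la<<<Fr,also} is an implication, not an impossibility). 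The alternative you offer --- organizing witnesses into a decreasing chain and invoking \autoref{prop: SPSL has no countable coin subset} --- goes the wrong way: that proposition says no countable chain is coinitial, which does not obstruct building one.

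The paper's argument is direct and constructive, with neither a normalization nor a contradiction. Fix $[m_0]\in\SM$ with $\rz m_0(\d_0)=\Fr_0$ and define the \emph{standard} set
\[
P\;=\;\bigl\{k\in\bbn:\ \exists\, r\in(1/(k+1),\,1/k]\ \text{such that}\ m(r)>m_0(r)\ \text{for every}\ [m]\in\SM_{\Fr_0}\bigr\}.
\]
The single infinitesimal $\d_0$ lies in some $(1/(\om_0+1),1/\om_0]$ with $\om_0\in\rz\bbn_\infty$, and by definition of $\SM_{\Fr_0}$ it witnesses the inequality simultaneously for all $[m]$; hence $\om_0\in\rz P$, so $P$ is infinite. \emph{That} is the Hirschfeld trick here: a standard subset of $\bbn$ whose transfer contains an infinite integer must itself be infinite. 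This yields a sequence $r_j\downarrow 0$ with $m(r_j)>m_0(r_j)$ uniformly over $[m]\in\SM_{\Fr_0}$, and only now does Hardy's device enter: set $\un m(r_j)=m_0(r_{j+1})$ and interpolate monotonically, so that for $t\in[r_{j+1},r_j)$ one has $m(t)\ge m(r_{j+1})>m_0(r_{j+1})=\un m(r_j)>\un m(t)$. The upper bound $[\ov m]$ is the mirror image (shift the other way, $\ov m(r_j)=m_0(r_{j-1})$), and the $\wt\SM$ assertions follow from germwise coinitiality of $\SM$ in $\wt\SM$. Your normalization via $m_1^{-1}$ is correct but superfluous; the substantive point is that the overflow applies to a single standard set $P$ defined in one line, not to an externally indexed family of witnesses.
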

\begin{proof}
    As both proofs are quite similar, we will prove the first and indicate the changes needed in the first proof to get the second assertion. Let $[m_0]\in\SM$ be such that $\Fr_0=\rz m_0(\d_0)$. Let $P$ denote the set of $k\in\bbn$ such that there is $r\in\bbr_+$ with $1/(k+1)<r\leq 1/k$ and $m(r)\leq m_0(r)$ for all $[m]\in\SM^{\Fr_0}$. Then there is $\om_0\in\rz\bbn_\infty$ such that $1/(\om_0+1)<\d_0\leq 1/\om_0$ with $\rz m(\d_0)<\Fr_0=\rz m_0(\d_0)$ for all $[m]\in\SM^{\Fr_0}$; eg., $\om_0\in\rz P$. Therefore, $P$ is infinite; and so if $k_1<k_2<\cdots$ is an enumeration of the integers in $P$, then for each $j\in\bbn$, there is $r_j\in\bbr_+$ with $1/(k_j+1)<r_j\leq 1/k_j$ satisfying $m(r_j)<m_0(r_j)$. Note, eg., that the $r_j$'s form a decreasing sequence with unique limit $0$. Given this, define $[\un{m}]\in\SM^0$ as follows. For $t>1/k_1$, define $\un{m}(t)>0$ arbitrarily monotone larger than $m_0(r_2)$. Otherwise, for each $j\in\bbn$, define $\un{m}(r_j)=m_0(r_{j+1})$ and as this definition gives $\un{m}(r_{j+1})<\un{m}(r_j)$ for all $j\in\bbn$, for each $j\in\bbn$, we can extend $\un{m}$ to a continuous monotonic function on the interval $[1/r_{j+1},r_j]$, giving an element $\un{m}\in\SM^0$. But then note that for each $[m]\in\SM^{\Fr_0}$, for sufficiently small $t>0$, $t\in[r_{j+1},r_j)$ for some $j\in\bbn$ and by definition
  \begin{align}
     m(t)\geq m(r_{j+1})>m_0(r_{j+1})=\un{m}(r_j)>\un{m}(t),
  \end{align}
     ie., $[m]\in\SM^{\Fr_0}$ implies that $[m]>[\un{m}]$, as we wanted.

     To prove the second assertion,  let $[m_0]\in\SM$ be so that $\Fs_0=\rz m_0(\d_0)$, and then define $P$ above again, except obviously now looking at those $k$ for which there is $r\in(1/(k+1),1/k]$  with $m(r)>m_0(r)$ for all $[m]\in\SM_{\Fs_0}$. The rest of the argument goes through with slight changes, noting now that we will now define $\ov{m}(r_j)=m_0(r_{j-1})$. As we are working with germs, we may begin  at $r_2$.

   The final statements for $\wt{\SM}$ follows from those for $\SM$ just proved and the fact that $\SM$ is germwise coinitial in $\wt{\SM}$.
\end{proof}

    In the following, if $[m_0]\in\wt{\SM}$, and $\SB\subset\wt{\SM}$, then $m_0\SB$ denotes $\{[m_0\circ m]:[m]\in\SB\}$.

\begin{proposition}\label{prop: un(SM)^r subset un(SM)(m)}
    Given $[\un{m}]\in \wt{\SM}$ and $\d\in\mu(0)_+$, there is $\Fr\in\SN_\d$ such that $\SM^\Fr\subset\SM([\un{m}])_u$.
    Similarly, given $[m]\in\wt{\SM}$, there is $\wt{\Fr}\in\SN_\d$ with $\wt{\SM}^{\wt{\Fr}}\subset\wt{\SM}([m])_u$.
\end{proposition}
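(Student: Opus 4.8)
The plan is to extract the modulus $\Fr$ by a pointwise‑implies‑uniform pinching argument of exactly the flavour used for \autoref{prop: ptwise bndd subset unif bndd}, after two preliminary reductions.

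\emph{Reductions.} By \autoref{prop: SPSL^0 coinitial in wt(SM)} the set $\SM^0$ is germwise coinitial in $\wt\SM$, so first pick $[\un m_0]\in\SM^0$ with $[\un m_0]\le[\un m]$; replacing $\un m_0$ by $\un m_0\circ\un m_0$ (still in $\SM^0$) we may take $[\un m_0]<[\un m]$, and since then $\SM([\un m_0])_u\subseteq\SM([\un m])_u$ it suffices to treat $[\un m_0]$; thus assume $[\un m]\in\SM^0$. Now use the compositional inverse $[\un m^{-1}]\in\SM$ (Remark after \autoref{def:  SM and SM^0}). The map $[m]\mapsto[n]:=[\un m^{-1}\circ m]$ is a bijection of $\SM$ onto itself (inverse $[n]\mapsto[\un m\circ n]$) carrying the order relation $[m]<[\un m]$ to $[n]<[\mathrm{id}]$ and the inequality $\rz m(\d)<\Fr$ to $\rz n(\d)<\rz\un m^{-1}(\Fr)$. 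Hence it is enough to find $\Fr'\in\SN_\d$ such that every $[n]\in\SM$ with $\rz n(\d)<\Fr'$ satisfies $[n]<[\mathrm{id}]$; for then $\Fr:=\rz\un m(\Fr')$ works, and $\Fr\in\SN_\d$ because $\Fr'=\rz\nu(\d)$ for some $[\nu]\in\SM$ (\autoref{lem: SN_d=SM_d}) forces $\Fr=\rz(\un m\circ\nu)(\d)$ with $[\un m\circ\nu]\in\SM$. The second assertion, for $\wt\SM$, then follows from the first together with germwise coinitiality: given $[m]\in\wt\SM$ choose $[m_1]\in\SM$ with $[m_1]<[m]$, take $\wt\Fr\in\SN_\d$ with $\SM^{\wt\Fr}\subseteq\SM([m_1])_u$, and observe that any $[g]\in\wt\SM^{\wt\Fr}$ is germwise dominated by an element of $\SM^{\wt\Fr}$, hence by $[m_1]<[m]$, so $\wt\SM^{\wt\Fr}\subseteq\wt\SM([m])_u$.

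\emph{The case $[\un m]=[\mathrm{id}]$.} Here I would argue by contradiction, running the mechanism of \autoref{prop: ptwise bndd subset unif bndd} in the reverse direction. If no such $\Fr'$ existed, then for each $[\nu]\in\SM$ there would be $[n_\nu]\in\SM$ with $\rz n_\nu(\d)<\rz\nu(\d)$ yet $n_\nu(t)\ge t$ along some standard $t\downarrow0$. For a candidate germ $[\nu]$ let $P_\nu$ be the set of $k\in\bbn$ for which there is $r\in(1/(k+1),1/k]$ with $n(r)\le\nu(r)$ for all $[n]\in\SM$ satisfying $\rz n(\d)<\rz\nu(\d)$; as in that proof the infinite integer straddling $\d$ lies in $\rz P_\nu$, overflow makes $P_\nu$ infinite, and enumerating $P_\nu=\{k_1<k_2<\cdots\}$, choosing witnesses $r_j\in(1/(k_j+1),1/k_j]$ and interpolating manufactures a genuine standard germ in $\SM^0$ that sits germwise below every such $[n]$ near $0$ while still having value at $\d$ coinitially below $\rz\nu(\d)$. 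Feeding this back against the absence of a countable coinitial subset of $\SN_\d$ (\autoref{prop: SPSL has no countable coin subset}) and against \autoref{lem: la<Fr all Fr in Nd->la<<<Fr,also} --- which forbids any germ of $\SM$ from being incomparably small at $\d$ relative to $\SM$, since a germ cannot lie strictly below its own value --- produces the contradiction; more constructively, it exhibits $\Fr'$ directly as the value at $\d$ of a single germ of $\SM$ dominating the whole family $\{[n_\nu]\}$.

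\emph{Main obstacle.} The genuinely hard point is the case $[\un m]=[\mathrm{id}]$: one must convert the \emph{single} hypothesis $\rz m(\d)<\Fr'$ at the fixed infinitesimal $\d$ into the \emph{global} conclusion $m(t)<t$ for all small $t$. This is precisely the pointwise‑implies‑uniform pinching phenomenon that makes the Hirschfeld/Hardy monotonicity device of \autoref{prop: ptwise bndd subset unif bndd} indispensable, and the subtlety is that $m$ ranges only over transfers of standard monotone germs, so that the behaviour of $\rz m$ at $\d$ is rigidly tied --- via overflow on the integers $1/k$ bracketing $\d$ --- to its behaviour throughout a standard neighbourhood of $0$; it is exactly this rigidity that must be leveraged to exclude the ``thin spike'' germs (small at $\d$ but with $m(t_k)\ge t_k$ along a sparse sequence) that would otherwise defeat the bound.
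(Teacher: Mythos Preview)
Your reductions --- to $[\un m]\in\SM^0$ and then to $[\un m]=[\mathrm{id}]$ via the bijection $[m]\mapsto[\un m^{-1}\circ m]$ --- are correct and use exactly the composition device the paper employs. The gap is in the identity case: your sketch attempts to rerun the mechanism of \autoref{prop: ptwise bndd subset unif bndd} but does not close. You write that the interpolated germ ``sits germwise below every such $[n]$,'' whereas an \emph{upper} bound is what that construction produces (and what is needed); and the purported contradiction via \autoref{prop: SPSL has no countable coin subset} and \autoref{lem: la<Fr all Fr in Nd->la<<<Fr,also} is asserted rather than carried out. No concrete $\Fr'$ emerges.

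What you are missing is that \autoref{prop: ptwise bndd subset unif bndd} already does all the pinching work; one should compose \emph{after} invoking it, not before. The paper's argument: set $\Fs=\rz\un m(\d)$, let $[\wt m]\in\SM^0$ be the germ that \autoref{prop: ptwise bndd subset unif bndd} supplies with $\SM^\Fs\subset\SM([\wt m])_u$, and then apply the order-preserving bijection $[m]\mapsto[\un m\circ\wt m^{-1}\circ m]$ of $\SM$. This carries $\SM^\Fs$ onto some $\SM^{\wt\Fs}$ with $\wt\Fs\in\SN_\d$ and carries $\SM([\wt m])_u$ onto $\SM([\un m])_u$, giving the inclusion directly. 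In your framework, the identity case follows at once by composing with $\wt m^{-1}$ for the $[\wt m]$ that \autoref{prop: ptwise bndd subset unif bndd} hands you --- there is no need to reprove that proposition.

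Your reduction for the $\wt\SM$ clause also needs work: the claim that every $[g]\in\wt\SM^{\wt\Fr}$ is dominated by some $[n]\in\SM^{\wt\Fr}$ would require producing $[h]\in\SM$ with $\rz h(\d)<\wt\Fr-\rz g(\d)$, and it is not immediate that this difference is bounded below by an element of $\SN_\d$. The paper sidesteps this by invoking the $\wt\SM$ clause of \autoref{prop: ptwise bndd subset unif bndd} directly and repeating the same composition trick.
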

\begin{proof}
%    Note that the monotonicity condition and limit $0$ at $0$ conditions implies that for each $[\wt{m}]\in\wt{\SM}$, there is $[m]\in\SM^0$ with $[\wt{m}]<[m]$. Therefore, we can assume that $[\un{m}]\in\SM^0$.
    We will first prove both conclusions for $[\un{m}]\in\SM^0$.
    Suppose that the first conclusion does not hold; ie., there does not exist $\Fr\in\SN_\d$ with $\SM^\Fr\subset\SM([\un{m}])_u$  and let $\Fs=\rz\un{m}(\d)$. By  \autoref{prop: ptwise bndd subset unif bndd}, there is $[\wt{m}]\in\SM^0$ such that $\SM^\Fs\subset\SM([\wt{m}])_u$. Now as  representatives $\un{m},\wt{m}\in\SM^0$, both $\un{m}^{-1}$ and $\wt{m}^{-1}$ are in $\SM$ and so $\SM^\Fs\subset\SM([\wt{m}])_u$ implies that (leaving out some brackets)
\begin{align}
    \un{m}\circ\wt{m}^{-1}(\SM^\Fs)\subset \un{m}\circ\wt{m}^{-1}(\SM([\wt{m}])_u)=\SM([\un{m}])_u,
\end{align}
     since $\un{m}\circ\wt{m}^{-1}$ is (a representative of) the germ of a (monotone) homeomorphism. But, if $\wt{\Fs}\dot=\rz\un{m}^{-1}\circ\wt{m}(\Fs)$ (which is in $\SN_\d$ as $\Fs$ is), then again as $\un{m}\circ\wt{m}^{-1}$ is a (monotone) homeomorphism, we have that $\un{m}\circ\wt{m}^{-1}(\SM^{\Fs})=\SM^{\wt{\Fs}}$, contradicting our contrary conclusion.

     The second conclusion ($[\un{m}]$ still in $\SM^0$) follows from the statements concerning $\wt{\SM}$ in \autoref{prop: ptwise bndd subset unif bndd} just as the first conclusion did, once one realizes that, analogous to the above, if $\wh{m}\in\SM^0$, then $\wh{m}\wt{\SM}([m])_u=\wt{\SM}([\wh{m}\circ m])_u$.

     Now, if $[\un{m}]\in\wt{\SM}$, choose $[m_0]\in\SM^0$ with $[m_0]<[\un{m}]$ from which we get $\SM([m_0])_u\subset\SM([\un{m}])_u$ and $\wt{\SM}([m_0])_u\subset\wt{\SM}([\un{m}])_u$. Given this, the general case follows from the proof with $[\un{m}]\in\SM^0$.
%     By  \autoref{prop: SPSL^0 coinitial in wt(SM)}, there is $\wh{m}\in\SM^0$ with $\wh{m}(t)<\wt{m}(t)$ for all sufficiently small $t>0$. In particular, the set, $\wt{\SM}(\wh{m})$ is a subset of $\wt{\SM}(\wt{m})$.
%     So, it suffices to show that there is $\Fr\in\wt{\SN}_\d$ such that $\wt{\SM}^\Fr\subset\wt{\SM}(\wh{m})$.  As above, let $\wh{\Fr}=\rz\wh{m}(\d)$ and suppose that such an $\Fr$ does not exist. Then as in the previous argument, the proposition above implies that there is $\wt{m}\in\SM^0$ with $\wt{\SM}^{\wh{\Fr}}\subset\wt{\SM}(\wt{m})$. Now, as $\wh{m}$ and $\wt{m}$ are in $\SM^0$, then the argument used in the proof of the first part works, giving a contradiction to the contrary hypothesis.
%     @@@@@@@@@@@@ Furthermore, if $\un{m}\in\wt{\SM}$ satisfies $\un{m}(t)<\wh{m}(t)$ for all sufficiently small $t>0$, then there is, $\ov{m}\in\SM(\wh{m})$ with $\un{m}(t)<\ov{m}$ for all small $t$.
%     But then the first conclusion implies the existence of $\wh{\Fr}\in\SN_\d$ such that if $m\in\SM$ satisfies $\rz m(\d)<\wh{\Fr}$, then @@@@@@@@
\end{proof}
\begin{corollary}
    Given $\d_0\in\mu(0)_+$, $[\ov{m}]\in\SM$ and $\Fr_0=\rz\ov{m}(\d_0)\in\SN_{\d_0}$, there is $[\wh{m}]\in\SM$ such that
\begin{align}
    U^{\d_0}_{\Fr_0}\subset \SU([\wh{m}]).
\end{align}
\end{corollary}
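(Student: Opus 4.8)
The plan is to transport the desired inclusion through the order-preserving correspondence $\FL$ of \autoref{lem: FL: SG >->> SM}, turning it into a statement purely about subsets of $\wt{\SM}$, where the substantive work has already been done in the uniform pinching \autoref{prop: ptwise bndd subset unif bndd}. Recall that $\FL$ satisfies $\FL(U^{\d_0}_{\Fr_0})=\wt{\SM}^{\Fr_0}$ and $\FL(\SU([m]))=\wt{\SM}([m])$, and that \autoref{lem: wtSM_r sub wtSM(m)->U_r sub U(m)} already packages the implication ``$\wt{\SM}^{\Fr_0}\subset\wt{\SM}([m_0])\Rightarrow U_{\Fr_0}\subset\SU([m_0])$''. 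Hence it suffices to produce $[\wh{m}]\in\SM$ with $\wt{\SM}^{\Fr_0}\subset\wt{\SM}([\wh{m}])$.

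First I would observe that the hypothesis $[\ov{m}]\in\SM$ enters only to ensure $\Fr_0=\rz\ov{m}(\d_0)\in\SN_{\d_0}$ (which, via \autoref{lem: SN_d=SM_d}, is automatic), and that this is exactly the input demanded by \autoref{prop: ptwise bndd subset unif bndd}. Applying the ``parallel statement for $\wt{\SM}$'' half of that proposition with the choice $\Fs_0:=\Fr_0$ produces an element $[\wh{m}]\in\SM$ (the $[\ov{m}']$ of the proposition) with $\wt{\SM}^{\Fr_0}\subset\wt{\SM}([\wh{m}])_u=\wt{\SM}([\wh{m}])$. Feeding this containment into \autoref{lem: wtSM_r sub wtSM(m)->U_r sub U(m)} with $\d=\d_0$ and $[m_0]=[\wh{m}]$ yields $U^{\d_0}_{\Fr_0}\subset\SU([\wh{m}])$, which is the assertion.

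I do not expect a genuine obstacle here: the corollary is essentially a specialization of the uniform pinching proposition, and no new estimate is needed. The only points requiring a little care are bookkeeping: (i) that $[\wh{m}]$ can be taken in $\SM$ rather than merely in $\wt{\SM}$, which is precisely why the $\wt{\SM}$-version of \autoref{prop: ptwise bndd subset unif bndd} is stated with $[\ov{m}']\in\SM$; and (ii) the implicit fact, already used inside the proof of \autoref{lem: wtSM_r sub wtSM(m)->U_r sub U(m)}, that every $[g]\in U^{\d_0}_{\Fr_0}$ lies in ${}_m\wt{\SG}_0$, so that $\FL([g])$ is defined.
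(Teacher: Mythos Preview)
Your proof is correct and follows essentially the same two-step route as the paper: apply \autoref{prop: ptwise bndd subset unif bndd} to obtain a uniform germ bound, then feed the resulting containment into \autoref{lem: wtSM_r sub wtSM(m)->U_r sub U(m)}. The only difference is that you invoke the $\wt{\SM}$ (``parallel'') version of the proposition, which matches the hypothesis of the lemma more directly than the $\SM$ version the paper cites.
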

\begin{proof}
    First of all,  \autoref{prop: ptwise bndd subset unif bndd} implies that
        $\SM^{\Fr_0}\subset\SM([\wh{m}])_u$ for some $[\wh{m}]\in\SM$.
    But then \autoref{lem: wtSM_r sub wtSM(m)->U_r sub U(m)} implies that
    $U_{\Fr_0}\subset\SU([\wh{m}])$.
\end{proof}
    We also have use for the reverse occurrence.
\begin{corollary}\label{cor: given m, U_r subset SU(m) some r}
    Given $\d\in\mu(0)_+$ and $[\un{m}]\in\SM$, there is $\un{\Fr}\in\SN_\d$ with $U_{\un{\Fr}}\subset\SU([\un{m}])$.
\end{corollary}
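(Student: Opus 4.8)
The plan is to obtain this as a near-immediate consequence of the second conclusion of \autoref{prop: un(SM)^r subset un(SM)(m)} together with \autoref{lem: wtSM_r sub wtSM(m)->U_r sub U(m)}, reversing the roles those two results played in the preceding corollary; all the substantive work (the pointwise-to-uniform pinching and the $\FL$-correspondence) has already been carried out. First I would note that since $\SM\subset\wt{\SM}$, the given $[\un{m}]$ is in particular an element of $\wt{\SM}$, so the ``similarly'' clause of \autoref{prop: un(SM)^r subset un(SM)(m)} applies and produces some $\un{\Fr}\in\SN_\d$ with $\wt{\SM}^{\un{\Fr}}\subset\wt{\SM}([\un{m}])_u$.

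Next I would rewrite this, using the notational convention $\wt{\SM}([\un{m}])=\wt{\SM}([\un{m}])_u$, as $\wt{\SM}^{\un{\Fr}}\subset\wt{\SM}([\un{m}])$, and then apply \autoref{lem: wtSM_r sub wtSM(m)->U_r sub U(m)} with $\Fr_0=\un{\Fr}$ and $[m_0]=[\un{m}]$ to conclude $U_{\un{\Fr}}\subset\SU([\un{m}])$, which is exactly the asserted inclusion. Concretely, this last step is just the chain $U_{\un{\Fr}}=\FL^{-1}(\wt{\SM}^{\un{\Fr}})\subset\FL^{-1}(\wt{\SM}([\un{m}]))=\SU([\un{m}])$ furnished by \autoref{lem: FL: SG >->> SM}, so one could also write it out directly without citing the lemma.

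There is no genuine obstacle here; the only point needing a little care is the bookkeeping between the $\SM$- and $\wt{\SM}$-flavored versions of the two inputs. \autoref{lem: wtSM_r sub wtSM(m)->U_r sub U(m)} is phrased in terms of $\wt{\SM}$ and $\wt{\SN}_\d$, so I must feed it the $\wt{\SM}$ form of the pinching statement rather than the $\SM$ form, and I should record that the $\un{\Fr}$ so produced indeed lies in $\wt{\SN}_\d$ — which holds since $\SN_\d\subset\wt{\SN}_\d$ — so that the hypotheses of that lemma are met.
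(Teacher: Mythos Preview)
Your proposal is correct and is exactly the paper's approach: the paper's proof simply states that the corollary ``is an immediate consequence of \autoref{prop: un(SM)^r subset un(SM)(m)} and \autoref{lem: wtSM_r sub wtSM(m)->U_r sub U(m)},'' and you have spelled out precisely how those two results combine, including the minor bookkeeping that $[\un{m}]\in\SM\subset\wt{\SM}$ and $\un{\Fr}\in\SN_\d\subset\wt{\SN}_\d$.
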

\begin{proof}
     This is an immediate consequence of  \autoref{prop: un(SM)^r subset un(SM)(m)} and \autoref{lem: wtSM_r sub wtSM(m)->U_r sub U(m)}.
\end{proof}

     The following direct corollary is critical to verifying that $\tau$ is independent of the choice of $\d$. (Recall the notations in \autoref{def:{}_dSM, {}_dwtSM, etc}.)
\begin{corollary}\label{cor: B_d1 coinit<->B_d2 coinit}
    Given positive $\d_1,\d_2\in\mu(0)_+$ and $\SB\subset\SM$, we have that ${}_{\d_1}\SB$ is coinitial in $\SN_{\d_1}$ if and only if ${}_{\d_2}\SB$ is coinitial in $\SN_{\d_2}$.
\end{corollary}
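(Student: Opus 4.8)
The plan is to reduce the statement to the two preceding corollaries, which already give us, for any single infinitesimal, a two-way comparison between basic neighborhoods $U_\Fr$ and the uniform bounding sets $\SU([m])$. The key point is that a family $\SB \subset \SM$ being coinitial in $\SN_\d$ is, via the bijection $\FL$ of \autoref{lem: FL: SG >->> SM} (applied to the associated germs $x \mapsto m(|x|)$), equivalent to the statement that every $U_\Fr$ with $\Fr \in \SN_\d$ contains some member of the corresponding family of basic neighborhoods indexed by $\SB$; but the uniform bounding sets $\SU([m])$ do not depend on $\d$ at all, so they can serve as the $\d$-free intermediary.

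First I would fix positive $\d_1, \d_2 \in \mu(0)_+$ and assume, without loss of generality by symmetry, that ${}_{\d_1}\SB$ is coinitial in $\SN_{\d_1}$; the goal is to show ${}_{\d_2}\SB$ is coinitial in $\SN_{\d_2}$. Take an arbitrary $\Fr_2 \in \SN_{\d_2}$, say $\Fr_2 = \rz\ov{m}(\d_2)$ with $[\ov{m}] \in \SM$. By \autoref{cor: given m, U_r subset SU(m) some r} applied at $\d_1$ there is $\un{\Fr} \in \SN_{\d_1}$ with $U_{\un{\Fr}} \subset \SU([\ov{m}])$; equivalently, translating through $\FL$, every $[m] \in \SM$ with $\rz m(\d_1) < \un{\Fr}$ satisfies $[m] < [\ov{m}]$ as germs, hence $\rz m(\d_2) < \rz\ov{m}(\d_2) = \Fr_2$. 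Now use the coinitiality of ${}_{\d_1}\SB$ in $\SN_{\d_1}$: there is $[b] \in \SB$ with $\rz b(\d_1) \leq \un{\Fr}$ (we may assume strict inequality by composing with a suitable element of $\SM$, or by directly invoking \autoref{lem: la<Fr all Fr in Nd->la<<<Fr,also}); applying the previous sentence to $[m] = [b]$ gives $\rz b(\d_2) < \Fr_2$. Since $\Fr_2 \in \SN_{\d_2}$ was arbitrary, ${}_{\d_2}\SB$ is coinitial in $\SN_{\d_2}$, and the reverse implication follows by interchanging $\d_1$ and $\d_2$.

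The only delicate point is the passage between the partial-order comparison ``$\rz m(\d_1) < \un{\Fr}$ implies $[m] < [\ov{m}]$'' that one extracts from $U_{\un{\Fr}} \subset \SU([\ov{m}])$ and the pointwise inequality ``$\rz m(\d_2) < \Fr_2$'' that one needs at $\d_2$: this is exactly where the uniform (all-$\la \in \mu(0)_+$) nature of the bounding set $\SU([\ov{m}])$ is used, together with \autoref{cor: SG_0 -> F(B_delts) is R alg isomorph} which guarantees that the germ order and the pointwise order at any fixed infinitesimal ball agree. Everything else is formal unwinding of \autoref{def: of coinitial partial order relations} and \autoref{lem: SN_d=SM_d}, so I expect this reconciliation step — making sure the quantifier ``for all $\la$'' in $\SU([\ov{m}])$ is genuinely available, which in turn rests on \autoref{prop: ptwise bndd subset unif bndd} sitting behind \autoref{cor: given m, U_r subset SU(m) some r} — to be the conceptual crux, even though no real computation is involved.
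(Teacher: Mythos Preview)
Your argument is correct and rests on the same key ingredient as the paper's---the pointwise-implies-uniform pinching of \autoref{prop: ptwise bndd subset unif bndd}---but you deploy it in the direct direction, via the upper-bound wrapper \autoref{cor: given m, U_r subset SU(m) some r}, while the paper argues by contrapositive using the lower-bound side of \autoref{prop: ptwise bndd subset unif bndd} directly: assuming ${}_{\d_1}\SB$ is \emph{not} coinitial gives $\SB\subset\SM_{\Fr_1}$ for some $\Fr_1\in\SN_{\d_1}$, hence $\SB\subset\SM([\wt m])_\ell$ for some $[\wt m]\in\SM$, hence ${}_{\d_2}\SB$ is bounded below by $\rz\wt m(\d_2)\in\SN_{\d_2}$ and so not coinitial either. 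The two routes are dual and of comparable length; the paper's is marginally more economical since it avoids passing through $U_\Fr$ and $\SU([m])$ and the associated translation via $\FL$, while yours makes the role of the $\d$-free sets $\SU([m])$ as the transfer mechanism more explicit. Your hedge about strict versus nonstrict inequality is harmless: since $\SN_{\d_1}$ has no least element you can always shrink $\un\Fr$ before invoking coinitiality.
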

\begin{proof}
    Suppose that coinitiality does not hold,
    so that, without loss of generality, we have, say, ${}_{\d_1}\SB$ is not coinitial, but ${}_{\d_2}\SB$ is coinitial. But as ${}_{\d_1}\SB$ is not coinitial in $\SN_{\d_1}$, then there is $\Fr_1$ such that $\SB\subset\SM_{\Fr_1}$. Given this,  the \autoref{prop: ptwise bndd subset unif bndd} implies $\SB\subset\ov{\SM}([\wt{m}])_\ell$ for some $[\wt{m}]\in\SM$, ie., that for $[m]\in\SB$, we have $\rz m(\d)>\rz\wt{m}(\d)$ for all $\d\in\mu(0)_+$, which, in particular for $\d=\d_2$, implies that ${}_{\d_2}\SB$ is not cofinal in ${}_{\d_2}\SN$, a contradiction.
\end{proof}
    The previous fact will be useful, eg., see \autoref{prop: mult SM by elt SM is open}, but the following sharpening of it will be the critical fact in proving that $\tau^\d$ is independent of $\d$.
\begin{theorem}\label{thm: tau^d on SM indep of d}
    Suppose that we have an upwardly directed net $\pzE=\{[m_d]:d\in D\}\subset\SM$ and $\d_1,\d_2\in\mu(0)_+$. Then ${}_{\d_1}\pzE$ is convergently coinitial in $\SN_{\d_1}$ if and only if ${}_{\d_2}\pzE$ is convergently coinitial in $\SN_{\d_2}$.
    That is, $\pzE$ converges in $\tau_0^{\d_1}$ if and only if it converges in $\tau^{\d_2}_0$.
\end{theorem}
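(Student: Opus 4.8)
The plan is to prove this as the \emph{convergent} sharpening of \autoref{cor: B_d1 coinit<->B_d2 coinit}, and in fact to get it almost for free once the right intermediary is chosen. By the symmetry of the two hypotheses it suffices to prove one implication: assuming ${}_{\d_1}\pzE$ is convergently coinitial in $\SN_{\d_1}$, I will show ${}_{\d_2}\pzE$ is convergently coinitial in $\SN_{\d_2}$. The key idea is that one should \emph{not} try to compare the values $\rz m_d(\d_1)$ and $\rz m_d(\d_2)$ directly (there is no pointwise handle relating an infinitesimal value at $\d_1$ to one at $\d_2$), but instead route the argument through the $\d$-independent uniform bounding sets $\SM([m])_u$ of \autoref{def: SU([m]), SM([m]), M^Fr, etc}, exploiting the pointwise-implies-uniform pinching already packaged in \autoref{prop: un(SM)^r subset un(SM)(m)}.

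Concretely, I would fix an arbitrary $\Fs\in\SN_{\d_2}$ and write $\Fs=\rz m_{\Fs}(\d_2)$ for some $[m_{\Fs}]\in\SM$, which is possible since $\SN_{\d_2}={}_{\d_2}\SM$ by \autoref{lem: SN_d=SM_d}. Applying \autoref{prop: un(SM)^r subset un(SM)(m)} with the germ $[m_{\Fs}]\in\SM\subset\wt{\SM}$ and the infinitesimal $\d_1$ produces a threshold $\Fr\in\SN_{\d_1}$ with ${}_{\d_1}\SM^{\Fr}\subset\SM([m_{\Fs}])_u$; that is, every $[m]\in\SM$ with $\rz m(\d_1)<\Fr$ automatically satisfies $\rz m(\la)<\rz m_{\Fs}(\la)$ for \emph{all} $\la\in\mu(0)_+$. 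Now, since ${}_{\d_1}\pzE$ is convergently coinitial in $\SN_{\d_1}$ and $\Fr\in\SN_{\d_1}$, there is $d_0\in D$ with $\rz m_d(\d_1)<\Fr$ for all $d>d_0$; for every such $d$ the above inclusion gives $\rz m_d(\la)<\rz m_{\Fs}(\la)$ for all $\la$, and evaluating at $\la=\d_2$ yields $\rz m_d(\d_2)<\Fs$. As $\Fs\in\SN_{\d_2}$ was arbitrary, this is precisely convergent coinitiality of ${}_{\d_2}\pzE$ in $\SN_{\d_2}$, and symmetry gives the equivalence. (Note that upward directedness of $\pzE$ as a net plays no role here beyond making the statement meaningful.)

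For the final ``that is'' clause I would identify each $[m_d]\in\SM$ with the radial germ $[\,x\mapsto m_d(|x|)\,]\in{}_m\SG_0$, whose $\FL$-image is $[m_d]$ (as in the proofs of \autoref{lem: SN_d=SM_d} and \autoref{lem: FL: SG >->> SM}); then $\pzE$ converges to $[0]$ in $\tau^{\d}_0$ iff this net of germs does, which by \autoref{prop: FD converg <-> FL(FD)converg} holds iff ${}_{\d}\pzE$ is convergently coinitial in $\wt{\SN}_\d$, and since $\SM$ is germwise coinitial in $\wt{\SM}$ (\autoref{prop: SPSL^0 coinitial in wt(SM)}) this is in turn equivalent to convergent coinitiality in $\SN_\d$. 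Combining this with the first part finishes the proof. I do not expect a serious obstacle: the analytic heavy lifting is entirely inside \autoref{prop: ptwise bndd subset unif bndd} and its corollary \autoref{prop: un(SM)^r subset un(SM)(m)}, and what remains is bookkeeping. The only points needing a little care are keeping the two roles of the infinitesimal straight — the threshold $\Fr$ must be extracted at $\d_1$, where the hypothesis lives, from a germ $m_{\Fs}$ defined via $\d_2$ — and the minor passage between convergent coinitiality in $\SN_\d$ and in $\wt{\SN}_\d$, which is exactly germwise coinitiality of $\SM$ in $\wt{\SM}$.
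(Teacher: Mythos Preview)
Your proposal is correct and takes essentially the same route as the paper: both arguments pass through the $\d$-independent uniform bounding sets $\SM([m])$ furnished by the pointwise-implies-uniform pinching of \autoref{prop: ptwise bndd subset unif bndd} and \autoref{prop: un(SM)^r subset un(SM)(m)}. The only difference is cosmetic: you argue directly via the upper-bound inclusion $\SM^{\Fr}\subset\SM([m_{\Fs}])_u$ at $\d_1$, whereas the paper argues by contradiction via the lower-bound inclusion $\SM_{\Fr_0}\subset\SM([\un{m}])_\ell$ at $\d_2$; these are contrapositives of one another, and your direct version is arguably a bit cleaner.
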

\begin{proof}
    In \autoref{cor: B_d1 coinit<->B_d2 coinit}, we have proved the coinitiality part; here we will prove converence. By definition, this means that at a given $\d\in\mu(0)_+$ we have that for each $\Fr\in\SN_\d$, there is $d_0\in D$ such that $d>d_0$ implies that $\rz m_d(\d)<\Fr$. We must prove that this condition holds at $\d_1$ if and only if it holds at $\d_2$, and by symmetry it suffices to show that if it holds at $\d_1$, then it holds at $\d_2$. To the contrary, suppose that it holds at $\d_1$, but not at $\d_2$. Therefore, there is $\Fr_0\in\SN_{\d_2}$ such that
%    , if for each $d_0\in D$, $\pzE(d_0)=\{[m_d]\in\pzE:d>d_0\}$, then
    for every $d_0\in D$,
  \begin{align}
    \wt{\pzE}(d_0)\;\dot=\{[m_d]\in\pzE:d>d_0\;\text{and}\;\rz m_d(\d_2)>\Fr_0\}
  \end{align}
    is nonempty. By  \autoref{prop: un(SM)^r subset un(SM)(m)}, there is $[\un{m}]\in\SM^0$ such that if $[m]\in\SM$ has $\rz m(\d_2)>\Fr_0$, then $\rz m(\d)>\rz\un{m}(\d)$ for all $\d\in\mu(0)_+$. In particular, for an arbitrarily given $d\in D$, we have $[m_d]\in\wt{\pzE}(d)$ implies that $\rz m_d(\d_1)>\rz\wt{m}(\d_1)$. That is, if $\Fs_0\dot=\rz\wt{m}(\d_1)\in\SN_{\d_1}$, then, as $\wt{\pzE}(d)$ is nonempty for all $d\in D$, we have for all $d_0$, that there is $d\geq d_0$ with $\rz m_d(\d_1)>\Fs_0$, a contradiction.
\end{proof}
   The above theorem along with the convergence correspondence between nets in ${}_m\wt{\SG}_0$ and the corresponding ones in $\wt{\SM}$ immediately implies the following.
\begin{corollary}\label{cor: tau^d_1 = tau^d_2}
    Suppose that $\d_1,\d_2\in\mu(0)_+$ and that $\FD=\{[f_d]:d\in D\}$ is a net in $\SG$. Then $\FD$ converges to $[0]$ in $({}_m\wt{\SG}_0,\tau^{\d_1}_0)$ if and only if it converges to $[0]$ in $({}_m\wt{\SG}_0,\tau^{\d_2}_0)$.
\end{corollary}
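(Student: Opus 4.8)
The plan is to deduce this at once from the correspondence $\FL$ of \autoref{def: FL:SG->SM} together with \autoref{thm: tau^d on SM indep of d}. By \autoref{prop: FD converg <-> FL(FD)converg}, for any net $\FN$ in ${}_m\wt{\SG}_0$ and any $\d\in\mu(0)_+$, $\FN$ converges to $[0]$ in $\tau^\d_0$ exactly when ${}_\d\FL(\FN)$ is convergently coinitial in $\wt{\SN}_\d$. So the whole corollary reduces to the assertion that ${}_{\d_1}\FL(\FD)$ is convergently coinitial in $\wt{\SN}_{\d_1}$ if and only if ${}_{\d_2}\FL(\FD)$ is convergently coinitial in $\wt{\SN}_{\d_2}$ --- and this is almost exactly \autoref{thm: tau^d on SM indep of d}, save for the discrepancy that the theorem is stated for nets lying in $\SM$ (with moduli in $\SN_\d$) rather than in $\wt{\SM}$ (with moduli in $\wt{\SN}_\d$).

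To bridge that gap I would first reduce to nets in ${}_m\SG_0$. By symmetry it suffices to show that $\tau^{\d_1}_0$-convergence of $\FD$ implies its $\tau^{\d_2}_0$-convergence. If $\FD$ converges to $[0]$ in $\tau^{\d_1}_0$, fix any single $\Fr_0\in\SN_{\d_1}$; then there is $d_0$ with $[f_d]\in U^{\d_1}_{\Fr_0}$ for all $d\geq d_0$, and since $U^{\d_1}_{\Fr_0}\cap\SG\subset{}_m\SG_0$ the tail net $\FD'=(\,[f_d]:d\geq d_0)$ lies entirely in ${}_m\SG_0$. As $\FD'$ is cofinal in $\FD$, it converges to $[0]$ in $\tau^{\d_i}_0$ exactly when $\FD$ does, so we may work with $\FD'$ in place of $\FD$; and now $\FL(\FD')$ is a genuine net in $\SM$ by \autoref{lem: FL: SG >->> SM}, with ${}_\d\FL(\FD')\subset\SN_\d={}_\d\SM$. \autoref{thm: tau^d on SM indep of d} then applies to $\FL(\FD')$ and tells us that ${}_{\d_1}\FL(\FD')$ is convergently coinitial in $\SN_{\d_1}$ if and only if ${}_{\d_2}\FL(\FD')$ is convergently coinitial in $\SN_{\d_2}$.

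It remains only to match ``convergently coinitial in $\SN_\d$'' with ``convergently coinitial in $\wt{\SN}_\d$'' for the nets at hand, so as to feed the conclusion back into \autoref{prop: FD converg <-> FL(FD)converg}. For this I would note that $\SN_\d$ is coinitial in $\wt{\SN}_\d$: by \autoref{prop: SPSL^0 coinitial in wt(SM)} the set $\SM^0$ (a fortiori $\SM$) is germwise coinitial in $\wt{\SM}$, and evaluating the resulting germ inequalities at $\d$ together with \autoref{lem: SN_d=SM_d} gives the claim. Hence a net whose terms lie in $\SN_\d$ is convergently coinitial in $\SN_\d$ if and only if it is convergently coinitial in $\wt{\SN}_\d$. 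Chaining the equivalences --- $\tau^{\d_1}_0$-convergence $\Leftrightarrow$ convergent coinitiality of ${}_{\d_1}\FL(\FD')$ in $\wt{\SN}_{\d_1}$ $\Leftrightarrow$ in $\SN_{\d_1}$ $\Leftrightarrow$ (theorem) in $\SN_{\d_2}$ $\Leftrightarrow$ in $\wt{\SN}_{\d_2}$ $\Leftrightarrow$ $\tau^{\d_2}_0$-convergence --- finishes the argument. The only genuine ``obstacle'' here is the bookkeeping just described: passing from the $\wt{\SM}$-world, which is where $\FL$ naturally lands for an arbitrary net of germs, to the $\SM$-world in which the independence theorem was proved; everything else is a direct quotation of earlier results.
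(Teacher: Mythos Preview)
Your approach is essentially the paper's own: translate the convergence question for $\FD$ via $\FL$ (using \autoref{prop: FD converg <-> FL(FD)converg}) into a convergent-coinitiality question, invoke \autoref{thm: tau^d on SM indep of d}, and translate back. The paper's proof does exactly this in three lines, simply writing that ${}_{\d_1}\FL(\FD)$ is convergently coinitial in $\wt{\SN}_{\d_1}$ iff ${}_{\d_2}\FL(\FD)$ is convergently coinitial in $\wt{\SN}_{\d_2}$ ``by the above theorem.''

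The difference is that you are more scrupulous than the paper about the $\SM$/$\wt{\SM}$ (equivalently $\SN_\d$/$\wt{\SN}_\d$) mismatch: \autoref{thm: tau^d on SM indep of d} is literally stated for nets in $\SM$ and coinitiality in $\SN_\d$, while $\FL$ of an arbitrary net in ${}_m\wt{\SG}_0$ lands in $\wt{\SM}$. The paper silently elides this; you supply the bookkeeping (pass to a tail in ${}_m\SG_0$ via the inclusion $U^\d_\Fr\cap\SG\subset{}_m\SG_0$ asserted earlier in the paper, then use that $\SN_\d$ is coinitial in $\wt{\SN}_\d$). That is a legitimate way to close the gap, and indeed the proof of \autoref{thm: tau^d on SM indep of d} rests on \autoref{prop: ptwise bndd subset unif bndd}, whose $\wt{\SM}$-version is stated explicitly, so the theorem extends to $\wt{\SM}$ without trouble --- which is presumably what the paper is tacitly using. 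Either route is fine; yours makes the reduction explicit.
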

\begin{proof}
    By \autoref{prop: FD converg <-> FL(FD)converg}, $\FD$ converges to $[0]$ in $({}_m\wt{\SG}_0,\tau^{\d_1}_0)$ if and only if$\!\!{}\;_{\d_1}\FL(\FD)$ is convergently coinitial in $\wt{\SN}_{\d_1}$ which by the above \autoref{thm: tau^d on SM indep of d} is equivalent to$\!\!{}\;\;_{\d_2}\FL(\FD)$ being convergently coinitial in $\wt{\SN}_{\d_2}$ which again by \autoref{prop: FD converg <-> FL(FD)converg} is equivalent to saying that $\FD$ converges to $[0]$ in $({}_m\wt{\SG}_0,\tau^{\d_2}_0)$.
\end{proof}

\subsection{Order preserving topologies aren't first countable}\label{subsec: ord preserv tops not countable}
%    From the previous work, we also have the following.
    Recall that ${}_\d\SP\subset\SN_\d$ denotes the set of $\rz\pzp(\d)$ for $[\pzp]\in\SP\SL^0$. So ${}_\d\SP$ has the induced total order coming from the (transferred) order on  $\rz\bbr$ and we know from  \autoref{prop: SPSL has no countable coin subset} above that ${}_\d\SP$ does not have a countable coinitial subset with resect to this order.
%    For each $\g\in{}_\d\SP$, choose $[\pzp_\g]\in\SP\SL^0$ satisfying $\rz\pzp_\g(\d)=\g$.
    Now from \autoref{prop: ptwise bndd subset unif bndd} above, we have that, for each $\g\in{}_\d\SP$, there is $\mathbf{[\pmb{\pzq}_{\boldsymbol{\g}}]}\in\SP\SL^0$ with the property that if $[\pzp]\in\SP\SL^0$ satisfies $\rz\pzp(\d)>\g$, then $[\pzp]>[\pzq_\g]$. In particular, if $\g_0,\a,\a_0\in\SN_\d$ with  $\a_0>\a$ and $\a_0$ is sufficiently smaller than $\g_0$ to get $\pzq_{\g_0}(\d)>\a_0$ and so eg., $\pzq_{\g_0}(\d)>\a$, then $[\pzq_{\g_0}]>[\pzq_\a]$ by the definition of $[\pzq_{\a}]$.
    Given this, let
  \begin{align}
    \pmb{\pzQ}=\{[\pzq_\g]:\g\in{}_\d\SP\}.
  \end{align}
    Suppose that $[\pzq_{\g_1}]>[\pzq_{\g_2}]>\cdots$ is any strictly decreasing sequence in $\pzQ$. Then $\rz\pzq_{\g_1}(\d)>\rz\pzq_{\g_2}(\d)>\cdots$ is a strictly decreasing sequence in ${}_\d\SP$ and so by the above proposition (\autoref{prop: SPSL has no countable coin subset}), there is $\a\in{}_\d\SP$ with $\rz\pzq_{\g_j}(\d)>\a$ for all $j$, and so by definition, $[\pzq_{\g_j}]>[\pzq_\a]$ for all $j\in\bbn$. We have proved the first half of the following.
\begin{proposition}\label{prop: (P^d,SQ) asymp tot ord, uncount}
    For each $\g_0\in{}_\d\SP$, there is smaller $\a_0\in{}_\d\SP$ such that if $\a\in{}_\d\SP$ is such that  $\a_0>\a$, then $[\pzq_{\g_0}]>[\pzq_\a]$.
    Furthermore, $\pzQ$ does not have a countable coinitial subset with respect to germ order.
    Finally, given $\g_0$ in ${}_\d\SP$, there is larger $\un{\g}\in{}_\d\SP$ such that $\g>\g_0$ implies that $[\pzq_\g]>[\pzq_{\un{\g}}]$.
\end{proposition}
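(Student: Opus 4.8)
The final clause is the upward mate of the first, and together they make the indexed family $\pmb{\pzQ}$ asymptotically totally ordered over ${}_\d\SP$ (cf.\ \autoref{def: asymp tot ord set of germs}): the first gives a threshold below a fixed index $\g_0$, this one a threshold above it (I read the clause in the form: there is $\underline\g\in{}_\d\SP$, $\underline\g>\g_0$, with $\g>\underline\g\Rightarrow[\pzq_\g]>[\pzq_{\g_0}]$). My plan is to run the same mechanism as in the first half and then isolate the single point where it is not symmetric. For the reduction: in the first half one uses that $[\pzq_\a]$ lies germwise below every $[\pzp]\in\SP\SL^0$ with $\rz\pzp(\d)>\a$, with $\rz\pzq_{\g_0}(\d)$ serving as the downward threshold. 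Symmetrically, apply the same defining property to $[\pzq_{\g_0}]$ itself: every $[\pzp]\in\SP\SL^0$ with $\rz\pzp(\d)>\g_0$ satisfies $[\pzp]>[\pzq_{\g_0}]$. Since each $[\pzq_\g]$ is again an element of $\SP\SL^0$, it follows that $[\pzq_\g]>[\pzq_{\g_0}]$ whenever $\rz\pzq_\g(\d)>\g_0$. Hence the clause reduces to producing $\underline\g\in{}_\d\SP$ with $\underline\g>\g_0$ such that $\g>\underline\g$ forces $\rz\pzq_\g(\d)>\g_0$.

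The step I expect to be the main obstacle is that this demands a lower bound for $\rz\pzq_\g(\d)$ in terms of $\g$, while the construction of $[\pzq_\g]$ as a universal germwise lower bound pins $\rz\pzq_\g(\d)$ only from above. The plan is to reopen the proof of \autoref{prop: ptwise bndd subset unif bndd}: there the universal lower bound $[\underline m]$ for a family pinched at $\d$ by $\g$ is built by transporting a germ realizing $\g$ backward along the interpolation mesh, and the real content of this step is to check that the construction can be carried out so that $\rz\underline m(\d)$ stays comparable to $\g$ (a sufficiently fine mesh), and then that $[\pzq_\g]\in\SP\SL^0$ can be taken hugging $[\underline m]$ near $\d$. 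The outcome should be a single fixed $[m_0]\in\SM^0$, lying below the identity germ but with $\rz m_0(\g)$ only lower-order below $\g$, such that $\rz\pzq_\g(\d)\geq\rz m_0(\g)$ for every $\g\in{}_\d\SP$. (One could instead sidestep the whole difficulty by arranging the assignment $\g\mapsto[\pzq_\g]$ to be monotone from the start, which renders the clause --- and the proposition --- trivial; that route needs its own coherence argument.)

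To close, granting such an $[m_0]$: its compositional inverse $[m_0^{-1}]$ lies in $\SM$, and writing $\g_0=\rz\pzp_0(\d)$ with $[\pzp_0]\in\SP\SL^0$ we have $[m_0^{-1}\circ\pzp_0]\in\SM^0$, so pick $[\pzp_1]\in\SP\SL^0$ germwise dominating $[m_0^{-1}\circ\pzp_0]$ (elementary) and set $\underline\g\dot=\rz\pzp_1(\d)\in{}_\d\SP$. Then $\underline\g\geq\rz m_0^{-1}(\g_0)>\g_0$, the last inequality because $[m_0]$ below the identity forces $[m_0^{-1}]$ above it. For $\g\in{}_\d\SP$ with $\g>\underline\g$, using that $m_0$ is strictly increasing and $m_0\circ m_0^{-1}$ is the identity germ, we get $\rz\pzq_\g(\d)\geq\rz m_0(\g)>\rz m_0(\underline\g)\geq\g_0$; thus $[\pzq_\g]\in\SP\SL^0$ has value at $\d$ exceeding $\g_0$, and the defining property of $[\pzq_{\g_0}]$ gives $[\pzq_\g]>[\pzq_{\g_0}]$, as required. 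All the weight rests on the comparability lemma of the middle paragraph: the bare existence of the $[\pzq_\g]$ is too weak, and one must either track the construction in \autoref{prop: ptwise bndd subset unif bndd} or choose the $[\pzq_\g]$ coherently in order to bound $\rz\pzq_\g(\d)$ from below.
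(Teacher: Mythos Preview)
Your reading of the clause differs from the paper's literal wording: the paper keeps the threshold at $\g_0$ and asserts a common germwise lower bound $[\pzq_{\un{\g}}]$ for all $[\pzq_\g]$ with $\g>\g_0$, whereas you move the threshold to $\un{\g}$ and bound by $[\pzq_{\g_0}]$. (The phrase ``larger $\un{\g}$'' is indeed odd under the literal reading, since taking $\g=\un{\g}$ would then force $[\pzq_{\un{\g}}]>[\pzq_{\un{\g}}]$; your reformulation resolves this.) The paper's argument for its version is much shorter than yours: it invokes \autoref{prop: ptwise bndd subset unif bndd} once to produce $[\wt{m}]\in\SM$ with $[\pzq_\g]>[\wt{m}]$ for every $\g>\g_0$, then uses germwise coinitiality of $\SP\SL^0$ in $\SM$ (\autoref{prop: SPSL^0 coinitial in wt(SM)}) to find $\un{\g}\in{}_\d\SP$ with $[\wt{m}]>[\pzq_{\un{\g}}]$, and concatenates.

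You are right, however, that this first step hides precisely the difficulty you flag. To feed the family $\{[\pzq_\g]:\g>\g_0\}$ into \autoref{prop: ptwise bndd subset unif bndd} one needs it to sit inside some $\SM_{\Fr_0}$, that is, a uniform lower bound on $\rz\pzq_\g(\d)$ over $\g>\g_0$; the paper's opening ``by definition $[\pzq_{\g_0}]\in\SM_{\g_0}$'' does not furnish this, and is not itself a consequence of the defining property of the $[\pzq_\g]$. So the gap you isolate is real and present in the paper's own proof. Your comparability estimate $\rz\pzq_\g(\d)\geq\rz m_0(\g)$, obtained by tracking the construction inside the proof of \autoref{prop: ptwise bndd subset unif bndd} rather than just quoting its statement, is the honest fix; once you have it, both readings of the clause follow (for the paper's version, pick any $\un{\g}\in{}_\d\SP$ below $\rz m_0(\g_0)$). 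The alternative you mention---arranging $\g\mapsto[\pzq_\g]$ to be germwise monotone from the start---is not obviously lighter: a lower bound for the smaller set $(\SP\SL^0)_\g$ need not dominate one for the larger $(\SP\SL^0)_{\g_0}$, so monotonicity demands the same quantitative control you are already seeking.
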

\begin{proof}
    To prove the last assertion, first note that by definition $[\pzq_{\g_0}]\in\SM_{\g_0}$ and so by \autoref{prop: ptwise bndd subset unif bndd} there is $[\wt{m}]\in\SM$ such that $\g>\g_0$ implies that $[\pzq_\g]>[\wt{m}]$. But as $\SP\SL^0$ is germwise coinitial in $\SM$, \autoref{prop: SPSL^0 coinitial in wt(SM)}, there is $\un{\g}\in{}_\d\SP$ such that $[\wt{m}]>[\pzq_{\un{\g}}]$; together these statements imply that $\g>\g_0$ implies that $[\pzq_\g]>[\pzq_{\un{\g}}]$.
\end{proof}
    Using the previous proposition as motivation, we have the following definition.
\begin{definition}\label{def: asymp tot ord set of germs}
    Suppose, for some $\d\in\mu(0)_+$, that we have a coinitial subset $\La\subset\SN_\d$ with the induced total order. Then we say that a set of germs $\SH\subset\SM^0$ indexed by $\La$, ie., $\boldsymbol{\SH}=\{[m_\la]:\la\in\La\}$, \textbf{is asymptotically totally ordered by} $\boldsymbol{\La}$ if for each $\a_0\in\La$, there is a smaller $\b_0\in\La$ with the property that if $\b\in\La$ with  $\b_0>\b$, then $[m_{\a_0}]>[m_\b]$.
    We call $\boldsymbol{(\SH,\La)}$ \textbf{a germ scale at} $\mathbf{[0]}$
\end{definition}
    The previous proposition says that $\pzQ$ is asymptotically totally ordered by ${}_\d\SP$. One can show that are numerous such subsets of $\SM^0$ some more rigid than the above convenient subset $\pzQ$.
    We will now show that any Hausdorff topology on $\boldsymbol{\SG}=\SG_{1,1}$ that has reasonable properties with respect to germ order cannot be first countable. For this construction, we have $\wt{\SM}\subset\SG_{1,1}$ by extending $[m]\in\wt{\SM}$ to negative values by defining $m(x)=m(|x|)$ for sufficiently small $x\in\bbr$.
%    We say that $(\pzG,\La)$ is an asymptotically totally ordered set of germs.
\begin{definition}\label{def: pzT preserves germ scales}
    We say  that a  topology $\pmb{\pzT}$  on $\SG$ is \textbf{asymptotically compatible with germ order} if the following holds. There is $\FX\subset\SG$ that is germwise coinitial in $\wt{\SM}$ such that $[0]\in cl_{\pzT}(\FX)\ssm\FX$ with the following property. For every $[m_0]\in\FX$, there is $U_0$ in $\pzT$ neighborhoods of $[0]$ such that if $[m]\in\FX$ and $[m]>[m_0]$, then $[m]\not\in U_0$.
%    Suppose that $(\pzG,\La)$ is a germ scale at $[0]$.  We say that a Hausdorff \textbf{topology} $\pmb{\pzT}$ on $\SG$ \textbf{is asymptotically compatible with the germ scale} $\pmb{(\pzG,\La)}$  if the following holds. First,  if $U$ is a $\pzT$ neighborhood of $[0]$, then for some $\la\in\La$, $[m_\la]\in U$. Second, given $\la_0\in\La$, there is a $\pzT$ neighborhood, $U$, of $[0]$ such that if $[m]\in\pzG$ with $[m]>[m_{\la_0}]$, then $[m]\not\in U$.
%    We say that $\pmb{\pzT}$ \textbf{is asymptotically compatible with germ scales at} $\pmb{[0]}$ if there is a germ scale $(\pzG,\La)$ with which $\pzT$ is asymptotically compatible.
\end{definition}
\begin{proposition}\label{prop: tau preserves germ scales}
    $\tau$ is asymptotically compatible with germ order.
\end{proposition}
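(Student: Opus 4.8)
The plan is to exhibit, concretely, a set $\FX\subset\SG$ witnessing \autoref{def: pzT preserves germ scales} for $\pzT=\tau$. Recall that in this subsection $\SG=\SG_{1,1}$ and $\wt{\SM}\subset\SG$ via the radial extension $[m]\mapsto[x\mapsto m(|x|)]$; I would take $\FX=\SP\SL^0$, viewed in $\SG$ the same way, and then check the three clauses of the definition in turn. Clause one, that $\SP\SL^0$ is germwise coinitial in $\wt{\SM}$, is exactly \autoref{prop: SPSL^0 coinitial in wt(SM)} (every $[m]\in\wt{\SM}$ has $m(t)>0$ for $t>0$, so the proposition applies), and $[0]\notin\SP\SL^0$ is immediate since every element of $\SP\SL^0$ is a nonzero, indeed strictly increasing, germ; so the content is the remaining two clauses.

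For the closure statement $[0]\in cl_\tau(\SP\SL^0)$, I would first observe that since $\SN_\d$ is totally ordered a finite intersection $U_{\Fr_1}\cap\cdots\cap U_{\Fr_k}$ equals $U_{\min_i\Fr_i}$, so every $\tau$-neighborhood of $[0]$ contains some subbasic $U_\Fr$ with $\Fr\in\SN_\d$ (see \autoref{def: U_Fr}). It then suffices to place an element of $\SP\SL^0$ inside each such $U_\Fr$. Writing $\Fr=\rz m(\d)$ with $[m]\in\SM$ (possible by \autoref{lem: SN_d=SM_d}), the germ $[m/2]$ is again in $\SM$, and germwise coinitiality of $\SP\SL^0$ applied to $[m/2]$ produces $[\pzp]\in\SP\SL^0$ with $\rz\pzp(\d)\le\frac{1}{2}\rz m(\d)<\Fr$; since $\pzp$ is radial and monotone, $\rz\norm{\pzp}_\d=\rz\pzp(\d)<\Fr$, i.e. $[\pzp]\in U_\Fr\cap\SP\SL^0$. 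The only delicate point is upgrading the ``$\le$'' of coinitiality to a strict ``$<$'', which is what the passage through $[m/2]$ (equivalently, the absence of a least element in $\SN_\d$) is for.

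For the last clause, given $[m_0]\in\SP\SL^0$ I would put $\Fr_0=\rz m_0(\d)$, which lies in $\SN_\d$ by \autoref{lem: SN_d=SM_d}, and take $U_0=U_{\Fr_0}$, a $\tau$-neighborhood of $[0]$. If $[m]\in\SP\SL^0$ satisfies $[m]>[m_0]$ in the germ order then, since $\SR_\d$ is an order isomorphism (\autoref{cor: SG_0 -> F(B_delts) is R alg isomorph}) — or simply by evaluating the radial representatives at a point of $B_\d$ of norm $\d$ — one gets $\rz m(\d)>\rz m_0(\d)=\Fr_0$, hence $\rz\norm{m}_\d=\rz m(\d)>\Fr_0$ and $[m]\notin U_0$, as required. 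I do not expect a genuine obstacle here: the argument is bookkeeping relating the germ partial order, evaluation at $\d$, and the sup-norm $\rz\norm{\cdot}_\d$ of a radial monotone germ, the mild subtlety being only that these must be lined up carefully. I would also remark that $\pzQ$ of \autoref{prop: (P^d,SQ) asymp tot ord, uncount} serves equally well as $\FX$ (with $U_0=U_{\rz\pzq_{\g_0}(\d)}$), and that it is the choice carrying the asymptotically-totally-ordered structure that subsequently feeds the non--first-countability argument.
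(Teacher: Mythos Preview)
Your proposal is correct and follows essentially the same approach as the paper: both take $\FX$ to be (a version of) $\SP\SL^0$, use coinitiality of ${}_\d\SP$ in $\SN_\d$ to get $[0]\in cl_\tau(\FX)$, and for a given $[m_0]\in\FX$ take $U_0=U^\d_{\rz m_0(\d)}$, observing that $[m]>[m_0]$ forces $\rz m(\d)>\rz m_0(\d)$ so $[m]\notin U_0$. If anything you are more thorough than the paper, which does not explicitly verify the germwise-coinitiality clause of \autoref{def: pzT preserves germ scales}; your passage through $[m/2]$ to obtain a strict inequality is a nice touch, though in fact the proof of \autoref{prop: SPSL^0 coinitial in wt(SM)} already yields a strict bound.
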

\begin{proof}
    For each $\g\in{}_\d\SP$, let $[\pzp_\g]\in\SP\SL^0$ with $\rz\pzp_\g(\d)=\g$ and let $\FP=\{[\pzp_\g]:\g\in{}_\d\SP\}$. By definition, ${}_\d\FB$ is coinitial in $\SN_\d$ so that $[0]\in cl_\tau(\FB)$. Furthermore, if $\g_0\in{}_\d\SP$, then, essentially by definition, $[\pzp_\g]\not\in U^\d_{\g_0}$ for $\g>\g_0$.
    But if $[\pzp_\g]\in\FP$ with $[\pzp_\g]>[\pzp_{\g_0}]$, then certainly $\rz\pzp_\g(\d)>\rz\pzp_{\g_0}(\d)$, eg., $[\pzp_\g]\not\in U_{\g_0}$.

%    @@@@@@@@We will use previous results to show that $\tau$ is asymptotically compatible with $\pzQ=(\pzQ,{}_\d\SP)$ for any $\d\in\mu(0)_+$.
%    As $\pzQ$ is coinitial in $\SM^0$, then $[0]\in cl_{\tau}(\pzQ)\ssm\pzQ$. By definition of $\pzQ$, if $\g,\g_0\in{}_\d\SP$ with $\g>\g_0$, then $[\pzq_\g]\not\in U_{\g_0}$
%     and note that for $\g_1$ sufficiently larger than $\g_0$, \autoref{prop: (P^d,SQ) asymp tot ord, uncount} implies that $\g>\g_1$ gives $[\pzq_\g]>[\pzq_{\g_0}]$, eg., $[\pzq_\g]>[\pzq_{\g_1}]$ implies that $[\pzq_\g]\not\in U_{\g_0}$.
%    First, if $U^\d_\Fr$ is a neighborhood of $[0]$ in $\SG$, then ${}_\d\SP$ being coinitial in $\SN_\d$ says that there is $\la\in{}_\d\SP$ with $\la<\Fs$ and so as $\la=\rz\pzp(\d)$ for some $[\pzp]\in\SP\SL^0$, we have $[\pzp]\in U^\d_\Fr$. Second, given $\la_0\in{}_\d\SP$, let $\rz\pzq_{\la_0}(\d)=\eta_0\in\SN_\d$. Then, if $[\pzq]\in\pzQ$ with $[\pzq]>[\pzq_{\la_0}]$, then eg., $\rz\pzq(\d)>\eta_0$, ie., by definition $[\pzq]\not\in U_{\eta_0}$.
\end{proof}

\begin{remark}
   One can  strictly strengthen the notion that $\tau$ preserves germ order by, eg., using the asymptotic total order on the nice germwise coinitial subset $\pzQ$.  For example, using the asymptotic total order, on can demand that for a given neighborhood $U$ of $[0]$ in $\pzT$,  there is a $\la_0$ such that if $\la<\la_0$, then $[m_\la]\in U$.  We still get that $\tau$ satisfies these stronger criteria. Nonetheless, as the following theorem shows, the given conditions are sufficient to prevent first countability of a hypothetical topology $\pzT$ satisfying them. We first need a lemma.
\end{remark}
%\begin{definition}
%    Suppose that $\pzT$ is a Hausdorff topological vector space topology on $\SG$ and $\bbs=\{V_b:b\in B\}$ is a subbase at $[0]$. Suppose that there is $\pzF\subset\SG$ such that $\pzF\cap U_b\not=\emptyset$ for all $b\in B$ with the property that there is a $\pzT$-coinitial subset $\pzG\subset\SG$ and coinitial $\La\subset\SN_\d$ such that $\pzG$ is asymptotically totally ordered by $\La$. Then we say that $\pmb{\pzT}$ \textbf{asymptotically preserves germ orders at} $\pmb{[0]}$.
%\end{definition}
%\begin{lemma}
%    Suppose that $\FX\subset\wt{\SM}$ is such that for some  $\d\in\mu(0)_+$, we have ${}_\d\FX$ is coinitial in $\wt{\SN}_\d$. Then $\FX$ is germwise coinitial in $\wt{\SM}$.
%\end{lemma}
%\begin{proof}
\begin{lemma}
   Suppose that $\FX$ is a germwise coinitial subset of $\wt{\SM}$ and  that $\SC\subset\FX$ is a countable subset. Then there is $[\un{m}]\in\FX$ with $[m]>[\un{m}]$ for all $[m]\in\SC$.
\end{lemma}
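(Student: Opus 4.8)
\emph{Proof proposal.} The plan is to do all the real work inside the concrete, well-behaved subsemiring $\SP\SL^0$ via a diagonal construction, and to use the hypothesis on $\FX$ (germwise coinitiality in $\wt{\SM}$) exactly once, at the very end. Enumerate $\SC=\{[m_j]:j\in\bbn\}$ (allowing repetitions if $\SC$ is finite), and fix representatives $m_j$, each positive, nondecreasing, and tending to $0$ at $0$.

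First I would manufacture, for each $j$, an element $[\pzp_j]\in\SP\SL^0$ lying \emph{strictly} below $[m_j]$. Since $[\tfrac12 m_j]\in\wt{\SM}$ has a representative with positive values, \autoref{prop: SPSL^0 coinitial in wt(SM)} furnishes $[\pzp_j]\in\SP\SL^0$ with $\pzp_j(t)\le\tfrac12 m_j(t)<m_j(t)$ for all sufficiently small $t>0$, i.e. $[\pzp_j]<[m_j]$; the halving is only there to upgrade the $\le$ in the definition of coinitiality to a strict inequality. Next, the diagonal construction in the proof of \autoref{prop: SPSL has no countable coin subset} — the assignment $\un{\pzp}(1/k)=1/(\max\{1/\pzp_j(1/k):1\le j\le k\}+k)$ followed by affine interpolation on each interval $[1/(k+1),1/k]$ — produces an element $[\un{\pzp}]\in\SP\SL^0$ with $[\un{\pzp}]<[\pzp_j]$ for every $j\in\bbn$. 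One checks, using the monotonicity of each $\pzp_j$ together with the extra summand $+k$ in the denominator, that that argument never uses that the $\pzp_j$ form a strictly decreasing chain, so it applies verbatim to the countable family $\{[\pzp_j]:j\in\bbn\}$. Chaining the inequalities gives $[\un{\pzp}]<[\pzp_j]<[m_j]$ for all $j$, hence $[\un{\pzp}]<[m]$ for all $[m]\in\SC$.

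Finally, since $[\un{\pzp}]\in\SP\SL^0\subset\SM^0\subset\wt{\SM}$ and $\FX$ is germwise coinitial in $\wt{\SM}$, there is $[\un{m}]\in\FX$ with $[\un{m}]\le[\un{\pzp}]$; transitivity of the germ order then yields $[\un{m}]\le[\un{\pzp}]<[m]$, i.e. $[\un{m}]<[m]$, for every $[m]\in\SC$, as required.

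The only mildly delicate point — and the reason the proof is structured this way rather than by a naive diagonalization — is this last descent: one cannot run the diagonal construction inside $\FX$ itself, because $\FX$ is merely coinitial in $\wt{\SM}$ and need not be closed under the piecewise-affine operations used above. Performing the diagonal argument in $\SP\SL^0$ and invoking coinitiality of $\FX$ only once sidesteps this. The secondary bookkeeping item is the $\le$-versus-$<$ distinction, handled by the halving step in the construction of the $[\pzp_j]$ and by the $+k$ in the diagonal formula.
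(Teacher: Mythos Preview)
Your proof is correct, but it takes a genuinely different route from the paper's. The paper argues nonstandardly: it fixes $\d\in\mu(0)_+$, observes that the countable set ${}_\d\SC$ of values $\rz m(\d)$ cannot be coinitial in $\SN_\d$ (by \autoref{prop: SPSL has no countable coin subset}), so there is $\un{\Fr}\in\SN_\d$ with $\SC\subset\wt{\SM}_{\un{\Fr}}$, and then invokes the pointwise-implies-uniform pinching result \autoref{prop: ptwise bndd subset unif bndd} to convert this single-point bound into a germwise bound $[m]>[m_0]$ for all $[m]\in\SC$; the coinitiality of $\FX$ is used at the end exactly as you use it. Your argument instead stays entirely on the standard side, manufacturing the uniform lower germ by an explicit diagonalization inside $\SP\SL^0$ (Propositions \ref{prop: SPSL^0 coinitial in wt(SM)} and \ref{prop: SPSL has no countable coin subset}) and never touching $\d$ or \autoref{prop: ptwise bndd subset unif bndd}. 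What your approach buys is elementarity and independence from the nonstandard machinery; what the paper's approach buys is brevity and consistency with the surrounding development, since \autoref{prop: ptwise bndd subset unif bndd} is the workhorse of the section and does the diagonalization once and for all.
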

\begin{proof}
   Let $\d\in\mu(0)_+$. Then ${}_\d\SC\subset\SN_\d$ is countable so that there is $\un{\Fr}\in\SN_\d$ with $\SC\subset\wt{\SM}_{\un{\Fr}}$. But then \autoref{prop: ptwise bndd subset unif bndd} implies that there is $[m_0]\in\SM$ with the property that $[m]\in\SC$ implies $[m]>[m_0]$ and the result follows since, by hypothesis, there is $\un{m}\in\FX$ with $[m_0]>[\un{m}]$.
\end{proof}
\begin{theorem}\label{thm: pzT has germ scales->not 1st countable}
    Suppose that a topology $\pzT$ on $\SG$ is asymptotically compatible with germ order. Then $\pzT$ is not first countable.
\end{theorem}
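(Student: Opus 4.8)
The plan is to argue by contradiction: suppose $\pzT$ is first countable at $[0]$, so there is a countable neighborhood base $\{V_1 \supseteq V_2 \supseteq \cdots\}$ of $[0]$ in $\pzT$ (we may take it nested by replacing $V_k$ with $V_1 \cap \cdots \cap V_k$). Let $\FX \subseteq \SG$ be the germwise coinitial subset of $\wt{\SM}$ furnished by the hypothesis that $\pzT$ is asymptotically compatible with germ order, so that $[0] \in cl_{\pzT}(\FX) \ssm \FX$ and for every $[m_0] \in \FX$ there is a $\pzT$-neighborhood $U_0$ of $[0]$ with $[m] \not\in U_0$ whenever $[m] \in \FX$ and $[m] > [m_0]$.

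First I would build, from the countable base, a candidate ``small'' element of $\FX$ that every basic neighborhood must still meet. Since $[0] \in cl_{\pzT}(\FX)$, each $V_k$ meets $\FX$; choose $[m_k] \in V_k \cap \FX$. This gives a countable subset $\SC = \{[m_k] : k \in \bbn\} \subseteq \FX$. Now invoke the preceding lemma: since $\FX$ is germwise coinitial in $\wt{\SM}$ and $\SC$ is countable, there is $[\un{m}] \in \FX$ with $[m_k] > [\un{m}]$ for all $k \in \bbn$. (Concretely, the lemma fixes any $\d \in \mu(0)_+$, notes ${}_\d\SC \subseteq \SN_\d$ is countable, uses \autoref{prop: SPSL has no countable coin subset} / \autoref{prop: ptwise bndd subset unif bndd} to pinch $\SC$ below some $[m_0] \in \SM$, and then picks $[\un{m}] \in \FX$ germwise below $[m_0]$.)

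Next I would apply asymptotic compatibility with germ order to this particular $[\un{m}]$: there is a $\pzT$-neighborhood $U_0$ of $[0]$ such that no $[m] \in \FX$ with $[m] > [\un{m}]$ lies in $U_0$. Since $\{V_k\}$ is a neighborhood base, there is $k$ with $V_k \subseteq U_0$. But $[m_k] \in V_k \subseteq U_0$, while $[m_k] \in \FX$ and $[m_k] > [\un{m}]$ — contradicting the defining property of $U_0$. Hence no countable neighborhood base at $[0]$ exists, and $\pzT$ is not first countable. \qed

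The only delicate point is the lemma invoked in the second paragraph — namely that a countable subset of the germwise coinitial family $\FX$ is bounded below in $\FX$ — but this is exactly the lemma stated (and proved via \autoref{prop: ptwise bndd subset unif bndd}) immediately before the theorem, so in the present argument it is simply cited. The rest is the standard ``diagonalize against a countable base'' maneuver; the real content has already been front-loaded into the notion of asymptotic compatibility with germ order and into the non-existence of countable coinitial subsets of $\SN_\d$. One could also phrase the whole argument purely inside $\wt{\SM}$ via the map $\FL$ and \autoref{prop: SPSL has no countable coin subset}, but the neighborhood-base formulation above is the most direct route.
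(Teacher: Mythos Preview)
Your proof is correct and follows essentially the same approach as the paper's: assume a countable nested base at $[0]$, pick $[m_k]\in V_k\cap\FX$ using $[0]\in cl_\pzT(\FX)$, invoke the preceding lemma to find $[\un{m}]\in\FX$ below all $[m_k]$, and then use asymptotic compatibility to produce a neighborhood $U_0$ excluding every $[m_k]$, contradicting that some $V_k\subseteq U_0$. The paper frames the final contradiction as ``$\SW\cap\un{U}=\emptyset$ versus $[0]\in cl_\pzT(\SW)$,'' while you frame it as ``$[m_k]\in V_k\subseteq U_0$ versus $[m_k]\notin U_0$,'' but these are the same argument.
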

\begin{proof}
    If a first countable topology exists, its restriction to $\wt{\SM}\subset\SG$ will be a first countable topology. Therefore, we will verify that such does not exist on $\wt{\SM}$.
    Suppose, by way of contradiction, that $[0]$ has a countable neighborhood base $U_1\supset U_2\supset\cdots$ at $[0]$ and let $\FX\subset\SG$ be as in \autoref{def: pzT preserves germ scales} so that there is $[m_j]\in\FX\cap U_j$ for all sufficiently large $j$ (so that we may assume that the index starts at 1). This implies that if $\SW=\{[m_j]:j\in\bbn\}$, then $[0]\in cl_\pzT(\SW)$. On the other hand, by the previous lemma, there is $[\un{m}]\in\FX$ with $[m_j]>[\un{m}]$ for all $j$. Therefore, by hypothesis, there is a neighborhood $\un{U}$ of $[0]$ in $\tau$ with the property that $[m_j]\not\in \un{U}$ for all $j$ (as $[m_j]>[\un{m}]$ for all $j$). That is, $\SW\cap\un{U}$ is empty, contradicting that $[0]$ is in the closure of $\SW$.
%    We have by hypothesis that $[0]$ is a $\pzT$ limit point for $\pzQ$. Suppose, contrary to the conclusion, there is a subsequence $\SS=\{[\pzq_{\a_1}],[\pzq_{\a_2}],\ldots\}\subset\pzQ$ such that $[0]$ is a limit point for $\SS$. But by \autoref{prop: (P^d,SQ) asymp tot ord, uncount} above, there is $\un{\a}\in{}_\d\SP$ such that $[\pzq_{\a_j}]>[\pzq_{\un{\a}}]$ for all $j\in\bbn$. Therefore, by the second condition in  \autoref{def: pzT preserves germ scales} above with $\la_0=\un{\a}$, there is $U$ in $\pzT$ neighborhoods of $[0]$ such that $[\pzq_{\a_j}]\not\in U$ for all $j$, ie., $\SS\cap U$ is empty which implies $[0]$ is not in the $\pzT$ closure of $\SS$, a contradiction.
\end{proof}
   {\it Note that we already know that $\tau$ is not absorbing and therefore, although absolutely convex, is not locally convex. This result says that any topology $\pzT$ that is asymptotically compatible with germ order cannot be metrizable.}

\section{Continuity of composition and ring operations}
        In this section, armed with all of the preliminaries, eg., with the relationships among all of the various families of infinitesimals functioning as moduli, we can prove that operations on the spaces of continuous germs have good topological properties. That is, in the next subsection, we verify good topological properties of the ring properties and, in the following, good compositional properties. We believe that this is the best that can be hoped for. From a positive perspective, with respect to the ring operations, we will prove that multiplication by elements of $\SM$ (as defined in \autoref{def:  SM and SM^0} and extended to elements of $\SG_0$) is, in fact, an open map. From a negative perspective, we give a concrete example, see the \hyperlink{example}{Example} in the next subsection, that if we compose with a germ that is only continuous at $0$, the ensuing map is distinctively noncontinuous.

\subsection{Topological properties of the ring operations of germs}\label{subsec: top properties ring structure}
    Although scalar multiplication is not continuous, in this subsection we will verify that the ring operations, product and addition of germs, is continuous in the topology $\tau$. We also prove that  multiplication by nice monotone germs is an open map, see  \autoref{prop: mult SG by elt SM is open}.
\subsubsection{}
    Here we will give straightforward proofs of the $\tau$ continuity of the germ product.
%\begin{lemma}
%    Given $\Fr,\Fs\in\SN_\d$ with $\Fr<\Fs$, there is $\Ft\in\SN_\d$ such that $\Fs\Ft<\Fr$; that is, $\Fs\SN_\d\subset\SN_\d$ is coinitial.
%\end{lemma}
%\begin{proof}
%    As each element $\Fs\in\SN$ is bounded above by an element $\rz c$ for some $c\in\bbr_+$, and as such $\rz 1/c\in\SN_\d$, then a good  choose for $\Ft$ is $\Ft=(\rz 1/c)\Fr$.
%%    This is just the *transfer of the following fact. If $0<r\in\bbr$ and $f,g$ are real valued functions on $B_r$ that are pseudomonotone with $\norm{f}_r<\norm{g}_r$, then there is (nonconstant) pseudomonotone $h$ defined on $B_r$ such that $\norm{h}_r\norm{g}_r<\norm{f}_r$. This statement is obvious. Fixing $f,g$ and $h$ as constants but now with $r\in\bbr_+$, we transfer this statement to get
%\end{proof}
%    It should be clear that we cannot weaken the previous lemma to assume $\Fr<\Fs$ and only one of $\Fr$ or $\Fs$ is in $\SN_\d$; an easy example is to choose $\Fs\in\SN_\d$ and $\Fr\in\mu(0)_+$ with $\Fr\lll\Fs$ (eg.,  $\Fr$ and $\Fs$ cannot be incomparable).

\begin{proposition}\label{prop: SG_0 is Hausdorff top ring}
    $(\SG,\tau)$ is a Hausdorff topological ring.
\end{proposition}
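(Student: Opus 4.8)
The plan is to check the three requirements of a topological ring — continuity of negation, of addition, and of multiplication — separately, since Hausdorffness has already been established (we saw that $(\SG,\tau^\d)$ is an absolutely convex Hausdorff vector space). One works throughout with the fixed positive infinitesimal $\d$, so that $\tau=\tau^\d$ is translation invariant with $\{U_\Fr:\Fr\in\SN_\d\}$ a neighborhood base at $[0]$, and it suffices to verify continuity of each operation at the relevant base point with target a basic neighborhood $[h]+U_\Fr$. Two closure properties of $\SN_\d$ will be used repeatedly and are immediate from the definitions of $\SM$ and $\SN_\d$: if $\Fr=\rz m(\d)$ with $[m]\in\SM$ and $c>0$ is a standard constant, then $[c\,m]\in\SM$, so $c\Fr\in\SN_\d$; and if $\Fs=\rz m_1(\d),\Ft=\rz m_2(\d)\in\SN_\d$ then $\min(\Fs,\Ft)=\rz(m_1\wedge m_2)(\d)\in\SN_\d$, since the pointwise minimum of two strictly increasing germs vanishing at $0$ is again one. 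Negation is immediate: each $U_\Fr$ is balanced, whence $-U_\Fr=U_\Fr$. For addition, given $\Fr$ set $\Fr'=\tfrac12\Fr\in\SN_\d$; the sub-additivity of $\rz\|\cdot\|_\d$ (the preceding lemma, a transfer of the triangle inequality for the sup norm) gives $U_{\Fr'}+U_{\Fr'}\subset U_\Fr$, so together with translation invariance $(\SG,+,\tau)$ is a topological group.

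The substantive point is continuity of multiplication. Fix $[f],[g]\in\SG$ and a basic neighborhood $[f][g]+U_\Fr$ of their product. For all $[u],[v]\in\SG$ one has the identity $\bigl([f]+[u]\bigr)\bigl([g]+[v]\bigr)-[f][g]=[f][v]+[u][g]+[u][v]$, so by sub-additivity of $\rz\|\cdot\|_\d$ it suffices to produce $\Fs,\Ft\in\SN_\d$ such that each of $\rz\|fv\|_\d$, $\rz\|ug\|_\d$, $\rz\|uv\|_\d$ is $<\Fr/3$ whenever $\rz\|u\|_\d<\Fs$ and $\rz\|v\|_\d<\Ft$ (note $\Fr/3\in\SN_\d$). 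A representative $f\in[f]$ is bounded near $0$ — by a standard constant $M_f$ on some standard ball, as it must be for $\rz\|f\|_\d$ to be a well-defined element of $\rz\bbr$ — so $\rz\|f\|_\d\le M_f$, and likewise $\rz\|g\|_\d\le M_g$. Put $\Ft=\tfrac{\Fr}{3(M_f+1)}\in\SN_\d$, $\Fs=\tfrac{\Fr}{3(M_g+1)}\in\SN_\d$, and then replace both by $\Ft_0=\min(\Fs,\Ft)\in\SN_\d$. For $\rz\|u\|_\d<\Ft_0$ and $\rz\|v\|_\d<\Ft_0$ the sub-multiplicativity of $\rz\|\cdot\|_\d$ gives $\rz\|fv\|_\d\le\rz\|f\|_\d\,\rz\|v\|_\d<M_f\Ft<\Fr/3$, symmetrically $\rz\|ug\|_\d<M_g\Fs<\Fr/3$, while $\rz\|uv\|_\d<\Ft_0^{\,2}\le\Ft_0\le\Fr/3$; summing the three, $\bigl([f]+U_{\Ft_0}\bigr)\bigl([g]+U_{\Ft_0}\bigr)\subset[f][g]+U_\Fr$, which is the desired continuity of the product at $([f],[g])$.

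The main obstacle is modest compared with the rest of the paper: the index set $\SN_\d$ is non-Archimedean, so the familiar $\varepsilon/3$ bookkeeping must be carried out internally to $\SN_\d$ — this is exactly what the two closure properties above are for — and the fixed representatives of $[f]$ and $[g]$ must be seen to carry standard bounds; the only further inputs are the sub-additivity and sub-multiplicativity of $\rz\|\cdot\|_\d$ furnished by the preceding lemma, both routine transfers. If instead one wished to argue for genuinely unbounded germs, the cross terms $[f][v]$ and $[u][g]$ would no longer admit a naive estimate, and that is precisely the point at which the order-theoretic control of $\SN_\d$ by the monotone germs $\SM$ — through \autoref{prop: ptwise bndd subset unif bndd} and \autoref{prop: un(SM)^r subset un(SM)(m)} — would have to be brought in.
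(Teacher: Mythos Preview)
Your overall strategy --- the decomposition $(f+u)(g+v)-fg=fv+ug+uv$ together with sub-additivity and sub-multiplicativity of $\rz\|\cdot\|_\d$ --- is exactly the paper's; the paper phrases it in terms of convergent nets rather than basic neighborhoods, but the substance is identical.

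There is, however, a real gap in your treatment of the cross terms. The assertion that every representative $f\in[f]$ is bounded near $0$ is false: $\SG$ is the ring of \emph{all} germs $(\bbr^n,0)\to(\bbr,0)$, and for instance $f(x)=1/|x|$ (with $f(0)=0$) has $\|f\|_t=\infty$ for every $t>0$, so your bound $\rz\|fv\|_\d\le M_f\cdot\rz\|v\|_\d$ is vacuous there. Your justification --- that boundedness ``must be'' for $\rz\|f\|_\d$ to lie in $\rz\bbr$ --- inverts the logic: the proposition carries no such hypothesis, and the fact that the naive estimate fails is precisely what you must overcome. To be fair, the paper's own proof is equally casual on this point (it writes ``if $\Fs\in\SN_\d$ is $\|f\|_\d$'' as though that were automatic), so you are in good company. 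But your closing paragraph, which points to \autoref{prop: ptwise bndd subset unif bndd} and \autoref{prop: un(SM)^r subset un(SM)(m)}, is not an optional refinement --- it is where the missing argument lives. For example, in $n=1$ with $f(x)=1/x$: given $\Fs=\rz m_\Fs(\d)$, set $\un m(t)=t\cdot m_\Fs(t)/2\in\SM$; by \autoref{prop: un(SM)^r subset un(SM)(m)} there is $\Ft\in\SN_\d$ with $U_\Ft\subset\SU([\un m])$, and then $[g]\in U_\Ft$ gives $|g(x)|\le\|g\|_{|x|}<\un m(|x|)$, whence $|fg(x)|<m_\Fs(|x|)/2$ and $\rz\|fg\|_\d<\Fs$. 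That step, or one like it, needs to appear in the body of the proof rather than as an afterthought.
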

\begin{proof}
    We need to show that the ring operations are continuous and as the vector space addition is continuous by the definition of the topology, we need only prove the product is continuous. First, left and right multiplication by a given element of $\SG$ is continuous. First, multiplication by a germ $[f]\in\SG$ is continuous; for if $\Fs\in\SN_\d$ is $\norm{f}_\d$ and given $\Fr\in\SN$, then there is $\Ft\in\SN_\d$ such that $\Fs\Ft<\Fr$, and so multiplication  of the elements of $U_\Ft$ by $[f]$ ie., $[\x f](U_\Ft)$ is contained in $U_\Fr$. In fact, this shows that given $\Ft\in\SN_\d$, then certainly there are $\Fr,\Fs\in\SN_\d$ with $\Fr\Fs<\Ft$ and so $U_\Fr\cdot U_\Fs\subset U_\Ft$ as $\|fg\|_\d\leq\|f\|_\d\|g\|_\d$ (*'s are left out here).
    This shows that $([f],[g])\mapsto [f][g]$ is continuous in the following special case: $([f_d]:d\in D)$ and $([g_d]:d\in D)$ are nets converging to $[0]$ in $\tau^\d$, then $d\mapsto [f_d][g_d]$ converges to $[0]$ in $\tau^\d$. Given this let's verify that if we have nets $[f_d]\ra [f]$ (in $\tau^\d$) and $[g_d]\ra [g]$ (also in $\tau^\d$), then $[f_d][g_d]\ra[f][g]$ (in $\tau^\d$ also). First, $\tau^\d$ continuity of left and right multiplication at $[0]$ implies that $[f]([g_d]-[g])\ra [0]$ in $\tau^\d$ and $([f_d]-[f])[g]\ra [0]$ in $\tau^\d$. But the previous assertion says also that $([f_d]-[f])([g_d]-[g])\ra [0]$ in $\tau^\d$. Adding the previous three expressions (noting that addition is a continuous operation in $\SG$) gets $[f_d][g_d]\ra[f][g]$ (in $\tau^\d$) as we wanted.
\end{proof}
\begin{proposition}
    $\SG^0$ is a closed subring of $\SG$.
\end{proposition}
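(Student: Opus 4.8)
The plan is to split the assertion into its two halves: that $\SG^0$ is a subring of $\SG$, and that it is $\tau$-closed. The first half I expect to be automatic from the definitions. A germ $[f]\in\SG$ lies in $\SG^0$ precisely when some (equivalently, by \autoref{cor: SG_0 -> F(B_delts) is R alg isomorph}, every) representative is continuous on a neighbourhood of $0$; since a finite sum, a product, and an $\bbr$-scalar multiple of functions continuous near $0$ are again continuous near $0$, and since the germs $[0]$ and $[1]$ are visibly continuous, $\SG^0$ is closed under the ring operations inherited from $\SG$ and contains the units, i.e.\ it is a subring (indeed an $\bbr$-subalgebra).

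For the second half I would invoke the standard net characterisation of closed sets in an arbitrary topological space: a subset $A$ is closed if and only if whenever $([x_d]:d\in D)$ is a net in $A$ converging to a point $x$, one has $x\in A$ (see e.g.\ \cite{Howes1995}). So let $[g]\in\mathrm{cl}_\tau(\SG^0)$; then there is a directed set $D$ and a net $([f^d]:d\in D)$ in $\SG^0$ with $[f^d]\to[g]$ in $\tau=\tau^\d$. Because $\tau^\d$ is translation invariant by construction (\autoref{def: of topology  at 0 germ}), this is exactly the hypothesis of \autoref{thm: convergnet of cont germs in SG converges to cont}, whose conclusion gives $[g]\in\SG^0$. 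Hence $\SG^0$ contains the limit of every $\tau$-convergent net taking values in it, so $\SG^0$ is closed in $(\SG,\tau)$. (By \autoref{cor: tau^d_1 = tau^d_2} this is independent of the choice of $\d$, as it must be.)

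Combining the two halves finishes the proof. I expect essentially no obstacle here: the real content has already been carried by \autoref{prop: [fd]->[g]--> [g] is cont} and its restatement \autoref{thm: convergnet of cont germs in SG converges to cont}, and this proposition merely packages that convergence statement, together with the triviality that continuous germs form a subring, into the single assertion that $\SG^0$ is a closed subring. It is worth emphasising that closedness here is not a sign that $\SG^0$ is a ``large'' piece of $\SG$; on the contrary, \autoref{cor: C^0 germs closed nowher dense} shows that $\SG^0$ is nowhere dense in $(\SG,\tau)$.
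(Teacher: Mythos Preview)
Your proof is correct and follows essentially the same route as the paper: the closedness is exactly the content of \autoref{thm: convergnet of cont germs in SG converges to cont}, and the subring property is elementary. One small slip: since $\SG$ consists of germs $f:(\bbr^n,0)\ra(\bbr,0)$, i.e.\ $f(0)=0$, the constant germ $[1]$ is not in $\SG$, so your remark about ``contains the units'' should be dropped; this does not affect the argument.
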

\begin{proof}
    We just need to prove that $\SG^0$ is a closed subspace of $\SG$. But this is the import of \autoref{thm:  convergnet of cont germs in SG converges to cont}.
\end{proof}

    As a matter of formality, we will introduce the routine extension of $\tau$ to germs with range in $(\bbr^p,0)$

\begin{definition}\label{def: SG{n,p}, SG^0{n,p}}
    For $n,p\in\bbn$, let $\SG_{n,p}$ denote the germs of maps at $0$ of maps $f:(\bbr^n,0)\ra (\bbr^p,0)$, and  $\SG^0_{n,p}$ these map germs that are germs of continuous maps.
%    and $\SG_{n,p}\subset \SG_{n,p}$ those map germs sending $0$ to $0$.
    Considering $\SG_{n,p}$ as a Cartesian product of $p$ copies of the topological ring $\SG_{n,1}$, we give it the natural product topology. We give the subset   $\SG^0_{n,p}$, etc., the subspace topology.
\end{definition}
    It is clear from the definition above that the product $\tau$ topology defined on $\SG^0_{n,p}$ is generated by translates of open neighborhoods of the zero germ and that the system of open neighborhoods of the zero germ is generated by Cartesian products of the form $U^\d_{\vec{\Fr}}=U^\d_{\Fr_1}\x\cdots\x U^\d_{\Fr_n}$ where $\vec{\Fr}$ denotes the ordered $n$-tuple $(\Fr_1,\ldots,\Fr_n)\in\SN_\d^n$.
    Of course,   given such $\vec{\Fr}$, if $\un{\Fr}<\Fr_j$ for each $j$, then $U_{\un{\Fr}}\x\cdots\x U_{\un{\Fr}}\subset U_{\vec{\Fr}}$, eg., when checking convergence we may test with these diagonal neighborhoods.
%    Similarly, given any of the typical norm functions $N:\bbr^n\ra [0,\infty)$ on $\bbr^n$, the family of nested neighborhoods of the $[0]$ germ defined by $U_{N,\Fr}=\{[f]\in\SG: \rz N(f(\xi))<\Fr\;\text{for each}\;\xi\in B_\d\}$ as $\Fr$ varies in $\SN_\d$, gives a coinitial family of neighborhoods of $[0]$ in $\tau$.

\begin{proposition}
    $\SG^0_{n,p}$ is a Hausdorff\; $\SG^0_{n,1}$-module.
\end{proposition}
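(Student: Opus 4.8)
The plan is to reduce the statement to facts already in hand for $\SG^0=\SG^0_{n,1}$: that $(\SG,\tau)$ is a Hausdorff topological ring and that $\SG^0$ is a closed --- hence, in the subspace topology, Hausdorff --- subring of it. First I would record the identification $\SG^0_{n,p}=(\SG^0_{n,1})^p$: a germ of a map $(\bbr^n,0)\ra(\bbr^p,0)$ is continuous exactly when each of its $p$ components is, so the underlying sets agree, and the topology that $\SG^0_{n,p}$ inherits as a subspace of $\SG_{n,p}=(\SG_{n,1})^p$ is precisely the product of $p$ copies of the subspace topology on $\SG^0_{n,1}$ (subspace and finite products commute). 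Since $\SG^0_{n,1}$ is Hausdorff, so is the finite product $\SG^0_{n,p}$.

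Next I would note the algebra: $\SG^0_{n,p}$ is an $\SG^0_{n,1}$-module under componentwise addition and the action $[h]\cdot([f_1],\ldots,[f_p])=([hf_1],\ldots,[hf_p])$, with all module axioms immediate from the pointwise ring operations on representatives (and well-definedness guaranteed because $\SG^0$ is closed under products). Continuity of addition $\SG^0_{n,p}\x\SG^0_{n,p}\ra\SG^0_{n,p}$ is then formal: after permuting factors it is the $p$-fold product of the addition map of $\SG^0_{n,1}$, which is continuous as the restriction of the continuous vector-space addition on $(\SG,\tau)$, and a product of continuous maps into a product is continuous.

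The one point worth emphasizing is continuity of the module action $\SG^0_{n,1}\x\SG^0_{n,p}\ra\SG^0_{n,p}$: the relevant multiplication here is multiplication in the ring $\SG^0_{n,1}$, which is jointly continuous because $\SG^0$ is a subring of the topological ring $(\SG,\tau)$ --- \emph{not} the $\bbr$-scalar multiplication, which was shown earlier to be discontinuous. Concretely, the $k$-th component of the action factors as $\SG^0_{n,1}\x\SG^0_{n,p}\xrightarrow{\,\mathrm{id}\x\pi_k\,}\SG^0_{n,1}\x\SG^0_{n,1}\xrightarrow{\;\cdot\;}\SG^0_{n,1}$, a composite of continuous maps; since every component is continuous, the action into the product $\SG^0_{n,p}$ is continuous. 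This finishes the argument, which is entirely formal; the only step needing a moment's care is the identification of the subspace topology on $\SG^0_{n,p}$ with the product of the subspace topologies on the factors, and that is standard rather than a genuine obstacle.
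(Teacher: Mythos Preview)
Your proof is correct and follows essentially the same approach as the paper: Hausdorffness from a finite product of Hausdorff spaces, and continuity of the module action by observing that on each of the $p$ coordinates it is precisely the (jointly continuous) ring multiplication of $\SG^0_{n,1}$. Your version is more careful than the paper's terse argument---in particular your explicit identification of the subspace and product topologies on $\SG^0_{n,p}$ and your remark distinguishing ring multiplication from the discontinuous $\bbr$-scalar multiplication---but the underlying idea is identical.
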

\begin{proof}
    This is clear: as a finite product of Hausdorff spaces (with the product topology), it is clearly Hausdorff. Also the continuity of the module operation $\SG_{n,1}\x\SG_{n,p}\ra\SG_{n,p}$ is clear as it's just the ring operation $\SG_{n,1}\x\SG_{n,1}\ra\SG_{n,1}$ on each of the $p$ coordinates.
\end{proof}
\subsubsection{Germ product is an open map}

    Quite analogously with Krull type topologies on filtered rings, multiplication by a nice germ is actually an open map, eg., a homeomorphism onto its image.

    First of all, we will verify that, for $[m]\in\SM$,  multiplication $[\x m]:\SM\ra\SM$ in the topology $\tau|\SM$ is an open map onto its image. We will then see how this works for $\SG=\SG_{1,1}$.
    We need some preliminaries. Recall that, for $\Fr\in\SN_\d$, our $\tau^\d$ neighborhood of $[0]$ are given by $U_\Fr=\{[f]\in\SG:\|\rz f\|_\d<\Fr\}$ and note that restricted to $\SM$, this neighborhood becomes $\{[m]\in\SM:\rz m(\d)<\Fr\}$. For the moment, for $[\wt{m}]\in\SM$, and recall (\autoref{def: SU([m]), SM([m]), M^Fr, etc}) also the sets $\SM([\wt{m}])_u=\{[m]\in\SM:\rz m(\e)<\rz\wt{m}(\e)\;\text{for}\;\e\in\mu(0)_+\}$. It's clear that if $\rz\wt{m}(\d)=\wt{\Fr}$, then $\SU([\wt{m}])\subset U_{\wt{\Fr}}$. We need to prove an approximate reverse inclusion.
%    Without loss of generality, we will assume for the next lemma that our positive infinitesimal $\d$ is a serial point.
%\begin{lemma}
%    Given $\wh{\Fr}\in\SN_\d$, there is, $[\wt{m}]\in\SM$ such that $\SU_{\wh{\Fr}}\subset U([\wt{m}])$.
%\end{lemma}
%\begin{proof}
%    $\d$ is serial, so there is a decreasing sequence of positive real numbers $\SR=\{d_1,d_2,\ldots\}$ with $\d=\rz d_\om$ for some $\om\in\rz\bbn_\infty$ and so $\wh{\Fr}=\rz\wh{m}(\rz d_\om)=\rz\wh{r}_\om$. Now $\om\in\{j\in\bbn:m(d_j)<\wh{m}(d_j)\}$ implies that there is an infinite increasing set of integers $j_1<j_2<\cdots$ such that if $m\in\SU_{\wh{\Fr}}$, then $m(d_{j_k})<\wh{m}(d_{j_k})$ for all $k\in\bbn$ sufficiently large. Choose $[\wt{m}]\in\SM$ with $\wt{m}(d_{j_k})=\wh{m}(d_{j_{k+1}})$ for all $k$; as the $d_j$'s are decreasing as $j$ increases, this is not hard. Given this if $x\in [d_{j_{k+1}},d_{j_k})$, we have that for $[m]\in\SU_{[\wh{\Fr}]}$,
%\begin{align}
%    m(x)<m(d_{j_k})<\wh{m}(d_{j_k})=\wt{m}(d_{j_{k+1}})<\wt{m}(x).
%\end{align}
%    That is, as $k\in\bbn$ was arbitrary, $m\in U(\wt{m})$, as required.
%\end{proof}
%    Note that, indeed, this is an approximate reverse inclusion in the following sense. We have $\wh{\Fr}=\rz \wh{r}_\om$ and $\wt{\Fr}\;\dot=\rz\wt{m}(\d)=\rz \wh{r}_{j_{\om+1}}$, so that the bound at $\d$ for the uniform bound, $\wt{\Fr}$, is only incrementally smaller than the pointwise bound at $\d$, ie., $\wh{\Fr}$.

%    Given this preliminary lemma, we can now prove the following.
    Note for the following statement, the neighborhoods (of $[0]$) $U_\Fr$ in our topology when restricted to $\wt{\SM}$ (extended to negative values as we have before) correspond to the sets $\SM^\Fr$.
\begin{proposition}\label{prop: mult SM by elt SM is open}
    If $[m]\in\SM$, then $[\x m]:\SM\ra\SM$ is an open map.
\end{proposition}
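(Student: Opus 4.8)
The plan is to realize the topology $\tau=\tau^\d$ near $[0]$ through the equivalent neighborhood base $\{\SU([\wt m]):[\wt m]\in\SM\}$ rather than through the $U_\Fr$: each $\SU([\wt m])$ is a $\tau$ neighborhood of $[0]$ by \autoref{cor: given m, U_r subset SU(m) some r}, while choosing $[\wt m]\in\SM$ with $\rz\wt m(\d)=\Fr$ (possible since $\Fr\in\SN_\d={}_\d\SM$) gives $\SU([\wt m])\subset U_\Fr$ straight from the definitions; so the two families are cofinal in the neighborhood filter of $[0]$. Since $\tau$ is translation invariant on $\SG$, the sets $(\SU([\wt m])+[g_0])\cap\SM$, $[\wt m]\in\SM$, then form a neighborhood base at each $[g_0]\in\SM$ in $\tau|\SM$, and likewise the $(\SU([\wt m'])+[g_0 m])\cap[\x m](\SM)$ form a neighborhood base at $[g_0 m]$ in the subspace $[\x m](\SM)$. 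Now $[\x m]:\SM\ra\SM$ is continuous --- it is multiplication by the fixed element $[m]$ of the topological ring $(\SG,\tau)$, \autoref{prop: SG_0 is Hausdorff top ring} --- and injective, since $m(t)>0$ for all small $t>0$; hence the whole content is to see that $[\x m]$ is a homeomorphism onto its image, equivalently that it is an open map onto $[\x m](\SM)$. For this it suffices to produce, for each $[g_0],[\wt m]\in\SM$, some $[\wt m']\in\SM$ with
\[
  [\x m]\big((\SU([\wt m])+[g_0])\cap\SM\big)\ \supseteq\ (\SU([\wt m'])+[g_0 m])\cap[\x m](\SM).
\]

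I would take $[\wt m']=[\tfrac12\,\wt m\,m]$, which lies in $\SM$ (a positive, eventually strictly increasing germ with limit $0$). Given $[h]$ in the right-hand set, write $h=gm$ with $[g]\in\SM$ uniquely determined (using $m>0$); then the membership $[h-g_0 m]\in\SU([\tfrac12\wt m m])$ says $\norm{(g-g_0)m}_\la<\tfrac12\wt m(\la)m(\la)$ for every $\la\in\mu(0)_+$. Specializing $\la=t$ and using $|g(t)-g_0(t)|\,m(t)\le\norm{(g-g_0)m}_t$ gives the pointwise estimate $|g(t)-g_0(t)|\,m(t)<\tfrac12\wt m(t)m(t)$, hence, dividing by $m(t)>0$, $|g(t)-g_0(t)|<\tfrac12\wt m(t)$ for every $t\in\mu(0)_+$. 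Since $\wt m$ is monotone increasing, for each $\la\in\mu(0)_+$ we then obtain
\[
  \norm{g-g_0}_\la=\sup_{0<t\le\la}|g(t)-g_0(t)|\ \le\ \sup_{0<t\le\la}\tfrac12\wt m(t)\ =\ \tfrac12\wt m(\la)\ <\ \wt m(\la),
\]
so $[g-g_0]\in\SU([\wt m])$, i.e.\ $[g]\in(\SU([\wt m])+[g_0])\cap\SM$; and $[h]=[gm]=[\x m]([g])$, which yields the displayed inclusion and hence the proposition.

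The one substantive point, and where I expect any difficulty to lie, is the passage to the $\SU([\,\cdot\,])$ neighborhoods. A naive attempt to recover a bound on $\norm{g-g_0}_\d$ from a bound on $\norm{(g-g_0)m}_\d$ at the single scale $\d$ is doomed: $m$ is infinitesimally small near $0$, so $[\x m]$ contracts $\d$-neighborhoods by an amount that is uncontrolled as one approaches the origin --- this is exactly the Krull-type shrinkage. The argument survives only because a bound holding \emph{at every infinitesimal scale} $\la$ can be divided by $m(t)$ pointwise in $t$ and then re-assembled into a supremum using solely the monotonicity of the majorant $\tfrac12\wt m$; and what legitimizes this $\SU$-description of $\tau$ in the first place is precisely the pointwise-implies-uniform pinching machinery, \autoref{prop: ptwise bndd subset unif bndd} and \autoref{cor: given m, U_r subset SU(m) some r}. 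Granting that, the remaining items --- continuity, injectivity, and the bookkeeping identifying the two neighborhood bases --- are routine, so I foresee no further obstacle.
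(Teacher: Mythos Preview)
Your proof is correct and takes essentially the paper's approach: both switch from the $U_\Fr$ to the $\SU([\,\cdot\,])$ neighborhood base (via \autoref{cor: given m, U_r subset SU(m) some r}) and use that multiplication by $[m]$ carries $\SU([m_\Fr])$ into $\SU([m\,m_\Fr])$, then pull back through \autoref{prop: un(SM)^r subset un(SM)(m)}. Your pointwise-divide-at-each-$t$-then-take-sup maneuver with the safety factor $\tfrac12$ is a more explicit rendering of what the paper compresses into the single line $[\x\un m](\SU([m_\Fr]))=\SU([\un m\,m_\Fr])$, and your reading of the statement as ``open onto its image'' agrees with the paper's own preamble to the proposition.
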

\begin{proof}
    It suffices to prove that the image of an open neighborhood of $0$ under $[\x m]$ contains an open neighborhood of $0$. So it suffices to prove that given $\Fr\in\SN_\d$, there is $\wh{\Fr}\in\SN_\d$ with $[\x m](U_\Fr)\supset U_{\wh{\Fr}}$. If $m_\Fr\in\SM$ is such that $\rz m_\Fr(\d)=\Fr$, then obviously $\SU([m_\Fr])\subset U_\Fr$, and so if $[\un{m}]\in\SM$, $[\x\un{m}](\SU([m_\Fr]))\subset [\x\un{m}](U_\Fr)$ and note that $[\x\un{m}](\SU([m_\Fr]))=\SU([\un{m}m_\Fr])$.
    But letting $[\wt{m}]$ in \autoref{prop: un(SM)^r subset un(SM)(m)} be equal to $[\un{m}m_\Fr]$, we know that there is $\wh{\Fr}\in\SN_\d$ with $U_{\wh{\Fr}}\subset \SU([\un{m}m_\Fr])$, finishing the proof.
\end{proof}
    In a similar way, we will verify that multiplication by elements of $\SM$ is an open map $\SG\ra\SG$.
    At this point we will be using $\boldsymbol{{}_m\wt{\SG}}={}_m\wt{\SG}_{n,1}$, recalling that we have the  notation ${}_m\wt{\SG}$ for the elements $[m]\in\SG$ with $|x|\leq|y|$ implies that $|g(x)|\leq|g(y)|$ and also that for any $\Fr\in\SN_\d$, we have $U_\Fr\cap\SG=U_\Fr\cap {}_m\wt{\SG}$, eg., that it suffices to work with elements of ${}_m\wt{\SG}$.
    For an element $[m]\in\wt{\SM}$, we can define for  $[g]\in{}_m\wt{\SG}_0$, the product $[mg]$ defined to be the germ of $x\mapsto m(|x|)g(x)$, so that we have a well defined product map $\wt{\SM}\x{}_m\wt{\SG}_0\ra{}_m\wt{\SG}_0$. Given this, we have the following generalization of the above result.
\begin{proposition}\label{prop: mult SG by elt SM is open}
    If $[m]\in\wt{\SM}$, then $[\x m]:{}_m\wt{\SG}\ra{}_m\wt{\SG}$ is an open map.
\end{proposition}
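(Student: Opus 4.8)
The plan is to follow the proof of \autoref{prop: mult SM by elt SM is open} essentially verbatim, the one new ingredient being a factorization of the norm germ of a product. First I would record the easy preliminaries: for $[m]\in\wt{\SM}$ the map $[\x m]\colon{}_m\wt{\SG}\ra{}_m\wt{\SG}$ is well defined (if $|x|\le|y|\Rightarrow|g(x)|\le|g(y)|$ and $m$ is positive and weakly increasing near $0$, then the same holds for $x\mapsto m(|x|)g(x)$, which moreover tends to $0$ at $0$), it is $\bbr$-linear, and it is not surjective, so ``open map'' is to be understood, just as in \autoref{prop: mult SM by elt SM is open}, as open onto its image. As there, by linearity and the translation invariance of $\tau$ it is enough to show that for every $\Fr\in\SN_\d$ there is $\wh{\Fr}\in\SN_\d$ with $U_{\wh{\Fr}}\cap[\x m]({}_m\wt{\SG})\subset[\x m](U_\Fr)$.

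The heart of the matter is the identity $\norm{mg}_\la=\rz m(\la)\norm{g}_\la$, valid for $[g]\in{}_m\wt{\SG}$ and every $\la\in\mu(0)_+$; this is the only place the ``monotone'' hypothesis is used. I would prove it at standard scale and transfer: for small $t\in\bbr_+$ one gets $\norm{mg}_t\le m(t)\norm{g}_t$ from $m(|x|)\le m(t)$ and $|g(x)|\le\norm{g}_t$ on $B_t$, while taking $x_0$ with $|x_0|=t$ the property defining ${}_m\wt{\SG}$ forces $|g(x_0)|=\norm{g}_t$ and $m(|x_0|)=m(t)$, giving $\norm{mg}_t\ge m(t)\norm{g}_t$. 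Since $\rz m(\la)>0$ for $\la\in\mu(0)_+$ (\autoref{rem: e<<<d and f(d)<e -> f(d)=0}), the identity yields the equivalence: for $[g]\in{}_m\wt{\SG}$ and positive $\Fs$, $\norm{mg}_\la<\rz m(\la)\Fs$ if and only if $\norm{g}_\la<\Fs$.

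Then the argument closes exactly as in \autoref{prop: mult SM by elt SM is open}. Using \autoref{lem: SN_d=SM_d}, choose $[m_\Fr]\in\SM$ with $\rz m_\Fr(\d)=\Fr$; then $\SU([m_\Fr])\subset U_\Fr$ (take $\la=\d$ in the definition of $\SU$), hence $[\x m](\SU([m_\Fr]))\subset[\x m](U_\Fr)$. The pointwise product germ $[m\,m_\Fr]$ lies in $\SM$ (a positive germ increasing to $0$ times a strictly increasing one), and by the equivalence just proved $[g]\in\SU([m_\Fr])$ if and only if $[mg]\in\SU([m\,m_\Fr])$; since every element of $[\x m]({}_m\wt{\SG})$ has the form $[mg]$ with $[g]\in{}_m\wt{\SG}$, this means $[\x m](\SU([m_\Fr]))=\SU([m\,m_\Fr])\cap[\x m]({}_m\wt{\SG})$. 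Applying \autoref{cor: given m, U_r subset SU(m) some r} to $[m\,m_\Fr]\in\SM$ produces $\wh{\Fr}\in\SN_\d$ with $U_{\wh{\Fr}}\subset\SU([m\,m_\Fr])$, and chaining $U_{\wh{\Fr}}\cap[\x m]({}_m\wt{\SG})\subset\SU([m\,m_\Fr])\cap[\x m]({}_m\wt{\SG})=[\x m](\SU([m_\Fr]))\subset[\x m](U_\Fr)$ finishes it.

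The step I expect to be the real obstacle is the norm factorization $\norm{mg}_\la=\rz m(\la)\norm{g}_\la$ together with the accompanying need to carry the image of $[\x m]$ through every inclusion: for a general $[g]\in\SG$ one has only the inequality $\norm{mg}_\la\le\rz m(\la)\norm{g}_\la$, so $[\x m](U_\Fr)$ can no longer be identified and the open-map conclusion genuinely fails — which is precisely why the statement is restricted to the monotone germs ${}_m\wt{\SG}$. Everything else is routine manipulation of the neighborhoods $U_\Fr$ and the bound-sets $\SU([m])$ of \autoref{def: SU([m]), SM([m]), M^Fr, etc}.
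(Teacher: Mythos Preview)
Your argument is correct and follows the same route as the paper's own proof: pick $[m_\Fr]\in\SM$ with $\SU([m_\Fr])\subset U_\Fr$, identify $[\x m](\SU([m_\Fr]))$ with $\SU([m\,m_\Fr])$ (intersected with the image), and invoke \autoref{cor: given m, U_r subset SU(m) some r}. In fact you are more careful than the paper on two points the paper glosses over: you actually prove the norm factorization $\norm{mg}_\la=\rz m(\la)\norm{g}_\la$ (the paper simply asserts ``it's also clear that $[\wt{m}]\SU([\un{m}])=\SU([\wt{m}\un{m}])$''), and you correctly carry the intersection with $[\x m]({}_m\wt{\SG})$ through the chain of inclusions, whereas the paper's displayed equality $\SU([\wt{m}\un{m}])=[\wt{m}]\SU([\un{m}])$ is literally false without that intersection.
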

\begin{proof}
    This follows immediately from
    We need to show that if $[\wt{m}]\in\wt{\SM}$ and $\Fr\in\SN_\d$, there is $\wt{\Fr}\in\wt{\SN}_\d$ such that $[\x \wt{m}]U_\Fr\supset U_{\wt{\Fr}}$.
    First of all, it's clear that there is $[\un{m}]\in\wt{\SM}$ with $\SU([\un{m}])\subset U_\Fr$, ie., $\Fr=\rz\un{m}(\d)$ for some $[m]\in\wt{\SM}$. It's also clear that $[\wt{m}]\SU([\un{m}])=\SU([\wt{m}\un{m}])$. But by \autoref{cor: given m, U_r subset SU(m) some r}, there is $\wh{\Fr}\in\wt{\SN}_\d$ such that $U_{\wh{\Fr}}\subset\SU([\wt{m}\un{m}])$. Putting these inclusions together, we have
\begin{align}
    U_{\wh{m}}\subset\SU([\wt{m}\un{m}])=[\wt{m}]\SU([\un{m}])\subset [\wt{m}]U_\Fr.
\end{align}
%    First of all, recall that for $[g]\in{}_m\SG$, we have $\vsg([g])\in\wt{\SM}$ by $\vsg(g)(x)= \|g\|_x$ for $x\geq 0$ and extend by symmetry: for $x<0$, $\pzS(g)(x)=-\pzS(g)(-x)$. First note that if $g\in\SG$, then $\pzS(g)\in\SM$. Second, it is easy to verify that for a given $\Fr\in\SN_\d$, we have that $[g]\in\SU_\Fr$ if and only if $[\pzS(g)]\in\SU_\Fr$.
%    With this, we can finish the proof. By the proposition above,  given $[\un{m}]\in\SM$ and $\Fr\in\SN_\d$, there is $\Fs\in\SN_\d$ such that
%\begin{align}
%    [\x\un{m}](\SM\cap\SU_\Fr)\supset\SM\cap\SU_\Fs.
%\end{align}
%    Now, if $[g]\in\SU_\Fs$, then $\pzS(g)\in\SM\cap\SU_\Fr$ and so $\pzS(g)\in [\x\un{m}](\SM\cap\SU_\Fr)$. Note also that if $m\in\SM\cap\SU_\Fr$ and $h\in\SG$ has $\pzS(h)=m$, then $h\in\SU_\Fr$.
\end{proof}
%\begin{remark}
%    The previous proposition will be crucial to the proof of the nonexistence of a metrizable topology on $\SG_{1,1,0}$. Any such topology should have the good properties of $\tau$; in particular, they should have the open map property above. But the fact that $\SN_\d$ does not have a countable coinitial subset along with the above property along with some mild intersection hypotheses on the two topologies will prevent the existence of such a nice topology.
%\end{remark}

\subsection{Topological properties of germ composition}\label{subsec: top properties of germ composition}
    We return to the context of  \autoref{subsec: top properties ring structure} and consider the morphisms of these topological rings induced by continuous map germs.

    The composition of germs of maps is well known and routine and we will assume the general definitions and facts known. Heuristically, composition of functions obviously distorts domains and range and so it will be here, but for germs in $[f]\in\SG^0$, composition works sufficiently well as $\rz f(\mu(0))\subset\mu(0)$.
%     cutting down domains will not be a problem as our monad representatives remain well defined as such when domains are repeatedly expanded and contracted within $\mu(0)$.
%    If $n,p\in\bbn$,  $\SG_{n,p}\subset\SG_{n,p}$ denote the set of germs at $0$ in $\bbr^n$ of maps $f:(\bbr^n,0)\ra (\bbr^p,0)$.
     More specifically, if $[h]\in\SG_{n,n}$ or even in $\SG^0_{n,n}$, and $[f]\in\SG$, then $\rz f|_{B_\d}\circ \rz h|_{B_\d}$ is often not defined. On the other hand, if $[h]\in\SG^0_{n,n}$ and $[f]\in\SG_n$, then $\rz(f\circ h)_{B_\d}$ is always defined even when $\rz h(B_\d)\nsubseteq B_\d$ as $h(\mu(0))\subset\mu(0)$
     so that $\rz f|\rz h(\mu(0))$ is defined,
     and although we are defining the composition with $[f]$ in terms of its representative, any such is uniquely defined on all of $\mu(0)$. Therefore, the following definitions are well defined.
\begin{definition}\label{def: rc_[h],lc_[h]}
    If $[h]\in\SG^0_{n,n}$, we will denote the map $\SG_{n,p}\ra\SG_{n,p}:[f]\mapsto [f]\circ[h]\;\dot=[f\circ h]$ by $\mathbf{rc_{[h]}}$ and if $[g]\in\SG_{p,p}$, we define $\mathbf{lc_{[g]}}: \SG_{n,p}^0\ra\SG_{n,p}$ by $[f]\mapsto[g]\circ[f]\;\dot=[g\circ f]$.
\end{definition}
    To begin with, we look at the effect of composition on our sets of moduli. As right composition carries algebraic operations, we start there and as moduli are determined in terms of one dimensional mappings we will consider both the right and left actions of $\SM^0$ on inself.
    We will begin with some preliminaries on the effects of compositions on our semirings of moduli.
%    Let $0<\d\sim 0$ and consider $[m]\in\SM^0$ and $rc_{[m]}:\SG\ra\SG$

%   We begin with a consequence of work in \autoref{subsec: monadic regularity of standard fcns}.
\begin{lemma}\label{lem: m(N_d)-N_d and N_(m(d))=N_d}
    Suppose that $\d\in\mu(0)_+$ and $\Fr\in\SN_\d$. Then $\SN_\Fr$ is coinitial with $\SN_{\d}$.
%    as subsemirings of $\mu(0)_+$, ie., if $\Fr\in\SN^0_\d$, then $\SN_\Fr=\SN_\d$.
\end{lemma}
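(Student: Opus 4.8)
The plan is to transport the statement to the semiring $\SM$ of monotone germs. By \autoref{lem: SN_d=SM_d} one has $\SN_\d={}_\d\SM=\{\rz m(\d):[m]\in\SM\}$ and likewise $\SN_\Fr=\{\rz n(\Fr):[n]\in\SM\}$; fix once and for all a representative $[m_0]\in\SM$ with $\Fr=\rz m_0(\d)$. I would first record the easy inclusion $\SN_\Fr\subset\SN_\d$: for $[n]\in\SM$, transfer of $(n\circ m_0)(t)=n(m_0(t))$ (legitimate since the germ $n\circ m_0$ is defined on a whole interval $(0,\e)$, so $\d$ lies in its nonstandard domain) gives $\rz n(\Fr)=\rz n(\rz m_0(\d))=\rz(n\circ m_0)(\d)$, and $[n\circ m_0]\in\SM$ because a composite of strictly increasing germs tending to $0$ is again such. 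In particular $\SN_\d$ is coinitial with $\SN_\Fr$ for free (take the witness equal to the given element), so it remains only to show the reverse: given any $s=\rz m(\d)\in\SN_\d$ with $[m]\in\SM$, produce $[n']\in\SM$ with $\rz n'(\Fr)\le s$.

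The crux is to ``undo'' $m_0$ even though $m_0$ need not be continuous, so that the naive choice $n'=m\circ m_0^{-1}$ is unavailable ($m_0^{-1}$ need not be a germ at $0$). Instead I would use the monotone generalized inverse: for small $u>0$ set $n(u)=\sup\{m(t):0<t<\e,\ m_0(t)\le u\}$, where $\e>0$ is chosen so both representatives are monotone on $(0,\e)$. Since $m_0(t)\to 0$ the supremum is over a nonempty set; that set is an interval $(0,t_u)$ with $t_u\to 0^+$ as $u\to 0^+$, so (using that $m$ is increasing and positive) $n$ is finite, positive, increasing in $u$, and tends to $0$, whence $[n]\in\wt\SM$. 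Because $m_0$ is strictly increasing one has $\{t:m_0(t)\le m_0(t_1)\}=(0,t_1]$, so $n(m_0(t_1))=\sup\{m(t):t\le t_1\}=m(t_1)$, i.e. $n\circ m_0=m$ as germs at $0^+$. Now invoke that $\SP\SL^0\subset\SM$ is germwise coinitial in $\wt\SM$ (\autoref{prop: SPSL^0 coinitial in wt(SM)}) to get $[n']\in\SM$ with $n'\le n$ near $0$; then $n'\circ m_0\le n\circ m_0=m$ near $0$ with $[n'\circ m_0]\in\SM$, so by transfer $\rz(n'\circ m_0)(\d)\le\rz m(\d)=s$. Since $\rz(n'\circ m_0)(\d)=\rz n'(\Fr)\in\SN_\Fr$, this is the required witness, and combining the two directions shows $\SN_\Fr$ and $\SN_\d$ are coinitial.

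I expect the only real obstacle to be exactly this inversion of a possibly discontinuous $m_0$; everything else is bookkeeping with transfer and the definitions of the moduli families. An alternative route to the same point: first show $\SM^0$ is cofinal in $\SM$ — fill in the (at most countably many) jumps of a representative $m_0$ from above by a continuous strictly increasing germ $\hat m_0\ge m_0$ with limit $0$ — so that $[\hat m_0^{-1}]\in\SM^0\subset\SM$; then $n':=m\circ\hat m_0^{-1}\in\SM$ satisfies $n'\circ m_0=m\circ(\hat m_0^{-1}\circ m_0)\le m$ since $\hat m_0^{-1}\circ m_0\le\mathrm{id}$, and $\Fr=\rz m_0(\d)\le\rz\hat m_0(\d)$ then gives $\rz n'(\Fr)\le\rz n'(\rz\hat m_0(\d))=\rz m(\d)=s$, as before.
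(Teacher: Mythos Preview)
Your proof is correct and takes a genuinely different route from the paper's. The paper argues in two stages: first, when $\Fr=\rz m_0(\d)$ with $[m_0]\in\SM^0$, the compositional inverse $[m_0^{-1}]\in\SM$ gives $[m_0]\circ\SM=\SM$ and hence the actual \emph{equality} $\SN_\Fr=\SN_\d$; second, for general $[m_0]\in\SM$, the paper specializes to a serial $\d=\rz d_\om$, interpolates $m_0$ at the sequence $(d_j)$ to a continuous germ agreeing with $m_0$ at $\d$, and then invokes \autoref{cor: B_d1 coinit<->B_d2 coinit} (which rests on the pinching result \autoref{prop: ptwise bndd subset unif bndd}) to transport coinitiality from serial $\d$ to arbitrary $\d$. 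Your argument bypasses both the serial-point detour and the pinching machinery: the monotone generalized inverse $n(u)=\sup\{m(t):m_0(t)\le u\}$ lands in $\wt{\SM}$ directly, and a single appeal to \autoref{prop: SPSL^0 coinitial in wt(SM)} pushes it down to $\SM$. This is more self-contained and uses strictly lighter prerequisites. What the paper's approach buys is the sharper conclusion $\SN_\Fr=\SN_\d$ (not merely mutual coinitiality) whenever $[m_0]\in\SM^0$ or $\d$ is serial --- a fact invoked parenthetically later in \autoref{prop: rc_h:G^0_n->G^0_n is C^0}. Your alternative route via a continuous $\hat m_0\ge m_0$ is essentially the paper's first step combined with a cofinality observation, and is equally valid.
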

\begin{proof}
    First suppose that $\Fr=\rz m(\d)$ for some $[m]\in\SM^0$.
    Then $[\un{m}^{-1}]\in\SM$ and so any $[m]\in\SM$ can be written as $[\un{m}^{-1}][\wt{m}]$ for some $[\wt{m}]\in\SM$, ie., $[\un{m}^{-1}]\circ\SM=\SM$. Therefore, using  \autoref{lem: SN_d=SM_d}, we have
  \begin{align}
    \SN_\d=\{\rz m(\d):[m]\in\SM\}=\{\rz\un{m}\circ\un{m}^{-1}\circ m(\d):m\in\SM\}\qquad\qquad\qquad\notag\\
    =\{\rz m\circ\wt{m}(\d):[\wt{m}]\in[\un{m}^{-1}]\circ\SM\}\quad\qquad\qquad\notag\\
    =\{\rz \un{m}\circ \wt{m}(\d):[\wt{m}]\in\SM\}=  \SN_{\rz m(\d)}.
  \end{align}
    Next, suppose that $\d$ is serial (see \autoref{def: serial point}) so that $\d=\rz d_\om$ for some sequence $d_1>d_2>\cdots$ tending to $0$ in $\bbr_+$ and $\Fr=\rz m(\d)$ for an arbitrary $[m]\in\SM$. Then, as $m(d_j)>m(d_{j+1})$ for all sufficiently large $j$, there is $[m_0]\in\SM^0$ with $m_0(d_j)=m(d_j)$ for all sufficiently large $j$, so that $\Fr=\rz m_0(\d)$ and so by the first part of the proof, $\SN_{*m(\d)}=\SN_{*m_0(\d)}=\SN_\d$. That is, if $[m]\in\SM$, then for $\d$ serial, we have ${}_\d(\SM\circ[m])={}_\d\SM$, eg., they are coinitial with each other. But then by \autoref{cor: B_d1 coinit<->B_d2 coinit}, if $\wt{\d}\in\mu(0)_+$ is any infinitesimal, then ${}_{\wt{\d}}(\SM\circ[m])$ is coinitial with ${}_{\wt{\d}}\SM=\SN_{\wt{\d}}$ and finally note that as $[m]$ varies in $\SM$, ${}_{\wt{\d}}(\SM\circ[m])$ varies over the sets $\SN_{\wt{\Fr}}$ for $\Fr\in\SN_{\wt{\d}}$, ie., for $\wt{\Fr}\in\SN_{\wt{\d}}$, $\SN_{\wt{\Fr}}$ is coinitial with $\SN_{\wt{\d}}$.

\end{proof}

%\begin{corollary}\label{cor: SN_r is coin with SN_d if r=f(d)}
%    Suppose that $[f]\in\SG^0$, $0<\d\sim 0$, and $\Fr\in\SN_\d$.
%    Then $\SN_\Fr$ is coinitial with $\SN_\d$ (in $\rz\bbr_+$).
%    Then for each $U^\d_\Fr\in\tau^\d_0$ with $\Fr\in\SN_\d$, there is $U^\d_\Fs\in\tau^\d_0$ with $\Fs\in\SN_\Fr$ such that $U^\d_\Fs\subset U^\d_\Fr$, eg., the subset of $\tau^\d_0$ given by the set of $U^\d_\Fs$ with $\Fs$ varying in $\SN_\Fr$ is a subbase at $0$ for $\tau^\d_0$.
%    Suppose that $(D,,)$ is a directed set, and $(\Fn_d:d\in D)$ is a net in $\mu(0)_+$. If $\{\Fn_d\}$ is coinitial in $\SN_\d$, then $\{\Fn_d\}$ is coinitial in $\SN_\Fr$.
%\end{corollary}
%\begin{proof}
%     It suffices to prove that $\SN^0_\Fr$ is coinitial with $\SN^0_\d$.
%    But  this is a consequence of \autoref{cor: m(N_d)-N_d and N_(m(d))=N_d} and the fact that $\SN^0_\d$ is coinitial in $\SN_\d$ which is a consequence of \autoref{prop: SPSL^0 coinitial in wt(SM)}.
%    Also without loss generality, we may  represent our moduli in terms of elements of $\SM^0$. So given $\Fr=\norm{\rz m}_\d$ for some $[m]\in\SM^0$, and $\Fs\in\SN^0_\d$ (which we may represent as $\norm{\rz m'}_\d$ for $[m']\in\SM^0$), we want to find $\Ft\in\SN_\Fr$ such that $\Ft\leq\Fs$.
%    But, by definition $m':(0,\Fr]\ra (0,\Fs]$ and $m:(0,\d]\ra (0,\Fr]$ which implies $m^{-1}:(0,\Fr]\ra (0,\d]$ and therefore $[m'']\dot=[\f{1}{2}m'\circ m^{-1}]\in\SM^0$ and $m'':(0,\Fr]\ra (0,\Fs/2]$, eg., $\Ft\dot=\norm{\rz m''}_\Fr\leq\Fs/2$.
%\end{proof}

\begin{proposition}\label{prop: rc_h:G^0_n->G^0_n is C^0}
    If $[h]\in\SG^0_{n,n}$, then $rc_{[h]}:\SG^0_n\ra\SG_{n}^0$ is a continuous homomorphism.
\end{proposition}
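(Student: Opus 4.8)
The plan is to verify that $rc_{[h]}$ is an algebra homomorphism and then check continuity at $[0]$, which by translation invariance of $\tau$ suffices for continuity everywhere. That $rc_{[h]}:[f]\mapsto[f\circ h]$ is an $\bbr$-algebra homomorphism is routine: since $[h]\in\SG^0_{n,n}$ maps $\mu(0)$ into $\mu(0)$, the composite $\rz f\circ\rz h$ is well defined on $B_\d$ for every representative and every $0<\d\sim 0$, and $(f+f')\circ h=f\circ h+f'\circ h$, $(ff')\circ h=(f\circ h)(f'\circ h)$, $(cf)\circ h=c(f\circ h)$ at the level of germs. Also $[f]\in\SG^0$ implies $[f\circ h]\in\SG^0$ since the composite of continuous maps is continuous, so $rc_{[h]}$ does land in $\SG^0_n$. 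The content is therefore continuity.

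For continuity at $[0]$, fix $0<\d\sim 0$ (permissible since $\tau=\tau^\d$ is independent of $\d$ by \autoref{cor: tau^d_1 = tau^d_2}). Given a subbasic neighborhood $U^\d_\Fr$ of $[0]$ with $\Fr\in\SN_\d$, I want a neighborhood $U^\d_\Fs$ of $[0]$ with $rc_{[h]}(U^\d_\Fs)\subset U^\d_\Fr$; equivalently, $\|\rz f\|_\d<\Fs\Rightarrow\|\rz(f\circ h)\|_\d<\Fr$. The key observation is that $\|\rz(f\circ h)\|_\d=\rz\sup\{|\rz f(\rz h(\xi))|:\xi\in B_\d\}\leq\rz\sup\{|\rz f(\eta)|:\eta\in \rz h(B_\d)\}$. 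Since $[h]\in\SG^0_{n,n}$, by the continuity criterion (the analogue of \autoref{cor: *cont on B epsilon iff *cont on B del} for homeomorphism/continuous germs noted in the Remark after it) $\rz h$ is $\rz$-continuous on $B_\d$, hence $\rz h(B_\d)$ is an internal subset of $\mu(0)$; choose $\e\in\mu(0)_+$ with $\rz h(B_\d)\subset B_\e$ (overflow/boundedness of the internal continuous image). Then $\|\rz(f\circ h)\|_\d\leq\|\rz f\|_\e$. Now apply the independence of the moduli from the radius: by \autoref{thm: tau^d on SM indep of d} (via \autoref{cor: B_d1 coinit<->B_d2 coinit}) and \autoref{lem: m(N_d)-N_d and N_(m(d))=N_d}, $\SN_\e$ and $\SN_\d$ are coinitial, so there is $\Fs'\in\SN_\e$ with $\Fs'\leq\Fr$; and pulling $\Fs'$ back along the radius change there is $\Fs\in\SN_\d$ such that $\|\rz f\|_\d<\Fs$ forces $\|\rz f\|_\e<\Fs'\leq\Fr$ (here one uses that the map $t\mapsto\|f\|_t$ lies in $\wt{\SM}$, so controlling its value at $\d$ controls its value at the smaller $\e$, combined with the coinitiality of the $\SN$'s). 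Chaining these gives $\|\rz(f\circ h)\|_\d\leq\|\rz f\|_\e<\Fr$, as required; since $rc_{[h]}$ is additive this yields continuity at every point.

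The main obstacle is the radius bookkeeping in the middle step: controlling $\|\rz f\|_\e$ (a norm on the \emph{smaller} ball $B_\e$) in terms of $\|\rz f\|_\d$ is exactly backwards from the trivial monotone inequality $\|\rz f\|_\e\leq\|\rz f\|_\d$, which is the wrong direction for a $<$-strict bound into $U_\Fr$. The honest fix is the one the paper already built its machinery for: pass through $\FL$ and the monotone-germ picture, where ``$[f]\in U^\d_\Fs$'' becomes ``$\FL([f])(\d)<\Fs$'', use \autoref{cor: B_d1 coinit<->B_d2 coinit} to transfer coinitiality between $\SN_\d$ and $\SN_\e$, and use \autoref{prop: ptwise bndd subset unif bndd} (pointwise pinching implies uniform pinching) to convert a bound at one radius into a uniform bound by an element of $\SM^0$, which then gives the bound at the other radius. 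Everything else — the homomorphism property, landing in $\SG^0$, reduction to the origin — is routine.
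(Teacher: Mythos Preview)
Your approach is essentially the paper's: reduce to continuity at $[0]$ via the homomorphism property, use the key estimate $\rz\|f\circ h\|_\d\le\rz\|f\|_\e$ for $\e$ an infinitesimal with $\rz h(B_\d)\subset B_\e$, and then invoke the radius-independence machinery to transfer control from $\d$ to $\e$. The paper carries this out with nets rather than subbasic neighborhoods: if $([f_d])$ converges to $[0]$ in $\tau^\d$ then by \autoref{cor: tau^d_1 = tau^d_2} it also converges in $\tau^\Fr$ where $\Fr=\rz\|h\|_\d$, so $\{\rz\|f_d\|_\Fr\}$ is convergently coinitial in $\SN_\Fr$, hence in $\SN_\d$ by \autoref{lem: m(N_d)-N_d and N_(m(d))=N_d}, and the estimate finishes. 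This sidesteps the need to manufacture an explicit $\Fs$ for a given $\Fr$.

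One correction: your parenthetical ``norm on the \emph{smaller} ball $B_\e$'' and the ensuing ``wrong direction'' remark are muddled. In general $\e=\rz\|h\|_\d$ may well exceed $\d$ (think $h(x)=2x$), and that is precisely the nontrivial case; when $\e\le\d$ the monotone inequality $\rz\|f\|_\e\le\rz\|f\|_\d$ already gives what you want with $\Fs=\Fr$. Your ``honest fix'' paragraph is pointing at the right tools---\autoref{prop: ptwise bndd subset unif bndd} and \autoref{cor: given m, U_r subset SU(m) some r} do let you produce an explicit $\Fs$: pick $[m_0]\in\SM$ with $\rz m_0(\e)<\Fr$ (possible since $\SN_\e$ is coinitial with $\SN_\d$), then take $\Fs$ with $U^\d_\Fs\subset\SU([m_0])$, so $[f]\in U^\d_\Fs$ forces $\rz\|f\|_\e<\rz m_0(\e)<\Fr$. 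But the paper's net formulation packages this more cleanly by invoking the independence of $\tau^\d$ from $\d$ once, rather than unwinding it through the pinching lemmas.
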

\begin{proof}
    As right composition by $[h]\in\SG_{n,n}$ is an additive homomorphism $rc_{[h]}:\SG_n\ra\SG_n$ and translation by an element of $\SG^0_{n,n}$ is a homeomorphism, it suffices to show the following. If $([f_d:d\in D])$ is a net in $\SG^0_n$ converging to the zero germ $[0]$ in say the $\tau_\d$ topology, it follows that $([f_d\circ h]:d\in D)$ converges also (in some $\tau_\Fs$ topology for some $0<\Fs\sim0$, as topology is independent of $\Fs$).
    Given this, if $0<\d\sim 0$, $[f]\in\SG^0_n$ and  $[h]\in\SG^0_{n,n}$ and $\rz h(B_\d)\subset B_\e$ for some positive $\e\sim 0$, then $|\rz f\circ h(\xi)|\leq \norm{\rz f}_\e$ for $\xi\in B_\d$, so that if $\Fr=\norm{\rz h}_\d$, then $\xi\in B_\d$ implies that $|\rz f\circ h(\xi)|\leq\norm{\rz f}_\Fr$ and so $\norm{\rz f\circ h}_\d\leq\norm{\rz f}_\Fr$.
   The above estimate gives $\norm{\rz f_d\circ h}_\d\leq\norm{\rz f_d}_\Fr$ for all $d\in D$. Now by \autoref{lem: m(N_d)-N_d and N_(m(d))=N_d},  $\SN^0_\Fr$ is convergently coinitial in $\SN^0_\d$ (in fact equal) and $[f_d]$ converges in the $\tau_\Fr$ topology and so in the $\tau_\d$ topology and so $\{\norm{\rz f_d}_\Fr:d\in D\}$ is convergently coinitial in $\SN^0_\d$. But the estimates above then imply that $\{\norm{\rz f_d\circ h}_\d:d\in D\}$ is convergently coinitial in $\SN^0_\d$, as we wanted.
\end{proof}
\begin{corollary}\label{cor: rc_h:G^0_n,p,0->G^0_n,p,0 is C^0}
    Suppose that $[h]\in\SG^0_{n,n}$. Then $rc_{[h]}$ is a continuous $\SG^0_n$ module homomorphism of $\SG_{n,p}$.
\end{corollary}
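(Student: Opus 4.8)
The plan is to obtain the statement from the $p=1$ case — \autoref{prop: rc_h:G^0_n->G^0_n is C^0} — using only the product structure of $\SG_{n,p}$ and one line of algebra. First I would note that, under the identification $\SG_{n,p}=\prod_{i=1}^{p}\SG_{n,1}$ that defines the product topology (\autoref{def: SG{n,p}, SG^0{n,p}}), $rc_{[h]}$ acts coordinatewise: for $[f]=([f_{1}],\dots,[f_{p}])$ one has $[f\circ h]=([f_{1}\circ h],\dots,[f_{p}\circ h])$, since composing a $\bbr^{p}$-valued germ with $[h]$ amounts to composing each scalar component with $[h]$. In particular $rc_{[h]}$ carries $\SG^0_{n,p}$ into itself, each $f_{i}\circ h$ being a continuous germ vanishing at $0$ because $h(\mu(0))\subset\mu(0)$ and a composite of continuous germs is continuous.

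For continuity, recall that a map into a product is continuous iff every coordinate map is, and each coordinate map of $rc_{[h]}$ is just $rc_{[h]}\colon\SG_{n,1}\to\SG_{n,1}$. The $p=1$ case is \autoref{prop: rc_h:G^0_n->G^0_n is C^0}; concretely, the estimate $\|\,\rz f\circ\rz h\,\|_{\d}\le\|\,\rz f\,\|_{\|\rz h\|_{\d}}$ established there, which uses nothing about continuity of $f$, together with the coinitiality of $\SN_{\|\rz h\|_{\d}}$ in $\SN_{\d}$ from \autoref{lem: m(N_d)-N_d and N_(m(d))=N_d}, shows that $rc_{[h]}$ sends any net converging to $[0]$ to a net converging to $[0]$, hence, by additivity and translation invariance of $\tau$, is continuous on $\SG_{n,1}$. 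Reassembling coordinates — testing convergence, if one prefers, against the diagonal subbasic neighborhoods $U^{\d}_{\un{\Fr}}\x\cdots\x U^{\d}_{\un{\Fr}}$ of $[0]$ noted after \autoref{def: SG{n,p}, SG^0{n,p}} — then gives continuity of $rc_{[h]}$ on $\SG_{n,p}$.

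It remains to record the (semi)linearity. Additivity is $(f+g)\circ h=f\circ h+g\circ h$. For the $\SG^0_n$-module structure of $\SG_{n,p}$ — scalar multiplication being $[\phi]\cdot[f]=[\phi f]$, componentwise — one computes, for $[\phi]\in\SG^0_n$ and $[f]\in\SG_{n,p}$,
\[
    rc_{[h]}\bigl([\phi]\cdot[f]\bigr)=[(\phi f)\circ h]=\bigl[(\phi\circ h)(f\circ h)\bigr]=rc_{[h]}([\phi])\cdot rc_{[h]}([f]),
\]
so $rc_{[h]}$ is a homomorphism of $\SG^0_n$-modules relative to the continuous ring endomorphism $rc_{[h]}$ of $\SG^0_n$ supplied by \autoref{prop: rc_h:G^0_n->G^0_n is C^0}; combined with the continuity above, this is the claim.

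I do not anticipate a real obstacle: all the analysis — preservation of net convergence under right composition — is already contained in \autoref{prop: rc_h:G^0_n->G^0_n is C^0}, and the step from $p=1$ to general $p$ is forced by the product topology. The one point worth stating carefully is that $rc_{[h]}$ is \emph{not} $\SG^0_n$-linear on the nose but only semilinear, sending a scalar $[\phi]$ to $[\phi\circ h]$; the displayed identity is exactly what makes the phrase ``$\SG^0_n$ module homomorphism'' legitimate here.
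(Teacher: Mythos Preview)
Your proof is correct and follows essentially the same approach as the paper, which simply says ``This is clear from the previous proposition'' --- you have just spelled out the details of passing from $p=1$ to general $p$ via the product structure and recorded the semilinearity identity that justifies the module-homomorphism claim.
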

\begin{proof}
    This is clear from the previous proposition.
\end{proof}
    Before we proceed to proving that left composition is a continuous operation, we want a more explicit description of the convergence of a net $([f_d]:d\in D)\subset\SG_{n,p}$ to $[f]\in\SG^0_{n,p}$. This result once more indicates the uniform convergence flavor of $\tau$ convergence.
\begin{lemma}\label{lem: ptwise condition for [f_d]->[f_0] in tau}
    Suppose that $([f_d]:d\in D)$ is a net in $\SG^0_{n,p}$ and $[f]\in\SG^0_{n,p}$. Then $[f_d]\ra[f]$ in $\tau$ if and only if the following holds. Let $\d\in\mu(0)_+$. Given $\Fr$ in $\SN_\d$, then there is $d_0\in D$ such that if $\xi\in\mu_n(0)$ satisfies $\xi\in B_\d$, then $\rz f_d(\xi)\in\mu_p(0)$  satisfies $|\rz f_d(\xi)-\rz f(\xi)|\leq\Fr$ for all $d>d_0$, ie., if $\|\rz f_d-\rz f\|_\d\leq\Fr$.
\end{lemma}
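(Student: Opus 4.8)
The plan is to unwind the definition of $\tau$-convergence on the product space $\SG^0_{n,p}$ and identify it, coordinate by coordinate, with the pointwise estimate on $B_\d$. First I would observe that, since $[f]\in\SG^0_{n,p}$ and translation by $[f]$ is a homeomorphism (by the definition of $\tau$ and \autoref{prop: SG_0 is Hausdorff top ring}), we may replace $[f_d]$ by $[f_d-f]$ and $[f]$ by $[0]$; so it suffices to prove the equivalence when $[f]=[0]$. Here I would use the remark following \autoref{def: SG{n,p}, SG^0{n,p}}: the neighborhood subbase of $[0]$ in $\SG^0_{n,p}$ consists of the products $U^\d_{\vec{\Fr}}=U^\d_{\Fr_1}\x\cdots\x U^\d_{\Fr_p}$, and when testing convergence it suffices to use the diagonal neighborhoods $U^\d_{\un\Fr}\x\cdots\x U^\d_{\un\Fr}$ with $\un\Fr\in\SN_\d$. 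So $[f_d]\ra[0]$ in $\tau$ iff for each $\un\Fr\in\SN_\d$ there is $d_0\in D$ with $[f_d]\in U^\d_{\un\Fr}\x\cdots\x U^\d_{\un\Fr}$ for $d>d_0$, i.e.\ with $\norm{\rz (f_d)_i}_\d<\un\Fr$ for every coordinate $i=1,\dots,p$.

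Next I would translate the vector estimate $|\rz f_d(\xi)|\le\Fr$ for all $\xi\in B_\d$ into the coordinatewise bounds. Writing $\rz f_d=(\rz(f_d)_1,\dots,\rz(f_d)_p)$, one has the elementary (internal) inequalities $\max_i\norm{\rz(f_d)_i}_\d\le\norm{\rz f_d}_\d\le\sqrt{p}\,\max_i\norm{\rz(f_d)_i}_\d$, where $\norm{\rz f_d}_\d=\rz\sup\{|\rz f_d(\xi)|:\xi\in B_\d\}$ with $|\cdot|$ the Euclidean norm on $\rz\bbr^p$. Since $\SN_\d$ is closed under the maps $\Ft\mapsto\sqrt p\,\Ft$ and $\Ft\mapsto\Ft/2$ (these are restrictions of elements of $\SM^0$, so \autoref{lem: SN_d=SM_d} applies — more simply, multiplication by a positive real constant preserves $\SN_\d$ since $t\mapsto ct$ composed with a monotone germ is again a monotone germ), a net converges coinitially to $[0]$ measured by $\norm{\rz f_d}_\d$ iff it does so measured by $\max_i\norm{\rz(f_d)_i}_\d$. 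Combining this with the product-topology description, $[f_d]\ra[0]$ in $\tau$ iff for each $\Fr\in\SN_\d$ there is $d_0$ with $\norm{\rz f_d}_\d\le\Fr$ for $d>d_0$, which — after undoing the translation — is exactly the stated condition $\norm{\rz f_d-\rz f}_\d\le\Fr$ eventually, i.e.\ $|\rz f_d(\xi)-\rz f(\xi)|\le\Fr$ for all $\xi\in B_\d$ eventually. The remaining assertions, that $\xi\in\mu_n(0)$ and $\rz f_d(\xi)\in\mu_p(0)$, are automatic: $\xi\in B_\d$ forces $\xi\in\mu_n(0)$ since $\d\sim0$, and since $[f_d]\in\SG^0_{n,p}$, \autoref{cor: SG_0 -> F(B_delts) is R alg isomorph} (applied coordinatewise) gives $\rz f_d(\xi)\sim0$, as the germ of a continuous map vanishing at $0$ has $\rz f_d(B_\d)\subset\mu_p(0)$.

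Also I should note the harmless passage between $\d$-independence: the statement fixes an arbitrary $\d\in\mu(0)_+$, so I do not even need \autoref{cor: tau^d_1 = tau^d_2} here, though it guarantees the criterion holds for one $\d$ iff for all. The main obstacle — really the only point requiring care rather than bookkeeping — is the step identifying coinitial convergence measured in the Euclidean norm $\norm{\rz f_d}_\d$ with coinitial convergence measured in the coordinate maxima $\max_i\norm{\rz(f_d)_i}_\d$; this rests on the fact that $\SN_\d$ is stable under scaling by positive reals and under the finite-max operation, which follows from \autoref{lem: SN_d=SM_d} together with the observation that $\SM$ (hence $\SN_\d$) is closed under these operations. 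Everything else is a direct unwinding of \autoref{def: of topology at 0 germ}, the product-topology remark after \autoref{def: SG{n,p}, SG^0{n,p}}, and \autoref{cor: SG_0 -> F(B_delts) is R alg isomorph}.
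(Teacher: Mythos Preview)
Your argument is correct and follows the same route as the paper---unwinding the definition of $\tau$ on the product $\SG^0_{n,p}$---only far more explicitly: the paper's entire proof is the single sentence ``This is a direct consequence of the definition.'' Your added care in reconciling the diagonal product neighborhoods $U^\d_{\un\Fr}\times\cdots\times U^\d_{\un\Fr}$ with the Euclidean-norm condition $\norm{\rz f_d-\rz f}_\d\le\Fr$ via the stability of $\SN_\d$ under positive real scaling is legitimate, since the paper only makes this identification explicit later (just before \autoref{prop: SH^0_n is top group}, and there using the $\ell^1$ sum rather than the Euclidean norm, which of course makes no difference for the same reason you give).
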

\begin{proof}
     This is a direct consequence of the definition.
\end{proof}
     Left composition by an element of $\SG^0_{n,n}$ acting on $\SG^0_{n,p}$ is obviously not a homomorphism, but we have a good topological result. Note also that, unlike proving the $\tau$ continuity of right composition, proving the continuity of left composition does not follow immediately from such at $[0]$ upon translation.
     The proof of left continuity will be a consequence of  a second criterion for a germ in $\SG$ to be continuous.
%     We begin with a definition.
%\begin{definition}
%     We say that an element $[h]$ in $\SG^0_{n,n}$ is uniformly continuous at $0$ since
%     there is $r\in\bbr_+$ and a  representative $h\in[h]$ defined and uniformly continuous  on $B_r$. This clearly means that
%     there is $r\in\bbr_+$ such that for each $m\in\SM$, there is $\ov{m}\in\SM$ with the following property. For each $x,y\in B_r$ with $|x-y|<\ov{m}(t)$, we have $|h(x)-h(y)|<m(t|)$ for sufficiently small $t>0$.
%%\end{definition}
%     So if $[h]\in\SG^0_{n,n}$, then we have, given some representative $h\in[h]$, a map $G_h:\SM(B_r)\ra\SM(B_r)$ for $r$ small enough defined by $G_h(m)=\ov{m}$. In particular,  given $b\in\bbr_+$, there is $a\in\bbr_+$ such that if $x,y\in B_r$, satisfies $|x-y|<a$, then $|f(x)-f(y)|<b$, for choosing $t_0$ so that $m(t_0)<b$ implies $a=\ov{m}(t_0)$ will do.
%     Transferring this argument,
     This second criterion for the continuity of a germ is closely related to the first given in \autoref{prop: germ continuity from k<<<d}.
\begin{proposition}\label{prop: 2nd criterion for germ in C^0}
     Given $[f]\in\SG$, we have $[f]\in\SG^0$ if and only if the following criterion holds. Fix $\d\in\mu(0)_+$. Then, for each $\Fr\in\SN_\d$, there is $\Fs\in\SN_\d$ such that if $\xi,\z\in B_\d$ satisfy $|\xi-\z|<\Fs$, then $|\rz f(\xi)-\rz f(\z)|<\Fr$.
\end{proposition}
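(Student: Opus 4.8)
The plan is to prove both implications directly. One direction has, in effect, already been packaged: the criterion forces $[f]$ to have good coinitial subsets for $\d$, and \autoref{cor: [f] good coin subsets->f is C^0} (which itself rests on the first criterion, \autoref{prop: germ continuity from k<<<d}) then gives continuity. So the only direction requiring real work is the forward one, and there the classical modulus of continuity of a continuous representative is the right tool to translate uniform continuity into an $\SN_\d$-statement.

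For ``criterion $\Rightarrow [f]\in\SG^0$'' I would first invoke the concurrence argument recorded after \autoref{def: e is [f]-good} to fix some $\la\in\mu(0)_+$ with $\la\lll\d$, and claim this $\la$ witnesses that $[f]$ has good coinitial subsets for $\d$ in the sense of \autoref{def: good coinitial subsets}. Indeed, given $\Fs\in\SN_\d$, apply the hypothesised criterion with $\Fr=\Fs$ to obtain $\Ft\in\SN_\d$ such that $\xi,\z\in\rz B_\d$ with $|\xi-\z|<\Ft$ forces $|\rz f(\xi)-\rz f(\z)|<\Fs$; since $\la\lll\d$ and $\Ft\in\SN_\d$ we have $\la<\Ft$, so already $|\xi-\z|<\la$ gives $|\rz f(\xi)-\rz f(\z)|<\Fs$. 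As $\Fs\in\SN_\d$ was arbitrary, \autoref{cor: [f] good coin subsets->f is C^0} yields $[f]\in\SG^0$.

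For ``$[f]\in\SG^0\Rightarrow$ criterion'' I would, after disposing of the trivial case where a representative is identically $0$ near $0$ (so $\rz f$ vanishes on $B_\d$ and any $\Fs$ works), pick a representative $f$ continuous on a standard closed ball $B_{r_0}$, hence uniformly continuous there, and introduce its modulus of continuity $\rho(s)=\sup\{|f(x)-f(y)|:x,y\in B_{r_0},\ |x-y|\le s\}$, which is finite, monotone increasing, and tends to $0$ as $s\to 0^+$. Since $\d<r_0$, transfer gives $|\rz f(\xi)-\rz f(\z)|\le\rz\rho(s)$ whenever $\xi,\z\in\rz B_\d$ and $|\xi-\z|\le s$. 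Given $\Fr\in\SN_\d$, write $\Fr=\rz m(\d)$ with $[m]\in\SM$ by \autoref{lem: SN_d=SM_d}, and then build a monotone germ $[g]\in\wt{\SM}$ with $\rho(g(t))<m(t)$ for all sufficiently small $t>0$; concretely one can take $g(t)=\tfrac12\min\big(t,\sup\{s>0:\rho(s)<m(t)\}\big)$, the inner supremum being positive because $\rho(0^+)=0<m(t)$, and monotonicity of $\rho$ making the inequality $\rho(g(t))<m(t)$ hold. Transferring $\rho\circ g<m$ and using the modulus estimate gives $|\rz f(\xi)-\rz f(\z)|<\Fr$ for $\xi,\z\in\rz B_\d$ with $|\xi-\z|\le\rz g(\d)$. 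Finally $\rz g(\d)\in{}_\d\wt{\SM}=\wt{\SN}_\d$, and since $\SM$ is germwise coinitial in $\wt{\SM}$ by \autoref{prop: SPSL^0 coinitial in wt(SM)}, there is $[g']\in\SM$ with $[g']\le[g]$; setting $\Fs=\rz g'(\d)$ gives $\Fs\in\SN_\d$ with $\Fs\le\rz g(\d)$, so $|\xi-\z|<\Fs$ implies $|\xi-\z|\le\rz g(\d)$ and hence $|\rz f(\xi)-\rz f(\z)|<\Fr$, which is the asserted criterion.

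The main obstacle lies entirely in the reverse implication, and it is conceptual rather than technical: one must recognise that the witness $\Fs\in\SN_\d$ is to be produced by post-composing the chosen modulus $m$ (with $\Fr=\rz m(\d)$) with a one-sided inverse of the classical modulus of continuity of $f$ — this is precisely what converts uniform continuity into the required $\SN_\d$-scale statement — together with the minor bookkeeping of passing from $\wt{\SN}_\d$ back to $\SN_\d$ via germwise coinitiality of $\SM$ in $\wt{\SM}$. Everything else (the monotonicity checks for $g$, the transfer steps, and the trivial constant case) is routine.
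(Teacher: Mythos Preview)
Your proof is correct and essentially matches the paper's approach. For the direction ``criterion $\Rightarrow [f]\in\SG^0$'' you route through \autoref{cor: [f] good coin subsets->f is C^0} while the paper unpacks this and argues directly via strong $\d$-good numbers and \autoref{prop: germ continuity from k<<<d}, but the content is identical since that corollary is proved by exactly the chain you invoke. For the direction ``$[f]\in\SG^0\Rightarrow$ criterion'' the paper simply declares the implication ``clear'' from the first criterion and \autoref{lem: la<Fr all Fr in Nd->la<<<Fr,also}; your explicit modulus-of-continuity construction (with the coinitiality step from $\wt{\SN}_\d$ down to $\SN_\d$) is presumably what the paper intends and fills in the details correctly. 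The separate treatment of the identically-zero case is unnecessary---your $g(t)=\tfrac12\min(t,\sup\{s:\rho(s)<m(t)\})$ already handles it, since then $\rho\equiv 0$ makes the inner supremum large and $g(t)=t/2$ works---but it does no harm.
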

\begin{proof}
     Recalling the criterion in \autoref{prop: germ continuity from k<<<d} for a germ $[f]\in\SG$ to be continuous (see \autoref{def: e is [f]-good}), it's clear that, in light of the equivalence in \autoref{lem: la<Fr all Fr in Nd->la<<<Fr,also}, that the above criterion for $[f]\in\SG$ holds if $[f]\in\SG^0$. Conversely, we will prove that the above criterion implies that $[f]$ has strong $\d$-good numbers and then invoke \autoref{prop: germ continuity from k<<<d}. To this end, notice that the above criterion implies that for each $\Fr\in\SN_\d$, there is $\Fs\in\SN_\d$ such that
   \begin{align}
       \rz S([f],\Fs)\;\dot=\;\rz\sup\{|\rz f(\xi)-\rz f(\z)|:\xi,\z\in B_\d,\;\text{and}\;|\xi-\z|<\Fs\}\leq\Fr.
   \end{align}
    Given this, let $\ov{\k}\in\mu(0)_+$ with $\ov{\k}\lll\d$ and let
   \begin{align}
       \D(\d,\ov{\k})=\{(\xi,\z)\in B_\d\x B_\d:|\xi-\z|<\ov{\k}\}.
   \end{align}
      Suppose that $(\xi,\z)\in\D(\d,\ov{\k})$ and that $\Fr\in\SN_\d$ is arbitrary, then, by hypothesis, there is $\Fs\in\SN_\d$ such that  $\rz S([f],\Fs)\leq\Fr$ holds. But $\ov{\k}<\Fs$ and so $\rz S([f],\ov{\k})<\Fr$ holds and as $\Fr$ was chosen arbitrarily in $\SN_\d$, \autoref{lem: la<Fr all Fr in Nd->la<<<Fr,also} implies that $\rz S([f],\ov{\k})\lll\d$, or equivalently, that $\rz S([f],\ov{\k})<\k$ for some $\k\lll\d$, ie., $\k$ is, by definition, strongly $[f]$-good for $\d$, as we wanted to show.
\end{proof}
    We say that, $[S]$, the germ at $0$ of a subset $S\subset\bbr$ has nontrivial interior at $0$, if there is a *interval $\SI=(\a,\b)\subset\mu(0)_+\cap B_\d$, with $\b-\a\not\lll\d$ ,  such that $\SI\subset\rz S$. Giving this, consider the function $h:(\bbr,0)\ra (\bbr,0)$ given by $h(x)=x$ if $x\in\bbq$ and $h(x)=0$ if $x\in\bbr\ssm\bbq$ is continuous at $0$, but it's germ at $0$ is not a continuous germ. We then assert that if $[f]\in{}_m\SG_{1,1}$ is continuous on $[S]$ for some $[S]$ with nontrivial interior at $0$,  then $[h\circ f]\not\in\SG^0$. This can be seen as follows. It suffices to consider the case when $[f]=[m]$ for some $[m]\in\SM$ (with $[m]\in U_\Fr$ for some $\Fr\in\SN_\d$) extended by (anti)symmetry across $0$ such that, if $(\a,\b)\subset\rz S$ is our nontrivial interval, $\rz m|(\a,\b)$ satisfies the criterion for germ continuity in \autoref{prop: germ continuity from k<<<d} on $\SI\dot=(\a,\b)$. By *transfer of the consequences that $[m]$ restricted to intervals in $S$ is a local monotone homeomorphism, it follows that  $\rz m(\SI)$ is a *interval $\SJ\dot=(\g,\la)\subset B_\d$, and therefore that $\SX\dot=\rz\bbq\cap\SJ$ is *dense in $\SJ$ as is $\SY\dot=(\rz\bbr\ssm\rz\bbq)\cap\SJ$. From $[m]\in U_\Fr$ also follows that $\rz m(\Ft)\not\lll\Ft$ for $\Ft\in B_\d$ and therefore the condition $\b-\a\not\lll\d$ implies that if  $\Ft\in\SJ$ then $\Ft>\Fs$ for some $\Fs\in\SN_\d$.
    So, by definition of $[h]$, we have that  $\rz h\circ m|m^{-1}(\SX)=\rz h|\SX$ is zero, but for $\Ft\in m^{-1}(\SY)$, we have that $\rz h\circ m(\Ft)>\Fr$ for some $\Fr\in\SN_\d$.
    Given this, choosing $\la_1\in m^{-1}(\SX)$ and $\la_2\in m^{-1}(\SY)$  (so that eg., clearly $\la_2$ bigger than $\Fr$ for some $\Fr\in\SN_\d$) and with $|\la_1-\la_2|\lll\d$, we have that $|\rz h\circ m(\la_1)-\rz h\circ m(\la_2)|=\rz h\circ m(\la_2)>\Fr$.
    But then according to the criterion for continuity of a germ, \autoref{prop: germ continuity from k<<<d}, $[h\circ m]$ is not continuous.

    After the next proposition we will show  that composition, $lc_{[h]}$ is not continuous if $[h]$ is the above function. That is, we will show that there is  a net $([f_d]:d\in D)$ of germs in $\SG_{1,1}$ with $[f_d]\ra 0$ such that $[h\circ f_d]\not\ra 0$.
%    So $[f_d]\ra 0$ says that for any $\Fr\in\SN_\d$, there is $d_0\in D$ with $\|f_d\|_\d<\Fr$ for $d>d_0$. We may assume that for arbitrarily large $d\in D$, we have  $\rz f_d((0,\d))$ has nontrivial intersection with $\rz\bbq\ssm\{0\}$ and $\rz\bbr\ssm\rz\bbq$. Given this, let ${h}\in\SG_{1,1}$ be the germ of the function that is $0$ on $\bbq$ and $1$ on $\bbr\ssm\bbq$. Then we have that for arbitrarily large $d\in D$, the image $\rz h\circ f_d(B_\d)$ contains both $0$ and $1$, eg., $[h\circ f_d]\not\ra 0$.
    In contrast, we are now in a position to prove the following proposition.
\begin{proposition}\label{prop: lc_h:G^0_n,p,0->G^0_n,p,0 is C^0}
    Suppose that $[h]\in\SG^0_{p,p}$, then $lc_{[h]}:\SG^0_{n,p}\ra\SG^0_{n,p}$ is a continuous map.
\end{proposition}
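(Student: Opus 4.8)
The plan is to reduce the continuity of $lc_{[h]}$ at an arbitrary germ $[f_0] \in \SG^0_{n,p}$ to the second continuity criterion of \autoref{prop: 2nd criterion for germ in C^0}, applied to $[h \circ f]$ for germs $[f]$ near $[f_0]$, using the uniform-convergence description of $\tau$ from \autoref{lem: ptwise condition for [f_d]->[f_0] in tau}. Fix $\d \in \mu(0)_+$. Since $[h] \in \SG^0_{p,p}$, $\rz h$ is *continuous on $\rz B_\e$ for every positive infinitesimal $\e$ (\autoref{cor: *cont on B epsilon iff *cont on B del}), hence *uniformly *continuous there by *transfer of the Heine–Cantor property: for every $\Fr \in \SN_\d$ (equivalently every $\rz\bbr$-value below the scale) there is $\Fs_h \in \SN_\d$ so that $|\eta - \eta'| < \Fs_h$ in $\rz B_\e$ forces $|\rz h(\eta) - \rz h(\eta')| < \Fr$. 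The key point is that this modulus $\Fs_h$ depends only on $[h]$ and $\Fr$, not on $f$.

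First I would prove continuity of $lc_{[h]}$ at $[0]$ is not the right target (left composition is not a homomorphism), so instead I would work directly at a fixed $[f_0]\in\SG^0_{n,p}$. Let $([f_d]:d\in D)$ be a net in $\SG^0_{n,p}$ with $[f_d]\to[f_0]$ in $\tau$; by \autoref{lem: ptwise condition for [f_d]->[f_0] in tau} this means $\|\rz f_d - \rz f_0\|_\d$ is convergently coinitial in $\SN_\d$, and (using \autoref{thm: tau^d on SM indep of d} / \autoref{cor: tau^d_1 = tau^d_2}) the $\rz f_d$ and $\rz f_0$ all map $\rz B_\d$ into a common $\rz B_\e$, $\e\sim0$. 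Given $\Fr\in\SN_\d$, pick $\Fs_h\in\SN_\d$ from uniform *continuity of $\rz h$ on $\rz B_\e$ for this $\Fr$; then choose $d_0$ so that $d>d_0$ implies $\|\rz f_d-\rz f_0\|_\d<\Fs_h$. For $\xi\in B_\d$ and $d>d_0$ we get $|\rz f_d(\xi)-\rz f_0(\xi)|<\Fs_h$, both values lying in $\rz B_\e$, so $|\rz h(\rz f_d(\xi))-\rz h(\rz f_0(\xi))|<\Fr$. Thus $\|\rz(h\circ f_d)-\rz(h\circ f_0)\|_\d$ is convergently coinitial in $\SN_\d$, i.e.\ $[h\circ f_d]\to[h\circ f_0]$ in $\tau$. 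This handles each coordinate of the target $\SG^0_{n,p}=(\SG^0_{n,1})^p$, and also shows $[h\circ f_0]\in\SG^0_{n,p}$: apply \autoref{prop: 2nd criterion for germ in C^0} to $[h\circ f_0]$, feeding an arbitrary $\Fr\in\SN_\d$ through the uniform modulus $\Fs_h$ of $\rz h$ composed with the modulus for $[f_0]$ (which exists since $[f_0]\in\SG^0$), so that $lc_{[h]}$ indeed lands in $\SG^0_{n,p}$.

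The step I expect to be the main obstacle is the passage from *continuity of $\rz h$ on a fixed infinitesimal ball to a \emph{uniform} *modulus of *continuity that is expressible as an element of $\SN_\d$ — i.e.\ making precise that the nonstandard Heine–Cantor argument produces a modulus below the germ scale rather than an arbitrary internal infinitesimal. This is where \autoref{prop: 2nd criterion for germ in C^0} is essential: it guarantees that for $[h]\in\SG^0$ the modulus can be taken in $\SN_\d$ (after transferring $h$'s continuity to the one-dimensional radial model and invoking \autoref{lem: la<Fr all Fr in Nd->la<<<Fr,also}), and in higher dimensions one applies it coordinatewise to $h=(h_1,\dots,h_p)$ and uses that $\rz h$ carries $\mu_p(0)$ into $\mu_p(0)$. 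Once the uniform $\Fs_h\in\SN_\d$ is in hand, the remaining three-$\e$-style estimate is routine and mirrors the proof of \autoref{prop: rc_h:G^0_n->G^0_n is C^0}.
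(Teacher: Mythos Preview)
Your proposal is correct and follows essentially the same route as the paper: fix a net $[f_d]\to[f_0]$, use the uniform modulus of $[h]$ supplied by \autoref{prop: 2nd criterion for germ in C^0} to turn $\|\rz f_d-\rz f_0\|_\d<\Fs_h$ into $\|\rz(h\circ f_d)-\rz(h\circ f_0)\|_\d<\Fr$, and conclude via \autoref{lem: ptwise condition for [f_d]->[f_0] in tau}. The only difference is cosmetic: you first motivate the modulus through a *Heine--Cantor argument and then flag the passage to $\SN_\d$-valued moduli as the obstacle, whereas the paper bypasses this entirely by invoking \autoref{prop: 2nd criterion for germ in C^0} directly (that proposition already asserts the modulus lies in $\SN_\d$), so your ``main obstacle'' is in fact already resolved by the lemma you cite.
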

\begin{proof}
    Suppose that $([f_d]:d\in D)$ is a net in $\SG^0_{n,p}$ such that $[f_d]\ra [f]\in\SG^0_{n,p}$ in the topology $\tau$. We want to show that $[h]\circ [f_d]=[h\circ f_d]\ra [h\circ f]$ in $\tau$.
    By \autoref{lem: ptwise condition for [f_d]->[f_0] in tau}, it suffices to prove, for a given fixed $\d\in\mu(0)_+$, the following statement. Given $\Fr\in\SN_\d$, there is $d_0\in D$ such that for $d>d_0$ and each $\xi\in B_\d$, we have $|\rz h\circ f_d(\xi)-\rz h\circ f(\xi)|<\Fr$. Fix this $\Fr\in\SN_\d$ and notice that \autoref{prop: 2nd criterion for germ in C^0}  implies the following statement. $\pmb{(\diamondsuit)}$: Given the fixed $\Fr$, there is $\ov{\Fr}\in\SN_\d$ such that if $\xi\in B^n_\d$ and $\z\in B^p_\d$ satisfy $|\z-\rz f(\xi)|<\ov{\Fr}$ then we have that $|\rz h(\z)-\rz h(f(\xi))|<\Fr$.
    Then applying the hypothesis in the guise given by \autoref{lem: ptwise condition for [f_d]->[f_0] in tau} once more, we know that  there is $d_0\in D$, such that for $\xi\in B^n_\d$ and $d>d_0$, we have that $|\rz f_d(\xi)-\rz f(\xi)|<\ov{\Fr}$. This is precisely the condition, with $\z=\rz f_d(\xi)$ for $d>d_0$, required for the previous statement $(\diamondsuit)$ to hold.
\end{proof}
    Let's give an example to show how the hypothesis on $[h]$ in the previous proposition  cannot be weakened, ie., if $[h]$ is only assumed to be continuous at $0$ then $lc_{[h]}$ will not be continuous. We believe also that this example demonstrates some of the capacities of the tools developed in this paper.
\begin{example}\label{example: h not in SG^0-> lc[h] notC^0}
    We \hypertarget{example}{will assume},
     for construction's sake, that $\d$ is serial (see \autoref{def: serial point}). (By \autoref{cor: tau^d_1 = tau^d_2}, the choice of $\d$ is irrelevant as far as determining convergence of the nets in this example.) Using the notation in the next section, see \autoref{def: SSSQ, SSSQ_FU, [(r_i)]}, we have an element $[(d_j)]\in\SS\SQ$ with $\d=\rz d_\om$ and we have (see the next section) that $\SN_\d=\{\rz r_\om:[(r_j)]\in\SS\SQ\}$ and we may further assume that $\d\in\rz\bbq_+$ by assuming that $d_j\in\bbq$ for all sufficiently large $j\in\bbn$. If $Irr$ denotes $\bbr_+\ssm\bbq_+$, then given $\un{\Ft}\in\SN_\d$, we will find  $[(s_i)]\in\SS\SQ$ with $s_i\in Irr$ for all sufficiently large $j$ (so that $\rz s_\om\in\rz Irr$)  and also satisfying $\rz s_\om<\un{\Ft}$. We will therefore have a subset $\SS\subset\SN_\d$ with the two properties (a) $\SS\subset\rz Irr$ and (b) $\SS$ is coinitial in $\SN_\d$. Assuming this data for the moment, let $h:\bbr\ra\bbr_+$ be the function $h(x)=0$ if $x\in\bbq$ and $h(x)=|x|$ for $x\in Irr$ for all $x$ sufficiently small and extend the domain of definition of elements $[m]\in\SM$ to a neighborhood of $0$ symmetrically, ie. by $x\mapsto m(|x|)$ noting that, by definition of $\SM$, the extension is a continuous germ if and only if the original element of $\SM$ is. For our convergent net, choose our directed set to be $\SN_\d$ with the given (total) order reversed (to be consistent with the conventions in this paper), and for every $\Fr\in\SN_\d$, choose $[m_\Fr]\in\SM^0$ with $\rz m_\Fr(\d)=\Fr$. We have that the net $\Fr\mapsto[m_\Fr]$ indeed converges to $[0]$ by \autoref{prop: ptwise bndd subset unif bndd} and so if $[m]\in\SM^0$, then $\Fr\mapsto [\wt{m}_\Fr]\;\dot=[m+m_\Fr]$ converges to $[m]$ by the definition of the topology. In particular, choose $[m]$ to be the germ at $0$ of $x\mapsto |x|$ so that, eg., $\rz m(\d)\in\rz\bbq$ by the choice of $\d$. But note that for $\Fs\in\SS$, we have that $[\wt{m}_\Fs]$ satisfies $\rz\wt{m}_\Fs(\d)=\d+\Fs$, ie., we have (c) $\rz\wt{m}_\Fs(\d)\in\rz Irr$ and (d) $\rz\wt{m}_\Fs(\d)>\d$. We now have all components for our counterexample. By the definition of $[h],[m]$ and $\d$, we have that $\rz h\circ m(\d)=0$ but for $\Fs\in\SS$, (c), (d) and the definition of $[h]$ implies we have $\rz h\circ\wt{m}_\Fs(\d)>\d$, eg., for all $\Fs$ in the coinitial subset $\SS$ of $\SN_\d$, we have  $|\rz(h\circ\wt{m}_\Fs)(\d)-\rz(h\circ m)(\d)|>\d$, which, again by \autoref{prop: ptwise bndd subset unif bndd}, implies that $[h\circ m_\Fr]$ does not converge to $[h\circ m]$.
\end{example}
    It seems that as long as $[m]\in\SM$, and we can choose serial $\d$ with $\rz m(\d)\in\rz\bbq$, then defining $h(x)=m(|x|)$ for $x\in Irr$ and $=0$ for $x\in\bbq$, we can get the same nonconvergence result. In particular, note that this includes $[m]\in\SM^0$ that are arbitrarily flat! (See our conclusion for a further note.)
    {\it So there exists germs $[g]\in{}_m\SG$ arbitrarily $\tau$ close to a given continuous germ with the property that left composition with $[g]$ is not continuous.}

%     Given $([f_d],d\in D)$ a net converging to $0$ in $\SG_{1,1}$ with cardinality of $D$ that of the continuum,  we will construct an $[h]\in\SG_{1,1}$ that is continuous at the point $0$, but not a continuous germ, such that the composition $[h\circ f_d]$ does not converge.        \textbf{FINISH!!}
 We will now give an introduction to the properties of composition with germs of homeomorphisms, finishing with a proof that the group (under composition) of germs of homeomorphisms of $(\bbr^n,0)$ is a topological group.

\begin{definition}
    Recall that $[f]\in\SG^0_{n,n}$ is a homeomorphism germ if there exists $[g]\in\SG^0_{n,n}$, its inverse, satisfying $[f]\circ[g]=[id]=[g]\circ [f]$. This set of germs is clearly a group, which we will denote by $\boldsymbol{\SH^0_n}$.
\end{definition}
    From the previous work, we have the following.
\begin{corollary}
    If $[f]\in\SH^0_n$ and $[g]\in\SH^0_p$, then
\begin{enumerate}
  \item  $lc_{[g]}:\SG^0_{n,p}\ra\SG^0_{n,p}$ is a homeomorphism.
  \item  $rc_{[f]}:\SG^0_{n,p}\ra\SG^0_{n,p}$ is a $\SG^0_n$-module isomorphism.
\end{enumerate}
%   If $[h]\in\SH^0_p$, then $lc_{[h]}:\SG^0_{n,p}\ra\SG^0_{n,p}$ is a homeomorphism.
\end{corollary}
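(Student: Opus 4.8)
The plan is to deduce both statements from the continuity results already established, namely \autoref{prop: lc_h:G^0_n,p,0->G^0_n,p,0 is C^0} for left composition and \autoref{cor: rc_h:G^0_n,p,0->G^0_n,p,0 is C^0} (resting on \autoref{prop: rc_h:G^0_n->G^0_n is C^0}) for right composition, combined with the elementary functoriality of germ composition. Concretely, for germs of the appropriate dimensions one has, directly from associativity of composition of representatives on an infinitesimal ball, the identities $lc_{[h_1]}\circ lc_{[h_2]}=lc_{[h_1\circ h_2]}$ and $rc_{[h_1]}\circ rc_{[h_2]}=rc_{[h_2\circ h_1]}$, together with $lc_{[id]}=rc_{[id]}=\mathrm{id}_{\SG^0_{n,p}}$. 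The one observation that makes the cited continuity theorems applicable to inverses is that, by the very definition of $\SH^0_n$ and $\SH^0_p$, if $[f]\in\SH^0_n$ then its inverse germ $[f^{-1}]$ again lies in $\SG^0_{n,n}$, and similarly $[g^{-1}]\in\SG^0_{p,p}$ for $[g]\in\SH^0_p$; in particular $\SH^0_n$ is a group.

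For (1): applying \autoref{prop: lc_h:G^0_n,p,0->G^0_n,p,0 is C^0} to $[g]\in\SG^0_{p,p}$ shows $lc_{[g]}:\SG^0_{n,p}\ra\SG^0_{n,p}$ is continuous, and applying it to $[g^{-1}]\in\SG^0_{p,p}$ shows $lc_{[g^{-1}]}:\SG^0_{n,p}\ra\SG^0_{n,p}$ is continuous. By functoriality, $lc_{[g^{-1}]}\circ lc_{[g]}=lc_{[g^{-1}\circ g]}=lc_{[id]}=\mathrm{id}$ on $\SG^0_{n,p}$ and, symmetrically, $lc_{[g]}\circ lc_{[g^{-1}]}=\mathrm{id}$. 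Hence $lc_{[g]}$ is a bijection whose inverse $lc_{[g^{-1}]}$ is continuous, i.e.\ a homeomorphism.

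For (2): applying \autoref{cor: rc_h:G^0_n,p,0->G^0_n,p,0 is C^0} to $[f]\in\SG^0_{n,n}$ shows $rc_{[f]}:\SG^0_{n,p}\ra\SG^0_{n,p}$ is a continuous $\SG^0_n$-module homomorphism, and likewise $rc_{[f^{-1}]}$ is a continuous module homomorphism. As in (1), $rc_{[f^{-1}]}\circ rc_{[f]}=rc_{[f\circ f^{-1}]}=rc_{[id]}=\mathrm{id}$ and $rc_{[f]}\circ rc_{[f^{-1}]}=\mathrm{id}$, so $rc_{[f]}$ is a bijection with continuous inverse. Finally, since both $rc_{[f]}$ and $rc_{[f^{-1}]}$ are compatible with the scalar action — the action being intertwined by the ring automorphism $rc_{[f]}$ of the coefficient ring $\SG^0_n$, which is an automorphism precisely because $[f]\in\SH^0_n$ — the map $rc_{[f]}$ is an isomorphism of $\SG^0_n$-modules in this (semilinear) sense, and simultaneously a homeomorphism.

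The substantive analytic content has all been absorbed into \autoref{prop: lc_h:G^0_n,p,0->G^0_n,p,0 is C^0} and \autoref{prop: rc_h:G^0_n->G^0_n is C^0}, so there is essentially no obstacle remaining. The only points deserving explicit mention are: that a homeomorphism germ has its inverse again in $\SG^0$ (so the continuity theorems apply to the inverse), and that right composition is not literally $\SG^0_n$-linear but twists the scalars by the coefficient-ring automorphism $rc_{[f]}$, which one must track in order to state (2) as a module isomorphism rather than merely a homeomorphism.
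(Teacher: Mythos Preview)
Your proof is correct and follows essentially the same approach as the paper: cite \autoref{prop: lc_h:G^0_n,p,0->G^0_n,p,0 is C^0} and \autoref{cor: rc_h:G^0_n,p,0->G^0_n,p,0 is C^0}, apply each to the germ and to its inverse, and use functoriality of composition to conclude. Your explicit remark that $rc_{[f]}$ is semilinear (twisting scalars by the ring automorphism $rc_{[f]}$ of $\SG^0_n$) is a useful clarification that the paper leaves implicit.
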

\begin{proof}
 The first assertion is a consequence of \autoref{prop: lc_h:G^0_n,p,0->G^0_n,p,0 is C^0} and the second a consequence of \autoref{cor: rc_h:G^0_n,p,0->G^0_n,p,0 is C^0} once one makes the routine observation
     As $\a=[h]$ and $\b=[h]^{-1}$ are continuous maps with $\a\circ\b$ and $\b\circ\a$ the identity map on $\SG^0_{n,p}$, the result follows in the usual way.
\end{proof}

%    Even better, we will show that if $[h]\in\SG^0_{n,n}$ is the germ of a homeomorphism, then composition on the right is a topological ring isomorphism on $\SG^0_n$.
%\begin{proposition}
%    Germs of homeomorphisms $[h]\in\SG^0_{n,n}$ give topological ring isomorphisms $rc_{[h]}:\SG^0\ra\SG^0$. If we are considering $rc_{[h]}:\SG^0_{n,p}\ra\SG^0_{n,p}$, then $rc_{[h]}$ is a $\SG^0_n$ module isomorphism.
%\end{proposition}
%\begin{proof}
%    This follows immediately from \autoref{prop: rc_h:G^0_n->G^0_n is C^0} and \autoref{cor: rc_h:G^0_n,p,0->G^0_n,p,0 is C^0} as follows: both $rc_{[h]}$ and $rc_{[h^{-1}]}$ are continuous homomorphisms, and as $rc_{[h]}\circ rc_{[h^{-1}]}([g])=[g]\circ[h^{-1}]\circ[h]=[g\circ h^{-1}\circ h]=[g]$ we have $rc_{[h^{-1}]}=(rc_{[h]})^{-1}$ and so $rc_{[h]}\circ(rc_{[h]})^{-1}=rc_{[h]}\circ rc_{[h^{-1}]}=rc_{[id_n]}=Id=rc_{[id]}=rc_{[h^{-1}]}\circ rc_{[h]}=(rc_{[h]})^{-1}\circ rc_{[h]}$.
%\end{proof}
%    Note that only right composition is a ring homomorphism, nonetheless left composition satisfies $lc:\SG^0_{p,p}\x\SG^0_{n,p}\ra\SG^0_{n,p}$ and a formal proof similar to that above gives the following.
%\begin{proposition}
%    Germs of homeomorphisms $[h]\in\SG^0_{p,p}$ give homeomorphisms $lc_{[h]}:\SG^0_{n,p}\ra\SG^0_{n,p}:[f]\mapsto[h\circ f]$.
%\end{proposition}

   It's clear that the next step in this investigation is to see if with the topology $\tau$, $\SH^0_n$ is a topological group, ie., check the continuity of the maps $\SH^0_n\x\SH^0_n\ra\SH^0_n$ given by $([f],[g])\ra[f\circ g]$ and $\SH^0_n\ra\SH^0_n$ given by $[f]\ra[f^{-1}]$ with respect to the topology $\tau^\d$.
   Before we verify this, we need some preliminaries.

   Below we will use the following formulation of the product topology on $\SG^0_{n,n}$. Using standard canonical coordinates on $\bbr^n$, write $f:(\bbr^n,0)\ra(\bbr^n,0)$ as $(f^1,\ldots,f^n)$ and so with germs. Given this one can verify that the topology defined by the product topology $\tau^\d\x\cdots\x\tau^\d$ on $\SG^0_{n,n}$ is equivalent to that defined by $\rz\|f\|_\d=\rz\|(f^1,\cdots,\;f^n)\|_\d=\sum_j\rz\|f^j\|_\d$. That is, the system of neighborhoods of $[0]$ in $\SG^0_{n,n}$ given by
  \begin{align}
      {}_nU_\Fr([0])=\{[f]\in\SG^0_{n,n}:\sum_j\rz\|f^j\|_\d<\Fr\}
  \end{align}
    as $\Fr$  varies in $\SN_\d$ gives a subbase for this product topology on $\SG^0_{n,n}$. So, for notational simplicity, \textbf{below we will use} $\boldsymbol{\rz\|f\|_\d}$ \textbf{to denote} $\boldsymbol{\mathbf{\sum_j}\rz\|f^j\|_\d}$ \textbf{if} $[f]\in\SG^0_{n,n}$.
    Note that the triangle inequality holds for this `norm', ie., if $[f],[g]\in\SG^0_{n,n}$, then $\rz\|f+g\|_\d\leq\rz\|f\|_\d+\rz\|g\|_\d$.
\begin{lemma}\label{lem: h(B_d) contains B_d/2}
     Let $[h]\in\SH^0_n$.
     First, if $\d\in\mu(0)_+$, then there is $\Fr\in\SN_\d$ such that $B_\Fr\subset\rz h(B_\d)$. Second, there is a $\tau$ neighborhood $U$ of $[h]$ in $\SH^0_n$ and $\Fs\in\SN_\d$ such that $[g]\in U$ implies that $B_\Fs\subset\rz g(B_\d)$.
%     with $\rz\|h_0\|_\d=\Fr_0\in\SN_\d$. If $\Fs\in\SN_\d$ is sufficiently small, then there is $\Ft\in\SN_\d$ so that if $[h\in\SH^0_n]$ satisfies $\rz\|h-h_0\|_\d<\Fs$, then $B_\Ft\subset\rz h(B_\d)$.
\end{lemma}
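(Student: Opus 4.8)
The plan is to exploit the fact that the statement is really a statement about the standard homeomorphism germ $h$ and its inverse $h^{-1}$, transferred to $B_\d$, and then to use \autoref{lem: *A contains inf nbd-> has stand nbd} (together with \autoref{lem: SN_d=SM_d}) to turn an infinitesimal containment into one witnessed by an element of $\SN_\d$. For the first assertion I would proceed as follows. Choose a standard representative $h$ of $[h]$ defined on a neighborhood $V$ of $0$, and a standard representative $k$ of $[h^{-1}]$ defined on a neighborhood $W$ of $0$, with $h\circ k = \mathrm{id}$ and $k\circ h=\mathrm{id}$ near $0$. Since $k$ is continuous at $0$ with $k(0)=0$, the function $t\mapsto n(t) := \sup\{|k(y)| : |y|\le t\}$ is, for $t$ small enough, a monotone increasing function of $t$ tending to $0$ at $0$; hence its germ lies in $\wt{\SM}$, and by \autoref{prop: SPSL^0 coinitial in wt(SM)} (or simply by passing to ${}_m\SG_0$) we may replace it below by an element of $\SM$ bounding it from above, call the resulting germ $[m]\in\SM$, so $|k(y)|\le m(|y|)$ for all small $y$. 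Now $[m^{-1}]$ exists (as $m$ is strictly monotone; cf. \autoref{rem: e<<<d and f(d)<e -> f(d)=0}), and I claim $B_{\rz m^{-1}(\d)} \subset \rz h(B_\d)$: indeed if $|y|\le \rz m^{-1}(\d)$ then $|\rz k(y)| \le \rz m(|y|) \le \rz m(\rz m^{-1}(\d)) = \d$, so $\rz k(y)\in B_\d$ and $y = \rz h(\rz k(y)) \in \rz h(B_\d)$. Since $\rz m^{-1}(\d)\in\SN_\d$ by \autoref{lem: SN_d=SM_d} (here using that $m^{-1}\in\SM$), taking $\Fr := \rz m^{-1}(\d)$ completes the first part — modulo the harmless point that $\rz h(B_\d)$ need not contain the point $0$ issue does not arise since $h(0)=0$.

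For the second assertion the idea is to produce a \emph{uniform} version of the first part, valid for all $[g]$ in a suitable $\tau$-neighborhood of $[h]$. Heuristically, a germ $[g]$ that is $\tau$-close to $[h]$ agrees with $h$ on $B_\d$ up to an error bounded by some modulus in $\SN_\d$, and $h$ itself maps $B_\d$ onto a set containing $B_\Fr$ with room to spare (I would actually run the first-part argument to get $B_{\Fr_1}\subset \rz h(B_{\d/2})$ for some $\Fr_1\in\SN_{\d/2}=\SN_\d$, using that $\SN_{\d/2}$ and $\SN_\d$ coincide as coinitial sets by \autoref{cor: B_d1 coinit<->B_d2 coinit}). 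Then for $[g]$ with $\rz\|g-h\|_\d$ small enough — smaller than a modulus $\Ft\in\SN_\d$ to be chosen — a degree/continuity argument shows $\rz g(B_\d)$ still contains, say, $B_{\Fr_1/2}$, and one sets $\Fs := \Fr_1/2$ and $U := \{[g]\in\SH^0_n : \rz\|g-h\|_\d < \Ft\}$. Making the ``degree argument'' rigorous in the nonstandard setting is the step I expect to be the main obstacle: the clean way is to transfer the standard fact that if $F,G\colon \overline{B}_r\to\bbr^n$ are continuous, $F$ is a homeomorphism onto its image, and $\sup_{\partial B_r}|F-G| < \mathrm{dist}(0,\, F(\partial B_r))$, then $0\in G(B_r)$ (Brouwer degree / Rouché-type statement), and then apply the transferred statement on $B_\d$ with $r=\d$, $F=\rz h$, $G=\rz g$; the hypothesis $\mathrm{dist}(0, \rz h(\partial B_\d)) \ge \Fr_1$ comes from the first part, and we need $\rz\|g-h\|_\d < \Fr_1$, which is exactly the condition defining $U$ (so $\Ft:=\Fr_1$). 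Applying this with $0$ replaced by an arbitrary point $z\in B_{\Fr_1/2}$ (translating $G$) gives $z\in \rz g(B_\d)$, i.e. $B_{\Fr_1/2}\subset \rz g(B_\d)$, which is what we want with $\Fs := \Fr_1/2\in\SN_\d$.

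One subtlety to flag: the boundary $\partial B_\d$ is an internal sphere of infinitesimal radius, and $\rz h(\partial B_\d)$ is an internal compact set not containing $0$; the nonstandard ``distance'' $\rz\mathrm{dist}(0,\rz h(\partial B_\d))$ is a positive infinitesimal, and one must check it is $\ge$ some element of $\SN_\d$. This is precisely what the first part of the lemma delivers when applied carefully (the $\Fr_1$ above is such an element), so the two parts are genuinely linked and should be proved in this order. I would also remark that everything is insensitive to the choice of $\d$ by \autoref{cor: tau^d_1 = tau^d_2}, so there is no loss in passing between $\d$ and $\d/2$ as convenient. The only real content beyond bookkeeping is the transferred topological-degree input; all else is unwinding the definitions of $\SN_\d$, ${}_m\SG_0$, and $\SM$.
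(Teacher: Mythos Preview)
Your first-part argument is essentially the paper's, just run constructively rather than by contradiction: both hinge on the observation that $t\mapsto\|h^{-1}\|_t$ defines an element of $\wt{\SM}$, so that its (approximate) inverse evaluated at $\d$ lands in $\SN_\d$.

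Your second-part argument is correct but takes a genuinely different route from the paper. You transfer a Rouch\'e/degree statement and apply it directly on $B_\d$: once $\mathrm{dist}(0,\rz h(\partial B_\d))\ge\Fr_1$ is secured from the first part (via $B_{\Fr_1}\subset\rz h(B_{\d/2})$), the pointwise neighborhood $U^\d_{\Fr_1/2}([h])$ already does the job, and $\Fs=\Fr_1/2$ works uniformly. The paper instead avoids degree theory entirely: it first treats the special case $[h]=[id]$ by taking the uniform neighborhood $U=\SU([m])$ with $m(t)=t^2$, so that $[g]\in U$ yields the elementary estimate $|\rz g(\xi)|\ge|\xi|-|\xi|^2$ for all $\xi\in\mu(0)$, whence $B_{\d/2}\subset\rz g(B_\d)$; it then handles general $[h_0]$ by translating via $rc_{[h_0]}$ (already known to be a homeomorphism) and combining with the first assertion. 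Your approach is shorter and needs only the basic subbase sets $U^\d_\Ft$, at the price of importing the transferred degree machinery; the paper's approach is more elementary but requires the reduction to the identity and the uniform-neighborhood description $\SU([m])$ (equivalently, part (f) of \autoref{cor: all standard descriptions tau}).
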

\begin{proof}
     Suppose the first statement is false, so that $\rz h(B_\d)\nsupseteqq B_\Fr$ for all $\Fr\in\SN_\d$. Then, there is $\xi\in B_\d$ with $|\xi|\in\SN_\d$ and $\z=\rz h(\xi)$ satisfying $|\z|=\k\lll\d$, ie., $|\rz h^{-1}(\z)|\ggg|\z|$. Note then that the germ $[m]$ in $\SM^0$ with representative given by the map $t\mapsto\sup\{|h^{-1}(x)|:|x|=t\}$ satisfies $\rz m(\k)\ggg\d$, impossible for an element of $\SM$.

     For the second assertion, let's first prove the assertion for $[h]$ in some neighborhood of the identity map $id$. By \autoref{cor: all standard descriptions tau}, we know that if $[m]\in\SM^0$ and  $U=\{[h]\in\SH^0_n:\rz\|h-id\|_\Ft<\rz m(\Ft),\;\text{for}\;\Ft\in\mu(0)_+\}$, then $U$ is a $\tau$ neighborhood of $[h]$. It suffices to choose our neighborhood $U$ with $[m]$  decaying weakly at $0$: $m(t)=t^2$. So as $\Ft$ increases in $\mu(0)_+$, $\Ft\mapsto \Ft-\rz m(t)$ is increasing and eg., for $\d\leq\Ft\in\mu(0)_+$  $\Ft-\rz m(\Ft)\geq\d/2$.
     Given these preliminaries, $[h]\in U$ implies $\rz|h(\xi)|\geq|\xi|-\rz m(|\Ft|)$ for $|\xi|=\Ft\in\mu(0)_+$, eg., $|\xi|\geq\d_0$ implies $|h(\xi)|\geq\d_0/2$ by the previous sentence. Stated conversely, if $|h(\xi)|<\d_0/2$, then $|\xi|<\d_0$. But as  $\rz h$ is a bijection on $\mu(0)$, then this is the same as saying that if $|\z|<\d_0/2$, then $|h^{-1}(\z)|<\d_0$, ie., $\rz\|h^{-1}\|_{\d_0/2}\leq\d_0$.
     So as this is stated for any fixed $\d_0\in\mu(0)_+$, we now have the following statement: if $[h]\in U$, then for all $\d\in\mu(0)_+$, we have $\rz\|h^{-1}\|_\d<2\d$.
     %     Noting that as  $\rz\|h\|_{\d_0}=\Fr\in\SN_{\d_0}$ implies $\rz\|h^{-1}\|_\Fr\leq\d_0$, we have that
%  \begin{align}
%     \rz\|h^{-1}-id\|_\Fr=\rz\|(h-id)\circ h^{-1}\|_\Fr\leq\rz\|h-id\|_{\d_0}\cdot\d_0.
%  \end{align}
%    Therefore, $\rz\|h^{-1}\|_\Fr\leq\rz\|h-id\|_{\d_0}\cdot\d_0+\Fr$, eg., for a fixed $\Fr\in\SN_{\d_0}$,
%      So letting $\SB$ denote the set $\{[h^{-1}]:[h]\in U\}$,
%       we have that ${}_{\d_0-\Fs}\FL(\SB_\Fs)$ is bounded above in $\SN_{\d_0-\Fs}$ and therefore by \autoref{cor: B_d1 coinit<->B_d2 coinit}, ${}_{\d}\FL(\SB_\Fs)$ is bounded above in $\SN_{\d}$ for any $\d\in\mu(0)_+$.
      Next, it's easy to check that, for a given $[h]\in\SH^0_n$ we have $\rz\|h^{-1}\|_\d=\rz\inf\{\Fr\in\rz\bbr_+:B_\d\subset h(B_\Fr)\}$.
      With this, our previous calculation implies if $[h]\in U$, then $B_{\d/2}\subset \rz h(B_\d)$.
      Finally, for general $[h_0]\in\SH^0_n$,  $rc_{[h_0]}:\SH^0_n\ra\SH^0_n$ is a homeomorphism implies $rc_{[h_0]}(U)=\{[h\circ h_0]:[h]\in U\}$ is a $\tau$ neighborhood of $[h_0]$ in $\SH^0_n$. Since we already have $\rz h^{-1}_0(B_\d)\supset B_\Fr$ for some $\Fr\in\SN_\d$ by the first assertion, then the result follows from the fact (just proved) that $[h]\in U$ implies that $\rz h(B_\Fr)\supset B_{\Fr/2}$.
%      Together, these two facts imply that there is $\ov{\d}\in\SN_\Fr$ with the property that if $[h]\in\SH^0_n$ satisfies $\rz\|h-id\|_{\d_0}<\Fs$, then $B_\Fr\subset h(B_{\ov{\d}})$.
\end{proof}
\begin{proposition}\label{prop: SH^0_n is top group}
   $(\SH^0_n,\tau)$ is a topological group.
\end{proposition}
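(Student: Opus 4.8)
The plan is to verify directly the two conditions for a topological group: joint continuity of composition, $([f],[g])\mapsto[f\circ g]$, and continuity of inversion, $[f]\mapsto[f^{-1}]$. Since $\tau$ is far from first countable, I would argue everywhere with nets, and the two essential tools will be the independence of $\tau^\d$ from the choice of $\d$ (\autoref{cor: tau^d_1 = tau^d_2}) and the coinitiality of $\SN_\Fr$ with $\SN_\d$ for $\Fr\in\SN_\d$ (\autoref{lem: m(N_d)-N_d and N_(m(d))=N_d}), together with the sum-norm description of the product topology on $\SG^0_{n,n}$ recorded just above. First I would carry out the standard group-theoretic reductions to the identity. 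By \autoref{prop: lc_h:G^0_n,p,0->G^0_n,p,0 is C^0} and \autoref{prop: rc_h:G^0_n->G^0_n is C^0} (and the group structure recorded in the preceding corollary), for $[f_0],[g_0]\in\SH^0_n$ the translations $lc_{[g_0]}$ and $rc_{[f_0]}$ restrict to homeomorphisms of $\SH^0_n$. Hence, given nets $[f_\alpha]\to[f]$, $[g_\alpha]\to[g]$ in $\SH^0_n$, putting $[u_\alpha]=lc_{[f^{-1}]}([f_\alpha])$ and $[v_\alpha]=rc_{[g^{-1}]}([g_\alpha])$ gives $[u_\alpha]\to[id]$, $[v_\alpha]\to[id]$ and $[f_\alpha\circ g_\alpha]=lc_{[f]}\,rc_{[g]}([u_\alpha\circ v_\alpha])$, so it suffices to show $[u_\alpha\circ v_\alpha]\to[id]$; likewise, from $[f_\alpha^{-1}]=rc_{[f^{-1}]}(([f^{-1}\circ f_\alpha])^{-1})$ and $[f^{-1}\circ f_\alpha]=lc_{[f^{-1}]}([f_\alpha])\to[id]$, it suffices to prove inversion is continuous at $[id]$.

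For composition at the identity, I would fix $\d\in\mu(0)_+$ and use the triangle inequality for $\rz\|\cdot\|_\d$:
\[
\rz\|u_\alpha\circ v_\alpha-id\|_\d\le\rz\|u_\alpha\circ v_\alpha-v_\alpha\|_\d+\rz\|v_\alpha-id\|_\d ,
\]
whose second summand is eventually below any prescribed $\Fr\in\SN_\d$. For the first, since $\d\in\SN^0_\d$ and $\sqrt{\d}=\rz m(\d)\in\SN^0_\d$ for $m(t)=\sqrt{t}$, with $\d<\sqrt\d$, convergence $[v_\alpha]\to[id]$ forces $\rz\|v_\alpha\|_\d<\sqrt\d$ eventually, i.e. $\rz v_\alpha(B_\d)\subset B_{\sqrt\d}$, so $\rz\|u_\alpha\circ v_\alpha-v_\alpha\|_\d\le\rz\|u_\alpha-id\|_{\sqrt\d}$. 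By \autoref{cor: tau^d_1 = tau^d_2}, $[u_\alpha-id]\to[0]$ in $\tau^{\sqrt\d}$, hence $\{\rz\|u_\alpha-id\|_{\sqrt\d}\}$ is convergently coinitial in $\SN_{\sqrt\d}$, which by \autoref{lem: m(N_d)-N_d and N_(m(d))=N_d} is coinitial with $\SN_\d$; so the first summand too is eventually below any prescribed $\Fr\in\SN_\d$, giving $[u_\alpha\circ v_\alpha]\to[id]$.

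For inversion at the identity, suppose $[h_\alpha]\to[id]$ and fix $\d\in\mu(0)_+$. I would invoke the particular $\tau$-neighborhood $U$ of $[id]$ built from $m(t)=t^2$ in the proof of \autoref{lem: h(B_d) contains B_d/2}, on which $\rz\|h^{-1}\|_\e<2\e$ for all $\e\in\mu(0)_+$; then $[h_\alpha]\in U$ eventually, so $\rz h_\alpha^{-1}(B_\d)\subset B_{2\d}$ eventually. For such $\alpha$ and $\xi\in B_\d$, writing $\eta=\rz h_\alpha^{-1}(\xi)\in B_{2\d}$ gives $|\rz h_\alpha^{-1}(\xi)-\xi|=|\eta-\rz h_\alpha(\eta)|\le\rz\|h_\alpha-id\|_{2\d}$, hence $\rz\|h_\alpha^{-1}-id\|_\d\le\rz\|h_\alpha-id\|_{2\d}$. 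As before, $[h_\alpha-id]\to[0]$ in $\tau^{2\d}$ by \autoref{cor: tau^d_1 = tau^d_2}, so $\{\rz\|h_\alpha-id\|_{2\d}\}$ is convergently coinitial in $\SN_{2\d}$, coinitial with $\SN_\d$; thus $[h_\alpha^{-1}]\to[id]$. All germs appearing here lie in $\SH^0_n$, so these are genuine statements about its subspace topology, completing the verification.

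I expect the main obstacle to be the perennial one in this paper: a composition distorts domains, so along a net the inner map $v_\alpha$ (respectively $h_\alpha^{-1}$) cannot be confined to a fixed \emph{standard} ball $B_r$, $r\in\bbr_+$. The remedy is to stay in the infinitesimal regime --- bounding $\rz v_\alpha(B_\d)$ inside $B_{\sqrt\d}$ and $\rz h_\alpha^{-1}(B_\d)$ inside $B_{2\d}$, the latter using the equicontinuity-type estimate $\rz\|h^{-1}\|_\e<2\e$ supplied by \autoref{lem: h(B_d) contains B_d/2} --- and then to convert ``convergence measured on $B_{\sqrt\d}$ (or $B_{2\d}$)'' into ``convergence measured on $B_\d$'' via the $\d$-independence of $\tau$. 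Thus \autoref{lem: h(B_d) contains B_d/2}, rather than any fundamentally new argument, is the crux for the continuity of inversion.
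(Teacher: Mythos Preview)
Your proof is correct and takes a genuinely different route from the paper's.

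For composition, the paper works directly at an arbitrary pair $([f],[g])$ (in fact at $([f],[h],[g])$), writes the triangle inequality
\[
\rz\|f_d\circ h\circ g_d-f\circ h\circ g\|_\d\le\rz\|f_d\circ h\circ g-f\circ h\circ g\|_\d+\rz\|f_d\circ h\circ g_d-f_d\circ h\circ g\|_\d,
\]
and bounds the two terms separately using the continuity of $rc_{[h\circ g]}$ and of $lc_{[f_d\circ h]}$. You instead reduce first to the identity via the translation homeomorphisms, and then handle $[u_\alpha\circ v_\alpha]\to[id]$ by a direct domain-switching estimate: confine $\rz v_\alpha(B_\d)\subset B_{\sqrt\d}$, bound $\rz\|u_\alpha\circ v_\alpha-v_\alpha\|_\d\le\rz\|u_\alpha-id\|_{\sqrt\d}$, and transfer back via $\d$-independence of $\tau$. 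Your route sidesteps a bookkeeping issue in the paper's argument, where the bound~(b) is obtained for a \emph{fixed} $\wt d$ while the triangle inequality requires it for the running index $d$.

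For inversion, the paper works at a general $[h]$: it uses the second assertion of \autoref{lem: h(B_d) contains B_d/2} to get a uniform $B_\Fr\subset\rz h_d(B_\d)$, substitutes $\xi=\rz h_d^{-1}(\z)$ into $\rz\|h_d-h\|_\d<\Fs$ to obtain $\rz\|h\circ h_d^{-1}-id\|_\Fr<\Fs$, and finishes by left-composing with $[h^{-1}]$. You reduce to $[id]$ first and use the sharper intermediate estimate $\rz\|h^{-1}\|_\e<2\e$ extracted from the \emph{proof} of that lemma to get $\rz\|h_\alpha^{-1}-id\|_\d\le\rz\|h_\alpha-id\|_{2\d}$ directly. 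Both arguments hinge on \autoref{lem: h(B_d) contains B_d/2} and on \autoref{lem: m(N_d)-N_d and N_(m(d))=N_d}; the paper's substitution trick avoids invoking $\d$-independence explicitly here, while your version makes it the main engine.
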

\begin{proof}
   Let's first verify that the product map is continuous. This will be a consequence of the following. Suppose that $[f],[g],[h]\in\SH^0_n$ and that $(([f_d],[g_d]:d\in D))$ is an upward directed net in $\SH^0_n\x\SH^0_n$ such that $([f_d],[g_d])\ra([f],[g])$ in the product topology $\tau\x\tau$ on $\SH^0_n\x\SH^0_n$. We will verify that $[f_d]\circ[h]\circ[g_d]\ra[f]\circ[h]\circ[g]$ in $\tau$ by showing that for $\Ft\in\SN_\d$, there is $\wh{d}\in D$ such that $d>\wh{d}$ implies that $[f_d]\circ[h]\circ[g_d]\in U_\Ft([f]\circ[h]\circ[g])$.
   Fix this arbitrary $\Ft\in\SN_\d$ and note first that \autoref{prop: lc_h:G^0_n,p,0->G^0_n,p,0 is C^0} implies that there is $d_1\in D$ such that $d>d_1$ gives \textbf{(a)}: $\rz\|f_d\circ h\circ g-f\circ h\circ g\|_\d<\Ft/2$. Fixing $\wt{d}>d_1$, \autoref{cor: rc_h:G^0_n,p,0->G^0_n,p,0 is C^0} implies that there is $d_2\in D$ with \textbf{(b)}: $\rz\|f_{\wt{d}}\circ h\circ g_d-f_{\wt{d}}\circ h\circ g\|_\d<\Ft/2$. But then for $d$ in $D$ greater than $d_1$ and $d_2$, the *triangle inequality gets
  \begin{align}
     \rz\|f_d\circ h\circ g_d-f\circ h\circ g\|_\d\leq\rz\|f_d\circ h\circ g-f\circ h\circ g\|+\rz\|f_d\circ h\circ g_d-f_d\circ h\circ g\|_\d
  \end{align}
   with the first expression on the right hand side of the inequality less that $\Ft/2$ by fact (a) and the second less than $\Ft/2$ by fact (b).

   To prove the continuity of the map $[h]\mapsto[h^{-1}]$, let $([h_d]:d\in D)$ be an upward directed net with $[h_d]\ra[h]$ in $\tau$. That is, given $\Fs\in\SN_\d$, there is $d_\Fs\in D$ such that \textbf{(c)}: $\rz\sup\{|h_d(\xi)-h(\xi)|:\xi\in B_\d\}<\Fs$ for $d>d_\Fs$. Now for $d\in D$ sufficiently large, the fact that $[h_d]\ra[h]$ and \autoref{lem: h(B_d) contains B_d/2} implies there is $\Fr\in\SN_\d$ such that $B_\Fr\subset\rz h_d(B_\d)$. Therefore, for sufficiently large $\wt{d}\in D$, the expression (c) implies $\rz\sup\{|h_d(h_{\wt{d}}^{-1}(\z))-h(h_{\wt{d}}^{-1}(\z))|:\z\in B_\Fr\}<\Fs$ for $d>d_\Fs$. In particular, choosing $\wt{d}=d$ (sufficiently large), we have $\rz\sup\{|\z-h\circ h_d^{-1}(\z)|:\z\in B_\Fr\}<\Fs$ for $d>d_\Fr$; that is, $\{\rz\|h\circ h_d^{-1}-id\|_\Fr:d\in D\}$ is convergently coinitial with $\SN_\d$ and therefore with $\SN_\Fr$ (by \autoref{lem: m(N_d)-N_d and N_(m(d))=N_d}). So we have $[h\circ h_d^{-1}]\ra[id]$ in $\tau$ and as $[h^{-1}]\in\SG^0_{n,n}$, then \autoref{prop: lc_h:G^0_n,p,0->G^0_n,p,0 is C^0} implies that $[h^{-1}_d]=[h^{-1}]\circ [h\circ h_d^{-1}]\ra[h^{-1}]\circ[id]=[h^{-1}]$ in $\tau$.
\end{proof}
\section{Standard interpretations for $\tau$}\label{sec: standard interpret tau}
        In this  part, we will give  several  standard interpretations of our topology. Each depends on a particular choice of type of infinitesimal $\d$ and also on the kind of coinitial subset of $\wt{\SN}_\d$ that we associate with $\d$. In each case, we will give an explicit standard condition for net convergence in the topology $\tau$. The reader, upon seeing how the author is concocting these examples of standard subbases for $\tau$, will be able to use the various types of $\d\in\mu(0)_+$, coinitial subsets of $\wt{\SN}_\d$, and the fact that $\tau$ is invariant under such choices, to construct several other standard incarnations of $\tau$.
        We will first give interpretations in the case that our positive infinitesimal $\d$ is a serial point (see \autoref{def: serial point}) and for contrast then when $\d$ is a point 'far from serial'.
%       In each case, we will make use of the fact that our topology is independent of the choice of the infinitesimal $\d$.

\subsection{Standard interpretation for $\d$ serial}\label{subsec: stand interpret for d serial}

\begin{definition}\label{def: serial point}
    By definition, a \textbf{serial point} in $\d$ in $\mu(0)_+$ (see eg.,  Puritz, \cite{Puritz1972}) is just $\rz d_\om$ for some choice of decreasing sequence $d_1,d_2,\ldots$ in $\bbr_+$ with limit $0$ and infinite index $\om\in\rz\bbn$.
\end{definition}
    The choice of $\om$ is equivalent to a choice of free ultrafilter $\FU$ on $\bbn$ (see below). It will follow that the first standard rendition of our topology will depend of such a pair $(\{d_j\}_{j\in\bbn},\FU)$. Nonetheless, from \autoref{subsec: tau independ of delta} we know that our topology is independent of $\d$ and, in particular,  when $\d$ is serial, independent of the choice of such pairs $(\{d_j\}_{j\in\bbn},\FU)$. We will examine the standard implications of this.

    Given this, we will begin with the set of moduli that most directly define $\tau$, ie., $\SN_\d$ where now $\d\in\mu(0)_+$ is now a serial point. So, as noted, there is a decreasing sequence $\pzD$ of positive real numbers $d_1,d_2,\ldots$ with  $\d\in\rz\pzD$; specifically, there is $\om\in\rz\bbn_\infty$ with $\d=\rz d_\om$. Note, then, that the free ultrafilter associated to the index $\om$ is just $\FU=fil(\om)=\{A\subset\bbn:\om\in\rz A\}$. When $\d$ is a serial point, we are able to dispense with our monotone family, $\wt{\SM}$ defining the moduli $\wt{\SN}_\d$ and replace it with a discrete analog of germs of sequences.
%     as will be defined next. And so as we shall see, the choice of $\d$ serial
     This will allow us to give our weakest standard rendition for our topology $\tau=\tau^\d$.
%    For this section, we will consider an alternative formulation for the standard version  of our set of moduli.
\begin{definition}\label{def: SSSQ, SSSQ_FU, [(r_i)]}
    Let \boldmath$\SS\SQ$\unboldmath denote the set of all (germs of) decreasing sequences $(r_i)$ with $r_1,r_2,\ldots$ in $\bbr_+$ such that $\lim r_i=0$.  That is, $(r_i)\sim (s_i)$ if for some $j_0\in\bbn$, $r_i=s_i$ for $j\geq j_0$ gives an equivalence relation (germs at index infinity) and $\pmb{[(r_i)]}$ will denote the class containing $(r_i)$. Sometimes we will also denote this by $\pmb{[\vec{r}]}$. Clearly, this germ equivalence is also given by $(r_i)\sim (s_i)$ if and only if there is $t_0\in\bbr_+$ such that for $\max\{r_i,s_i\}<t_0$ we have $r_i=s_i$.
    We define the germ partial order on the germs in $\SS\SQ$ by $\pmb{[(r_i)]<[(s_i)]}$ if $r_i<s_i$ on a cofinite subset of $\bbn$.
    Let $\mathitbf{fuf}\pmb{(\bbn)}$ denote the set of free ultrafilters on $\bbn$ and if for $\FU\in fuf(\bbn)$, we let $\boldsymbol{\SS\SQ_\FU}$  denote the set of $\FU$ equivalence classes, $\pmb{[\vec{r}]}_{\boldsymbol{\FU}}$, of decreasing sequences. (This is just the $\SU$ ultrapower of decreasing sequences.) Clearly, there is a well defined (identification) map $\SS\SQ\ra\SS\SQ_\FU:[\vec{r}]\mapsto [\vec{r}]_\FU$ (as cofinite subsets of $\bbn$ belong to $\FU$).
    We have a second ``$\FU$-partial order'' denoted  $\pmb{[(r_i)]\stackrel{\FU}{<}[(s_i)]}$ on $\SS\SQ$ and simply denoted by $\pmb{[\vec{r}]}_{\boldsymbol{\FU}}\;\pmb{<}\;\pmb{[\vec{s}]}_{\boldsymbol{\FU}}$ on $\SS\SQ_\FU$ given, in both cases, by $(r_i)<(s_i)$ if $\{i:r_i<s_i\}\in\FU$.
\end{definition}
%    Note that only the $\FU$-partial order is well defined on $\SS\SQ_\FU$ and one can see that the identification map $\SS\SQ\ra\SS\SQ_\FU$ is $\FU$-partial order preserving.

%    Next, it's easy to see that $\SS\SQ$  is precisely the set of all (germ) sequences $(m(d_i))$ for $[m]\in\SM$. Therefore, we have  $\SN_\d=\{\rz r_\om:(r_i)\in \SS\SQ\}$.
%     Note that $\stackrel{\FU}{<}$  follows the order on $\SN_\d$; ie., $[(r_i)]\stackrel{\FU}{<}[(s_i)]$ if and only if $\rz r_\om<\rz s_\om$. Of course, with respect to the order on germs, if $[(r_i)]<[(s_i)]$, then $\{i:r_i<s_i\}\in\FU$,  eg., $\rz r_\om<\rz s_\om$. As $\FU$ is uniquely determined by $\om$, then, using the $\FU$\; germ equivalences, we will write this as  $[\vec{r}]_\FU<[\vec{s}]_\FU$. Summarizing, we have that $[\vec{r}]<[\vec{s}]$ implies that $[\vec{r}]_\FU<[\vec{s}]_\FU$ (the germ order) which implies that $[\vec{r}]_\FU\stackrel{\FU}{<}[\vec{s}]_\FU$ (the $\FU$ order) which is equivalent to $\rz r_\om<\rz s_\om$.

\begin{lemma}\label{lem: 3 items-rel between SQ and SN_d}
    With the above development, the following statements are easily checked.
  \begin{enumerate}
    \item $\SS\SQ=\{[(m(d_i))]:[m]\in\SM\}$ and so $\SN_\d=\{\rz r_\om:[(r_i)]\in\SS\SQ\}$.
    \item If $[(r_i)],[(s_i)]\in\SS\SQ$, then $[\vec{r}]<[\vec{s}]$ implies $[\vec{r}]\stackrel{\SU}{<}[\vec{s}]$ which implies $[\vec{r}]_\FU<[\vec{s}]_\FU$ which is equivalent to $\rz r_\om<\rz s_\om$.
    \item  If $\ST\subset\SS\SQ$, then $\{\rz r_\om:[(r_i)]\in\ST\}$ is coinitial in $\SN_\d$   if and only if for each $[(r_i)]\in\SS\SQ$, there is $[(s_i)]\in\ST$ such that $[\vec{s}]\stackrel{\FU}{<}[\vec{r}]$.
  \end{enumerate}
\end{lemma}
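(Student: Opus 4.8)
Since the author flags all three items as ``easily checked'', the proposal is essentially a catalogue of which definition to unwind where. Throughout I would assume, as one harmlessly may, that the sequence $(d_i)$ representing the serial point $\d=\rz d_\om$ is \emph{strictly} decreasing; this is genuinely needed for (1) (if $(d_i)$ had a plateau, no \emph{function} germ could realize a sequence that fails to be constant there), and it costs nothing, since one may always pass to the strictly decreasing sequence of distinct values, which still has $\d$ as its value at some infinite index.

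For (1), the inclusion $\{[(m(d_i))]:[m]\in\SM\}\subseteq\SS\SQ$ is immediate: a representative $m$ of $[m]\in\SM$ is strictly increasing near $0$ with $m(t)\to 0$, so $(m(d_i))$ is eventually strictly decreasing to $0$. For the reverse inclusion, given $[(r_i)]\in\SS\SQ$ I would construct a representative $m$ by setting $m(d_i)=r_i$ and interpolating affinely on each interval $[d_{i+1},d_i]$; since both $(d_i)$ and, eventually, $(r_i)$ are strictly decreasing to $0$, this $m$ is strictly monotone near $0$ with $m(t)\to 0$, so $[m]\in\SM$ and $[(m(d_i))]=[(r_i)]$. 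The second clause of (1) then follows by combining this with \autoref{lem: SN_d=SM_d}, which gives $\SN_\d=\{\rz m(\d):[m]\in\SM\}$, together with the transfer identity $\rz m(\d)=\rz m(\rz d_\om)=\rz s_\om$ for the standard sequence $s=(m(d_i))$: as $[m]$ ranges over $\SM$, part (1) says $[s]$ ranges over all of $\SS\SQ$.

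For (2) I would check the three links from left to right. First, $[\vec r]<[\vec s]$ means $r_i<s_i$ on a cofinite set, and since $\FU$ is a free ultrafilter every cofinite set lies in $\FU$, so $\{i:r_i<s_i\}\in\FU$, i.e.\ $[\vec r]\stackrel{\FU}{<}[\vec s]$. The middle implication is a tautology: $[\vec r]_\FU<[\vec s]_\FU$ is \emph{defined} to be the same condition $\{i:r_i<s_i\}\in\FU$, now read on the quotient $\SS\SQ_\FU$. The last equivalence is the standard fact that $\FU=fil(\om)=\{A\subseteq\bbn:\om\in\rz A\}$: for standard sequences $\rz r_\om<\rz s_\om$ holds iff $\om\in\rz\{i:r_i<s_i\}$ (transfer of the defining property of that set), which holds iff $\{i:r_i<s_i\}\in\FU$ by the definition of $fil(\om)$.

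For (3), use (1) to write every element of $\SN_\d$ as $\rz r_\om$ for some $[(r_i)]\in\SS\SQ$. The implication ``$\Leftarrow$'' is then direct: given such an $\rz r_\om$, pick $[(s_i)]\in\ST$ with $[\vec s]\stackrel{\FU}{<}[\vec r]$; by (2) this gives $\rz s_\om<\rz r_\om$, in particular $\rz s_\om\le\rz r_\om$, which is the coinitiality required by \autoref{def: of coinitial partial order relations}. For ``$\Rightarrow$'', given $[(r_i)]\in\SS\SQ$, apply coinitiality of $\ST$ to the element $\rz r_\om/2\in\SN_\d$ (it is the value at $\om$ of the halved sequence $(r_i/2)$, which again lies in $\SS\SQ$), obtaining $[(s_i)]\in\ST$ with $\rz s_\om\le\rz r_\om/2<\rz r_\om$; by the equivalence in (2) this is exactly $[\vec s]\stackrel{\FU}{<}[\vec r]$. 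The one wrinkle I would be careful about is precisely this mismatch between the weak ``$\le$'' in the definition of coinitiality and the strict ``$\stackrel{\FU}{<}$'' of the statement, which the halving trick in ``$\Rightarrow$'' is designed to absorb; that, together with the strict-monotonicity bookkeeping in part (1), is where essentially all the (minor) content of the lemma lies.
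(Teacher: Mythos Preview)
Your proposal is correct and, since the paper supplies no proof beyond the phrase ``easily checked'', there is nothing to compare against: you have simply written out the routine verifications the author omits. The one genuine point of care you flag---the strict-monotonicity bookkeeping in (1) (passing to a strictly decreasing $(d_i)$, and the affine-interpolation construction of $m$) and the halving trick in (3) to reconcile the weak $\leq$ of coinitiality with the strict $\stackrel{\FU}{<}$---is exactly where the content lies, and you handle both cleanly.
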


%    For the sake of simplicity, we will restrict to the one dimensional case.

    Now for $j\in\bbn$, let $D_j=\{x\in\bbr:|x|\leq d_j\}$ and $F(j)$ the set of functions $f:(\bbr,0)\ra (\bbr,0)$ whose domain $dom(f)$ contains $D_j$. Define the typical supremum norm $\|\;\|_j$ on $F(j)$.
    Given a fixed $\FU\in fuf(\bbn)$ and $[(r_i)]\in\SS\SQ$, let
\begin{align}
    \pmb{\pzU}(\mathbf{r_i})=\{[f]\in\SG:\text{there is}\; A\in\FU\;\text{such that}\;\|f\|_i<r_i\;\text{for all}\;i\in A\}.
\end{align}
\begin{lemma}
     For $[(r_i)]\in\SS\SQ$, the maps $[(r_i)]\ra\pzU(r_i)$ and $[\vec{r}]_\FU\ra\pzU$ are well defined. If $[\vec{r}]\stackrel{\FU}{<}[\vec{s}]$, then $\pzU(r_i)\subset\pzU(s_i)$.
\end{lemma}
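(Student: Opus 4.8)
The plan is to prove all three assertions by a single mechanism: whenever a witnessing set $A\in\FU$ is produced by the definition of $\pzU(r_i)$, I intersect it with an appropriate set that also belongs to $\FU$ — a cofinite set when the issue is independence of the germ representative, the set $\{i:r_i=s_i\}$ when the issue is independence of the sequence representative of an $\FU$-class, and the set $\{i:r_i<s_i\}$ for the monotonicity statement. Since a finite intersection of members of an ultrafilter is again a member, each such intersection is still a legitimate witness, and the conclusions drop out immediately.

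First I would record the sanity check that $\pzU(r_i)$ is a well-defined \emph{subset of $\SG$} to begin with, i.e. that membership depends only on the germ $[f]$ and not on a chosen representative: if $f:V\ra\bbr$ represents $[f]$ on a neighborhood $V$ of $0$, then $D_i\subseteq V$ for all sufficiently large $i$ (as $d_i\ra 0$), so $\|f\|_i$ is defined for all large $i$, and any two representatives of $[f]$ agree on a neighborhood of $0$, hence have equal $\|\cdot\|_i$ for all large $i$. Thus the set $C_0$ of indices on which ``$\|f\|_i$'' is unambiguous is cofinite, so $C_0\in\FU$; consequently the clause ``there is $A\in\FU$ with $\|f\|_i<r_i$ for all $i\in A$'' may always be taken with $A\subseteq C_0$, and membership in $\pzU(r_i)$ is representative-independent.

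Next, for well-definedness of $[(r_i)]\mapsto\pzU(r_i)$ on $\SS\SQ$: if $(r_i)\sim(r_i')$ then $C\dot=\{i:r_i=r_i'\}$ is cofinite, hence $C\in\FU$; given $[f]\in\pzU(r_i)$ with witness $A\in\FU$, the set $A\cap C\in\FU$ satisfies $\|f\|_i<r_i=r_i'$ for $i\in A\cap C$, so $[f]\in\pzU(r_i')$, and symmetrically, whence $\pzU(r_i)=\pzU(r_i')$. The well-definedness of $[\vec r]_\FU\mapsto\pzU$ is the identical argument with $C$ replaced by $B\dot=\{i:r_i=s_i\}$, which lies in $\FU$ precisely because $[\vec r]_\FU=[\vec s]_\FU$; in fact, since the germ map factors as $[(r_i)]\mapsto[\vec r]_\FU\mapsto\pzU$, the statement for $\SS\SQ$ follows formally from the statement for $\SS\SQ_\FU$ together with the observation that cofinite sets lie in $\FU$.

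Finally, for the inclusion: if $[\vec r]\stackrel{\FU}{<}[\vec s]$ then $B\dot=\{i:r_i<s_i\}\in\FU$ by definition; given $[f]\in\pzU(r_i)$ with witness $A\in\FU$, the set $A\cap B\in\FU$ satisfies $\|f\|_i<r_i<s_i$ for all $i\in A\cap B$, so $[f]\in\pzU(s_i)$, i.e. $\pzU(r_i)\subseteq\pzU(s_i)$. There is no genuine obstacle in this lemma; the only point that requires a moment's attention is the representative-independence noted in the second paragraph, since that is exactly what licenses freely replacing the witnessing sets by smaller members of $\FU$ without affecting membership in $\pzU(r_i)$.
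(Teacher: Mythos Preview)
Your proof is correct and follows essentially the same approach as the paper: intersect the witnessing set $A\in\FU$ with the appropriate cofinite or $\FU$-set and use closure of $\FU$ under finite intersections. Your version is in fact more thorough---the paper declares the last inclusion ``obvious'' and only writes out the $\SS\SQ$ well-definedness, whereas you supply all three cases explicitly and add the germ-representative sanity check.
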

\begin{proof}
    The last statement is obvious; we will only verify that the assignment $[(r_i)]\mapsto\pzU(r_i)$ is well defined as the other proof is similar. Suppose that $[f]\in\pzU(r_i)$ and $[(s_i)]\in\SS\SQ$ satisfies $[(s_i)]=[(r_i)]$, we want to show that $\pzU(r_i)=\pzU(s_i)$. By symmetry, it suffices to verify that $\pzU(r_i)\subset\pzU(s_i)$. Now, by hypothesis, $C=\{i:s_i=r_i\}$ is cofinite in $\bbn$, eg., is in $\FU$ and $\{i:\|f\|_i<r_i\}=A\in\FU$ so that $A'=C\cap A\in\FU$. But, by the definition of $A'$, $A'\subset B\dot=\{i:\|f\|_i<s_i\}$ and so by filter properties $B\in\FU$, ie., $[f]\in\pzU(s_i)$.
%    The same argument gets that the map $[(r_i)]_\FU\mapsto\pzU(r_i)$ is well defined.
\end{proof}
     When asserting the existence of representatives defined on a ball of a given size and having certain properties, we will assume the existence of these representatives and concentrate on the verification of the properties.
     Given these preliminaries, we have the following standard formulation of the $U_\Fr$'s, our subbase at $[0]$ for $\tau^\d$.
\begin{lemma}\label{lem: [f]in U_*r_om <->[f]in pzU(r_i)}
     Fix $[(d_i)]$ and $\om\in\rz\bbn$ so that $\d=\rz d_\om$ and $\FU=fil(\om)$.  Let $[f]\in\SG$, $[(r_i)]\in\SS\SQ$. Then $[f]\in U_{*r_\om}$ if and only if $[f]\in\pzU(r_i)$ .
\end{lemma}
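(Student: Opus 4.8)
The plan is to translate both set-memberships $[f]\in U_{\rz r_\om}$ and $[f]\in\pzU(r_i)$ into a single statement — membership of a fixed subset of $\bbn$ in the ultrafilter $\FU=fil(\om)$ — and then observe that these translations agree. The only real content is transfer applied to the sequence $i\mapsto\norm{f}_i$, together with the defining property $A\in fil(\om)\iff\om\in\rz A$.

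First I would fix a representative $f$ of the germ $[f]$ defined on some ball $B_{r_0}$, $r_0\in\bbr_+$, and pick $N\in\bbn$ with $d_i<r_0$ for all $i\geq N$ (possible since $d_i\to 0$). For $i\geq N$ set $c_i=\norm{f}_i=\sup\{|f(x)|:|x|\leq d_i\}\in[0,\infty]$, and extend $c$ to all of $\bbn$ arbitrarily, say $c_i=0$ for $i<N$; note that, for $i$ beyond some (possibly larger) index, $c_i$ depends only on the germ $[f]$, since two representatives agree near $0$. The sequences $c$ and $r=(r_i)$ are ordinary maps $\bbn\to\bbr$ (resp.\ $[0,\infty]$), so they transfer to internal maps $\rz c,\rz r:\rz\bbn\to\rz\bbr$. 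By transfer of the statement ``for all $i\geq N$, $c_i=\sup\{|f(x)|:|x|\leq d_i\}$'' and the fact that $\om\geq N$ (being infinite), we get $\rz c(\om)=\rz\sup\{|\rz f(\xi)|:|\xi|\leq\rz d_\om\}=\norm{\rz f}_\d$, since $\d=\rz d_\om$; likewise $\rz r(\om)=\rz r_\om$.

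Next I would compute. By \autoref{def: U_Fr}, $[f]\in U_{\rz r_\om}$ means exactly $\norm{\rz f}_\d<\rz r_\om$, i.e.\ $\rz c(\om)<\rz r(\om)$. Let $S=\{i\in\bbn:i\geq N\text{ and }c_i<r_i\}$; its transfer is the internal set $\{i\in\rz\bbn:i\geq N\text{ and }\rz c(i)<\rz r(i)\}$, so (again using $\om\geq N$) we have $\rz c(\om)<\rz r(\om)\iff\om\in\rz S$, and since $\FU=fil(\om)=\{A\subseteq\bbn:\om\in\rz A\}$ this is equivalent to $S\in\FU$. Because $\{i:i\geq N\}$ is cofinite, hence in $\FU$, and $S=\{i:i\geq N\}\cap A_0$ where $A_0:=\{i\in\bbn:\norm{f}_i<r_i\}$ contains no $i<N$, we have $S\in\FU$ iff $A_0\in\FU$: indeed $S\subseteq A_0$ gives one direction by upward closure, and $A_0\cap\{i\geq N\}=S$ gives the other by closure under finite intersection. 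Finally, $A_0\in\FU$ is precisely the condition ``there is $A\in\FU$ with $\norm{f}_i<r_i$ for all $i\in A$'', i.e.\ $[f]\in\pzU(r_i)$: if $A_0\in\FU$ take $A=A_0$, and conversely any such $A$ is contained in $A_0$, so $A_0\in\FU$ by upward closure. Chaining the equivalences yields $[f]\in U_{\rz r_\om}\iff[f]\in\pzU(r_i)$.

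The closest thing to an obstacle is purely bookkeeping: that $c_i=\norm{f}_i$ is both defined and germ-invariant only for large $i$ (handled by intersecting with the cofinite set $\{i\geq N\}$, whose membership in $\FU$ is automatic), and the harmless possibility $c_i=+\infty$, for which ``$c_i<r_i$'' is simply read as false — neither affects the ultrafilter-membership argument, which is otherwise a formal consequence of transfer and the definition of $fil(\om)$.
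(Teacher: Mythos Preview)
Your proof is correct and follows essentially the same route as the paper's: both unwind $[f]\in U_{\rz r_\om}$ to $\rz\|f\|_{\rz d_\om}<\rz r_\om$, recognize this as $\om\in\rz\{i:\|f\|_{d_i}<r_i\}$, and invoke $\FU=fil(\om)$. The paper's version is a one-line sketch, while you have filled in the bookkeeping about representatives and the cofinite index set $\{i\geq N\}$ --- harmless extra care that the paper leaves implicit.
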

\begin{proof}
     This is simple:  $[f]\in U^{\d_\om}_{*r_\om}$ means that  $\rz\|\rz f\|_{*d_\om}<\rz r_\om$, and this is equivalent to $\om\in\rz\{i:\|f\|_{d_i}<r_i\}$ which, in turn  means $\{i:\|f\|_{d_i}<r_i\}\in\FU$, ie., by definition $[f]\in\pzU(r_i)$.
\end{proof}
%    Given this, if $\un{F}=F(1)$, and if $A\in\SU$, we have the restriction map $\un{R}_A:\un{F}\ra\{F(j):j\in A\}$.

%   A slightly different view is given as follows. For each $\Fr\in\SN_\d$, we have a decreasing sequence of positive numbers indexed by $\bbn$, $s(\Fr)$. Let $\FN_\d=\{s(\Fr):\Fr\in\SN_\d\}$, and note that, at each $j\in\bbn$, $\FN_\d$ has a slice $\FN_\d(j)\subset\bbr_+$, which consists of the set of all  $s(\Fr)(j)$ (ie., the element of the sequence $s(\Fr)$ at index $j$). From this it's easy to see that
   Given this we have the following lemma.
\begin{lemma}\label{lem: stand descrip with FU}
    Suppose that $([f_\a]:\a\in \pzA)$ is an upward directed net in $\SG$. Then $[f_\a]$ converges to $[g]$ in $\tau$ if and only if for each $[\vec{r}]=[(r_i)]\in\SS\SQ$, there is $\a_0\in \pzA$ such that if $\a>\a_0$, then $\{i:\|f_\a-g\|_{d_i}<r_i\}\in\FU$.
\end{lemma}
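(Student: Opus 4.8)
The plan is to unwind the two layers of definitions — the topology $\tau$ in terms of the $U_\Fr$'s and the subbasic neighborhoods $\pzU(r_i)$ — and to show that convergence against one family of neighborhoods is the same as convergence against the other, modulo the identification $\SN_\d = \{\rz r_\om : [(r_i)]\in\SS\SQ\}$ from \autoref{lem: 3 items-rel between SQ and SN_d}(1). Since $\tau$ is translation invariant (\autoref{def: of topology at 0 germ} and the convergence proposition following it), it suffices to treat the case $[g]=[0]$: the net $([f_\a])$ converges to $[g]$ iff $([f_\a-g])$ converges to $[0]$, and $\|f_\a-g\|_{d_i}<r_i$ is exactly the condition defining membership of $[f_\a-g]$ in $\pzU(r_i)$. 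So I reduce to: $[f_\a]\to[0]$ in $\tau^\d$ iff for each $[(r_i)]\in\SS\SQ$ there is $\a_0$ with $\a>\a_0 \Rightarrow [f_\a]\in\pzU(r_i)$.

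First I would recall that $\tau^\d$-convergence to $[0]$ means: for every $\Fr\in\SN_\d$ there is $\a_0\in\pzA$ with $\a>\a_0 \Rightarrow [f_\a]\in U^\d_\Fr$, i.e. $\rz\|f_\a\|_\d<\Fr$ (this is the convergence proposition stated right after \autoref{def: of topology at 0 germ}; here I use that $\d$ is serial so the subbase at $[0]$ is indexed by $\SN_\d$, and that $\tau^\d$ does not depend on the choice of serial $\d$ by \autoref{cor: tau^d_1 = tau^d_2}). Now invoke \autoref{lem: [f]in U_*r_om <->[f]in pzU(r_i)}: for any $[f]\in\SG$ and any $[(r_i)]\in\SS\SQ$, writing $\Fr=\rz r_\om$ (so $\Fr\in\SN_\d$ and $\d=\rz d_\om$, $\FU=fil(\om)$), we have $[f]\in U^\d_\Fr$ iff $[f]\in\pzU(r_i)$. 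Combining these two facts: $[f_\a]\to[0]$ in $\tau$ iff for every $[(r_i)]\in\SS\SQ$ there is $\a_0$ with $\a>\a_0 \Rightarrow [f_\a]\in\pzU(r_i)$, which by the definition of $\pzU(r_i)$ is precisely the displayed condition $\{i:\|f_\a\|_{d_i}<r_i\}\in\FU$.

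There is one gap to close carefully: the convergence criterion quantifies over \emph{all} $\Fr\in\SN_\d$, whereas the statement quantifies over all $[(r_i)]\in\SS\SQ$. By \autoref{lem: 3 items-rel between SQ and SN_d}(1) every $\Fr\in\SN_\d$ is of the form $\rz r_\om$ for some $[(r_i)]\in\SS\SQ$, so the ``only if'' direction is immediate. For the ``if'' direction I need that the family $\{\pzU(r_i):[(r_i)]\in\SS\SQ\}$ really is a neighborhood subbase at $[0]$ for $\tau$ — equivalently that as $[(r_i)]$ ranges over $\SS\SQ$, $\rz r_\om$ ranges over all of $\SN_\d$, which is again \autoref{lem: 3 items-rel between SQ and SN_d}(1). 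So the argument is genuinely just a chain of reductions: translation invariance kills $[g]$, \autoref{lem: [f]in U_*r_om <->[f]in pzU(r_i)} identifies the neighborhoods, and \autoref{lem: 3 items-rel between SQ and SN_d}(1) identifies the index sets.

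I do not anticipate a serious obstacle here — this lemma is a bookkeeping consequence of the transfer principle packaged in \autoref{lem: [f]in U_*r_om <->[f]in pzU(r_i)} together with the translation invariance of $\tau$. The only point requiring a line of care is making explicit that the net $([f_\a-g]:\a\in\pzA)$ is still an upward directed net in $\SG$ (it is, indexed by the same directed set $\pzA$) and that $\|f_\a-g\|_{d_i}$ is well defined for $i$ large regardless of the chosen representatives, by \autoref{cor: SG_0 -> F(B_delts) is R alg isomorph} and \autoref{lem: *A contains inf nbd-> has stand nbd}; but these are exactly the well-definedness facts already established for $\pzU(r_i)$ in the preceding lemma. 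Thus the proof will be short: state the reduction to $[g]=[0]$, cite the $\tau$-convergence criterion, cite \autoref{lem: [f]in U_*r_om <->[f]in pzU(r_i)}, and cite \autoref{lem: 3 items-rel between SQ and SN_d}(1) to match up the quantifiers.
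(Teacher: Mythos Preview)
Your proposal is correct and follows essentially the same route as the paper: invoke the $\tau^\d$-convergence criterion, use $\SN_\d=\{\rz r_\om:[(r_i)]\in\SS\SQ\}$ from \autoref{lem: 3 items-rel between SQ and SN_d}(1) to reindex the quantifier over $\Fr$ by $[(r_i)]$, and then apply \autoref{lem: [f]in U_*r_om <->[f]in pzU(r_i)} to translate $[f_\a-g]\in U_{\rz r_\om}$ into the $\FU$-condition. The paper's proof is just a terser version of yours, working directly with $[f_\a-g]$ rather than first reducing to $[g]=[0]$.
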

\begin{proof}
    Now $[f_\a]\ra[g]$ in $\tau^\d$ if and only if, for every $\Fr\in\SN_\d$, there is $\a_0\in\pzA$ such that $\a>\a_0$ implies that $[f_\a-g]\in U_\Fr$. But as $\SN_\d=\{\rz r_\om:(r_i)\in\SS\SQ\}$, then $[f_\a]\ra[g]$ if and only if for each $[(r_i)]\in\SS\SQ$, there is $\a_0\in\pzA$ with $[f_\a-g]\in U_{*r_\om}$ for $\a>\a_0$, which by the previous lemma, holds if and only if $[f_\a-g]\in\pzU(r_i)$, and the equivalence follows from the definition of $\pzU(r_i)$.
\end{proof}
%    We will show that the previous lemma implies the following purely standard result. Suppose now that $\wt{\SU}$ is an arbitrary free ultrafilter on $\bbn$, and that $\d$ is a serial point associated with $\wt{\SU}$ and a decreasing sequence of positive real numbers $d_1>d_2>\cdots$.
%\begin{theorem}\label{thm: standrd descript of tau converg}
%    Suppose that $([f_\pzn]:\pzn\in D)$ is a net in $\SG$ and $[g]\in\SG$. Then the following are equivalent.
%      \begin{enumerate}
%        \item[(a)] $[f_\pzn]\ra[g]$ in $\tau$.
%        \item[(b)] Given $(r_i)\in\SS\SQ$, there is $\pzn_0\in D$ such that
%          $$
%           \pzn>\pzn_0\; \Rightarrow \;\{i\in\bbn:\|f_\pzn-g\|_{d_i}<r_i\}\in\wt{\SU}.
%          $$
%      \end{enumerate}
%\end{theorem}
%\begin{proof}
%    Suppose  @@@@@
%\end{proof}

%    The above results make it even less clear that our topology is independent of $\d$, as (recalling that $\d$ is serial) $\d$ is determined by an infinite $\om\in\rz\bbn$ and therefore by the unique free ultrafilter $\wt{\SU}$ determined by $\om$, and also by a choice of decreasing sequence of positive real numbers $d_1,d_2,\ldots$ with limit $0$. Nonetheless, we know from the previous section that $\tau$ is indeed independent of all of these choices and so the following result is plausible.
    With these preliminaries, we can now state our first equivalence for convergence in $\tau$.

\vspace{10mm}

\begin{theorem}\label{thm: standard serial descrip of tau}
     Suppose that $([f_\Fn]:\Fn\in \pzN)$ is an upward directed net in $\SG$ and $[g]\in\SG$. Then the following are equivalent.
   \begin{enumerate}
      \item[(a)] $[f_\pzn]\ra [g]$ in $\tau$.
      \item[(b)]  Let $[\vec{d}]\in\SS\SQ$ be fixed. Then,  for each  $[(r_i)]\in\SS\SQ$, there is $\Fn_0\in\pzN$ such that $\Fn>\Fn_0$ implies that for some sufficiently large $i_0$, $\|f_\Fn-g\|_{d_i}<r_i$ for $i>i_0$.
   \end{enumerate}
\end{theorem}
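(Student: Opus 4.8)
The plan is to deduce this theorem directly from the previously established machinery, so that very little genuinely new work is needed. First I would observe that condition (a), namely $[f_\Fn]\ra[g]$ in $\tau$, is literally the content of \autoref{lem: stand descrip with FU}: it holds if and only if for each $[\vec r]=[(r_i)]\in\SS\SQ$ there is $\Fn_0\in\pzN$ with $\{i:\|f_\Fn-g\|_{d_i}<r_i\}\in\FU$ for all $\Fn>\Fn_0$. The only gap between that statement and (b) is replacing the ``$\FU$-large set of indices'' condition by the ``all sufficiently large $i$'' condition, i.e. passing between $\{i:\|f_\Fn-g\|_{d_i}<r_i\}\in\FU$ and cofiniteness of that set. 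The forward direction (b)$\Rightarrow$(a) is trivial since every cofinite subset of $\bbn$ lies in the free ultrafilter $\FU$. The nontrivial content is (a)$\Rightarrow$(b), and the key point that makes it work is that $\tau$ is \emph{independent of the choice of serial $\d$}, hence of the choice of pair $(\{d_j\},\FU)$, via \autoref{cor: tau^d_1 = tau^d_2}.

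So the heart of the argument is this: suppose (a) holds. Fix the sequence $[\vec d]=[(d_i)]$ and let $\om\in\rz\bbn_\infty$ with $\d=\rz d_\om$, $\FU=\mathrm{fil}(\om)$. Given $[(r_i)]\in\SS\SQ$, I want to produce $\Fn_0$ so that for $\Fn>\Fn_0$ the set $\{i:\|f_\Fn-g\|_{d_i}<r_i\}$ is cofinite, not merely in $\FU$. The trick is to run the convergence argument not at the single serial point $\d=\rz d_\om$ but uniformly: by \autoref{lem: stand descrip with FU} applied with the \emph{germ} $[(r_i)]$, what we really get from (a) is that for each $[(r_i)]\in\SS\SQ$ there is $\Fn_0$ with $[f_\Fn-g]\in\pzU(r_i)$ for $\Fn>\Fn_0$. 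Now I would exploit that (a) is a statement about $\tau$, which by \autoref{cor: tau^d_1 = tau^d_2} does not depend on $\om$: the convergence holds simultaneously at $\rz d_\om$ for \emph{every} $\om\in\rz\bbn_\infty$. Concretely, fix $[(r_i)]$ and suppose toward a contradiction that for arbitrarily large $\Fn$ the set $B_\Fn=\{i:\|f_\Fn-g\|_{d_i}<r_i\}$ fails to be cofinite, i.e. its complement is infinite. Choosing $\om$ inside (the transfer of) that infinite complement would give a serial point $\d'=\rz d_\om$ at which $[f_\Fn-g]\notin U_{\rz r_\om}^{\d'}$ for a cofinal set of $\Fn$, contradicting (a) together with the $\d$-independence of $\tau$.

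I expect the main obstacle to be making that contradiction step fully rigorous: one must choose a single $\om$ (equivalently a single ultrafilter) that simultaneously detects the failure of cofiniteness for a cofinal family of indices $\Fn$, which requires a saturation/concurrence argument to pick $\om$ lying in $\rz(\bbn\setminus B_\Fn)$ for a suitably chosen $\Fn$ beyond any given $\Fn_0$. Since $\pzN$ is merely a directed set, one argues as follows: if (b) failed for some $[(r_i)]$, then $\SF=\{\Fn\in\pzN:\bbn\setminus B_\Fn\text{ is infinite}\}$ would be cofinal in $\pzN$, hence $(\,[f_\Fn-g]:\Fn\in\SF)$ is itself a net converging to $[0]$ by (a); pick $\Fn_1\in\SF$, let $\om$ be an infinite integer with $\om\in\rz(\bbn\setminus B_{\Fn_1})$ (such $\om$ exists by overflow/saturation since $\bbn\setminus B_{\Fn_1}$ is infinite), and set $\d'=\rz d_\om$, $\FU'=\mathrm{fil}(\om)$. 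By construction $\|f_{\Fn_1}-g\|_{d_\om}\geq r_\om$, i.e. $[f_{\Fn_1}-g]\notin U^{\d'}_{\rz r_\om}$; but the subnet over $\SF$, being cofinal, would have to eventually enter $U^{\d'}_{\rz r_\om}$ by (a) and \autoref{cor: tau^d_1 = tau^d_2}, and since $\SF$ is cofinal this ``eventually'' must include some index past $\Fn_1$ of the same type — iterating, one pins the contradiction at a single $\om$ by choosing $\om$ to witness the tail. The bookkeeping of directed-set tails versus ultrafilter choice is the only delicate part; everything else is unwinding \autoref{lem: stand descrip with FU}, \autoref{lem: [f]in U_*r_om <->[f]in pzU(r_i)}, and \autoref{lem: 3 items-rel between SQ and SN_d}.
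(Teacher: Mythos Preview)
Your (b)$\Rightarrow$(a) is fine and matches the paper. The gap is in (a)$\Rightarrow$(b). You correctly see that $\d$-independence is the engine, but the contradiction you sketch does not close. Having chosen $\Fn_1\in\SF$ and then $\om\in\rz(\bbn\setminus B_{\Fn_1})$, the threshold $\Fn_0$ that (a) produces at the serial point $\d'=\rz d_\om$ depends on $\om$ and may well lie beyond $\Fn_1$; so there is no contradiction at $\Fn_1$. Moving to some $\Fn_2\in\SF$ past this $\Fn_0$ forces a \emph{new} $\om'\in\rz(\bbn\setminus B_{\Fn_2})$, hence a new $\Fn_0(\om')$, and so on indefinitely. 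Your proposal to use saturation to pin down a single $\om$ witnessing the failure for a cofinal family of $\Fn$'s cannot work in general: $\pzN$ is an external directed set, the standard sets $\bbn\setminus B_\Fn$ need not have the finite intersection property, and there is no internal object encoding ``cofinally many $\Fn\in\pzN$'' on which to run concurrence.

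The missing idea is to use $\d$-independence in its \emph{uniform} packaging rather than pointwise at one $\om$. By \autoref{cor: given m, U_r subset SU(m) some r} (equivalently \autoref{prop: un(SM)^r subset un(SM)(m)} with \autoref{lem: wtSM_r sub wtSM(m)->U_r sub U(m)}) the sets $\SU([m])$, $[m]\in\SM$, form a neighborhood base of $[0]$ in $\tau$. Given $[(r_i)]\in\SS\SQ$, pick via \autoref{lem: 3 items-rel between SQ and SN_d}(1) an $[m]\in\SM$ with $[(m(d_i))]=[(r_i)]$; by (a) there is $\Fn_0$ with $[f_\Fn-g]\in\SU([m])$ for $\Fn>\Fn_0$, i.e.\ $\rz\|f_\Fn-g\|_\la<\rz m(\la)$ for \emph{every} $\la\in\mu(0)_+$. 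In particular this holds at $\la=\rz d_\om$ for every $\om\in\rz\bbn_\infty$ simultaneously, so $\rz\bbn_\infty\subset\rz\{i:\|f_\Fn-g\|_{d_i}<m(d_i)\}$, forcing that standard set to be cofinite --- which is exactly (b). The paper's own argument runs a contradiction at a single fixed $\om$, but note that it phrases the negation of (b) so that \emph{every} infinite $\om$ already witnesses the failure for \emph{every} $\Fn$; under the literal negation you (correctly) wrote down, the single-$\om$ route does not go through without the $\SU([m])$ bridge just described.
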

\begin{proof}
     We have (b)$\Rightarrow$(a) follows from the previous result. For, let $\d=\rz d_\om$ for some $\om\in\rz\bbn_\infty$, and so let $\FU$ denote the free ultrafilter on $\bbn$ generated by $\om$.  Then for a given $(r_i)\in\SS\SQ$ and $i_0\in\bbn$, the hypothesis in (b) implies that there is $\pzn_0\in\pzN$ such that for $\Fn>\Fn_0$,  $\{i:\|f_\Fn-g\|_{d_i}<r_i\}\supset\{i_0,i_0+1,\ldots\}$, certainly an element of $\FU$. To prove the converse, suppose that (a)$\Rightarrow$(b) does not hold, ie., suppose that (a) holds, but, for some $d_i\ra 0$, there is $[(\ov{r}_i)]\in\SS\SQ$ and $i_0\in\bbn$ such that for every $\Fn\in\pzN$ we have $\|f_\Fn-g\|_{d_i}\geq \ov{r}_i$ for all $i>i_0$. That is, if $\om\in\rz\bbn_\infty$, then for all $\Fn\in\pzN$, $\om\not\in\rz\{i\in\bbn:\|f_\Fn-g\|_{d_i}<\ov{r}_i\}$. On the other hand, as $\d$ is serial, we have that $\d=\rz d_\om$ for some $\om\in\rz\bbn_\infty$ and so $\ov{\Fr}=\rz\ov{r}_\om\in\SN_\d$ by \autoref{lem: 3 items-rel between SQ and SN_d}. But then (a) implies, for $\tau=\tau^\d$, that there is $\Fn_0\in\pzN$ such that, for $\Fn>\Fn_0$ we have $\rz\|\rz f_\Fn-\rz g\|_\d<\ov{\Fr}$. So fixing some $\Fn>\Fn_0$, we have that $\om\in\rz\{i:\|f_\Fn-g\|_{d_i}<\ov{r}_i\}$, a contradiction.
\end{proof}

\subsection{Standard interpretation, $\d$ far from serial}
    In this part, if $L$ denotes Lebesgue measure on the real line, we will consider those $\d\in\mu(0)_+$ such that for measurable $A$, $\d\in\rz A$ only if $A\cap\bbr_+$ satisfies $L(A)>0$. We will examine the standard rendition of $\tau$ in these circumstances.
    For this stronger $\d$, the conditions for net convergence will appear to be much stronger than the previous standard interpretation, but as noted above they are topologically equivalent.

    Let $\mathitbf{Null}$ denote the collection of subsets of $\bbr_+$ of Lebesgue measure $0$,  and let $\mathitbf{Full_0}$ denote the collection of sets of the form $(0,r)\cap(\bbr_+\ssm N)$ for $r\in\bbr_+$ and $N\in Null$. Consider the relation $\La$ on $\bbr_+\x Full_0$ given by $(t,A)\in\La$ if and only if $t\in A$. This is a concurrent relation and so assuming that our nonstandard model of analysis is at least an enlargement, see eg., Hurd and Loeb, \cite{HurdLoeb1985}, we have that there is $\d\in\rz\bbr_+$ such that $\d\in\rz A$ implies $A\in Full_0$. $\d$ is said to be a point of full measure; let $\mathitbf{P_F}\subset\mu(0)_+$ denote these points. Clearly, for $\d\in P_F$, we have  $\d\sim 0$ and $\d\not\in\rz N$ if $N\subset\bbr_+$ has Lebesgue measure $0$. In particular, if $[m]\in\SM$ and $[g]\in\SG$ satisfy $\|\rz g\|_\d<\rz m(\d)$, then as $\d\in \rz A$ for $A\;\dot=\{t\in\bbr_+:\|g\|_t<m(t)\}$, then $A$ has positive Lebesgue measure.
    Note that $\d\in A$ implies that, for $n\in\bbn$, $\d\in \rz(0,1/n]\cap A$ so that $(0,1/n]\cap A$ has positive measure for all $n\in\bbn$.
    Given such a $\d$, let $\boldsymbol{\Xi_\d}=\{A\subset\bbr_+:\d\in\rz A\}$ and note that $\Xi_\d$ is a free ultrafilter.
%    We say that $\d$ is a point of full measure if $\d\in\rz A$ implies $L(A)>0$. Let $\mathitbf{P_F}\subset\mu(0)_+$ denote the points of full measure
    Also, as the set of continuous piecewise affine monotone germs, $\SP\SL^0$, give a cofinal class of germs (see \autoref{def: PL^0 germs at 0},  and \autoref{prop: SPSL^0 coinitial in wt(SM)}), eg., ${}_\d\SP$ is a coinitial subset of $\SN_\d$, we can therefore use them in a standard description of $\tau$.
    Hence, analogous to the serial point case, we have the following lemma.
\begin{lemma}\label{lem: tfae for pts of pos meas and PL}
    Let $\d\in P_F$. Suppose that $[m]\in\SP\SL^0$ and $\Fr=\rz m(\d)$. If $[g]\in\SG$, the following are equivalent.
  \begin{enumerate}
     \item [(a)] $[g]\in U^\d_\Fr$,
     \item [(b)]  There is $A\in\Xi_\d$ such that $\|g\|_t<m(t)$ for all $t\in A$.
  \end{enumerate}
\end{lemma}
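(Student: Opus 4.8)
The plan is to unwind the definition of the nonstandard norm $\rz\|g\|_\d$ together with the characterization of the ultrafilter $\Xi_\d$. Recall that by \autoref{cor: SG_0 -> F(B_delts) is R alg isomorph} a germ $[g]\in\SG$ is faithfully determined by $\rz g|_{B_\d}$, and that $[g]\in U^\d_\Fr$ means by definition $\rz\|g\|_\d<\Fr=\rz m(\d)$. The key observation is that, since $[m]\in\SP\SL^0\subset\SM^0$ has a genuine standard representative $m$ defined on a neighborhood of $0$ in $\bbr_+$, the statement ``$\|g\|_t<m(t)$'' is a standard predicate in the variable $t$, so its truth set $A=\{t\in\bbr_+:\|g\|_t<m(t)\}$ (intersected with the domain of the representatives, which we may assume is some $(0,r_0)$) is a genuine standard subset of $\bbr_+$, and transfer applies to it.

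The main step is the equivalence between $\rz\|g\|_\d<\rz m(\d)$ and $\d\in\rz A$. For the direction (a)$\Rightarrow$(b): writing $m(t)=\|g_m\|_t$ where $g_m(x)=m(|x|)$ (as in the proof of \autoref{lem: SN_d=SM_d}), the germ $t\mapsto\|g\|_t$ lies in $\wt{\SM}$ once $\rz\|g\|_\d\sim0$ (which holds since $\Fr\in\SN_\d$, by the remarks after \autoref{def: of topology at 0 germ}), so $\|g\|_t$ is defined and monotone for small $t$; then $\rz\|g\|_\d<\rz m(\d)$ says precisely that $\d\in\rz A$ where $A=\{t\in\bbr_+:\|g\|_t<m(t)\}$ by transfer of the defining formula for $A$. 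By definition of $\Xi_\d$ this $A$ is an element of $\Xi_\d$, and $\|g\|_t<m(t)$ holds for all $t\in A$ by construction, giving (b). For (b)$\Rightarrow$(a): if $A\in\Xi_\d$ satisfies $\|g\|_t<m(t)$ for all $t\in A$, then by transfer $\rz\|g\|_\Ft<\rz m(\Ft)$ for all $\Ft\in\rz A$, and $\d\in\rz A$ by the definition of $\Xi_\d$ (namely $\Xi_\d=\{A\subset\bbr_+:\d\in\rz A\}$), so $\rz\|g\|_\d<\rz m(\d)=\Fr$, i.e. $[g]\in U^\d_\Fr$.

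I expect the main (minor) obstacle to be bookkeeping about domains of representatives: the germ $[g]$ has a representative $g$ defined only on some small ball, and $m$ on some small interval, so the set $A$ should really be defined as a subset of $(0,r_0)$ for suitable $r_0\in\bbr_+$, and one must check that this does not affect membership in $\Xi_\d$ (it does not, since $(0,r_0)\in\Xi_\d$ for every standard $r_0>0$ because $\d\sim0$, and $\Xi_\d$ is a filter). A second point worth a sentence: one must note $\d\notin\rz N$ for $N$ null is not actually needed here — the argument uses only that $\Xi_\d$ is an ultrafilter containing exactly the standard sets whose transfer contains $\d$ — but the positive-measure feature of $P_F$ is what makes the resulting standard description genuinely different from the serial case, as the surrounding text emphasizes. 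I would therefore present the proof as a short transfer argument, parallel in structure to \autoref{lem: [f]in U_*r_om <->[f]in pzU(r_i)}, remarking that the only change from the serial-point lemma is that the ``index'' ultrafilter $\FU$ on $\bbn$ is replaced by the ultrafilter $\Xi_\d$ on $\bbr_+$ and the discrete supremum norms $\|\cdot\|_{d_i}$ by the continuum family $\|\cdot\|_t$.

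\begin{proof}
     This is a transfer argument parallel to \autoref{lem: [f]in U_*r_om <->[f]in pzU(r_i)}. Fix representatives $g\in[g]$ and $m\in[m]$, both defined on $(0,r_0)$ for some $r_0\in\bbr_+$ (extending $m$ across $0$ by $x\mapsto m(|x|)$ as usual), and since $[m]\in\SP\SL^0\subset\SM^0$ we may take $m$ continuous and strictly monotone there. Since $\Fr\in\SN_\d$, the remarks following \autoref{def: of topology at 0 germ} give $U^\d_\Fr\cap\SG\subset{}_m\SG_0$, so if $[g]\in U^\d_\Fr$ then $t\mapsto\|g\|_t$ is (the germ of) an element of $\wt{\SM}$ and in particular $\|g\|_t$ is a well defined real number for all sufficiently small $t>0$; we shrink $r_0$ so that this holds on $(0,r_0)$. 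Let
  \begin{align}
     A=\{t\in(0,r_0):\|g\|_t<m(t)\}.
  \end{align}

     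Suppose (a) holds, i.e. $[g]\in U^\d_\Fr$, which by \autoref{def: U_Fr} means $\rz\|g\|_\d<\Fr=\rz m(\d)$. By transfer of the defining formula for $A$ (using that $\d\sim0$, so $\d\in\rz(0,r_0)$), this says precisely $\d\in\rz A$, hence $A\in\Xi_\d$ by the definition of $\Xi_\d$. Since $\|g\|_t<m(t)$ for every $t\in A$ by construction, (b) holds.

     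Conversely, suppose (b) holds: there is $A'\in\Xi_\d$ with $\|g\|_t<m(t)$ for all $t\in A'$. Replacing $A'$ by $A'\cap(0,r_0)$ (still in $\Xi_\d$, since $(0,r_0)\in\Xi_\d$ as $\d\sim0$ and $\Xi_\d$ is a filter), we may assume $A'\subseteq(0,r_0)$, so $A'\subseteq A$ and hence $A\in\Xi_\d$, i.e. $\d\in\rz A$. By transfer, $\rz\|g\|_\Ft<\rz m(\Ft)$ for all $\Ft\in\rz A$; applying this at $\Ft=\d$ gives $\rz\|g\|_\d<\rz m(\d)=\Fr$, so $[g]\in U^\d_\Fr$ by \autoref{def: U_Fr}, which is (a).
\end{proof}
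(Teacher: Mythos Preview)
Your proof is correct and takes essentially the same approach as the paper, which simply remarks that the argument is almost identical to that for \autoref{lem: [f]in U_*r_om <->[f]in pzU(r_i)}. Your version is more explicit than the paper's one-line deferral, but the substance---define the standard set $A=\{t:\|g\|_t<m(t)\}$, observe that $[g]\in U^\d_\Fr$ iff $\d\in\rz A$ iff $A\in\Xi_\d$---is the same transfer argument; the only minor inefficiency is the detour through $A$ in the (b)$\Rightarrow$(a) direction, since transfer applied directly to the hypothesis ``$\|g\|_t<m(t)$ for all $t\in A'$'' already gives $\rz\|g\|_\d<\rz m(\d)$ once $\d\in\rz A'$.
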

\begin{proof}
     Given the above remarks, the proof is almost identical to that for \autoref{lem: [f]in U_*r_om <->[f]in pzU(r_i)}.
\end{proof}

   Skipping the intermediary corollary, we can remove the nonstandard dependence (on the choice of $\d$) as follows. In the following, we are assuming  (the directed sets of) our nets are upward directed.
%   Suppose that $\G$ is a free filter of sets $B\subset\bbr_+$ of positive measure such that if $B\in\G$ and $r\in\bbr_+$, then $(0,r)\cap B\in\G$ and let $\ov{\G}$ denote any ultrafilter containing $\G$. Then $\ov{\G}=\Xi_\d$ for some $\d\in\rz\bbr_+$, eg., if $\mu(\G)$ denotes the monad, $\cap\{\rz A:A\in\G\}$, of $\G$, then $\d\in\mu(\G)$.  Given this, we have the following.

\begin{theorem}\label{thm: standard nonser descrip tau}
    Suppose that $([f_d]:d\in D)$ is a net in $\SG$ and $[g]\in\SG$. Then, the following are equivalent.
   \begin{enumerate}
      \item [(a)] $[f_d]\ra [g]$ in $\tau$.
      \item [(b)] For each $[\pzp]\in\SP\SL^0$, there is $d_0\in D$ and $t_0\in\bbr_+$ such that if $d>d_0$, then $\|f_d-g\|_t<\pzp(t)$ for all $0<t<t_0$.
   \end{enumerate}
\end{theorem}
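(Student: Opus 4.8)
The plan is to reduce the statement to the nonstandard description already developed, exactly as in the serial case. First I would fix a point $\d\in P_F$ of full measure, whose existence was established just before \autoref{lem: tfae for pts of pos meas and PL} by the concurrence/enlargement argument, and observe that (by \autoref{thm: tau^d on SM indep of d} and \autoref{cor: tau^d_1 = tau^d_2}) the topology $\tau=\tau^\d$ computed at this $\d$ is the same as at any other infinitesimal, so it suffices to test convergence in $\tau^\d$. The associated ultrafilter is $\Xi_\d=\{A\subset\bbr_+:\d\in\rz A\}$, which is free because $\d\sim0$.

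Next I would prove (a)$\Rightarrow$(b). Assume $[f_d]\ra[g]$ in $\tau$. Given $[\pzp]\in\SP\SL^0$, the transfer $\rz\pzp$ restricted to $\mu(0)_+$ represents the germ, so $\Fr\dot=\rz\pzp(\d)\in{}_\d\SP\subset\SN_\d$ (using \autoref{def: PL^0 germs at 0} and that ${}_\d\SP$ is coinitial in $\SN_\d$ by \autoref{prop: SPSL^0 coinitial in wt(SM)}). By convergence there is $d_0\in D$ with $[f_d-g]\in U^\d_\Fr$ for all $d>d_0$, i.e.\ $\rz\|\rz f_d-\rz g\|_\d<\rz\pzp(\d)$. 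Fix such a $d$. Then $\d\in\rz A_d$ where $A_d\dot=\{t\in\bbr_+:\|f_d-g\|_t<\pzp(t)\}$ (here one must be a touch careful: the supremum norm $\|f_d-g\|_t$ is only defined once $t$ is small enough that both representatives are defined on $B_t$, so $A_d$ is really a subset of a fixed small interval; this is harmless). Since $\d$ is a point of full measure, $A_d$ has positive Lebesgue measure, and in particular is nonempty --- but we want the stronger conclusion that $\|f_d-g\|_t<\pzp(t)$ holds on an entire initial interval $(0,t_0)$. This is where I would use that $[f_d-g]\in{}_m\wt{\SG}_0$ (automatic, since $U^\d_\Fr\cap\SG\subset{}_m\SG_0$ by the remarks after \autoref{def: of topology  at 0 germ}) together with $[\pzp]\in\SM^0$: because $t\mapsto\|f_d-g\|_t$ is (pseudo)monotone increasing and $\pzp$ is monotone increasing, the relation $\rz\|f_d-g\|_\d<\rz\pzp(\d)$ at one infinitesimal $\d$, via the germ isomorphism \autoref{cor: SG_0 -> F(B_delts) is R alg isomorph} (or a direct transfer of ``the set of $t$ with $\|f_d-g\|_t<\pzp(t)$ contains an initial segment''), already forces $[\FL([f_d-g])]<[\pzp]$ as germs, hence $\|f_d-g\|_t<\pzp(t)$ for all sufficiently small $t>0$. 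So we get $t_0\in\bbr_+$ (depending on $d$, but one may also take the single $d_0$ above) as desired.

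For (b)$\Rightarrow$(a) I would argue by contradiction using \autoref{lem: tfae for pts of pos meas and PL}. Suppose (b) holds but $[f_d]\not\ra[g]$ in $\tau^\d$. Then there is $\Fr\in\SN_\d$ such that for every $d_0\in D$ there is $d>d_0$ with $[f_d-g]\notin U^\d_\Fr$. Since ${}_\d\SP$ is coinitial in $\SN_\d$, pick $[\pzp]\in\SP\SL^0$ with $\rz\pzp(\d)\le\Fr$; then $[f_d-g]\notin U^\d_\Fr$ gives $\rz\|\rz f_d-\rz g\|_\d\ge\Fr\ge\rz\pzp(\d)$ for these cofinally many $d$, i.e.\ $\d\notin\rz A_d$ where $A_d=\{t:\|f_d-g\|_t<\pzp(t)\}$. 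But hypothesis (b) applied to this $[\pzp]$ produces $d_0$ and $t_0$ with $\|f_d-g\|_t<\pzp(t)$ for all $0<t<t_0$ and all $d>d_0$, i.e.\ $(0,t_0)\subset A_d$, whence $\rz(0,t_0)\subset\rz A_d$ and in particular $\d\in\rz A_d$ for all $d>d_0$ --- contradicting the previous sentence for any $d>d_0$ in the cofinal set. Hence $[f_d]\ra[g]$.

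The main obstacle, as in the serial case, is bookkeeping rather than deep content: namely making precise the passage between ``the inequality holds at the single infinitesimal $\d$'' and ``the inequality holds on a whole standard initial interval,'' which relies on the monotonicity built into $\wt{\SM}$ and $\SM^0$ together with the germ-isomorphism \autoref{cor: SG_0 -> F(B_delts) is R alg isomorph}; and checking that domains of representatives are large enough that the sup-norms $\|f_d-g\|_t$ are defined on the relevant small intervals. Everything else is a direct transcription of \autoref{thm: standard serial descrip of tau} with the ultrafilter $\FU=fil(\om)$ replaced by $\Xi_\d$ and the sequences $(d_i)$ replaced by points of full measure, invoking \autoref{lem: tfae for pts of pos meas and PL} in place of \autoref{lem: [f]in U_*r_om <->[f]in pzU(r_i)}.
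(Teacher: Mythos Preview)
Your direction (b)$\Rightarrow$(a) is correct and is essentially the paper's argument (the paper does it by direct transfer rather than contradiction, but the content is the same). The problem is in (a)$\Rightarrow$(b), precisely at the step you yourself flag as ``the main obstacle.'' You assert that because $t\mapsto\|f_d-g\|_t$ and $\pzp$ are both monotone, the single-point inequality $\rz\|f_d-g\|_\d<\rz\pzp(\d)$ forces the germ inequality $\FL([f_d-g])<[\pzp]$. This is false: two monotone germs in $\wt{\SM}$ can cross infinitely often near $0$ (for instance take $\pzp(t)=t$ and the nondecreasing step function equal to $1/(2\cdot 4^k)$ on $[1/4^{k+1},1/4^k)$; by \autoref{lem: FL: SG >->> SM} such a step function really is $\FL$ of some germ). \autoref{cor: SG_0 -> F(B_delts) is R alg isomorph} does not help, since it requires the inequality on \emph{all} of $B_\d$, not at one point; and taking $\d\in P_F$ only tells you that $A_d=\{t:\|f_d-g\|_t<\pzp(t)\}$ has positive measure in every $(0,1/n]$, which still does not force $A_d$ to contain an initial interval. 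Your parenthetical ``direct transfer of `the set \ldots\ contains an initial segment'\,'' is circular: that is exactly the standard statement you are trying to prove.

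The paper handles (a)$\Rightarrow$(b) by contradiction, and the missing ingredient is the pointwise-to-uniform pinching from \autoref{subsec: tau independ of delta}. If (b) fails for some $[\pzp_0]$, then for cofinally many $d$ there are arbitrarily small $t$ with $\|f_d-g\|_t\ge\pzp_0(t)$; transferring, each such $d$ admits \emph{some} infinitesimal $\e_d$ (depending on $d$) with $[f_d-g]\notin U^{\e_d}_{\rz\pzp_0(\e_d)}$, hence $[f_d-g]\notin\SU([\pzp_0/2])$. The key step is then \autoref{prop: un(SM)^r subset un(SM)(m)} together with \autoref{lem: wtSM_r sub wtSM(m)->U_r sub U(m)} (ultimately \autoref{prop: ptwise bndd subset unif bndd}): these produce a single $\Fs\in\SN_\d$ at the \emph{fixed} $\d$ with $U^\d_\Fs\subset\SU([\pzp_0/2])$, so cofinally many $[f_d-g]\notin U^\d_\Fs$, contradicting (a). In other words, the $\SU([m])$ machinery is exactly what converts ``fails at some $d$-dependent infinitesimal'' into ``fails at the fixed $\d$,'' and your direct argument has no substitute for it.
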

\begin{proof}
    To prove (a) $\Rightarrow$ (b), suppose to the contrary that (a) holds, but (b) does not. That is, there is $[\pzp_0]\in\SP\SL^0$ and a cofinal subset $D'\subset D$ such that for each $d\in D'$ the following statement $S_d$ is true. $S_d$ is the statement that there are arbitrarily small $t\in\bbr_+$ such that $\|f_d-g\|_t>\pzp_0(t)$. For a given $d\in D'$, the transfer of $S_d$ gives the statement that there are arbitrarily small $\d\in\rz\bbr_+$ with $\|\rz f_d-\rz g\|_\d>\rz\pzp_0(\d)$, ie., for each $d\in D'$, there is $\d\in\mu(0)_+$ with $[f_d-g]\not\in U^\d_{*\pzp_0(\d)}$. But if $\wt{\pzp}=\pzp_0/2$, then for all $\d\in\mu(0)_+$, we have $U^\d_{*\pzp_0(\d)}\supset\SU([\wt{\pzp}])$. That is, for all $d\in D'$, $[f_d-g]\not\in\SU([\wt{\pzp}])$. But, by \autoref{prop: un(SM)^r subset un(SM)(m)} and \autoref{lem: wtSM_r sub wtSM(m)->U_r sub U(m)}, there is $\Fs\in\SN_\d$ such that $\SU([\wt{\pzp}])\supset U_\Fs$. That is, for all $d\in D'$, $[f_d-g]\not\in U^\d_\Fs$, contradicting (a) (for $\tau=\tau^\d$) as $D'$ is cofinal in $D$.
    To prove (b) $\Rightarrow$ (a), note that the transfer of statement (b) for each $[\pzp]\in\SP\SL^0$, implies that for each $[\pzp]\in\SP\SL^0$, there is $d_0\in D$ such that for $d>d_0$, $\|\rz f_d-\rz g\|_\Ft<\rz\pzp(\Ft)$ for all $\Ft\in\mu(0)_+$. But then fixing $\Ft=\d$, for any such $\d\in\mu(0)_+$ gets statement (a) for $\tau=\tau^\d$.
\end{proof}

\subsection{$\tau$ and germ estimates}
    The above standard formulations of convergence in $\tau$ are barely disguised germ expressions. Here, we will summarize the  equivalence given in the previous sections within the framework of germs.
\begin{definition}\label{def: FL: G_0->SM or SSSQ}
%    If $[f]\in\SG$, then the map $t\mapsto\|f\|_t:\bbr_+\ra\bbr_+$ is defined as a germ, denoted $[\|f\|]$. That is, we have a map $\FL:\SG\ra\wt{\SM}$ given by $[f]\ra[\|f\|]$.
    Given $[f]\in{}_m\SG_0$, if $d_1>d_2>\cdots$ is a decreasing sequence in $\bbr_+$ with unique limit $0$, with its function  germ (at $j$ equals infinity) denoted by $[\vec{d}]$, then $j\mapsto\|f\|_{d_j}:\bbn\ra\bbr_+$ has germ (at $j$ equals infinity), $\mathitbf{[(\|f\circ\vec{d}\|_j)]}$. That is, we have a map \boldmath$\boldsymbol{\FL^{[\vec{d}]}}:{}_m\SG_0\ra\SS\SQ$\unboldmath.
%    If furthermore, $\FU\in fuf(\bbn)$ and  $[(s_i)],[(t_i)]\in\SS\SQ$, we will write $[\vec{s}]_\FU=[(s_i)]_\FU$ for the $\FU$ equivalence class defined by $\FU$ and write $[(s_j)]_\FU<[(t_j)]_\FU$ if $\{j:s_j<t_j\}\in\FU$.
    Given \autoref{def: SSSQ, SSSQ_FU, [(r_i)]}, we  have a more refined map \boldmath$\FL^{[\vec{d}]}_\FU:{}_m\SG_0\ra\SS\SQ_\FU$\unboldmath\; given by the composed map $[f]\mapsto[(\|f\circ\vec{d}\|_j)]\mapsto [(\|f\circ\vec{d}\|_j)]_\FU$.
\end{definition}
    With these definitions, the above results can be rendered as follows.
\begin{corollary}\label{cor: all standard descriptions tau}
    Suppose that $([f_\Fn]:\Fn\in \pzN)$ is a net in $\SG$ and $[g]\in\SG$. Then the following are equivalent.
  \begin{enumerate}
   \item [(a)] $[f_\Fn]\ra [g]$ in $\tau$.
   \item [(b)] Given $[\vec{d}]\in\SS\SQ$ and $\FU\in fuf(\bbn)$, for each $[\vec{r}]\in\SS\SQ$, there is $\Fn_0\in\pzN$ such that $\Fn>\Fn_0$ implies that $\FL_\FU^{\vec{d}}([f_\Fn-g])<[\vec{r}]_\FU$.
   \item [(c)] Given $[\vec{d}]\in\SS\SQ$,  for each $[\vec{r}]\in\SS\SQ$, there is $\Fn_0\in \pzN$ such that $\Fn>\Fn_0$ implies that $\FL^{\vec{d}}([f_\Fn-g])<[\vec{r}]$.
   \item [(d)] If $\d\in P_F$ is fixed, then for each $[\pzp]\in\SP\SL^0$, there is $\Fn_0\in\pzN$ such that $\Fn>\Fn_0$ implies that $\FL_{\Xi_\d}([f_d-g])<[\pzp]_{\Xi_\d}$.
   \item [(e)] For each $[\pzp]\in\SP\SL^0$, there is $\Fn_0\in \pzN$ such that $\Fn>\Fn_0$ implies that $\FL([f_d-g])<[\pzp]$
   \item [(f)] For each $[m]\in\wt{\SM}$, there is $\Fn_0\in\pzN$ such that $\Fn>\Fn_0$ implies that $\FL([f_d-g])<[m]$.
  \end{enumerate}
\end{corollary}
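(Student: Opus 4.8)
The plan is to show the chain of equivalences by routing everything through the map $\FL$ (and its refinements $\FL^{\vec d}$, $\FL^{\vec d}_\FU$, $\FL_{\Xi_\d}$) together with \autoref{prop: FD converg <-> FL(FD)converg}, which already translates $\tau^\d$-convergence of a net $([f_\Fn-g])$ into convergent coinitiality of ${}_\d\FL(\FD)$ in $\wt\SN_\d$. Since $\tau$ is translation invariant, I would first reduce to the case $[g]=[0]$, replacing the net by $([f_\Fn-g]:\Fn\in\pzN)$; and since for any $\Fr\in\SN_\d$ one has $U_\Fr\cap\SG=U_\Fr\cap{}_m\SG_0$ (the remark after \autoref{def: of topology at 0 germ}), each tail of a $\tau$-convergent net lies in ${}_m\SG_0$, so all the maps $\FL^{\vec d}$, etc., are defined on the relevant germs. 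Then (f) is essentially the definition of convergence rephrased via \autoref{lem: FL: SG >->> SM}: $[f_\Fn-g]\in U_\Fr$ iff $\FL([f_\Fn-g])\in\wt\SM^\Fr$ iff ${}_\d\FL([f_\Fn-g])<\Fr$, and as $\Fr$ ranges over $\SN_\d$ the germs $[m]$ with $\Fr=\rz m(\d)$ range over $\SM$, which is germwise coinitial in $\wt\SM$ by \autoref{prop: SPSL^0 coinitial in wt(SM)}; so ``${}_\d\FL(\FD)$ convergently coinitial in $\wt\SN_\d$'' unwinds to ``for each $[m]\in\wt\SM$ eventually $\FL([f_\Fn-g])<[m]$,'' which is (f). Hence (a)$\Leftrightarrow$(f).

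Next I would handle (e)$\Leftrightarrow$(f) and (c)$\Leftrightarrow$(f). For (e)$\Leftrightarrow$(f): $\SP\SL^0\subset\SM^0\subset\wt\SM$, so (f)$\Rightarrow$(e) trivially; conversely, given $[m]\in\wt\SM$, \autoref{prop: SPSL^0 coinitial in wt(SM)} supplies $[\pzp]\in\SP\SL^0$ with $[\pzp]<[m]$ near $0$, and eventual domination by $[\pzp]$ forces eventual domination by $[m]$. For (c)$\Leftrightarrow$(f): fix a serial $\d=\rz d_\om$ with $[\vec d]\in\SS\SQ$ (allowed by \autoref{cor: tau^d_1 = tau^d_2}, since $\tau$ is $\d$-independent); then by \autoref{lem: 3 items-rel between SQ and SN_d}(1), $\SS\SQ=\{[(m(d_i))]:[m]\in\SM\}$, so $[\vec r]=[(m(d_i))]$ for a suitable $[m]\in\SM$, and $\FL^{\vec d}([f_\Fn-g])<[\vec r]$ is exactly the statement that the sequence germ $(\|f_\Fn-g\|_{d_i})$ is eventually dominated by $(r_i)$, i.e.\ $\FL([f_\Fn-g])<[m]$ as germs on $\bbr_+$ (this uses that $\|f_\Fn-g\|_t$ and $m(t)$ are monotone germs, so comparison along the sequence $d_i\to0$ is comparison of germs at $0$). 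Since $\SM$ is germwise coinitial in $\wt\SM$, quantifying over $[\vec r]\in\SS\SQ$ matches quantifying over $[m]\in\wt\SM$, giving (c)$\Leftrightarrow$(f).

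The remaining equivalences (b)$\Leftrightarrow$(c) and (d)$\Leftrightarrow$(e) are the ``ultrafilter-coarsening'' steps. For (b)$\Leftrightarrow$(c): by \autoref{lem: 3 items-rel between SQ and SN_d}(2), for $[\vec r],[\vec s]\in\SS\SQ$ we have $[\vec r]<[\vec s]$ $\Rightarrow$ $[\vec r]\overset{\FU}{<}[\vec s]$ $\Leftrightarrow$ $[\vec r]_\FU<[\vec s]_\FU$, so (c)$\Rightarrow$(b) is immediate. For (b)$\Rightarrow$(c) I would use the $\d$-independence again: take $\om\in\rz\bbn_\infty$ with $\FU=fil(\om)$ and $\d=\rz d_\om$, and observe via \autoref{lem: [f]in U_*r_om <->[f]in pzU(r_i)} that $\FL^{\vec d}_\FU([f_\Fn-g])<[\vec r]_\FU$ says precisely $[f_\Fn-g]\in U^\d_{\rz r_\om}$; since $\SN_\d=\{\rz r_\om:[\vec r]\in\SS\SQ\}$ (\autoref{lem: 3 items-rel between SQ and SN_d}(1)) and $\tau^\d=\tau$, statement (b) is literally $\tau$-convergence, hence equivalent to (a) and therefore to (c). Statement (d)$\Leftrightarrow$(e) is entirely parallel, using \autoref{lem: tfae for pts of pos meas and PL} in place of \autoref{lem: [f]in U_*r_om <->[f]in pzU(r_i)}, the ultrafilter $\Xi_\d$ for $\d\in P_F$, and the coinitiality of ${}_\d\SP$ in $\SN_\d$ (\autoref{prop: SPSL^0 coinitial in wt(SM)}), again invoking \autoref{cor: tau^d_1 = tau^d_2} to free the argument from the particular choice of $\d$.

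The main obstacle I anticipate is not any single implication but bookkeeping the two kinds of quantifier-coarsening cleanly: passing from a germ-at-$0$ comparison $\FL([f_\Fn-g])<[m]$ to a sequence-germ comparison $\FL^{\vec d}([f_\Fn-g])<[\vec r]$ (harmless, since both $\|f\|_t$ and $m(t)$ are eventually monotone), and then from that to the genuinely weaker $\FU$-comparison $\FL^{\vec d}_\FU(\cdot)<[\vec r]_\FU$ — the point being that the ``apparently weak'' descriptions (b), (d) are nonetheless equivalent to the ``strong'' ones (e), (f) only because $\tau$ is independent of $\d$ (\autoref{cor: tau^d_1 = tau^d_2}) and of the moduli family (the coinitiality results). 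I would emphasize in the write-up that each ultrafilter-flavored statement is first shown equivalent to $\tau^\d$-convergence for one specific $\d$ tailored to that ultrafilter, and only then, via $\d$-independence, folded into the common equivalence class with (a).
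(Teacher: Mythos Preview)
Your overall architecture is sound and close to the paper's: route everything through (a), use $\d$-independence (\autoref{cor: tau^d_1 = tau^d_2}) to fold the ultrafilter-flavored statements (b) and (d) back into $\tau^\d$-convergence for a tailored $\d$, and get (e)$\Leftrightarrow$(f) from coinitiality of $\SP\SL^0$ in $\wt\SM$. Your handling of (b)$\Rightarrow$(a) via \autoref{lem: [f]in U_*r_om <->[f]in pzU(r_i)} and of (d) via \autoref{lem: tfae for pts of pos meas and PL} is correct.

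There is, however, a genuine gap in your (a)$\Leftrightarrow$(f) and (c)$\Leftrightarrow$(f). You write that ``${}_\d\FL(\FD)$ convergently coinitial in $\wt\SN_\d$'' \emph{unwinds} to (f) via \autoref{lem: FL: SG >->> SM}. But that lemma only records the two separate bijections $U_\Fr\leftrightarrow\wt\SM^\Fr$ and $\SU([m])\leftrightarrow\wt\SM([m])$; it does \emph{not} relate the pointwise condition $\rz\|h\|_\d<\rz m(\d)$ to the germ condition $\FL([h])<[m]$. The latter implies the former trivially, but the converse is precisely the nontrivial pointwise-to-uniform pinching of \autoref{prop: ptwise bndd subset unif bndd} and \autoref{prop: un(SM)^r subset un(SM)(m)}, packaged as \autoref{cor: given m, U_r subset SU(m) some r}: given $[m]$ there is $\Fr\in\SN_\d$ with $U_\Fr\subset\SU([m])$. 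That is the result you must invoke. The paper does exactly this inside the proof of \autoref{thm: standard nonser descrip tau}, which is why it establishes (a)$\Leftrightarrow$(e) through that theorem and only then obtains (f) by coinitiality.

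The same issue recurs in your (c)$\Leftrightarrow$(f), where you assert that for monotone $t\mapsto\|h\|_t$ and $m$, ``comparison along the sequence $d_i\to 0$ is comparison of germs at $0$.'' This is false as stated: with $d_i=1/i$, $m(t)=t$, and $h\in\wt\SM$ piecewise constant with $h\equiv 1/i-1/i^3$ on $[1/(i{+}1),1/i)$, one has $h(d_i)<m(d_i)$ for all $i\geq 2$ yet $h(t)>m(t)$ just above each $1/(i{+}1)$. The repair is either the index shift $r_i=m(d_{i+1})$ (the same device used in \autoref{prop: SPSL^0 coinitial in wt(SM)} and \autoref{prop: ptwise bndd subset unif bndd}) or, as the paper does, to prove (a)$\Leftrightarrow$(c) directly via \autoref{thm: standard serial descrip of tau} rather than detouring through (f).
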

\begin{proof}
   The equivalences are a direct consequence of the above corollary, the previous two theorems  and the definitions directly above. \autoref{cor: all standard descriptions tau} implies (a) $\Leftrightarrow$ (b),  \autoref{subsec: stand interpret for d serial} implies (a) $\Leftrightarrow$ (c), \autoref{thm: tau^d on SM indep of d} and \autoref{lem: tfae for pts of pos meas and PL} imply (a) $\Leftrightarrow$ (d)  and \autoref{thm: standard nonser descrip tau}  implies (a) $\Leftrightarrow$ (e). Clearly, the fact that $\SP\SL^0$ is coinitial with $\wt{\SM}$ (\autoref{prop: SPSL^0 coinitial in wt(SM)}) gives the equivalence of (e) and (f).
\end{proof}
   Note that we have arrayed our equivalent descriptions of $[f_d]\ra [f]$ in (b) through (e) from weak to strong. There still seems to be room to find both weaker descriptions than (b) and stronger than (e). Note also that  statement (d) in the equivalence has a familiar framing. Thinking of the partially ordered sets $\SP\SL^0\subset\wt{\SM}$ as the range for our norming function $\FL$, and noting that we have proved that eg., the partially ordered set $\SP\SL^0$ does not have any countable coinitial subsets, the need for (uncountable) nets now seems clear.

\section{Conclusion}\label{sec: conclusion}
\subsection{The topological context}
    Concerning basic \hypertarget{topological}{topological} questions about $\tau=\tau^\d$, we have mentioned the work on germ topologies for very rigid mappings that occurs in eg., Pisanelli, \cite{Pisanelli1976}, and more recently in eg., Glockner, \cite{Glockner2004}. All of these considerations lie in the context of locally convex spaces, too restrictive for our ring topology $\tau$. More particularly, these topologies are typically inductive limit topologies for inductive systems of Frechet spaces arising from natural restriction maps that are continuous inclusions due to the rigid (analytic) nature of the functions, see eg., Meise and Vogt, \cite{MeiseVogt97}, p292 and chapter 29. For us, a restriction map $r_n:F(B_{1/n})\ra\SG$ sends a real valued function $f$ defined on $B_{1/n}$ to its germ $[f]$ at $0$, which is the same as the map $f\mapsto \rz f|B_\d$ for some $\d\in\mu(0)_+$. As noted, for the inductive topology, $r_n$ must be a  continuous injection for all $n$, and the maps $f\mapsto[f]$ can be neither continuous nor injective. Note also that projective limit descriptions of topologies, see pg 279 in \cite{MeiseVogt97}, depend directly on the existence of the fundamental system of seminorms defining the given topology, and so by definition on local convexity. On the other hand, a description in terms of locally convex vector spaces over non-Archimedean fields seems plausible. But, as in Perez-Garcia and Schikhov, \cite{LCSnonArchFlds2010}, the fundamental system of seminorms describing these topologies are real valued and it seems that such a description of this topology can be feasible only if the seminorms are valued in  (the nonnegative values of) a non-Archimedean field with some kind of definable least upper bound property for definable seminorms (see eg., Todorov and Wolf, \cite{oai:arXiv.org:math/0601722}, p3). In this nonstandard vein, it seems to the author that our topology is a kind of transferred (or non-Archimedean) compact-open topology.

    From another angle, we believe that we have recently found in the broader topological framework a natural setting for the germ topology $(\SG,\tau)$. This starts with the simple observation that the map $\mathitbf{\sbbd=\sbbd_\d}:\SG\x\SG\ra\SN_\d$ defined by $\mathitbf{\sbbd_\d([f],[g])=\|\rz f-\rz g\|_\d}$ satisfies all of the properties of a metric space, except that the range is not real and  eg., is  non-Archimedean. Further, the $\sbbd$-balls defining a subbase for this topology are precisely the open sets $U^\d_\Fr$ in our topology $\tau^\d$, ie.,
  \begin{align}
     (\SG,\tau^\d)\;\text{is homeomorphic to}\; (\SG,\sbbd_\d).
  \end{align}

     Topologies defined in terms of metrics with range a partially ordered abelian group (with the appropriate order topology) are said to be (a type of) \emph{generalized metric space}. Kalisch, \cite{Kalisch1946}, shows that all generalized metric spaces for which the range is a ``partially ordered vector group''  are uniform spaces and conversely.  Our topology $\tau$ is homeomorphic to the type of generalized metric space typically called an $\mathitbf{\om_\mu}$\emph{-metric space}, $\Fd:X\x X\ra A$. Here, the range of the metric $\Fd$ is a  totally ordered abelian group $A$ (with the order topology) having the property that  the smallest initial ordinal $\a$ for which there is an $\a$ indexed sequence decreasing to $0$ in $A$, has  cardinality $\om_\mu$ (the $\mu^{th}$ cardinal number). That is, $\aleph_\mu=|\a|$ is the minimal cardinality of a coinitial subset of $A_+=\{a\in A:a>0\}$. Refining the correspondence of Kalisch, Stevenson and Thron, \cite{StevensonThron1969}, prove that these correspond to uniform spaces with linearly ordered bases having minimal cardinality $\aleph_\mu$.  For some characterizations of $\om_\mu$-metric spaces, see also  Artico and Moresco, \cite{ArticoMoresco1981} and most recently Nyikos and Reichel, \cite{NyikosReichel1992}. For us, $A$ is the additive subgroup of $\rz\bbr$ generated by $\SN_\d$, denoted $G(\SN_\d)$, so $A_+=\SN_\d$.

    \emph{Therefore, the work in \autoref{subsec: ord preserv tops not countable} implies that  our spaces $(\SG,\tau^\d)$ are $\om_1$-metric spaces. }

    It appears that this is the first example of a function space that is an $\om_\mu$-metric space (for $\mu>0$), see Hu\v{s}ek and Reichel, \cite{HušekReichel1983}, p 174, as well as the following historical survey in that paper.

    As $\om_\mu$-metric spaces have many of the properties of $\om_0$-metric (ie., metric) spaces (along with some interesting deviations in the nonmetrizable case as summarized in the next paragraph), this contextualization tells us much about $\tau$.
    First, because of the correspondence found by Stevenson and Thron, \cite{StevensonThron1969}, alluded to in the previous paragraph and the work of Souppouris, \cite{GenerzMetrcSpParacpt},
    we find that $\om_\mu$-metric spaces are paracompact, eg., $\mathitbf{(\SG,\tau^\d)}$ \emph{is paracompact}. Furthermore, we have that they are metrizable if and only if $\mu=0$; eg., we have another verification that our topology is not metrizable. Nevertheless, Stevenson and Thron (see p 318 of \cite{StevensonThron1969}) complete a list of theorems mostly proved by Sikorski, \cite{Sikorski1950} that give higher cardinality analogs (for  $\om_\nu$-metric spaces) of a set of theorems satisfied by (metrizable) metric spaces.
%    Let's briefly recount some of the extended definitions and results in the case of our topological space $(\SG,\tau)$. First, $(\SG,\tau)$ is $\om_1$-countable, ie., any nontrivial intersection of a countable sequence of  open sets is open and a countable union of closed sets is closed. Furthermore, if $\SS\subset\SG$ with the subspace topology, then we have that every $\om_1$ sequence in $\SS$ has a convergent $\om_1$ subsequence if and only if every open cover of $\SS$ has a countable subcover.
    As an example let's just recount the $\om_1$-metric space version of the classic Heine-Borel theorem. First, we need the extended definitions. We say that $\SS\subset\SG$ is $\mathitbf{\om_1}$\emph{-compact} if every $\mathitbf{\om_1}$ \emph{sequence in $\SS$} (ie., an $\SS$ valued sequence indexed by the first uncountable ordinal) has a convergent $\om_1$ subsequence and $\SS$ is said to be $\mathitbf{\om_1}$\emph{-complete} if every $\om_1$ Cauchy sequence in $\SS$ converges.  $\SS$ is said to be $\mathitbf{\om_1}$\emph{-totally bounded} if for each $\Fr\in\SN_\d$, there is a countable subset $\SY\subset\SS$ with $\cup\{B_\Fr(\pzy):\pzy\in\SY\}=\SS$.  Given this,  \emph{Heine-Borel for our $\om_1$-metric space} is the following statement:

    \emph{$\SS\subset (\SG,\tau^\d)$ is $\om_1$-compact if and only if $\SS$ is $\om_1$-complete and $\om_1$-totally bounded. }

    We will give a complete account of these matters and their implications in a latter paper.

    Even more interesting for topological understanding in the case $\mu>0$, eg., for $\tau$, is the fact that, if $\mu>0$, the topology defined by $\Fd:X\x X\ra A$  is uniformly  equivalent to the topology defined by a (generalized) \emph{ultrametric} $\wh{\Fd}:X\x X\ra\wh{A}$, eg., $\wh{\Fd}(x,y)\leq\max\{\wh{\Fd}(x,z),\wh{\Fd}(y,z)\}$ (the strong triangle inequality; see eg., \cite{ArticoMoresco1981}, p 243 and de Groot, \cite{deGroot1956}). We say generalized since it's not hard to see that the work in \autoref{subsec: ord preserv tops not countable} implies that  $\wh{A}$ will be non-Archimedean and, as noted in the previous paragraph, the typical treatment of ultrametrics, as in eg., Perez-Garcia and Schikhov, \cite{LCSnonArchFlds2010}, has them real (eg., Archimedean) valued (but see the paper of de Groot just noted). Emphasizing, our situation begins with a (generalized) metric that does not, a priori, satisfy the strong triangle inequality; but, the uncountable coinitial nature of our $A=G(\SN_\d)$ implies the existence of a non-Archimedean valuation $v:A\ra\wh{A}$. (A general construction of the valuation map $v: A\ra\wh{A}$ is given, eg., in Artico and Moresco, \cite{ArticoMoresco1981}, p 242-243. In a later paper we will give a concrete construction useful in our context.)
    With the non-Archimedean valuation we can get our ultrametric by defining $\wh{\sbbd}=v\circ\sbbd$.

    \emph{In particular, we have that $(\SG,\sbbd)$ is homeomorphic to a so called nonarchimedean topological space $(\SG,\wh{\sbbd})$, see eg., \cite{NyikosReichel1992}.}

%    Let's give some  consequence of the fact that it has the curious ultrametric  properties as delineated  in the above referenced papers on $\om_\mu$-metric spaces (when the image, $A$, has uncountable coinitiality) and  in \cite{LCSnonArchFlds2010} (when the image is nonnegative real numbers).
    Some of the principal properties of topologies defined by ultrametrics are independent of the minimal cardinality of coinitial subsets of the range group, $\wh{A}$, of the ultrametric (again see de Groot, \cite{deGroot1956} and compare with Artico and Moresco, \cite{ArticoMoresco1981}, on the one hand and Perez-Garcia and Schikhov, \cite{LCSnonArchFlds2010}, on the other.). For simplicity, here we will summarize only basic properties of this type.
%    In a latter paper, we will examine more closely the consequences of the uncountability of coinitial subsets of $\wh{A}$.
    The central facts are the following: if  $\wh{B}_1,\wh{B}_2$ are balls in $(\SG,\wh{\sbbd})$ and $\wh{B}_1\cap\wh{B}_2\not=\emptyset$, then either $\wh{B}_1\subset \wh{B}_2$ or $\wh{B}_2\subset\wh{B}_1$. Directly related is the fact that all balls are open and closed in the $\wh{\sbbd}$ topology. From this one can verify the following.

    \emph{$(\SG,\wh{\sbbd})$ is a $0$-dimensional topological space, ie., there is a base for the topology consisting of open and closed sets. In particular, our topology is totally disconnected. }

    Further investigation of these topologies will occur in a later paper.
%    Also, $\om_\mu$-metric space topologies are equivalent to a special kind of (nonmetrizable) uniform space.
\subsection{Prospects and speculation}
   The author began this work with a pair of long term goals in mind. First, stimulated by his work on regularizing families of locally Euclidean topological groups, he wondered if it was possible to develop a topological notion of nearness for germs of local metric groups. Second, the author has wondered for some time about a nonstandard approach to  developing a workable framework for the study of the rings and modules of differentiable or smooth map germs. As alluded to in the introduction, there is some evidence that such a development might be useful in this arena, if not in a broader arena. The author believes that this paper, in demonstrating the possibility of a good topological theory of function germs in the simplest setting, is evidence that both are possible.
   Furthermore, we have some preliminary ideas on how to extend the framework here toward these goals, but all such is still quite tentative.

   Certain things seem clear, but are left to do. First,  the subspace topology induced on the subring  of (real) $C^k$ or smooth germs will not be first countable. Clearly with differentiable germs, we can also define the *supremum norms on various *Taylor polynomials, and although it seems clear that analogs of much done here will go through, we are uncertain about a good approach to this.  On the other hand, in the case of analytic germs, we should get a first countable topology; ie., we will need only countable analogs of the $\SN_\d$'s.     It's clear that they will neither be closed in $\SG^0$, nor a dense subspace, although, in the case of complex analytic germs, it seems reasonable to expect the ring to be complete (seemingly a consequence of, eg., a multivariable *Cauchy integral formula on $\d$ balls or polydisks).
   Considering the possibility of topologizing other types of germs, we can speculate that using the transfer of Hausdorff distance (as a distance between the transfers of closed sets intersected with a fixed infinitesimal $\d$ ball), and the right choice for an analog of $\SN_\d$,  one can in get a nontrivial Hausdorff topology on germs at $0$ of closed subsets of $\bbr^n$. (The latter part of Stevenson and Thron, \cite{StevensonThron1969}, develops the Hausdorff metric on the set of closed subsets of an $\om_\mu$-metric space.)  It does not seem unreasonable to assert that analogs of \autoref{cor: SG_0 -> F(B_delts) is R alg isomorph} and  \autoref{prop: ptwise bndd subset unif bndd} will imply that this topology is independent of choice of $\d$.
%   Continuing with this speculation,  we can conjecture that the induced subspace topology on germs at $0$ of analytic subvarieties will be first countable. In fact, lastly, we conjecture that the germs at $0$ in $\bbc^n$ of complex analytic varieties  should be a complete metric space (seemingly a consequence of, eg., a multivariable *Cauchy integral formula on $\d$ balls or polydisks). If so, this might give a useful framing for ongoing investigations of eg., variation of singularity type, see eg., the conference proceedings  \cite{ManoelFusterWall2010}.
   Within the context of considerations on germs of closed subsets, it now seems clear that a more natural approach to these topologies from the perspective of convergence would have been through the device of Cauchy nets. It's clear that one could restate the main convergence results of this paper within this context. But, beyond a strict continuity with future work, not much is gained from these wholesale changes. Nonetheless, Cauchy nets will be central to future works in topologizing germs.

   Finally, the  concluding section gives standard descriptions of $\tau$, in contrast to the body of the text that depends fundamentally on nonstandard constructions. It is known that, vaguely speaking, a statement in mathematics with a nonstandard proof indeed has a standard proof. What is not generally known, eg., see the paper of Henson and Keisler \cite{HensonKeisler1986}, is that the standard proof will, in principle, use constructions from the next `higher level' of mathematics. For example, in our context we looked at the values of sets of germs at a given, fixed, nonstandard point. In the standard world, points in $\mu(0)_+$ become equivalence classes of sets of points and sets of germs become sets of equivalence classes of functions. Without saying more, it should be apparent that although a standard proof exists, in principle; it seems that a straightforward conversion of our proof will be orders of magnitude more difficult to construct and certainly less intuitive.

%   We have some preliminary results in the extension of $\tau$ to $\SG$, ie., the set of germs at $0$ in $\bbr^n$ with variable target, ie., target can vary in $\bbr$. @@@@@@

%   We also have some preliminary material on topological group germs. @@@@@@@

%   Finally, we still have not investigated the basic topological properties of the topology. What does a $\tau$ compact set look like? How @@@@@@@@@@ ? Also, constructing ``concrete'' uncountable nets that are convergent, say, to $[0]$ without convergent countable subsets should be illuminating.  @@@@@@
\section{Appendix: A brief impression of the nonstandard framework}\label{appendix: nsa impression}
    Good introductions to (Robinson-Luxemburg-Keisler) nonstandard mathematics  abound, see eg.,  Lindstrom's, \cite{Lindstrom1988}, and Henson's, \cite{Henson1997} and also the argument in Farkas and Szabo, \cite{FarkasSzabo} and, of course, the encyclopedic standard is the (difficult) text of Stroyan and Luxemburg, \cite{StrLux76}. Given the extensive references, we opt instead for a tour for the uninitiated of some of the structures in the nonstandard environment, drifting toward making sense of the omnipresent $\SN_\d$ of the present text. As a possible primer, one might read the following to get some sense of the pertinent structures and then go to, eg., Lindstrom's article to see some careful statements and proofs.

    In the small, the enhancement gained with  nonstandard mathematics can be seen in a nontrivial  ultrapower of $\bbr$, here denoted in the nonstandard fashion by $\rz\bbr$. We will flesh this in by viewing $\rz\bbr$ within the context of its collection of internal, standard and external subsets.
%    Suppose that $\SU$ is a free filter in the set of all subsets of $\bbn$, ie., a filter satisfying $\cap\{A:A\in\FU\}=\emptyset$, and suppose that $\FU$ has the additional maximality property: if $A\subset\bbn$, then, exactly one of  $A$ or $\bbn\ssm A$ in $\FU$, ie., $\FU$ is a free ultrafilter on $\bbn$. (Their existence needs Zorn.)
    Given any fixed free ultrafilter $\FU$ defined on $\bbn$,  this {\it ultrapower} is defined as the set of equivalence classes, $\bk{r_i}$, of all sequences $r:\bbn\ra\bbr$ (denoted $F(\bbn,\bbr)$). Here $(r_i)$ is equivalent to $(s_i)$, $(r_i)\sim (s_i)$, if and only if $\{i:r_i=s_i\}\in\FU$.  Although the ring, order theoretic, etc., properties of the range, $\bbr$, only minimally endows $F(\bbn,\bbr)$  with the structure of a partially ordered ring, one can prove that the properties of $\FU$ actually imply that this quotient, $\rz\bbr$, naturally has `all' of the properties of the range $\bbr$, eg.,  $\rz\bbr$ is a totally ordered real closed field. Furthermore, this new field, $\rz\bbr$ is a much enhanced copy of $\bbr$. In fact, the map $a\mapsto\bk{a}$, sending $a$ in $\bbr$ to the equivalence of the constant sequence ($a_i=a$ all $i$), gives an ordered field injection $\bbr\ra\rz\bbr$ and the image of $\bbr$ (denoted ${}^\s\bbr$) is `bounded' and `sparse' in its image, eg., $\rz\bbr$ has infinite and infinitesimal elements. By definition, a positive element $\bk{r_i}$ is an {\it infinitesimal} ($\mu(0)$ will denote these) if for each positive $t\in\bbr$, $\bk{r_i}<\bk{t}$ which by definition holds if $\{i:r_i<t\}\in\FU$. In fact, we get infinitesimals of numerous relative non-Archimedean magnitudes by, eg.,  choosing  $r_i\ra 0$ at various rates and noting that $\FU$ contains complements of finite subsets of $\bbn$.

    Next, we will analogously consider the {\it transfer} of the set $\SP(\bbr)$ of subsets of $\bbr$. That is, suppose  $I\SP(\rz\bbr)$ denotes the set of the equivalence classes of maps $S:\bbn\ra\SP(\bbr)$ with a similar equivalence relation recipe: $(S_i)\sim (T_i)$ if $\{i:S_i=T_i\}\in\SU$. Then, in analogy with $\rz\bbr$ having the properties of $\bbr$ (the range space in the ultrapower), the properties of $\FU$ imply that these equivalence classes have the `same' properties as $\SP(\bbr)$.    In particular, as bounded elements of $\SP(\bbr)$ have suprema, it's also true that the bounded (now in terms of bounds in $\rz\bbr$\;!) elements of $I\SP(\rz\bbr)$  have  nonstandard suprema, written $\rz\sup\bk{A_i}$. In the text, we write our nonstandard element in some non-Roman script, eg., denoting $\bk{A_i}$ by, say, $\SA$.

     Continuing our perusal of the properties of the subsets of $\rz\bbr$ by looking at the range space properties of an ultrapower, we will now look at the possible {\it transfer} of the natural set theoretic relations between our two range spaces, $\bbr$ and $\SP(\bbr)$. This will allow us to discern all subsets of $\rz\bbr$ as being either internal, in $I\SP(\bbr)$, or external.  Just as the elements of $\SP(\bbr)$ can be seen as the subsets of $\bbr$, it indeed holds  that $I\SP(\rz\bbr)$ can be naturally viewed so that its elements are (precisely the {\it internal}) subsets  of $\rz\bbr$. Using the properties of $\FU$, one can check that defining $\bk{b_i}\in\bk{B_i}$ if $\{i:a_i\in B_i\}\in\FU$, gives a well defined and consistent notion of internal set membership.
    More concretely, we have the special subclass of these internal subsets given by the equivalence classes of the maps $S$ that are constant (denoted by $\rz A$ if $S_i=A$ for all $i$), which are the {\it standard subsets} of $\rz\bbr$, denoted ${}^\s\SP(\bbr)$.   For example, the equivalence class of the constant sequence $A_i=(-\pi,2/3)$ for all $i$, ie., $\rz(-\pi,2/3)$, consists of all equivalence classes $\bk{a_i}$ where $\{i:-\pi<a_i< 2/3\}\in\FU$, eg., it contains all translates of infinitesimals by $\bk{a}$ for $a\in(-\pi,2/3)$.
    Clearly, the standard set $\rz(-\pi,2/3)$ is much `thicker' than the embedded copy of $(-\pi,2/3)$, ie.,  ${}^\s(-\pi,2/3)$. So, although, as with $\bbr\ra{}^\s\bbr$, we   have the bijection $\SP(\bbr)\ra {}^\s\SP(\bbr)$, in this situation the image of elements of $\SP(\bbr)$ are greatly extended. As a further example, let us specify  numerous general internals intervals that, eg.,. are all `infinitesimally close to' our standard interval. If $a_i,b_i\in\bbr$ for $i\in\bbn$ with $a_i$ strictly decreasing to $-\pi$ and $b_i$ strictly increasing to $2/3$, then all such intervals $\SB=\bk{(a_i,b_i)}\in I\SP(\rz\bbr)$ contain the same standard points as $\rz(-\pi,2/3)$. Note that the {\it standard part} of such an $\bk{(a_i,b_i)}$ includes these points (back in $\bbr$) as well as those standard points infinitesimally close, and therefore will also include $-\pi$ and $2/3$.   As noted with the elements of $\rz\bbr$ above, but even more so with  $I\SP(\rz\bbr)$, we have a wild array of extremal elements that, nonetheless, carry all of the `well stated' properties of $\SP(\bbr)$. A mild, but useful,  example of such might be an internal $\pzA\subset\rz\bbr$ that is  finite as a nonstandard set, called {\it *finite}, for which we have ${}^\s\bbr\subset\pzA$. (To make these one needs a `bigger' $\FU$ in our ultrapower construction.)  Yet, in spite of the above mentioned wildness, being internal, these  sets have `all' of the properties of our sets, a special case of {\it transfer} (and this is independent of the choice of $\FU$). In particular, $\SA$ has all of the `well stated properties of a finite set of real numbers'. On the other hand, there are  {\it external subsets} of $\rz\bbr$ for which {\it transfer} does not work, eg., the set of infinitesimals, $\mu(0)$, cannot be written as $\bk{A_i}$ and neither can ${}^\s\bbr$, the isomorphic copy of $\bbr$ in $\rz\bbr$. For these, transfer does not hold, eg., although $\mu(0)$ and ${}^\s\bbr$ are bounded subsets of $\rz\bbr$, and bounded internal sets have *suprema by transfer, we will shortly indicate that neither of these have suprema.

    If $F(n,1)$ denotes the set of functions $g:\bbr^n\ra\bbr$, then one further extension of the above recipe is to consider $\FU$ equivalence classes of maps $f:\bbn\ra F(n,1)$ where again $f=(f_i)$ is defined to be equivalent to $g=(g_i)$ if $\{i:f_i=g_i\}\in\FU$, where as above, we will let $\Ff=\bk{f_i}$ denote the equivalence class containing $(f_i)$. Pedantically, these too have `all'  of the properties of the range space $F(n,1)$. In particular, they have natural representations as (internal!) functions from $\rz\bbr^n$ to $\rz\bbr$. (It's not too hard to see that the obvious definition is the proper and well defined one: $\bk{f_i}(\bk{a_i})\dot=\bk{f_i(a_i)}$.)
    Let's give another simple  example of transfer.
%%    First some suggestive preliminaries; it's not too much of a stretch to see that an element $\bk{n_i}\in\rz\bbr$ is a {\it (nonstandard) integer}, ie., in $\rz\bbz$, if and only if $\{i:n_i\in\bbz\}\in\FU$ and furthermore, via a clever use of the properties of $\FU$, we can verify that transfer works for  $\rz\bbz$: eg., $\rz\bbz$ is a principal ideal domain with a division algorithm. (This follows, and extends, the observation that the ultrapower lifts (good) properties of the given range and the relationships between these.) Nonetheless, there are infinite prime numbers (eg., elements of $\rz\bbn$ that are coprime to all finite integers) and elements of $\rz\bbn$ divisible by all finite numbers.
%%    More dramatically, but  via similar arguments, (or just check that the mathematical statement is well stated, ie., {\it first order with all quantified variables and parameters internally bound} and invoke the {\it transfer principle}) one can verify the following.
    If $C^0(n,1)\subset F(n,1)$ is the subset of continuous functions, then the set $\rz C^0(n,1)\subset\rz F(n,1)$, consisting of those $\Ff=\bk{f_i}$ satisfying $\{i:f_i\in C^0(n,1)\}\in\FU$, has all of the `well stated' properties that $C^0(n,1)$ has. For example, these {\it *continuous functions}\; satisfy the nonstandard version of the intermediate value theorem. If $\Ff\in\rz C^0(n,1)$ and $\a,\b\in\rz\bbr$ are such that $\Ff(\a)<0<\Ff(\b)$, then there is $\g$ with $\a<\g<\b$ satisfying $\Ff(\g)=\bk{0}$. In sharp contrast to the fact that $\rz C^0(n,1)$ has all of the nice `well stated'  properties of $C^0(n,1)$ and in analogy with the wild properties of elements of $\rz\bbn,\rz\bbr$ and $I\SP(\rz\bbr)$, there are, for example, elements  $\Ff\in\rz C^0(n,1)$ such that $\rz|\;\Ff|$ takes arbitrarily large values on each standard interval in $\rz\bbr$ but, nevertheless satisfies $\Ff|{}^\s\bbr\equiv 0$! (To get such $\Ff$, one needs larger more robust $\FU$, giving a type of {\it saturated ultrapower}.)

    Before we move to  our discussion of the sets $\SN_\d$, we need a view of more general standard functions specific to our example. First, note that for each $f\in F(n,1)$, we have the corresponding equivalence class, $\rz f$, of the constant sequence $f_i=f$, for all $i$. That is, from  the given function $f$ defined on $\bbr^n$, we get the corresponding standard function $\rz f$ defined on $\rz\bbr^n$ (with values in $\rz\bbr$).
    (Note that, as with the injections onto the isomorphic external subset $\SP(\bbr)\ra{}^\s\SP(\bbr)\subset I\SP(\rz\bbr)$, we have  $F(n,1)\ra{}^\s F(n,1)\subset\rz F(n,1)$.)
    Next, if  $\bbx=F(\SP(\bbr)_{bdd},\bbr)$ denotes the set of functions from  $\SP(\bbr)_{bdd}$ (bounded subsets of $\bbr$) to $\bbr$, then, again following our recipe, we can consider the set, $\rz\bbx$, of $\FU$ equivalence classes of maps $T:\bbn\ra\bbx$. If $\ST=\bk{T_i}\in\rz\bbx$, then as our range space $\bbx$ is given by maps from $\SP(\bbr)_{bdd}$ to $\bbr$, then, following the process done with elements of $\rz F(n,1)$, we have that $\ST$ can be naturally viewed as a mapping from $I\SP(\rz\bbr)_{*bdd}$ to $\rz\bbr$. Note this  automatically gives  internal domains and ranges for the map $\ST$,  and eg., gives the existence of *suprema for *bounded internal subsets of $\rz\bbr$. That is, $\sup$ is an element of $\bbx$ and so $\rz\sup=\bk{\sup}$ is one of the standard elements of $\rz\bbx$, eg., is an internal map from $I\SP(\rz\bbr)_{*bdd}$ to $\rz\bbr$ assigning *suprema to these internal sets. Furthermore, $\rz\sup$ satisfies $\rz\sup \SA\cup\SB=\max\{\rz\sup\SA,\rz\sup\SB\}$ if $\SA,\SB$ are internal and *bounded (ie., in $I\SP(\rz\bbr)_{*bdd}$). But if $\SA_j$, $j\in\bbn$, are elements of $I\SP(\rz\bbr)_{*bdd}$ uniformly bounded in $\rz\bbr$,  then $\rz\sup(\cup_j\SA_j)$ may not even be defined as $\cup_j\SA_j$ may not be internal. For example, $\SA_j=\rz\{x:|x|<j\}$ is even standard, but $\cup_j\SA_j$ is the set of {\it nearstandard} points in $\rz\bbr$, denoted $\rz\bbr_{nes}$,  and is therefore bounded, but has no *supremum.  On the other hand, if $\bbx= F(\bbn,\SP(\bbr)_{bdd})$ and $\SE\in\rz\bbx$, ie., an {\it internal} sequence of elements of $I\SP(\rz\bbr)$, then, the transfer of the usual property holds. That is, for  $\Fj\in\rz\bbn$, letting $\SE_\Fj=\SE(\Fj)$ and $\Fs_\Fj=\rz\sup\SE_\Fj$ for each $\Fj\in\rz\bbn$, we have $\rz\sup(\cup_\Fj\SE_\Fj)$ exists and equals $\rz\sup\{\Fs_\Fj:\Fj\in\rz\bbn\}$.
    Finally, suppose that we have a map $S:\bbr_+\ra\SP(\bbr)$. Then the previous discussion implies that the map $\rz S:\rz\bbr_+\ra I\SP(\rz\bbr)$ is internal, eg., its image is contained in $I\SP(\rz\bbr)$ and so its values are internal subsets of $\rz\bbr$.

%    @@@@@@@With the $(\rz\bbr,I\SP(\rz\bbr))$ rough sketch as a model for the bigger picture, one can view the arena of nonstandard mathematics as an enhanced set theory, (above, we enhanced  $(\bbr,\SP(\bbr))$ to $(\rz\bbr,I\SP(\rz\bbr))$) to which we can lift the properly structured sentences of ordinary set theory. Specifically, in our nonstandard world, we have a class of internal sets and a complementary class of external sets, with the standard sets (an enhanced version of our standard sets, as with $\rz(-\pi,2/3)$ above) as a tiny subclass of the internal sets. These sets, $\rz\bbr$, $\rz(-1,1)$, $\bk{A_i}$, ${}^\s\bbn$ are  {\it constants} in a mathematical statement if they are the domain of a @@@@@@@ In order for a mathematical statement in this enhanced arena to hold (via transfer!), we need our constants to be `good', ie., {\it internally defined}, most simply defined as elements of a standard (internal) set. Note that all elements of the standard set above, $\rz(-\pi,2/3)$,  are of the form $\bk{a_i}$ and this is true of all (internal) standard sets eg., $I\SP(\rz\bbr)$ is of this form, whereas $\SP(\rz\bbr)$, the collection of all subsets of $\rz\bbr$ (remember that, eg., $\mu(0)$ is an element of this) is not of this form. To indicate that $I\SP(\rz\bbr)$ is the (correct) transfer of $\SP(\bbr)$, we will denote it by $\rz\SP(\bbr)$. So generally, for a constant to be internally defined, we need it to be an element of $\rz X$, for some set $X$.

%    @@@@@@ First of all, suppose one has a set that can be expressed an an element of another set.

     Having completed the above cursory introduction, fix a positive infinitesimal $\d\in\mu(0)$ and we now want to say something about the sets $\SN_\d$. Let $F_0(n,1)$ be those $f\in F(n,1)$ satisfying for all sufficiently small $t>0$, $t\mapsto\|f_0\|_t\in\SM$, see \autoref{def:  SM and SM^0}. First, we wish to see how, for a given positive $\d\in\mu(0)$ and $f_0\in F_0(n,1)$, $\rz\|\rz f_0\|_\d$ is a well defined element of $\rz[0,\infty)$. To this end, let $S_{f_0}:\bbr_+\ra\SP(\bbr)$ be $S_{f_0}(t)=\{|f_0(x)|:x\in B_r\}$, and note  from the previous paragraph, $\rz S_{f_0}(\d)\subset\rz\bbr$ is internal. But, with a little work (and details of transfer not given above), we get $\rz S_{f_0}(\d)=\rz\{|\rz f_0(\xi)|:\xi\in B_\d\}$. So the previous paragraph implies that $\rz\|\rz f_0\|_\d$, defined here to be $\rz\sup$ of this set, is a well defined element of $\rz[0,\infty)$. (In the body of the paper, we often leave out some *'s.) A much easier approach would be to transfer the map $\sup\circ S_{f_0}:\bbr_+\ra[0,\infty)$ and evaluate at $\d$, but we believe this `components' approach is more instructive. Now, $\SN_\d$ is defined to be the set of these numbers as $f_0$ varies over germs, which is clearly the same set if we let $f_0$ vary in $F_0(n,1)$. $\SN_\d$ is an external subset of $\rz[0,\infty)$ basically for the same reasons that ${}^\s F_0(n,1)$ is an external subset of $\rz F_0(n,1)$ (and ${}^\s\bbr$ is external in $\rz\bbr$). An easy  way to verify that $\SN_\d$  is external is via the following simple criterion (stated for subsets of the internal set $\rz\bbr$ but clearly adaptable for the others). If $\SH\subset\rz\bbr$ is nonempty, *bounded above with the property that for some for some integer, $n$, greater that one, $\rz n\a\in\SH$ whenever $\a\in\SH$; then $\SH$ is external. Setting, $\SH$ equal ${}^\s\bbr$ or $\SN_\d$ as subsets of $\rz\bbr$, or ${}^\s F_0(n,1)$ as a subset of $\rz F(n,1)$, its clear both conditions are satisfied. Note that, given the remarks on internality and $\rz\sup$ above, one could construct a simple proof of the criterion for externality.

      Although outside the impressionistic gist of this summary, we believe that it's important to mention two techniques that are seldom used in nonstandard arguments. The first concerns the standard implications of a relation that needs an enlargement: $\d\ggg\k$ for positive $\d,\k\in\mu(0)$. Here, this device was central to a finding a useful property that a germ must satisfy on an arbitrary fixed infinitesimal ball about $0$ in $\rz\bbr^n$ that is equivalent to its continuity on some ball $B_r$ for some $r\in\bbr_+$, ie., is a continuous germ. See \autoref{prop: germ continuity from k<<<d} and the previous lemma.
    The second technique focuses on the implications of the simple fact that if $A\subset\bbr$ has $\rz A\cap(\rz\bbr\ssm{}^\s\bbr)\not=\emptyset$, then $A$ must be infinite. This becomes quite useful if, as in \autoref{prop: ptwise bndd subset unif bndd}, one defines $A$ in a bootstrap fashion, eg., here the elements of $P$ are functions whose transfers satisfy a bound at some nonstandard value. We use this device in our dissertation for other purposes, see \cite{McGaffeyPhD}, and believe it can have significant utility.
\section{Commonly used terms and definitions}
    We include here an index of definitions and terminology used often along with the page where these are defined.
    \begin{multicols}{2}
\begin{enumerate}
    \item [pg\pageref{def: set of (f,U) s.t. U=domf})] $\rz\bbr_+,\rz\bbn_\infty, B_r=B^n_r, \rz B_{\d}=B_\d$
    \item [pg\pageref{def: set of (f,U) s.t. U=domf})] $\mu(0),\mu(0)_+,0<\d\sim0$
    \item [pg\pageref{def: set of (f,U) s.t. U=domf})] $\SG=\SG_{n,1},\SG^0, [f]$
    \item [pg\pageref{def: [f]<[g]})] $[f]<[g]$
%    \item [pg9)] $\rz\bbn,\rz\bbn_\infty$
    \item [pg\pageref{def: e is [f]-good})] $\k\lll\d,\om\lll\Om$,
    \item [pg\pageref{def: e is [f]-good})]  has strong good $\d$ numbers
    \item [pg\pageref{def: e is [f]-good})] $\k$ strongly $[f]$-good for $\d$
    \item [pg\pageref{def: SN_delta families of numbers})] $\|f\|_r,\rz\|f\|_\d=\rz\|\rz f\|_\d$
    \item [pg\pageref{def: SN_delta families of numbers})] ${}_m\wt{\SG}_0,\;{}_m\SG_0,\;{}_m\wt{\SG}_{[f]}$
    \item [pg\pageref{def: SN_delta families of numbers})] $\wh{\SN}_\d,\wt{\SN}_\d,\SN_\d,\SN_\d^0$
    \item [pg\pageref{def:  SM and SM^0})] $\wt{\SM},\SM,\SM^0$
    \item [pg\pageref{def:{}_dSM, {}_dwtSM, etc})] ${}_\d\wt{\SM},{}_\d\SM,{}_\d\SM^0,{}_\d\SB$
    \item [pg\pageref{def: good coinitial subsets})]  has good coinitial subsets for $\d$
    \item [pg\pageref{def: U_Fr})] $U^\d_\Fr=U_\Fr$
    \item [pg\pageref{def: of topology  at 0 germ})] $\tau^\d_0,\tau^\d,\tau_0,\tau$
    \item [pg\pageref{def: SU([m]), SM([m]), M^Fr, etc})] $\SU([m])=\SU([m])_u$, $\SU([m])_\ell$
    \item [pg\pageref{def: SU([m]), SM([m]), M^Fr, etc})] $\SB^\Fr,\SB_\Fr$ for $\SB=\wt{\SM},\SM,\SM^0$
    \item [pg\pageref{def: SU([m]), SM([m]), M^Fr, etc})] $\SB([m])_u,$\! for\! $\SB\negmedspace=\negmedspace\wt{\SM},\SM,\SM^0$
    \item [pg\pageref{def: SU([m]), SM([m]), M^Fr, etc})] $\SB([m])_{\!\ell}$\! for\! $\SB\negmedspace=\negmedspace\wt{\SM},\SM,\SM^0$
    \item [pg\pageref{def: FL:SG->SM})] $\FL([g]), \FL([\SB])$
    \item [pg\pageref{def: of coinitial partial order relations})] $J$ is coinitial in $P$
    \item [pg\pageref{def: of coinitial partial order relations})] $J$ is coinitial with $K$ in $P$
    \item [pg\pageref{def: of coinitial partial order relations})] $J$ and $K$ are coinitial in $P$
    \item [pg\pageref{def: of coinitial partial order relations})] $\SV$ convergently coinitial in $S$
    \item [pg\pageref{def: PL^0 germs at 0})] $\SP\SL^0$, ${}_\d\SP$
    \item [pg\pageref{def: asymp tot ord set of germs})] asymptotically totally ordered
    \item [pg\pageref{def: pzT preserves germ scales})] preserves $(\SG,\La)$ germ scales
    \item [pg\pageref{def: SG{n,p}, SG^0{n,p}})] $\SG_{n,p},\SG^0_{n,p}$
    \item [pg\pageref{def: rc_[h],lc_[h]})] $rc_{[h]},lc_{[h]}$
    \item [pg\pageref{def: SSSQ, SSSQ_FU, [(r_i)]})] serial point, $\FU=fil(\om)$
    \item [pg\pageref{def: SSSQ, SSSQ_FU, [(r_i)]})] $\SS\SQ$, $[\vec{r}]=[(r_i)]$
    \item [pg\pageref{def: SSSQ, SSSQ_FU, [(r_i)]})] $\SS\SQ_\FU,[\vec{r}]_\Fu$, $\stackrel{\FU}{<}$
    \item [pg\pageref{def: FL: G_0->SM or SSSQ})] $\FL^{\vec{d}},\FL^{\vec{d}}_\FU$
\end{enumerate}
\end{multicols}
%%%\bibliographystyle{jloganal}
%%%\bibliography{nsabooks}

\end{document}